\documentclass[10pt,reqno]{amsart}
\usepackage{latexsym,amsmath,amssymb,amscd}
\usepackage[all]{xy}
\usepackage{enumerate}
\usepackage
{hyperref}

\setcounter{tocdepth}{2} 
\makeatletter
\newcommand\@dotsep{4.5}
\def\@tocline#1#2#3#4#5#6#7{\relax
	\ifnum #1>\c@tocdepth 
	\else
	\par \addpenalty\@secpenalty\addvspace{#2}%
	\begingroup \hyphenpenalty\@M
	\@ifempty{#4}{%
		\@tempdima\csname r@tocindent\number#1\endcsname\relax
	}{%
		\@tempdima#4\relax
	}%
	\parindent\z@ \leftskip#3\relax \advance\leftskip\@tempdima\relax
	\rightskip\@pnumwidth plus1em \parfillskip-\@pnumwidth
	#5\leavevmode\hskip-\@tempdima #6\relax
	\leaders\hbox{$\m@th
		\mkern \@dotsep mu\hbox{.}\mkern \@dotsep mu$}\hfill
	\hbox to\@pnumwidth{\@tocpagenum{#7}}\par
	\nobreak
	\endgroup
	\fi}
\makeatother 

\begin{document}


\makeatletter
\@addtoreset{figure}{section}
\def\thefigure{\thesection.\@arabic\c@figure}
\def\fps@figure{h,t}
\@addtoreset{table}{bsection}

\def\thetable{\thesection.\@arabic\c@table}
\def\fps@table{h, t}
\@addtoreset{equation}{section}
\def\theequation{
\arabic{equation}}
\makeatother

\newcommand{\bfi}{\bfseries\itshape}

\newtheorem{theorem}{Theorem}[section]
\newtheorem{corollary}[theorem]{Corollary}
\newtheorem{definition}[theorem]{Definition}
\newtheorem{example}[theorem]{Example}
\newtheorem{lemma}[theorem]{Lemma}
\newtheorem{notation}[theorem]{Notation}
\newtheorem{convention}[theorem]{Convention}
\newtheorem{proposition}[theorem]{Proposition}
\newtheorem{remark}[theorem]{Remark}
\numberwithin{equation}{section}

\renewcommand{\1}{{\bf 1}}
\newcommand{\Ad}{{\rm Ad}}
\newcommand{\Aut}{{\rm Aut}\,}
\newcommand{\ad}{{\rm ad}}
\newcommand\BHLG{\mathbb{BHLG}}
\newcommand\BLA{\mathbb{BLA}}
\newcommand{\bm}{{\bf m}}
\newcommand{\card}{{\rm card}}
\newcommand{\Ci}{{\mathcal C}^\infty}
\newcommand{\Der}{{\rm Der}\,}
\newcommand{\de}{{\rm d}}
\newcommand{\ee}{{\rm e}}
\newcommand{\End}{{\rm End}\,}
\newcommand{\Fr}{{\mathsf{F}}}
\newcommand{\ev}{{\rm ev}}
\newcommand{\GL}{{\rm GL}}
\newcommand{\Gr}{{\rm Gr}}
\newcommand{\graf}{{\mathsf{G}}}
\newcommand{\Hom}{{\rm Hom}}
\newcommand{\hotimes}{\widehat{\otimes}}
\newcommand{\id}{{\rm id}}
\newcommand{\ie}{{\rm i}}
\newcommand{\gl}{{{\mathfrak g}{\mathfrak l}}}
\newcommand{\Ker}{{\rm Ker}\,}
\newcommand{\Lie}{\text{\bf L}}
\newcommand{\pr}{{\rm pr}}
\newcommand{\Ran}{{\rm Ran}\,}
\renewcommand{\Re}{{\rm Re}\,}
\newcommand{\spa}{{\rm span}\,}
\newcommand{\Tr}{{\rm Tr}\,}
\newcommand{\U}{{\rm U}}

\newcommand{\Ac}{{\mathcal A}}
\newcommand{\Bc}{{\mathcal B}}
\newcommand{\Cc}{{\mathcal C}}
\newcommand{\Dc}{{\mathcal D}}
\newcommand{\Ec}{{\mathcal E}}
\newcommand{\Fc}{{\mathcal F}}
\newcommand{\Gc}{{\mathcal G}}
\newcommand{\Hc}{{\mathcal H}}
\newcommand{\Jc}{{\mathcal J}}
\newcommand{\Kc}{{\mathcal K}}
\newcommand{\Lc}{{\mathcal L}}
\renewcommand{\Mc}{{\mathcal M}}
\newcommand{\Nc}{{\mathcal N}}
\newcommand{\Oc}{{\mathcal O}}
\newcommand{\Pc}{{\mathcal P}}
\newcommand{\Rc}{{\mathcal R}}
\newcommand{\Sc}{{\mathcal S}}
\newcommand{\Tc}{{\mathcal T}}
\newcommand{\Vc}{{\mathcal V}}
\newcommand{\Uc}{{\mathcal U}}
\newcommand{\Xc}{{\mathcal X}}
\newcommand{\Yc}{{\mathcal Y}}
\newcommand{\Zc}{{\mathcal Z}}
\newcommand{\Wc}{{\mathcal W}}

\newcommand{\Ag}{{\mathfrak A}}
\newcommand{\Bg}{{\mathfrak B}}
\newcommand{\Cg}{{\mathfrak C}}
\newcommand{\Fg}{{\mathfrak F}}
\newcommand{\Gg}{{\mathfrak G}}
\newcommand{\Ig}{{\mathfrak I}}
\newcommand{\Jg}{{\mathfrak J}}
\newcommand{\Lg}{{\mathfrak L}}
\newcommand{\Mg}{{\mathfrak M}}
\newcommand{\Pg}{{\mathfrak P}}
\newcommand{\Sg}{{\mathfrak S}}
\newcommand{\Xg}{{\mathfrak X}}
\newcommand{\Yg}{{\mathfrak Y}}
\newcommand{\Zg}{{\mathfrak Z}}

\newcommand{\ag}{{\mathfrak a}}
\newcommand{\bg}{{\mathfrak b}}
\newcommand{\dg}{{\mathfrak d}}
\renewcommand{\gg}{{\mathfrak g}}
\newcommand{\hg}{{\mathfrak h}}
\newcommand{\kg}{{\mathfrak k}}
\newcommand{\mg}{{\mathfrak m}}
\newcommand{\n}{{\mathfrak n}}
\newcommand{\og}{{\mathfrak o}}
\newcommand{\pg}{{\mathfrak p}}
\newcommand{\sg}{{\mathfrak s}}
\newcommand{\tg}{{\mathfrak t}}
\newcommand{\ug}{{\mathfrak u}}
\newcommand{\zg}{{\mathfrak z}}

\newcommand{\bs}{\mathbf{s}}
\newcommand{\bt}{\mathbf{t}}

\newcommand{\BB}{\mathbb B}
\newcommand{\CC}{{\mathbb C}}
\newcommand{\EE}{\mathbb E}
\newcommand{\FF}{\mathbb F}
\newcommand{\HH}{{\mathbb H}}
\newcommand{\MM}{{\mathbb M}}
\newcommand{\NN}{\mathbb N}
\newcommand{\QQ}{\mathbb Q}
\newcommand{\RR}{{\mathbb R}}
\newcommand{\TT}{{\mathbb T}}
\newcommand{\ZZ}{\mathbb Z}

\newcommand{\tto}{\rightrightarrows}

\def\H{\textbf{\rm H}}
\def\Q{\textbf{\rm Q}}

\title[Smooth Banach structure on orbit spaces]{Smooth Banach structure on orbit spaces and leaf spaces} 

\author[D. Belti\c t\u a, F. Pelletier]{Daniel Belti\c t\u a, Fernand Pelletier} 

\address{Institute of Mathematics ``Simion Stoilow'' 
	of the Romanian Academy, 
	P.O. Box 1-764, Bucharest, Romania}
\email{Daniel.Beltita@imar.ro, beltita@gmail.com}

\address{Unit\'e Mixte de Recherche 5127 CNRS, Universit\'e  de Savoie Mont Blanc, Laboratoire de Math\'ematiques (LAMA),Campus Scientifique,  73370 Le Bourget-du-Lac, France}
\email{fernand.pelletier@univ-smb.fr, fer.pelletier@gmail.com}

\date{
	\today
}

\begin{abstract} 
We investigate the quotients of Banach manifolds with respect to free actions of pseudogroups of local diffeomorphisms. These quotient spaces are called \H-manifolds 
since the corresponding simply transitive action of the pseudogroup on its orbits is regarded as a homogeneity condition.
The importance of these structures stems from the fact that for every  regular foliation without holonomy of a Banach manifold, the corresponding leaf space has the natural structure of an \H-manifold. 
This is our main technical result, and one of its remarkable consequences is an infinite-dimensional version of Sophus Lie's third fundamental theorem, to the effect that every real Banach-Lie algebra can be integrated to an \H-group, that is, a group object in the category of \H-manifolds. 
In addition to these general results we discuss a wealth of examples of \H-groups which are not Banach-Lie groups.\\ 
{\it 2010  MSC:} Primary 22E65; Secondary 58H05, 58B25, 22A22
\\{\it Keywords:} Banach manifold, Banach-Lie group, leaf space, holonomy
\end{abstract}

\maketitle

\tableofcontents

\section{Introduction}

The problem of adapting Sophus Lie's third fundamental theorem to  infinite-dimensional Lie algebras received much attention, 
and the research in this area is streamlined by two main questions: 
Firstly, due to well-known counterexamples (e.g., \cite{DL66}) there arises the question of finding sufficient conditions 
ensuring that a given Banach-Lie algebra is enlargible, in the sense that it is isomorphic to the Lie algebra of some Banach-Lie group. 
See for instance \cite{Swi}, \cite{Pes93a}, \cite{Pes93b}, \cite{Pes95}, \cite{Be06}, 
and particularly \cite{Ne02} for a detailed discussion of this question in terms of central extensions. 
Secondly, there is the question of finding categories of group-like objects endowed with Lie functors into categories of topological Lie algebras, which should allow one to recover (``integrate'') in this way various classes of Lie algebras. 
An interesting result of this type was obtained in \cite[Th. 6.5]{Wo11}, 
to the effect that the \'etale Lie 2-groups recover all the Mackey-complete locally exponential Lie algebras in the sense of \cite{Ne06}. 

The present paper belongs to the second of the research directions mentioned above, and we focus on obtaining a category of groups, rather than group-like objects, a direction that was also explored for instance in \cite{Woj03} and \cite{Woj06}. 
Here we continue our earlier study of non-smooth quotients of Banach-Lie groups from \cite{BPZ19} in the framework of \H-manifolds 
---a certain category of 
quotients of smooth Banach manifolds for which the corresponding quotient maps satisfy a suitable homogeneity condition. 
One of our main results is that the group objects 
in that category 
are sufficient for solving the integrability problem for arbitrary real Banach-Lie algebras (Theorem~\ref{Lie_III}). 

Roughly speaking, an \H-manifold is the quotient of a Banach manifold with respect to the free action of a pseudogroup of local diffeomorphisms (Proposition~\ref{actionG}). 
In the special of case of finite-dimensional manifolds, this notion goes back to \cite{Pra}, and the first examples of \H-manifolds were the leaf spaces of regular foliations without holonomy. 

A special class of \H-manifolds, the so-called \Q-manifolds, was introduced in \cite{Ba73}, partially motivated by the need to provide a framework for the study of quotient groups of finite-dimensional Lie groups with respect to non-closed normal subgroups, as for instance the quotient group~$\RR/\QQ$. 
The natural extension of \Q-manifolds to the setting of manifolds modeled on Banach spaces was later explored in \cite{Pla} and \cite{Pl80b}, 
with a view to addressing the integrability problem for separable Banach-Lie algebras. 
However, lacking a sufficiently well developed Lie theory for \Q-groups, the integrability of separable Banach-Lie algebras was only partially established. 
(See Remark~\ref{critique} below for a more detailed discussion in this connection.)

It is worth mentioning that quotient spaces such as the leaf spaces of regular foliations were often studied using  diffeological structures (cf. \cite{IZ13} and also \cite{Mag}) or Fr\"olicher structures (cf. \cite{Fr80} and \cite{BIZKW}). 
However, when the corresponding quotient map has a suitable homogeneous transverse structure, the quotient space carries a richer structure from a differential geometrical point of view. 
The relation between these approaches and the \H-structures is discussed in some more detail in Appendix~\ref{AppA}, and we refer to \cite{BIZKW} for the relation between diffeological structures and Fr\"olicher structures.

In order to point out one of the peculiarities of our study, we recall that the category of finite-dimensional Lie groups, and even of some infinite-dimensional Lie groups, is a \emph{full} subcategory of the category of topological groups, that is, every continuous homomorphism is automatically smooth, cf. for instance \cite[Th. IV.1.18]{Ne06} for the case of locally exponential Lie groups (which includes  the Banach-Lie groups). 
In particular, the smooth structure of a Lie group is uniquely determined by its underlying topology. 
In stark contrast to that situation, the topology of the \Q-manifolds, and even more of the \H-manifolds, plays a rather limited role in their study 
(cf.  \cite{Ba73} and \cite{BPZ19}). 
In fact, the quotient of every Lie group by any countable subgroup is a \Q-group, as noted in \cite{Ba73}. 
Hence the natural topology of a \Q-group could be trivial in simple examples such as the quotient group $\RR/\QQ$ that we already mentioned above. 
It is for this reason that our techniques rely a lot on differential geometry, particularly the transverse structure of certain foliations, 
and we have rather little use of the topological methods that are often used in Lie theory. 

The structure of this paper is as follows: 
In Section~\ref{Sect2}, we develop the basic aspects of the \H-manifolds modeled on Banach spaces. 
In Section~\ref{Sect3} we obtain the main result to the effect that, loosely speaking, if a regular foliation of a Banach manifold is without holonomy, then its leaf space has the natural structure of an \H-manifold (Theorem~\ref{QuotientBanachManifold}). 
In Section~\ref{Sect4}, we develop some basic Lie theory for the \H-groups, 
in particular the appropriate versions of Sophus Lie's fundamental theorems I and III. 
Specifically, we construct the Lie functor from the category of \H-groups to the category of real Banach-Lie algebras (Theorem~\ref{Lie_I}) 
and we prove that every real Banach-Lie algebra arises from an \H-group (Theorem~\ref{Lie_III}). 
Then, in Section~\ref{Sect5} we develop methods to construct quotients of Banach-Lie groups which are \Q-groups but not Banach-Lie groups (Theorem~\ref{pseudo_discr_cor2}). 
Finally, in Appendix~\ref{AppA} we proceed to a brief comparison between the theory of \H-manifolds and earlier theories of differentiability beyond the realm of smooth manifolds, particularly the Fr\"olicher spaces and the diffeological spaces. 

\subsection*{General notation} 
The graph of any mapping $h\colon U\to U'$ is denoted by 
$$\graf_h:=\{(x,h(x))\in U\times U'\mid x\in U\}\subseteq U\times U'.$$

\section{\H-structures
}
\label{Sect2}
In this section we introduce the pre-\H-structures and \H-structures modeled on Banach spaces and we explore some of their basic features such as a characterization in terms of the graph of the corresponding equivalence relation (Proposition~\ref{characterization}) and the nontrivial fact that the direct product of two \H-atlases is again an \H-atlas (Corollary~\ref{characterization_cor1}). 
We then introduce the tangent space of a pre-\H-manifold and the tangent vector fields, which are needed in the construction of the Lie functor on the category of group \H-manifolds in Section~\ref{Sect4}.
\subsection{n.n.H. Banach manifolds in the Bourbaki sense}

We briefly recall here some classical notions on not-necessarily-Hausdorff (for short n.n.H.) manifolds modeled on Banach spaces that may be different for different connected components of the manifold under consideration, as in \cite{Bou3}.

 A \emph{$\Ci$-atlas}   on a set $M$   is a family $\{(U_\alpha, u_\alpha) \}_{\alpha\in A}$ of subsets $U_\alpha$ of $M$ 
and maps $u_\alpha$ from $U_\alpha$ to a Banach space $\mathbb{M}_\alpha$ such that:
\begin{itemize}
\item  $u_\alpha$ is a bijection of $U_\alpha$ onto an open  subset of   $\mathbb{M}_\alpha$ for all $\alpha\in A$;

\item $M=\bigcup\limits_{\alpha\in A}U_\alpha$;

\item if  $\alpha,\beta\in A$ and  $U_{\alpha\beta}:=U_\alpha\cap U_\beta\not=\emptyset$, 
then  $u_{\alpha\beta}:=u_\alpha\circ u_\beta^{-1}\vert_{u_\beta(U_{\alpha\beta})}\colon u_\beta(U_{\alpha\beta})\to
u_\alpha(U_{\alpha\beta})$ is a $\Ci$-map.
\end{itemize}
As usual, one has the notion of equivalent  $\Ci$-atlases on  $M$, 
as well as refinement of a given $\Ci$-atlas, 
and every $\Ci$-atlas is compatible with a maximal $\Ci$-atlas.
Such a maximal atlas 
defines a topology on $M$ which in general fails to have the Hausdorff property.

\begin{definition}\label{Banach} 
\normalfont
A maximal  $\Ci$-atlas on $M$  is called a \emph{not-necessarily-Hausdorff  Banach manifold structure} on $M$, 
for short a \emph{n.n.H. Banach manifold}. 
This structure is called a \emph{Hausdorff Banach manifold  structure} on $M$ 
(for short a \emph{Banach manifold}) 
if the topology defined by this atlas is a Hausdorff topology.
\end{definition}

\begin{remark}\label{maSHausdorffSetInnnH}  
\normalfont 
If $M$ is a n.n.H.  Banach manifold, 
then its corresponding topology is~$T_1$ hence  each finite subset of $M$ is closed. 
Moreover, since $M$ is locally Hausdorff, by Zorn's Lemma,  there exists a maximal open dense subset $M_0$ of $M$ which is an open Banach manifold in $M$.  
(See for instance \cite[Lemma 4.2]{BaGa}.)
\end{remark}

It is clear that the classical construction of the tangent bundle $TM$ of a  Banach manifold $M$ 
can be applied to a n.n.H. Banach manifold and so we get again a n.n.H.
 Banach manifold $TM$. 
 
 A  \emph{weakly immersed n.n.H. Banach submanifold} 
 (resp., an \emph{immersed n.n.H. Banach submanifold}) $N$ of a n.n.H. Banach manifold $M$ is an injective smooth map $\iota\colon N\to M$  such that $T_x\iota\colon T_xN\to T_{\iota(x)}M$ is injective (resp., is injective and its range is closed) for every $x\in N$. 
 A weakly immersed n.n.H. Banach submanifold $N$ of $M$ is called \emph{closed Banach submanifold} (resp. \emph{split Banach submanifold}) if $N$ is a closed subset of $M$ (resp.\  $T\iota(T_xN)$ is closed and complemented in  $T_{\iota(x)}M$ for all $x\in N$). 
 A weakly immersed Banach submanifold $N$ of $M$ 
 is called an \emph{initial submanifold} if 
 for every Banach manifold $P$ and every smooth mapping $f\colon P\to M$ with $f(P)\subseteq \iota(N)$ there exists a smooth mapping $f_0\colon P\to N$ with $\iota\circ f_0=f$. 
 
 More generally, a (not necessarily injective) smooth map $\psi\colon P\to M$ is called a \emph{split immersion} if the bounded linear operator $T_x\psi\colon T_xP\to T_{\psi(x)}M$ is injective and its range is a split closed linear subspace of $T_{\psi(x)}M$ for every $x\in P$.

A  \emph{submersion} $p\colon N\to M$ between two n.n.H. Banach manifolds is a surjective smooth map such that $Tp(T_xN)=T_{p(x)}M$ and $\Ker T_xp$ is complemented in $T_xN$ for each $x\in N$. 

A \emph{locally trivial fibration}  $p\colon N\to M$ is a submersion such that for each $x\in M$ there exist an open neighbourhood $U$ of $x$ and a diffeomorphism $\Phi\colon p^{-1}(U)\to U\times p^{-1}(x)$ such that $p_1\circ\Phi=p$ where $p_1=U\times p^{-1}(S)\to U$ is the canonical projection. 
If the basis is connected then all the fibers are diffeomorphic. 

A \emph{n.n.H Banach  bundle} is a locally trivial fibration  $\pi\colon  \mathcal{A}\to M$ whose  fiber is a Banach space. Of course $\mathcal{A}$ is Hausdorff if and only if $M$ is so. Again of course on each connected components  $\mathcal{A}_\alpha$ of $\mathcal{A}$ the fibers are isomorphic to a common  Banach space  (called {\it the typical fiber}) but this model can change from a connected  component to an another one. 
All classical  notions of n.n.H. Banach bundle morphisms, n.n.H. Banach bundle isomorphisms, n.n.H.Banach subbundles, regular foliations etc.\  are  clear.

\subsection{Definition and first properties of a (pre-)\H-structure on a set}
For the rest of this section,  unless otherwise specified,  $M$ is a    Banach manifold   (not necessarily Hausdorff) modeled on a Banach space $\mathbb{M}$. 
In the same way as Pradines in \cite{Pra} 
we introduce the following definitions.

\begin{definition}\label{pre_H_def}
	\normalfont
	Let $\pi\colon M\to S$ be a map from a n.n.H. Banach manifold to an arbitrary set. 
	To this mapping we associate three objects: 
	\begin{enumerate}[{\rm(a)}]
		\item The pseudogroup $\Gamma(\pi)$ consisting of the 
		\emph{transverse diffeomorphisms} associated to the triple $(M, \pi, S)$, that is, diffeomorphisms $h\colon U\to U'$ satisfying
		$\pi\circ h=\pi\vert_U$, where $U,U'\subseteq M$ are open subsets.
		\item The groupoid $\widetilde{\Gamma(\pi)}\mathop{\tto}M$ of germs of $\Gamma(\pi)$, whose source/target maps are denoted by $\bs,\bt\colon \widetilde{\Gamma(\pi)}\to M$. 
		\item The equivalence relation 
		$\Rc_\pi:=\{(x,y)\in M\times M\mid \pi(x)=\pi(y)\}$.  
	\end{enumerate}	
It is easily seen that for every $\gamma\in\widetilde{\Gamma(\pi)}$ one has $(\bt(\gamma),\bs(\gamma))\in\Rc_\pi$, hence one has the mapping 
\begin{equation*}
(\bt,\bs)\colon  \widetilde{\Gamma(\pi)}\to \Rc_\pi, \quad \gamma\mapsto (\bt(\gamma),\bs(\gamma)).
\end{equation*}
For every 
subpseudogroup $\Gamma\subseteq\Gamma(\pi)$ (cf. \cite[Def. 1.1.2]{Wal}), 
its set of germs $\widetilde{\Gamma}$ is a subgroupoid of the above groupoid $\widetilde{\Gamma(\pi)}\mathop{\tto}M$ 
and we have the restricted mapping 
\begin{equation}\label{pre_H_def_eq}
	(\bt,\bs)\vert_{\widetilde{\Gamma}}\colon\widetilde{\Gamma}\to \Rc_\pi, \quad \gamma\mapsto (\bt(\gamma),\bs(\gamma)).
\end{equation}
The above 
quadruple
$(M, \pi, \Gamma, S)$ 
is called a \emph{pre-\H-chart}  on $S$ 
if the mapping \eqref{pre_H_def_eq} is surjective. 

Moreover, we say that a pre-\H-chart 
$(M, \pi, \Gamma, S)$
is an \emph{\H-chart} 
if the mapping~\eqref{pre_H_def_eq} is bijective. 

A (pre-)\H-chart 
$(M, \pi, \Gamma, S)$ 
is called a (\emph{pre-})\emph{\H-atlas} if the mapping $\pi$ is surjective. 
\end{definition} 

In Definition~\ref{pre_H_def}, it is straightforward to check that $\Gamma(\pi)$ satisfies the axioms of a pseudogroup of transformations from \cite[Ch. I, \S 1]{KN63} or \cite[Def. 1.1.1]{Wal}. 

\begin{remark}\label{H_def}
\normalfont 
One can reformulate the notion of an \H-atlas (or \H-chart) in a groupoid-free language as follows. 
Let $S$ be a set.
A \emph{pre-\H-atlas}  on $S$ is a 
quadruple
	$(M, \pi, \Gamma, S)$ 
where $\pi$ is a surjective map from a n.n.H. Banach manifold $M$  on $S$ 
and $\Gamma\subseteq\Gamma(\pi)$ is a subpseudogroup 
with the following homogeneity property:
\begin{quotation}
	for all $x,y\in M$ with $\pi(x)=\pi(y)$, there exists 
	$h\in\Gamma$
	with $h(x)=y$. 
\end{quotation}
If in that homogeneity property the germ of 
$h\in\Gamma$ 
at $x$ is moreover uniquely determined by the conditions $h(x)=y$ and $\pi\circ h=\pi\vert_U$, then 
the 
quadruple	$(M, \pi, \Gamma, S)$ 
is an \emph{\H-atlas}. 
\end{remark}

For the following definition we need a specialization of the notion of equivalence of pseudogroups in the sense of \cite[\S 1.1]{Hae80}, \cite[Def. 1.1.2]{Hae85}, and \cite[\S 1.4]{Hae88}. 
Specifically, if $\Gamma$ and $\Gamma'$ are pseudogroups of diffeomorphisms on the n.n.H. manifolds $M$ and $M'$,  
then an \emph{\'etale morphism from $\Gamma$ to $\Gamma'$} is a maximal set $\Phi$ of diffeomorphisms from  open subsets of $M$ onto open subsets of $M'$, satisfying the following conditions: 
\begin{enumerate}[{\rm(i)}]
	\item If $\varphi\in\Phi$, $h\in\Gamma$, and $h'\in\Gamma'$, then $h'\circ\varphi\circ h\in \Phi$. 
	\item The union of the domains of the elements of $\Phi$ is equal to $M$. 
	\item If $\varphi,\psi\in\Phi$, then $\varphi\circ \psi^{-1}\in\Gamma'$. 
\end{enumerate}
If moreover the set $\Phi^{-1}:=\{\varphi^{-1}\mid\varphi\in\Phi\}$ is an \'etale morphism from $\Gamma'$ to $\Gamma$, then $\Phi$ is called an \emph{equivalence of pseudogroups} 
and the pseudogroups $\Gamma$ and $\Gamma'$ are called \emph{equivalent}. 
If this is the case, then the set $\Gamma\cup\Gamma'\cup\Phi\cup\Phi'$ is a pseudogroup on the disjoint union $M\sqcup M'$. 
See \cite[\S 2]{AM06} and \cite[\S 2]{AM08} for more details on this construction. 

\begin{definition}  
\label{compatib}
\normalfont
If $S$ is a set, then two pre-\H-charts 
	$(M, \pi, \Gamma, S)$ and $(M', \pi', \Gamma', S)$ 
are called \emph{compatible} if 
there exists an equivalence of pseudogroups $\Phi$ 
	from $\Gamma$ to~$\Gamma'$ satisfying $\pi'\circ\varphi=\pi\vert_U$ for every element $\varphi\colon U\to V$ from~$\Phi$.
	
A (\emph{pre}-)\emph{\H-manifold structure} on $S$ is an equivalence class of  (pre-)\H-atlases on $S$. 
In this case~$S$ is called a (\emph{pre}-)\emph{\H-manifold}.
\end{definition}

\begin{remark}\label{QuotientTopology} 
\normalfont 
Since an \H-manifold $S$  is a quotient space of a Banach manifold, for the quotient topology, 
$S$ is locally connected and locally path connected (cf. \cite{Bou1}). 
In particular, if $S$ is connected, it is also path-connected.
\end{remark}

\begin{example}
\normalfont
For instance, for any Banach manifold $M$ the 
let us consider the pseudogroup $\Gamma_0$ generated by $\id_M$ in the sense of \cite[Def. 1.1.3]{Wal}. 
	Specifically, $\Gamma_0=\{\id_U\mid U\text{ open }\subseteq M\}$.   
	Then the quadruple $(M,\id_M,\Gamma_0,M)$ 
is an \H-atlas. 
\end{example}

\begin{example}\label{Q_ex}
	\normalfont
Let $\pi\colon M\to S$ be mapping from a Banach manifold to a set. 
We recall from \cite{Ba73} and \cite{Pla} (see also \cite{Mo75}, \cite{Mo77}, 
\cite[Def. 2.1]{BPZ19}, \cite{BP24}) that the mapping $\pi$ is called a \emph{\Q-chart} 
if it satisfies the following conditions: 
\begin{enumerate}[{\rm(a)}]
	\item\label{Q_ex_a} For all $x,y\in M$ with $\pi(x)=\pi(y)$, there exist 
	open subsets $U,U'\subseteq M$ with $x\in U$,  $y\in U'$, and a diffeomorphism $h\colon U\to U'$ with $h(x)=y$ and 
	$\pi(h(z))=\pi(z)$ for all $z\in U$.
	\item\label{Q_ex_b} For every Banach manifold $T$ and any smooth maps $f,h\colon T\to M$ with $\pi\circ f=\pi\circ h$, the set $\{t\in T\mid f(t)=h(t)\}$ is open in $T$. 
\end{enumerate} 
The triple $(M,\pi,S)$ is also called a \Q-chart, and it is callen a \Q-atlas if $\pi(M)=S$. 
If the above conditions \eqref{Q_ex_a}--\eqref{Q_ex_b} are satisfied, 
then the quadruple $(M,\pi,\Gamma(\pi),S)$ is an \H-chart 
since 
the above condition~\eqref{Q_ex_b} implies the uniqueness assertion in the homogeneity condition from Remark~\ref{H_def}. 
	Loosely speaking, every \Q-chart is an \H-chart, 
	since the pseudogroup~$\Gamma(\pi)$ in the above quadruple $(M,\pi,\Gamma(\pi),S)$ is uniquely determined by~$\pi$.
However, as shown by a simple example with $M=\RR^2$ in \cite{Pra}, the converse implication does not hold true in general. 
More specifically, if $(M,\pi,\Gamma,S)$ is an \H-chart, then $\pi\colon M\to S$ need not be a \Q-chart even if $\Gamma$ is the maximal pseudogroup, that is, $\Gamma=\Gamma(\pi)$. 
For the sake of completeness we give in Example~\ref{H-nonQ}  below a generalization of Pradines' example from \cite{Pra} in a Banach context. 
\end{example}

\begin{example}\label{H-nonQ}
\normalfont  
Let   $M$ be a real Banach space and select any closed linear subspace 
$M_0\subseteq M$. 
We denote $M_1:=M\setminus M_0$, which is open in~$M$. 
We define an equivalence relation on~$M$ by 
\begin{equation}\label{Eq_sim}
	x \mathcal{R} y \Longleftrightarrow \begin{cases}
	&\text{either }x\in M_1\text{ and } y=-x, \\
	&\text{or }  x\in M_0\text{ and } y=x.
	\end{cases}
\end{equation}
	We denote $S:=M/\mathcal{R}$ and let $\pi\colon M\to S$ its corresponding quotient map. 
	 
\noindent	\textbf{Claim 1:} \emph{If $\dim M\ge 2$, then 
the quadruple $(M,\pi,\Gamma(\pi),S)$}
	is an $\H$-atlas. 

To prove this claim we check the homogeneity property in Remark~\ref{H_def}. 
Let $x,y\in M$ arbitrary with $\pi(x)=\pi(y)$, that is, $x \mathcal{R} y$. 
If $x\in M_0$ then $y=x\in M_0$ and for any open subset $U\subseteq M_0$ with $x\in U$, defining $U':=U$ and $h:=\id_U\colon U\to U'$, we have $h(x)=y$ and $\pi\circ h=\pi\vert_U$. 
If however $x\in M_1$, then $y=-x\in M_1$ and there exists an open subset $U\subseteq M_1$ with $U\cap(-U)=\emptyset$. 
Then defining $U':=-U$ and $h:=-\id_U\colon U\to U'$, we have $h(x)=y$ and $\pi\circ h=\pi\vert_U$. 

The uniqueness of the germ of $h$ required in Remark~\ref{H_def} will follow as soon as we will prove the following fact: 
\emph{for any connected open subsets $U,U'\subseteq M$ and any homeomorphism $h\colon U\to U'$ satisfying $\pi\circ h=\pi\vert_U$ we have either $U=U'$ and $h=\id_U$, or $U'=-U$ and $h=-\id_U$}. 
In order to prove that fact, we denote $U_j:=U\cap M_j=\{x\in U\mid h(x)=(-1)^jx\}$ for $j=0,1$. 
Since $M=M_0\sqcup M_1$, we then obtain $U=U_0\sqcup U_1$, hence also 
\begin{equation}
\label{H-nonQ_eq2}
U\setminus\{0\}=(U_0\setminus\{0\})\sqcup U_1.
\end{equation}
Since $U\subseteq M$ is a connected open subset and $\dim M\ge 2$, it is easily checked that the open set $U\setminus\{0\}$ is connected as well. 
On the other hand, for $j=0,1$, we have $U_j=\{x\in U\mid h(x)=(-1)^jx\}$, 
hence $U_j$ is a closed subset of $U$, since $h$ is a continuous map. 
Then \eqref{H-nonQ_eq2} is a partition of the connected set $U\setminus\{0\}$ into two closed subsets. 
Therefore we have either $U\setminus\{0\}=U_0\setminus\{0\}$, 
which this easily implies $U=U_0$ and $h=\id_U$, or $U\setminus\{0\}=U_1$, 
which implies $h=-\id_U$. 
This completes the proof of Claim~1 above.

\noindent	\textbf{Claim 2:} \emph{If $M_0\subsetneqq M$, then $\pi:M\to S$ is not a \Q-atlas}. 

To prove this claim, we use the hypothesis $M_0\subsetneqq M$ 
to select a point  $u\in M\setminus M_0=M_1$ 
and we consider the smooth curves 
$$c_\pm\colon\RR\to M,\quad c_{\pm}(t)=\pm tu.$$
For every $t\in\RR$ we have $tu\in M_1$, hence $(-tu) \mathcal{R} (+tu)$. 
That is, $\pi\circ c_-=\pi\circ c_+$. 
On the other hand, we have  $\{t\in\RR\mid c_-(t) = c_+(t)\}=\{0\}$, 
which is not an open subset of $\RR$, and thus 
the condition~\eqref{Q_ex_b} in the definition of a \Q-atlas in Example~\ref{Q_ex} is not satisfied. 
\end{example}

\begin{remark}  
\normalfont 
As  suggested  in \cite{Pra}, the result of Example~\ref{H-nonQ} 
holds true under much more general assumptions. 
Thus, Claim~1 holds true  if $M$ is a Banach space with $\dim M\ge 2$ while $M_0\subseteq M$ is any closed subset that is invariant to the symmetry $x\mapsto -x$ and satisfies $0\in M_0$.  
Claim~2 needs that there exists $u\in M$ with $tu\in M\setminus M_0$ for every $t\in\RR\setminus\{0\}$. 
\end{remark}

\begin{example}
	\normalfont
For any \'etale map $f\colon N\to M$,  if 
$(M,\pi,\Gamma,S)$ 
is an \H-atlas on~$S$, then 
$(N,\pi,f^{-1}(\Gamma),S)$ 
is an \H-atlas as well. 
Here $f^{-1}(\Gamma)$ is the pseudogroup on~$N$ generated in the sense of \cite[Def. 1.1.3]{Wal} by the diffeomorphisms of the form $\sigma\circ h\circ f\vert_U$ where $U\subseteq N$ is an open subset for which $f\vert_U$ is injective, $h\in \Gamma$ with its domain containing $f(U)$, 
and $\sigma\colon h(f(U))\to N$ is a smooth cross-section of $f$, 
i.e., $f\circ\sigma=\id_{h(f(U))}$. 
(Compare the construction of an \'etale morphism of pseudogroups defined by an \'etale map in \cite[\S 1.4]{Hae88}.)
We also note that the set of diffeomorphisms so the \H-atlases 
$$\Phi:=\{f\vert_U\colon U\to f(U)\mid U\text{ open }\subseteq M\text{ with }f\vert_U\text{ injective }\}$$
is an equivalence of pseudogroups from $f^{-1}(\Gamma)$ to $\Gamma$, 
so the atlases $(M,\pi,\Gamma,S)$ and $(N,\pi,f^{-1}(\Gamma),S)$ are equivalent.

It follows that if $\{(U_\alpha,\varphi_\alpha)\}_{\alpha\in A}$ is an atlas for $M$,  and we define 
$\widehat{M}:=\bigsqcup\limits_{\alpha\in A} U_\alpha$ and $\widehat{\pi}\colon \widehat{M}\to S$ is defined by $\widehat{\pi}\vert_{ U_\alpha}=\pi\vert_{U_\alpha}$, then $(\widehat{M},\widehat{\pi},\Gamma(\widehat{\pi}), S)$ 
is 
an \H-atlas. 
Note that even if $M$ is not Hausdorff, $\widehat{M}$ is Hausdorff. 
Thus for any \H-manifold, there always exists an \H-atlas 
$(M,\pi,\Gamma,S)$ such that $M$ is Hausdorff. 
 Moreover, we may assume that each connected component of ${M}$ is simply connected.
\end{example}

\begin{example}
	\label{pre-H_prod}
	\normalfont
If the 
quadruple $(M_i,\pi_i,\Gamma_i,S_i)$ 
is a pre-\H-atlas for $i=1,2$, then the  
quadruple $(M_1\times M_2, \pi_1\times\pi_2, \Gamma_1\times\Gamma_2,S_1\times S_2)$
is clearly a pre-\H-atlas, too. 
The similar assertion for \H-atlases is less trivial and is the content of Corollary~\ref{characterization_cor1}. 
The direct product of two \Q-atlases is again a \Q-atlas, as noted in \cite[Rem. 2.4]{BPZ19}, 
and the case finite-dimensional \Q-atlases goes back to \cite[Ch. 1, \S 1, no. 6]{Ba73}. 
\end{example}

\begin{example}\label{open_subm}
	\normalfont
	Let $(M,\pi,\Gamma,S)$ 
	be a (pre-)\H-atlas and $S_0\subseteq S$ be a subset for which we denote $M_0:=\pi^{-1}(S_0)\subseteq M$. 
	If the subset $M_0\subseteq M$ is open, then it is straightforward to check that $(M_0,\pi\vert_{M_0}, \Gamma_{M_0}, S_0)$ 
	is again a (pre-)\H-atlas. 
	Here $\Gamma_{M_0}$ is the pseudogroup on $M_0$ generated by restrictions to $M_0$ of the elements of~$\Gamma$, 
		discussed in \cite[\S 1.1]{Hae80}. 
	
	Additionally, if $(M,\pi, S)$ is a \Q-atlas, then $(M_0,\pi\vert_{M_0}, S_0)$ is a \Q-atlas as well. 
	In this context, we refer to \cite[Prop. 2.9]{BPZ19} for a useful description of the quotient topology of \Q-manifolds in terms of local charts. 
\end{example}

\begin{definition}\label{H-smooth}
\normalfont 
Let $S_1$ and $S_2$ be pre-\H-manifolds. 
A map $f\colon S_1\to S_2$ is called  
a \emph{pre-\H-morphism}, or is said to be \emph{pre-\H-smooth}, 
or simply \emph{smooth}, 
 if for every $s_i\in S_i$  for $i=1,2$ with $f(s_1)=s_2$ there exist pre-\H-charts $(M_i,\pi,\Gamma_i,S_i)$ 
 with $s_i\in\pi(M_i)$ for $i=1,2$ 
such that for every $x_i\in M_i$ with $\pi_i(x_i)=s_i$ for $i=1,2$  
there exists a neighbourhood $U_i$ of $x_i\in M_i$  and a smooth map $\hat{f}\colon U_1\to U_2$ 
such that $\hat{f}(x_1)=x_2$ and $\pi_2(\hat{f}(z))=f(\pi_1(z))$ for all $z\in U_1$. 
(The map $\hat{f}$ will be sometimes called a \emph{lift} of $f$ with respect to the given pre-\H-charts and points $x_i\in\pi^{-1}(s_i)$ for $i=1,2$.)
 
The above mapping $f$ is called \emph{submersion}/\emph{immersion}/\emph{\'etale} if it has a lift $\hat{f}\colon U\to U'$ which is a submersion/immersion/\'etale map 
for all $s_1\in S_1$ and $s_2\in S_2$ with $f(s_1)=s_2$. 
Moreover, the mapping $f\colon S_1\to S_2$ is called a \emph{diffeomorphism} if it is bijective and both $f$ and $f^{-1}$ are smooth. 

If $S_1$ and $S_2$ are pre-\H-manifolds, then the set of all smooth maps from $S_1$ to $S_2$ is denoted by $\Hom_{\H}(S_1,S_2)$. 
\end{definition}

\begin{remark}\label{H-categ}
\normalfont 
The notions introduced in Definition~\ref{H-smooth} are correctly defined, that is, 
they do not depend of the choice of the pre-\H-charts 
$(M_i,\pi,\Gamma_i,S_i)$  for $i=1,2$. 
In fact, let 
$(M'_i,\pi',\Gamma'_i,S_i)$ 
be a local chart that is compatible with 
$(M_i,\pi,\Gamma_i,S_i)$ 
and satisfies $s_i\in \pi'_i(M'_i)$ for $i=1,2$. 
Also let $x'_i\in M'_i$ with $\pi'_i(x'_i)=s_i$ for $i=1,2$. 
Since 
the pre-\H-charts $(M_i,\pi,\Gamma_i,S_i)$ and $(M'_i,\pi',\Gamma'_i,S_i)$ 
are compatible, 
it follows that there exists an equivalence of pseudogroups 
$\Phi_i$ from $\Gamma_i$ to $\Gamma'_i$ as in Definition~\ref{compatib}.  
Since the union of the domains of the elements of $\Phi_i$ is equal to $M_i$, 
there exists $\varphi_i\colon U_{x_i}\to \varphi_i(U_{x_i})$ in $\Phi_i$ with $x_i\in U_{x_i}$. 
We have $\pi'_i\circ\varphi_i=\pi_i\vert_{U_i}$, hence $\pi'_i(\varphi(x_i))=\pi_i(x_i)$. 
Since
$\pi(x_i)=s_i=\pi'_i(x'_i)$, 
we obtain 
$\pi'_i(\varphi_i(x_i))=\pi'_i(x'_i)$, 
hence there exists $h'_i\in\Gamma'_i$ with $\varphi_i(x_i)$ in the domain of $h'_i$ and $h'_i(\varphi_i(x_i))=x'_i$. 
By shrinking $U_{x_i}$, we may assume that $\varphi_i(U_{x_i})$ is contained in the domain of $h'_i$, and let us denote $U_{x'_i}:=h'_i(\varphi_i(U_{x_i}))$, 
which is an open neigbourhood of $x'_i$. 
Here we have $h'_i\circ\varphi_i=h'_i\circ\varphi_i\circ\id_{U_{x_i}}\in\Phi_i$ 
since $\id_{U_{x_i}}\in\Gamma_i$ and $\Phi_i$ is an \'etale morphism of pseudogroups from $\Gamma_i$ to $\Gamma'_i$. 
Let us denote $h_i:= h'_i\circ\varphi_i\in\Phi_i$. 
We thus obtain 
open subsets 
$U_{x_i}\subseteq M_i$ and $U_{x'_i}\subseteq M'_i$ with $x_i\in U_{x_i}$ and $x'_i\in U_{x'_i}$ and a diffeomorphism $h_i\colon U_{x_i}\to U_{x'_i}$ with $h_i(x_i)=x'_i$ and $\pi'_i\circ h_i=\pi_i\vert_{U_{x_i}}$. 
Since $U_i$ is an open neighbourhood of $x_i\in M_i$, 
we may also assume 
$U_{x_i}\subseteq U_i$ 
(after replacing $U_{x_i}$ by $U_{x_i}\cap U_i$ and $U_{x'_i}$ by $h_i(U_{x_i}\cap U_i)$). 
Since $\hat{f}\colon U_1\to U_2$ is continuous and $\hat{f}(x_1)=x_2$, 
one can further assume $\hat{f}(U_{x_1})\subseteq U_{x_2}$ 
(after replacing $U_{x_1}$ by $U_{x_1}\cap\hat{f}^{-1}(U_{x_2})$). 
Then it makes sense to define 
$\hat{f}':=h_2\circ \hat{f}\circ h_1^{-1}$ and 
one has the commutative diagram 
$$\xymatrix{
	U_{x'_1} \ar[r]^{\hat{f}'} & U_{x'_2} \\
	U_{x_1} \ar[d]_{\pi_1} \ar[r]^{\hat{f}} \ar[u]^{h_1} & U_{x_2} \ar[d]^{\pi_2}  \ar[u]_{h_2}\\
		S_1\ar[r]^{f} & S_2
	}$$
where $\hat{f}'(x'_1)=x'_2$ and for all $z\in U_{x'_1}$, 
$$\pi_2(\hat{f}'(z))
=\pi_2(h_2(\hat{f}(h_1^{-1}(z))))
=\pi_2(\hat{f}(h_1^{-1}(z)))
=f(\pi_1(h_1^{-1}(z)))
=f(\pi_1(z))$$ 
where the second equality follows since $\pi_2\circ h_2=\pi_2\vert_{U_{x_2}}$, the third equality is based on $\pi_2\circ\hat{f}=f\circ \pi_1\vert_{U_1}$, and the fourth equality is based on $\pi_1\circ h_1=\pi_1\vert_{U_{x_1}}$.  

It is also clear that the identity mapping of every pre-\H-manifold is pre-\H-smooth and the composition of two pre-\H-smooth mappings is again pre-\H-smooth, 
hence one obtains the category of pre-\H-manifolds and its full subcategory consisting of the \H-manifolds. 
Since the morphisms in these two categories are the same, it is not necessary to introduce a notion of \H-smooth mapping. 
\end{remark}

\begin{proposition}\label{H-submersion} 
	Let $S'$, $S$, and $T$ be pre-\H-manifolds. 
	If $f\colon S'\to S$ is a surjective submersion,  
	and $\phi\colon S\to T$  is a mapping such that 
	 $\phi\circ f\colon S'\to T$  is smooth,  then $\phi$ is smooth.
\end{proposition}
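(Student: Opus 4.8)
The plan is to verify the defining criterion of smoothness from Definition~\ref{H-smooth} for $\phi$ directly. Write $\psi:=\phi\circ f\colon S'\to T$, which is smooth by hypothesis. Fix $s\in S$ and set $t:=\phi(s)$; I must exhibit pre-\H-charts $(M,\pi,S)$ with $s\in\pi(M)$ and $(P,\sigma,T)$ with $t\in\sigma(P)$ such that for \emph{every} $x\in\pi^{-1}(s)$ there are a neighbourhood $U_0$ of $x$ and a smooth lift $\hat\phi\colon U_0\to P$ with $\hat\phi(x)\in\sigma^{-1}(t)$ and $\sigma\circ\hat\phi=\phi\circ(\pi\vert_{U_0})$. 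The idea is to construct $\hat\phi$ by first taking a local smooth section of the submersion $f$ and then composing it with a lift of $\psi$.

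First I would use the surjectivity of $f$ to choose $s'\in S'$ with $f(s')=s$; then $\psi(s')=\phi(f(s'))=t$. Since $f$ is a submersion, Definition~\ref{H-smooth} applied to the pair $(s',s)$ furnishes pre-\H-charts $(M',\pi',S')$ with $s'\in\pi'(M')$ and $(M,\pi,S)$ with $s\in\pi(M)$ such that for every $x'\in{\pi'}^{-1}(s')$ and every $x\in\pi^{-1}(s)$ there is a lift $\hat f\colon U'\to U$ of $f$ that is a submersion of Banach manifolds, with $\hat f(x')=x$ and $\pi\circ\hat f=f\circ(\pi'\vert_{U'})$. Because the smoothness of $\psi$ is independent of the chosen pre-\H-charts (Remark~\ref{H-categ}), I may witness it at the pair $(s',t)$ using this \emph{same} source chart $(M',\pi',S')$ together with some target chart $(P,\sigma,T)$, $t\in\sigma(P)$; thus for every $x'\in{\pi'}^{-1}(s')$ there is a smooth lift $\hat\psi\colon V'\to P$ defined near $x'$ with $\sigma\circ\hat\psi=\psi\circ(\pi'\vert_{V'})$. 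The charts $(M,\pi,S)$ and $(P,\sigma,T)$ so obtained are the ones I use for $\phi$; note they were fixed before choosing $x$.

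Now fix $x'\in{\pi'}^{-1}(s')$ and an arbitrary $x\in\pi^{-1}(s)$, and let $\hat f\colon U'\to U$ and $\hat\psi\colon V'\to P$ be as above with $\hat f(x')=x$; observe $\sigma(\hat\psi(x'))=\psi(\pi'(x'))=\psi(s')=t$. The key analytic step is that the Banach-manifold submersion $\hat f$ admits a local smooth section through $x'$: since $T_{x'}\hat f$ is surjective with complemented kernel, the standard local form of a submersion (implicit function theorem) yields a neighbourhood $U_0\subseteq U$ of $x=\hat f(x')$ and a smooth map $\tau\colon U_0\to U'$ with $\tau(x)=x'$ and $\hat f\circ\tau=\id_{U_0}$. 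After shrinking $U_0$ so that $\tau(U_0)\subseteq U'\cap V'$, I set $\hat\phi:=\hat\psi\circ\tau\colon U_0\to P$, which is smooth and satisfies $\hat\phi(x)=\hat\psi(x')\in\sigma^{-1}(t)$. Finally, for $w\in U_0$ the relations $\hat f\circ\tau=\id_{U_0}$ and $\pi\circ\hat f=f\circ(\pi'\vert_{U'})$ give $\pi(w)=\pi(\hat f(\tau(w)))=f(\pi'(\tau(w)))$, so that
\[
\sigma(\hat\phi(w))=\sigma(\hat\psi(\tau(w)))=\psi(\pi'(\tau(w)))=\phi(f(\pi'(\tau(w))))=\phi(\pi(w)).
\]
As $x\in\pi^{-1}(s)$ was arbitrary and the charts $(M,\pi,S)$, $(P,\sigma,T)$ were fixed beforehand, this establishes the criterion of Definition~\ref{H-smooth} and hence the smoothness of $\phi$.

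I expect the genuine content to lie in two places. The first is the chart bookkeeping of the second paragraph: securing a \emph{single} source chart for $S'$ that simultaneously witnesses the submersion property of $f$ and the smoothness of $\psi$, which is exactly where the chart-independence of Remark~\ref{H-categ} is needed. The second, and the only truly analytic ingredient, is the existence of the smooth local section $\tau$ of the submersion $\hat f$; everything after that is a diagram chase through the three intertwining identities $\pi\circ\hat f=f\circ\pi'$, $\sigma\circ\hat\psi=\psi\circ\pi'$ and $\psi=\phi\circ f$, together with routine shrinking of neighbourhoods to keep all compositions defined.
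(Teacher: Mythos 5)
Your argument is essentially the paper's own proof: lift $f$ near a chosen preimage $x'$ of $s'$ to a submersion $\hat f$ of Banach manifolds, use the local structure of submersions to produce a smooth local section through $x'$, and compose it with a lift of $\phi\circ f$; the concluding diagram chase through $\pi\circ\hat f=f\circ\pi'$ and $\sigma\circ\hat\psi=(\phi\circ f)\circ\pi'$ is identical (your chart bookkeeping via Remark~\ref{H-categ} is handled in the paper by simply fixing pre-\H-\emph{atlases} at the outset, so all lifts live over the same charts automatically). The one point to tighten is your statement of the criterion being verified: Definition~\ref{H-smooth} requires the lift to send the prescribed $x\in\pi^{-1}(s)$ to a \emph{prescribed} point $z\in\sigma^{-1}(t)$, not merely into that fiber, and your $\hat\phi(x)\in\sigma^{-1}(t)$ is formally weaker. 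Your construction delivers the full condition with no extra work: when invoking the smoothness of $\psi=\phi\circ f$ at the pair $(x',z)$, choose the lift $\hat\psi$ with $\hat\psi(x')=z$, whence $\hat\phi(x)=\hat\psi(\tau(x))=z$ --- this is exactly how the paper proceeds, fixing an arbitrary pair $(x,z)$ with $(\phi\circ\pi)(x)=\tau(z)$ from the start.
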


\begin{proof} 
	We select any pre-\H-atlases 
$(M,\pi,\Gamma, S)$, $(M', \pi',\Gamma', S')$, and $(N,\tau, \Xi,T)$ 
	and fix an arbitrary pair $(x,z)\in M\times N$ with $(\phi\circ \pi)(x)=\tau(z)$. 
	
	Since $f$ is surjective, there exist  $s'\in S'$ such that $f(s')=\pi(x)$.  
	We then select  $x'\in M'$ such that $\pi'(x')=s'$. 
	Since $f\colon S'\to S$ is a submersion, there exists an open subset  $U_{x'}\subseteq M'$ 
	and a submersion $\hat{f}\colon U_{x'}\to 
	M$ with 
	$\pi\circ \hat{f}=f\circ \pi'\vert_{U_{x'}}$, 
	$x'\in U_{x'}$, 
	 and 
	 $\hat{f}(x')=x$.  
Moreover, since $\hat{f}$ is a submersion between Banach manifolds, there exist an open subset $U_x\subseteq M$ and a smooth mapping $s:U_x\to U_{x'}$ with $ \hat{f}\circ s=\id_{U_x}$ and $s(x)=x'$. 	
	
On the other hand $\phi(f(s'))=\phi(\pi(x))=\tau(z)$, $\pi'(x')=s'$, and 
the mapping	$\phi\circ f\colon S'\to T$ is smooth, 
	hence there exist an open subset $ U'_{x'}\subseteq M'$ 
and a smooth mapping $\hat{g}: U'_{x'}\to N$ 
		with $\tau\circ \hat{g}=(\phi\circ f)\circ \pi'\vert_{U'_{x'}}$, 
		$x'\in U'_{x'}$  and  $\hat{g}(x')=z$. 
		
Finally, since $\hat{f}\colon U_{x'}\to M$ is a submersion, 
it is in particular an open mapping, 
hence we obtain the open subset $W_x:=\hat{f}(U_{x'}\cap U'_{x'})\subseteq M$. 
Then the composition $\hat{g}\circ s\vert_{W_x}: W_x\to N$ is a smooth mapping which is a lift of $\phi$ 
	since 
$$\tau\circ (\hat{g}\circ s)=(\phi\circ f)\circ \pi'\circ s
=\phi\circ \pi\circ \hat{f}\circ s=\phi\circ \pi\text{ on }W_x$$
and moreover $(\hat{g}\circ s)(x)=\hat{g}(s(x))=\hat{g}(x')=z$. 
This completes the proof. 
	\end{proof}

\subsection{Alternative definition of \H-manifolds}
In the present context, just as in the case of \Q-manifolds in \cite[Prop.~1.1.2]{Pla}, we have the following characterization of an \H-atlas:

\begin{proposition}\label{characterization}  
	Let $\pi\colon M\to S$ be a surjective map from a Banach manifold onto a set $S$ 
and $\Gamma$ be a diffeomorphism pseudogroup on $M$ with $\Gamma\subseteq\Gamma(\pi)$. 
	Then $(M,\pi, \Gamma, S)$ is an \H-atlas if and only if the following conditions are satisfied: 
\begin{enumerate}[{\rm(i)}]
\item
\label{characterization_item1}
The graph $\Rc_\pi$ of the equivalence relation associated to $\pi$ has the structure of a split Banach submanifold
of $ M\times M$. 
  \item
  \label{characterization_item2}
  If $p_1,p_2\colon M\times M\to M$ are the Cartesian projections, then 
   $p_i\vert_{\Rc_\pi}\colon\Rc_\pi\to M$ is an  \'etale map 
   for $i=1,2$. 
   \item
   \label{characterization_item3} 
   For every  $h\in\Gamma$, its graph $\graf_h$ is an open submanifold of $\Rc_\pi$. 
\end{enumerate}
If these conditions are satisfied, then $(M,\pi, S)$ is a \Q-atlas if and only if $\Rc_\pi$ is an initial submanifold of $M\times M$. 
\end{proposition}

The proof of this proposition needs the following lemma which will also be used in the proof of Lemma~\ref{ParticularOpen}. 
Here we use the notion of pseudogroup of transformations in the sense of \cite[Ch. I, \S 1]{KN63}.

\begin{lemma}\label{graphIntersection} 
Let $\Gamma$ be a pseudogroup of 
transformations of a topological space $M$ and assume that the action of $\Gamma$ on $M$ is free\footnote{This means that for any $x\in M$ and  $h\in \Gamma(\pi)$, if $h(x)=x$  then the germ of $h$ at $x$ is the germ of $\id_M$  at $x$.}.

For $i=1,2$  let 
$h_i\colon U_i\to U'_i$ 
with $h_i\in\Gamma$,
and assume 
$$\graf_{h_1}\cap \graf_{h_2}\not=\emptyset.$$
If we denote  
$U:=\{x\in U_1\cap U_2\mid h_1(x)=h_2(x)\} $  
and $U':=h_1(U)=h_2(U)$, then the following assertions hold: 
\begin{enumerate}[{\rm(i)}]
	\item\label{graphIntersection_item1} 
	The set $U$ is open in $U_1\cap U_2$, and is also closed if the relative topology of $U_1\cap U_2$ is Hausdorff. 
	\item\label{graphIntersection_item2}  The set $U'$ is open in $U_1'\cap U_2'$, and is also closed if the relative topology of $U_1'\cap U_2'$ is Hausdorff. 
	\item\label{graphIntersection_item3}  The mapping $h:= h_1\vert_U=h_2\vert_U\colon U\to U'$ belongs to $\Gamma$. 
	\item\label{graphIntersection_item4}  One has $\graf_{h_1}\cap \graf_{h_2}=\graf_h$. 
\end{enumerate} 
\end{lemma}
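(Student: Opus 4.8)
The plan is to derive everything from two ingredients: the axioms of a pseudogroup (which allow me to compose, invert and restrict elements of $\Gamma$) and, crucially, the freeness of the action of $\Gamma$. First I record two reductions. Assertion~(iv) is purely formal: unwinding the graphs, a pair $(x,y)$ lies in $\graf_{h_1}\cap\graf_{h_2}$ exactly when $x\in U_1\cap U_2$ and $y=h_1(x)=h_2(x)$, i.e.\ when $x\in U$ and $y=h(x)$; hence $\graf_{h_1}\cap\graf_{h_2}=\graf_h$, and the standing hypothesis $\graf_{h_1}\cap\graf_{h_2}\neq\emptyset$ guarantees $U,U'\neq\emptyset$. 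Next, assertion~(ii) is just assertion~(i) applied to the inverses $h_1^{-1},h_2^{-1}\in\Gamma$: one checks that $U'=\{y\in U_1'\cap U_2'\mid h_1^{-1}(y)=h_2^{-1}(y)\}$ and that $\graf_{h_1^{-1}}\cap\graf_{h_2^{-1}}\neq\emptyset$, so the open/closed statements for $U'$ follow from those for $U$ by symmetry. This reduces the whole lemma to assertion~(i) together with assertion~(iii).

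The key step, which yields both the openness in~(i) and assertion~(iii), is to turn the pointwise condition $h_1(x_0)=h_2(x_0)$ into agreement on a whole neighbourhood by means of freeness. Fix $x_0\in U$ and set $g:=h_2^{-1}\circ h_1$, which by the composition axiom belongs to $\Gamma$ and is defined on the open set $U_1\cap h_1^{-1}(U_2')\ni x_0$. Since $h_1(x_0)=h_2(x_0)$, we get $g(x_0)=h_2^{-1}(h_1(x_0))=x_0$, so $g$ fixes $x_0$; by freeness the germ of $g$ at $x_0$ is that of $\id_M$, i.e.\ there is an open neighbourhood $V$ of $x_0$ with $g\vert_V=\id_V$. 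For $x\in V$ this means $h_2^{-1}(h_1(x))=x$, whence on the one hand $x=g(x)\in\Ran(h_2^{-1})=U_2$ and on the other hand $h_1(x)=h_2(x)$; therefore $V\subseteq U_1\cap U_2$ and $V\subseteq U$, so $U$ is open in $U_1\cap U_2$ (indeed open in $M$). Assertion~(iii) is then immediate: $U$ is an open subset of the domain $U_1$ of $h_1\in\Gamma$, so the restriction axiom gives $h=h_1\vert_U\in\Gamma$, and $U'=h(U)$ is its open range, which is also the openness statement in~(ii).

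It remains to prove that $U$ is closed in $\Omega:=U_1\cap U_2$ when $\Omega$ is Hausdorff, and this is the step I expect to be the main obstacle, precisely because the ambient space $M$ need not be Hausdorff. The coincidence set of $h_1,h_2\colon\Omega\to M$ is $F^{-1}(\Delta_M)$ for $F=(h_1,h_2)$, but the diagonal $\Delta_M$ is closed only when $M$ itself is Hausdorff, so the naive equalizer argument is unavailable. My plan is to replace the target $M$ by the Hausdorff space $\Omega$: on the open set $W:=\{x\in\Omega\mid h_1(x)\in U_2',\ g(x)=h_2^{-1}(h_1(x))\in U_1\}$ — which contains $U$, since $g=\id$ there — the map $g$ takes values in $\Omega$, and $U=\{x\in W\mid g(x)=x\}$ is the equalizer of the two continuous maps $g,\iota\colon W\to\Omega$ into a Hausdorff space, hence closed in $W$.

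The delicate remaining point, and the heart of the difficulty, is to promote closedness in the open subset $W$ to closedness in all of $\Omega$, that is, to rule out a net $(x_\alpha)$ in $U$ whose limit $x_0$ in $\Omega$ escapes $W$. Here I would exploit that along such a net the common value $h_1(x_\alpha)=h_2(x_\alpha)$ lies in $U_1'\cap U_2'$, that $h_2(x_0)\in U_2'$ since $x_0\in U_2$, and that $h_1,h_2$ restrict to homeomorphisms carrying the Hausdorff set $\Omega$ onto Hausdorff subspaces $h_1(\Omega),h_2(\Omega)$, in order to identify the two candidate limits $h_1(x_0)$ and $h_2(x_0)$ and thereby force $x_0\in U$; controlling these limits in the non-Hausdorff $M$ is exactly where the Hausdorffness hypothesis must be spent. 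Once closedness of $U$ is secured, the closedness of $U'$ in~(ii) follows by the same argument applied to $h_1^{-1},h_2^{-1}$, as explained in the first paragraph.
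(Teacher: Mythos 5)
Your proofs of (iii), (iv) and of the openness halves of (i)--(ii) are correct and essentially identical to the paper's: the paper likewise deduces from freeness that the germs of $h_1$ and $h_2$ coincide at each point of $U$ (your composite $g:=h_2^{-1}\circ h_1\in\Gamma$, which fixes $x_0$ and hence has identity germ, is exactly the mechanism the paper leaves implicit), obtains $h=h_1\vert_U\in\Gamma$ from the restriction axiom, reduces (ii) to (i) by passing to $h_1^{-1},h_2^{-1}$, and treats (iv) as a formal property of graphs.

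The closedness halves of (i)--(ii), however, you only sketch, and the step you flag as delicate (``Here I would exploit that \dots'') cannot be completed: it is false in the stated generality. Let $M$ be the line with two origins, i.e.\ the quotient of $\RR\times\{a,b\}$ identifying $(x,a)\sim(x,b)$ for $x\ne 0$, with origins $0_a,0_b$; let $\sigma\colon M\to M$ be the homeomorphism swapping $0_a$ and $0_b$ and fixing all other points, and let $\Gamma$ be the pseudogroup of all local homeomorphisms that locally agree with $\id_M$ or with $\sigma$. This action is free, since any element of $\Gamma$ fixing a point agrees with $\id_M$ on a neighbourhood of that point. Taking $U_1=U_2=(\RR\setminus\{0\})\cup\{0_a\}$, $h_1=\id_{U_1}$ and $h_2=\sigma\vert_{U_1}$, the set $U_1\cap U_2\cong\RR$ is Hausdorff, yet $U=\RR\setminus\{0\}$ is not closed in it: $0_a$ is a limit point of $U$, because $h_1(0_a)=0_a$ and $h_2(0_a)=0_b$ cannot be separated in $M$. (In this example your set $W$ equals $U$ itself, so your equalizer argument does give closedness of $U$ in $W$, but the promotion from $W$ to $U_1\cap U_2$ that you hoped to achieve with a net argument is genuinely impossible.) Your diagnosis is thus exactly right---the coincidence-set argument needs the \emph{target}, not the domain, to be Hausdorff---and the same objection applies to the paper's own one-line justification of this step (``since both $h_1$ and $h_2$ are continuous \dots''), which silently invokes the equalizer argument with the non-Hausdorff $M$ as target. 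The closedness assertions do hold under the stronger hypothesis that $h_1,h_2$ take values in a common Hausdorff subspace (in particular when $M$ is Hausdorff). Fortunately, only (iii), (iv) and the openness statements are used later (Propositions~\ref{characterization} and~\ref{actionG}), so the defect in both your sketch and the paper's proof is confined to the closedness clauses.
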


\begin{proof} 
	\eqref{graphIntersection_item1} 
	To prove that $U$ is open, let $x\in U$ arbitrary, 
	hence $h_1(x)=h_2(x)=:y$. 
	Since 
	$h_1,h_2\in\Gamma$ and the action of $\Gamma$ on $M$ is free, 
	it follows that the germs of $h_1$ and $h_2$ at $x$ coincide, that is, there exists an open set $W\subseteq M$ satisfying $x\in W\subseteq U_i$ for $i=1,2$ and $h_1\vert_W=h_2\vert_W$. 
	In particular $W\subseteq U$. 
	Since the point $x\in U$ is arbitrary, we see that $U$ is an open subset of $U_1\cap U_2$. 
	
	Since both $h_1$ and $h_2$ are continuous, we see by the definition of $U$ that $U$ is closed in $U_1\cap U_2$ if additionally the relative topology of $U_1\cap U_2$ is Hausdorff. 
	
	\eqref{graphIntersection_item2} This can be proved as above, replacing $h_i$ by $h_i^{-1}$. 
	
	\eqref{graphIntersection_item3} This follows since the restriction of every element of the pseudogroup $\Gamma$ to any open subset of its domain still belongs to $\Gamma$. 
	
	\eqref{graphIntersection_item4} This is a straightforward property of graphs of any mappings. 
\end{proof}

\begin{lemma}\label{pseudogroup} 
	If $(M,\pi ,\Gamma,S)$
	is a pre-\H-atlas, 
	we introduce the binary relation
	$$x\simeq y \iff (\exists h\in \Gamma)\quad  h(x)=y.$$
	Then the following assertions hold: 
	\begin{enumerate}[{\rm(i)}]
		\item\label{pseudogroup_item1}
		The  binary relation $\simeq$ coincides with 
		the equivalence relation $\Rc_\pi$ 
		associated to $\pi$, that is, 
		for arbitrary $x,y\in M$ one has 
		$x\simeq y$ if and only if $\pi(x)=\pi(y)$.  
		\item\label{pseudogroup_item2} 
		If $(M,\pi ,\Gamma,S)$ is an \H-atlas, then the action of $\Gamma$ 
		on $M$ is free.  
	\end{enumerate}
\end{lemma}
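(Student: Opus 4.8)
The plan is to unwind the definitions and to exploit the surjectivity (resp.\ bijectivity) of the map \eqref{pre_H_def_eq} recorded in Definition~\ref{pre_H_def}.

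For part~\eqref{pseudogroup_item1} I would prove the two implications separately. The implication $x\simeq y\Rightarrow\pi(x)=\pi(y)$ is immediate from the definition of the pseudogroup $\Gamma(\pi)$: if $h\colon U\to U'$ belongs to $\Gamma(\pi)$ and $h(x)=y$, then $x\in U$ and $\pi(y)=\pi(h(x))=(\pi\circ h)(x)=\pi(x)$, using $\pi\circ h=\pi\vert_U$. For the converse I would use that $(M,\pi,S)$ is a pre-\H-atlas, so that the map \eqref{pre_H_def_eq} is surjective. Given $x,y\in M$ with $\pi(x)=\pi(y)$ we have $(y,x)\in\Rc_\pi$, and surjectivity furnishes a germ $\gamma\in\widetilde{\Gamma(\pi)}$ with $\bt(\gamma)=y$ and $\bs(\gamma)=x$; any representative $h\in\Gamma(\pi)$ of $\gamma$ then satisfies $h(x)=y$, whence $x\simeq y$. (Equivalently, this is precisely the homogeneity property restated in Remark~\ref{H_def}.)

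For part~\eqref{pseudogroup_item2} I would use that for an \H-atlas the map \eqref{pre_H_def_eq} is bijective, in particular injective. Suppose $x\in M$ and $h\in\Gamma(\pi)$ satisfy $h(x)=x$, and let $\gamma$ denote the germ of $h$ at $x$, so that $\bs(\gamma)=x$ and $\bt(\gamma)=h(x)=x$. Since $\id_M\in\Gamma(\pi)$ (because $\pi\circ\id_M=\pi$), its germ $\gamma_0$ at $x$ also lies in $\widetilde{\Gamma(\pi)}$ and satisfies $\bs(\gamma_0)=\bt(\gamma_0)=x$. Thus both $\gamma$ and $\gamma_0$ are sent by \eqref{pre_H_def_eq} to $(x,x)\in\Rc_\pi$, and injectivity forces $\gamma=\gamma_0$; that is, the germ of $h$ at $x$ coincides with the germ of $\id_M$ at $x$, which is exactly the freeness asserted in the footnote.

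The argument is essentially a matter of matching the definitions, so I do not expect a serious obstacle. The only points requiring care are keeping the source/target convention for germs consistent, so that the pair $(\bt(\gamma),\bs(\gamma))$ is read off correctly, and verifying that the germ of $\id_M$ is a legitimate element of $\widetilde{\Gamma(\pi)}$ competing with that of $h$ under the injective map \eqref{pre_H_def_eq}.
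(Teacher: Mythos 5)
Your proposal is correct and fills in precisely the definition-unwinding that the paper leaves implicit, since its own proof consists only of the remarks that part~(i) is straightforward and part~(ii) follows at once from the definition of an \H-atlas. In particular, your use of the surjectivity of the map \eqref{pre_H_def_eq} for the converse in~(i), and of its injectivity (comparing the germ of $h$ with the germ of $\id_M$ at $x$, both mapping to $(x,x)\in\Rc_\pi$) for the freeness in~(ii), is exactly the intended argument.
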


\begin{proof}
	\eqref{pseudogroup_item1} This is straightforward. 
	
	\eqref{pseudogroup_item2} 
	This follows at once from the definition of an \H-atlas. 
\end{proof}

\begin{lemma}\label{Qdot}
	Let $(M,\pi,S)$ be a \Q-atlas, $J\subseteq\RR$ be an open interval with $0\in J$, and $c_1,c_2\in\Ci(J,M)$. 
	If $\pi\circ c_1=\pi\circ c_2$, then there exists a transversal diffeomorphism $h\colon U_1\to U_2$ with $c_j(0)\in U_j$ for $j=1,2$, 
	and there exists an open interval $J_0\subseteq \RR$ with $0\in J_0\subseteq J$ and 
	$h(c_1(t))=c_2(t)$ for every $t\in J_0$, and in particular  $(T_{c_1(0)}h)(\dot{c}_1(0))=\dot{c}_2(0)$. 
\end{lemma}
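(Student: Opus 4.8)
The plan is to first produce the transverse diffeomorphism $h$ from the homogeneity condition of the $\Q$-chart, then transport the curve $c_1$ through $h$ and identify the resulting curve with $c_2$ near $0$ by invoking the openness condition~\eqref{Q_ex_b} in the definition of a $\Q$-chart. That openness condition is precisely the feature distinguishing a $\Q$-atlas from a general $\H$-atlas, and it is what makes the agreement of the two curves spread out to an open interval rather than holding only at $0$.

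First I would observe that, since $\pi\circ c_1=\pi\circ c_2$ on $J$, evaluating at $0$ gives $\pi(c_1(0))=\pi(c_2(0))$. Applying condition~\eqref{Q_ex_a} of Example~\ref{Q_ex} with $x:=c_1(0)$ and $y:=c_2(0)$ yields open subsets $U_1,U_2\subseteq M$ with $c_j(0)\in U_j$ for $j=1,2$ and a transverse diffeomorphism $h\colon U_1\to U_2$, that is $h\in\Gamma(\pi)$, satisfying $h(c_1(0))=c_2(0)$ and $\pi\circ h=\pi\vert_{U_1}$. Next, since $c_1$ is continuous and $c_1(0)\in U_1$, there is an open interval $J_1$ with $0\in J_1\subseteq J$ and $c_1(J_1)\subseteq U_1$, so that $h\circ c_1\colon J_1\to M$ is a well-defined smooth map. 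Using $\pi\circ h=\pi\vert_{U_1}$ together with $c_1(J_1)\subseteq U_1$ and $\pi\circ c_1=\pi\circ c_2$, I would compute
$$\pi\circ(h\circ c_1)=(\pi\circ h)\circ c_1=\pi\circ c_1=\pi\circ c_2 \quad\text{on } J_1.$$
Thus the two smooth maps $h\circ c_1$ and $c_2$ from the Banach manifold $J_1$ to $M$ have the same image under $\pi$.

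I would then apply condition~\eqref{Q_ex_b} of Example~\ref{Q_ex} with $T:=J_1$, which gives that the set $\{t\in J_1\mid h(c_1(t))=c_2(t)\}$ is open in $J_1$. This set contains $0$ because $h(c_1(0))=c_2(0)$, so its connected component containing $0$ is an open interval $J_0$ with $0\in J_0\subseteq J_1\subseteq J$ on which $h(c_1(t))=c_2(t)$. Finally, differentiating the identity $h(c_1(t))=c_2(t)$ at $t=0$ and using the chain rule yields $(T_{c_1(0)}h)(\dot c_1(0))=\dot c_2(0)$, which is the last assertion.

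I do not expect any genuine obstacle here; the argument is essentially routine once the right objects are assembled. The only points requiring care are that one must shrink $J$ to $J_1$ so that $h\circ c_1$ is defined at all, and that condition~\eqref{Q_ex_b} must be applied to the \emph{transported} curve $h\circ c_1$ rather than to $c_1$ itself, since it is the pair $(h\circ c_1,c_2)$—and not $(c_1,c_2)$—that satisfies the hypothesis $\pi\circ f=\pi\circ g$ with the two maps agreeing at $0$.
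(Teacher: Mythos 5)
Your proof is correct and follows essentially the same route as the paper's: obtain $h$ from the homogeneity condition~\eqref{Q_ex_a}, shrink the interval so that $h\circ c_1$ is defined, apply the openness condition~\eqref{Q_ex_b} to the pair $(h\circ c_1,c_2)$ to get an open interval of coincidence around $0$, and differentiate. The only cosmetic difference is that the paper also shrinks $J$ so that $c_2(J)\subseteq U_2$, which your argument rightly shows is not needed.
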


\begin{proof}
	Since $\pi(c_1(0))=\pi(c_2(0))$, there is a transversal diffeomorphism $h\colon U_1\to U_2$ with $c_j(0)\in U_j$ for $j=1,2$, and $h(c_1(0))=c_2(0)$. 
	Replacing $J$ by the connected component of $0$ in the open subset $c_1^{-1}(U_1)\cap c_2^{-1}(U_2)\subseteq J$, we may assume $c_j(J)\subseteq U_j$ for $j=1,2$. 
	
	Then the smooth curves $h\circ c_1,c_2\colon J\to M$ satisfy $(h\circ c_1)(0)=c_2(0)$ hence, since $(M,\pi,S)$ is a \Q-atlas, the set $\{t\in J\mid (h\circ c_1)(t)=c_2(t)\}$ is an open neighbourhood of $0\in J$. 
	Denoting by $J_0$ the connected component of $0$ in that open neighbourhood, 
	we obtain the assertion. 
\end{proof}

\begin{proof}[Proof of Proposition \ref{characterization}] 
	First assume that $(M,\pi ,\Gamma,S)$ 
	is an \H-atlas. 
	For any 
	$h_1,h_2\in\Gamma$ 
	with $\graf_{h_1}\cap\graf_{h_2}\ne\emptyset$ there exists $h\in\Gamma$
	with $\graf_{h_1}\cap\graf_{h_2}=\graf_h$ 
	by Lemma~\ref{graphIntersection} 
	and the definition of a diffeomorphism pseudogroup, 
	cf. \cite[Def. 1.1.1(iii)]{Wal}. 
	Therefore the family of all graphs of elements of $\Gamma$
	gives rise to an atlas of Banach manifold on $\Rc$ 
	satisfying the condition~\eqref{characterization_item3} in the statement. 
	For arbitrary $(x,y)\in \Rc$, if $h\colon U\to U'$ 
	satisfies $h\in\Gamma$ and 
	$h(x)=y$, then 
	the smooth map $\sigma\colon U\to \Rc$, $\sigma(z):= (z,h(z))$ is a local cross-section of $p_1\vert_{\Rc}$ with $\sigma(x)=(x,y)$. 
	This implies that the inclusion mapping $\Rc\hookrightarrow M\times M$ is an immersion 
	and $\Ker T_{(z,h(z))}p_1$ is a topological complement of $T_{(z,h(z))}\mathcal{R}$ in $T_{(z,h(z))}(M\times M)$. But  the tangent space at $(z,h(z))\in \graf_{h_{U}}$ is the closed subspace 
	$\{(u, (T_z h)u)\in T_zM\times T_{h(z)}M\mid u\in T_zM\}$.
	 It follows that $p_2$ is also an \'etale map. 
	
	Conversely, assume that conditions \eqref{characterization_item1}--\eqref{characterization_item3}  
	are satisfied. 
	Since $p_1\vert_{\Rc}\colon \Rc\to M$ is an \'etale map, 
	for all $(x,y)\in \Rc$ there exists a smooth mapping $\sigma\colon U\to W$ with $\sigma(x)=(x,y)$, $p_1\circ \sigma=\id_U$, and $\sigma\circ p_1\vert_W=\id_W$ for some open subsets $U\subseteq M$ and $W\subseteq \Rc$ with $x\in U$ and $(x,y)\in W$. 
	The equality $p_1\circ \sigma=\id_U$ shows that there exists a mapping $h\colon U\to M$ with $\sigma(z)=(z,h(z))$ for all $z\in U$. 
	Then the equality $\id_W=\sigma\circ p_1\vert_W$ implies $(z,v)=\sigma(z)=(z,h(z))$ for every point $(z,v)\in W$, hence $W=\graf_h$. 
	Moreover, since $\sigma(x)=(x,y)$, one has $h(x)=y$. 
	We also note that, since $p_2\vert_{\Rc}\colon \Rc\to M$ is smooth and $h=p_2\circ\sigma$, it follows that $h$ is smooth. 
	
	Using also the hypothesis that $p_2\vert_{\Rc}\colon \Rc\to M$ is 
an \'etale map, 
we obtain a mapping $\sigma'\colon U'\to W'$ with $\sigma'(y)=(x,y)$, $p_2\circ \sigma'=\id_{U'}$, and $\sigma'\circ p_2\vert_{W'}=\id_{W'}$ for some open subsets $U'\subseteq M$ and $W'\subseteq \Rc$ with $y\in U'$ and $(x,y)\in W'$. 
	After replacing both $W$ and $W'$ by $W\cap W'$ and suitably shrinking $U$ and $U'$ we may and do assume $W=W'$. 
	As above, the equality $p_2\circ \sigma'=\id_{U'}$ shows that there exists a mapping $h'\colon U'\to M$ with $\sigma'(z')=(h'(z'), z')$ for all $z'\in U'$. 
	Then the equality $\id_{W'}=\sigma'\circ p_2\vert_{W'}$ implies $(w,z')=\sigma'(z')=(h'(z'),z')$ for every point $(w,z')\in W=\graf_h$, 
	and this implies that both mappings $h\colon U\to M$ and $h'\colon U'\to M$ are injective and are inverse to each other, 
	that is, $U'=h(U)$, $U=h'(U')$, $h\circ h'=\id_{U'}$, and $h'\circ h=\id_U$. 
	Moreover, since $\sigma'(y)=(x,y)$, one has $h'(y)=x$. 
	Also, since $p_1\vert_{\Rc}\colon \Rc\to M$ is smooth and $h=p_1\circ\sigma'$, we obtain that $h'$ is smooth.
	
    Consequently $h\colon U\to U'$ is a diffeomorphism. 
   Moreover, since we have assumed that the conditions \eqref{characterization_item2}--\eqref{characterization_item3}  
    	are satisfied, it follows that every $x\in U$ has an open neigbourhood $U_x\subseteq U$ for which there exists $h_x\in \Gamma$ with $h_x\colon U_x\to h(U_x)$ and $\graf_{h_x}\subseteq \graf_h$, that is, $h\vert_{U_x}=h_x$. 
    It then follows by the definition of a diffeomorphism pseudogroup, specifically \cite[Def. 1.1.1(iv)]{Wal}, that actually $h\in\Gamma$.  
	To prove the uniqueness of germs of transverse diffeomorphisms,  
	 let $\bar{h}\colon \bar{U}\to \bar{U}'$ be another 
diffeomorphism with $\bar{h}(x)=y$ and $\bar{h}\in\Gamma$. 
Then $z\mapsto (z,h(z))$ and $z\mapsto (z,\bar{h}(z))$ are smooth cross-sections section of $p_1$ over $U\cap \bar{U}$ taking the same value at $x$. 
Since $p_1$ is an \'etale map,   
this implies that $h=\bar{h}$ on some neighborhood of $x\in U\cap \bar{U}$. 
This completes the proof of the fact that \eqref{characterization_item1}--\eqref{characterization_item3} imply that 
$(M,\pi,\Gamma,S)$
is an \H-atlas.

It remains to show that if $(M,\pi,\Gamma,S)$ 
is an \H-atlas, then 
$(M,\pi, S)$ is a \Q-atlas if and only if $\Rc_\pi$ is an initial submanifold of $M\times M$. 
In fact, if $(M,\pi, S)$ is a \Q-atlas and $\varphi\colon  T\to M\times M$ is a smooth mapping defined on a Banach manifold with $\varphi(T)\subseteq \Rc_\pi$, then for every smooth curve $c\colon J\to T$ (where $J\subseteq \RR$ is an open interval) it directly follows by Lemma~\ref{Qdot} that 
the curve $\varphi\circ c\colon J\to \Rc_\pi$ is smooth. 
Then, by the technique of convenient analysis (see e.g., \cite{Fr81}), 
we obtain that $\varphi\colon T\to \Rc_\pi$ is smooth, and therefore $\Rc_\pi$ is an initial submanifold of $M\times M$. 
Conversely, if $\Rc_\pi$ is an initial submanifold of $M\times M$, 
then for every smooth mapping $\varphi=(f_1,f_2)\colon T\to M\times M$ defined on a Banach manifold with $\varphi(T)\subseteq \Rc_\pi$ 
it follows that the mapping $\varphi=(f_1,f_2)\colon T\to \Rc_\pi$ is smooth. 
Let $t_0\in T$ arbitrary. 
Then $(f_1(t_0),f_2(t_0))=\varphi(t_0)\in\Rc_\pi$ hence there exist an 
open subset $T_0\subseteq T$ with $t_0\in T_0$ and a transversal diffeomorphism $h\colon U\to V$ with $\varphi(T_0)\subseteq \graf_h\subseteq U\times V$ 
and $f_2(t)=h(f_1(t))$ for every $t\in T_0$. 
If moreover $f_1(t_0)=f_2(t_0)=:x_0\in M$, then the transversal diffeomorphism $h$ satisfies $h(x_0)=x_0$ hence, by the uniqueness condition in the definition of an \H-atlas, we obtain an open subset $U_0\subseteq U\cap V$ with $x_0\in U_0$ and $h\vert_{U_0}=\id_{U_0}$. 
Then $\graf_{h\vert_{U_0}}$ is an open neighbourhood of $(x_0,x_0)\in\Rc_\pi$ hence, since $\varphi=(f_1,f_2)\colon T\to \Rc_\pi$ is smooth and in particular continuous, it follows that $\varphi^{-1}(\graf_{h\vert_{U_0}})$ is an open neighbourhood of $t_0\in T$ 
which is contained in the set $\{t\in T\mid f_1(t)=f_2(t)\}$. 
This shows that condition~\eqref{Q_ex_b} is satisfied, and therefore $(M,\pi, S)$ is a \Q-atlas. 
This completes the proof. 
\end{proof}

Proposition~\ref{characterization} has the following corollary.

\begin{corollary}
	\label{characterization_cor1}
	The direct product of two \H-atlases is again an \H-atlas.
\end{corollary}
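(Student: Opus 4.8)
The plan is to reduce everything to the characterization of \H-atlases provided by Proposition~\ref{characterization}. Write $(M_1,\pi_1,S_1)$ and $(M_2,\pi_2,S_2)$ for the two given \H-atlases and set $\pi:=\pi_1\times\pi_2\colon M_1\times M_2\to S_1\times S_2$. Since $\pi_1$ and $\pi_2$ are surjective the map $\pi$ is surjective, and $M_1\times M_2$ is a Banach manifold, so it suffices to verify conditions \eqref{characterization_item1}--\eqref{characterization_item2} of Proposition~\ref{characterization} for the graph $\Rc_\pi\subseteq(M_1\times M_2)\times(M_1\times M_2)$.

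The central observation is that $\Rc_\pi$ is, after shuffling the four factors, nothing but the direct product $\Rc_{\pi_1}\times\Rc_{\pi_2}$. Concretely, I would introduce the canonical diffeomorphism $\Theta\colon(M_1\times M_2)\times(M_1\times M_2)\to(M_1\times M_1)\times(M_2\times M_2)$ given by $\Theta\big((x_1,x_2),(y_1,y_2)\big)=\big((x_1,y_1),(x_2,y_2)\big)$. Directly from the definition of $\pi=\pi_1\times\pi_2$ one has $\pi(x_1,x_2)=\pi(y_1,y_2)$ exactly when $\pi_1(x_1)=\pi_1(y_1)$ and $\pi_2(x_2)=\pi_2(y_2)$, so that $\Theta$ restricts to a bijection $\Theta\vert_{\Rc_\pi}\colon\Rc_\pi\to\Rc_{\pi_1}\times\Rc_{\pi_2}$.

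Granting this identification, the two conditions follow by the stability of the defining properties under direct products and under the diffeomorphism $\Theta$. For \eqref{characterization_item1}: each $\Rc_{\pi_i}$ is a split Banach submanifold of $M_i\times M_i$ by Proposition~\ref{characterization} applied to $(M_i,\pi_i,S_i)$, and the direct product $\Rc_{\pi_1}\times\Rc_{\pi_2}$ is then a split Banach submanifold of $(M_1\times M_1)\times(M_2\times M_2)$, since its tangent space at any point is the product of the two closed complemented tangent spaces and the product of the two topological complements is a topological complement of it. As $\Theta$ is a diffeomorphism carrying $\Rc_\pi$ onto $\Rc_{\pi_1}\times\Rc_{\pi_2}$, the split-submanifold structure transports back to $\Rc_\pi$. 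For \eqref{characterization_item2}: writing $P_j$ for the Cartesian projections of $(M_1\times M_2)\times(M_1\times M_2)$ onto $M_1\times M_2$ and $p_j^{(i)}$ for those of $M_i\times M_i$ onto $M_i$ ($j=1,2$), a direct computation gives $P_j=(p_j^{(1)}\times p_j^{(2)})\circ\Theta$, so that $P_j\vert_{\Rc_\pi}=\big(p_j^{(1)}\vert_{\Rc_{\pi_1}}\times p_j^{(2)}\vert_{\Rc_{\pi_2}}\big)\circ\Theta\vert_{\Rc_\pi}$. Each $p_j^{(i)}\vert_{\Rc_{\pi_i}}$ is étale by Proposition~\ref{characterization}, a direct product of étale maps (local diffeomorphisms) is again étale, and $\Theta\vert_{\Rc_\pi}$ is a diffeomorphism, whence $P_j\vert_{\Rc_\pi}$ is étale for $j=1,2$. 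Proposition~\ref{characterization} then yields that $(M_1\times M_2,\pi_1\times\pi_2,S_1\times S_2)$ is an \H-atlas.

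The only genuinely non-routine point is the first step: recognizing the identification $\Theta(\Rc_\pi)=\Rc_{\pi_1}\times\Rc_{\pi_2}$, which is exactly what lets Proposition~\ref{characterization} convert the combinatorially awkward product statement into the two elementary facts that split Banach submanifolds and étale maps are each preserved under formation of direct products. Once this bookkeeping is set up, no analysis of germs of transverse diffeomorphisms, and in particular no direct verification of the uniqueness clause in Remark~\ref{H_def}, is required; I expect the whole difficulty to lie precisely in having Proposition~\ref{characterization} available to bypass that uniqueness clause (which is the reason the analogue for \H-atlases is, as noted in Example~\ref{pre-H_prod}, less trivial than the pre-\H-atlas case).
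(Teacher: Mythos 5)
Your proposal is correct and follows essentially the same route as the paper: the paper's proof also introduces the variable-shuffling diffeomorphism (denoted $\Psi$ there), establishes $\Psi(\Rc_\pi)=\Rc_{\pi_1}\times\Rc_{\pi_2}$, endows $\Rc_\pi$ with the Banach manifold structure transported through $\Psi$, and verifies conditions \eqref{characterization_item1}--\eqref{characterization_item2} of Proposition~\ref{characterization} via the factorization of the Cartesian projections, exactly as you do. Your closing remark is also on target: the whole point of Proposition~\ref{characterization} here is to bypass any direct handling of germs of transverse diffeomorphisms and the uniqueness clause of Remark~\ref{H_def}.
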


\begin{proof}
	Let 
	$M_j$ be a Banach manifold, $\pi_j\colon M_j\to S_j$ be a surjective map, 
	and $\Gamma_j\subseteq\Gamma(\pi_j)$ be a diffeomorphism pseudogroup 
	 for which the graph $\Rc_{\pi_j}$ has the structure of a Banach manifold satisfying the conditions \eqref{characterization_item1}--\eqref{characterization_item3} in  Proposition~\ref{characterization} for $j=1,2$. 
	We consider the direct product map 
	$$\pi:=\pi_1\times\pi_2\colon M_1\times M_2\to S_1\times S_2,
	\quad  \pi(x_1,x_2):=(\pi_1(x_1),\pi_2(x_2))$$ 
	and we show that its graph $\Rc_\pi$ can be endowed with 
	the structure of a Banach manifold satisfying the same conditions \eqref{characterization_item1}--\eqref{characterization_item3} in  Proposition~\ref{characterization} 
	with respect to the pseudogroup $\Gamma:=\Gamma_1\times\Gamma_2=\{h_1\times h_2\mid h_1\in\Gamma_1,\ h_2\in\Gamma_2\}$. 
	
	To this end we note that 
	\begin{align*}
	\Rc_\pi & =\{((x_1,x_2),(y_1,y_2))\in (M_1\times M_2)\times(M_1\times M_2)\mid 
	\pi_j(x_j)=\pi_j(y_j),\ j=1,2\}, \\
	\Rc_{\pi_j}& =\{(x_j,y_j)\in M_j\times M_j\mid \pi_j(x_j)=\pi_j(y_j)\}
	\text{ for }j=1,2.
	\end{align*}
	If we now consider the mapping that interchanges the inner variables 
	\begin{align*}
	\Psi & \colon (M_1\times M_2)\times(M_1\times M_2)\to (M_1\times M_1)\times(M_2\times M_2), \\ 
	\Psi & (((x_1,x_2),(y_1,y_2)))=((x_1,y_1),(x_2,y_2))
	\end{align*}
	then it is clear that $\Psi$ is a diffeomorphism of Banach manifolds and \begin{equation}
	\label{graph-prod}
	\Psi(\Rc_\pi) =\Rc_{\pi_1}\times\Rc_{\pi_2}.
	\end{equation}
	More precisely, we have the commutative diagram 
	\begin{equation}
	\label{graph-prod-diagr}
	\xymatrix{
		\Rc_\pi \ar[d]_{\Psi\vert_{\Rc_\pi}}\ar[r]^{\iota\qquad\quad} & (M_1\times M_2)\times (M_1\times M_2)  \\
		\Rc_{\pi_1}\times\Rc_{\pi_2} \ar[r]^{\iota_1\times \iota_2\qquad\quad} & (M_1\times M_1)\times(M_2\times M_2)  \ar[u]_{\Psi^{-1}}
	}
	\end{equation}
	where $\iota_j\colon\Rc_{\pi_j}\to M_j\times M_j$ for $j=1,2$ and $\iota\colon\Rc_\pi\to (M_1\times M_2)\times (M_1\times M_2) $ 
	are inclusion maps. 
	
	Moreover, if we denote by $p^\sigma_1$ the first Cartesian projection defined on the graph~$\Rc_\sigma$ of any of the equivalence relations $\sigma\in\{\pi,\pi_1,\pi_2\}$, then we have 
	\begin{align*}
	p^{\pi}_1((x_1,x_2),(y_1,y_2))
	& =(x_1,x_2) \\
	& =(p^{\pi_1}_1(x_1,y_1),p^{\pi_2}_1(x_2,y_2)) \\
	&=(p^{\pi_1}_1\times p^{\pi_2}_1)((x_1,y_1),(x_2,y_2))
	\end{align*}
	hence 
	\begin{equation}
	\label{proj-prod}
	p^{\pi}_1=(p^{\pi_1}_1\times p^{\pi_2}_1)\circ\Psi^{-1}.
	\end{equation}
	and we have the commutative diagram 
	\begin{equation}
	\label{proj-prod-diagr}
	\xymatrix{
		\Rc_\pi \ar[d]_{p^{\pi}_1} \ar[r]^{\Psi\vert_{\Rc_\pi}}& \Rc_{\pi_1}\times\Rc_{\pi_2}  \ar[dl]^{p^{\pi_1}_1\times p^{\pi_2}_1}\\
		M_1\times M_2 & 
	}
	\end{equation}
	We now endow the graph $\Rc_\pi$ with the unique structure of a Banach manifold for which the bijective map  $$\Psi\vert_{\Rc_\pi}\colon\Rc_\pi\to \Rc_{\pi_1}\times\Rc_{\pi_2}$$
	(cf. \eqref{graph-prod}) is a diffeomorphism. 
	Since $\Psi$ and $\Psi^{-1}$ are diffeomorphisms, it then follows by \eqref{graph-prod},\eqref{graph-prod-diagr},\eqref{proj-prod},\eqref{proj-prod-diagr},  
	and 
	the fact that 
	$\Rc_{\pi_j}$ has the structure of a split Banach submanifold (respectively, initial split Banach submanifold, respectively submanifold) of the Banach manifold $M_j$ for which the first Cartesian projection $p^{\pi_j}_1\colon\Rc_{\pi_j}\to M_j$ is a local diffeomorphism for $j=1,2$, that this property is shared by $\Rc_\pi$ with respect to $M_1\times M_2$. 
	Finally, if $h_j\in\Gamma_j$ for $j=1,2$, 
	then the definition of $\Psi$ implies $\Psi(\graf_{h_1\times h_2})=\graf_{h_1}\times\graf_{h_2}$.  
	Since $\graf_{h_j}\subseteq\Rc_{\pi_j}$ is an open submanifold for $j=1,2$ 
and $\Psi\vert_{\Rc_\pi}\colon\Rc_\pi\to\Rc_{\pi_1}\times\Rc_{\pi_2}$ is a diffeomorphism, it then follows that $\graf_{h_1}\times\graf_{h_2}\subseteq \Rc_\pi$ is an open submanifold. 
Thus, 
the graph $\Rc_\pi$ satisfies condition \eqref{characterization_item3} in  Proposition~\ref{characterization} 
with respect to the pseudogroup $\Gamma=\Gamma_1\times\Gamma_2$, and this completes the proof. 
\end{proof}

We conclude this subsection by a proposition which provides examples of  \H-manifolds and which is, in some sense, 
converse to Lemma~\ref{pseudogroup}.

\begin{proposition}\label{actionG} 
	Let $\Gamma$ be a diffeomorphism pseudogroup on a  Banach manifold~$\Tc$.  
 If 
  $ \pi_\Tc:\Tc \rightarrow \Tc/\Gamma$ is the canonical projection on the quotient space associated to  the equivalence relation $\simeq$ defined\footnote{See Lemma \ref{pseudogroup}.} by $\Gamma$,   then 
$(\Tc,\pi_\Tc,\Gamma,\Tc/\Gamma)$ 
  is a pre-\H-atlas on~$\Tc/\Gamma$. 
 
 If moreover the action of $\Gamma$ on $\Tc$ is free, then 
$(\Tc,\pi_\Tc,\Gamma,\Tc/\Gamma)$  is an \H-atlas. 
\end{proposition}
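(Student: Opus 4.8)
The plan is to establish the two assertions separately: the pre-\H-atlas claim by a direct check of the homogeneity property, and the \H-atlas claim by reducing to the criterion of Proposition~\ref{characterization} rather than by verifying germ-uniqueness by hand.

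Before anything else I would record that $\simeq$ is indeed an equivalence relation: reflexivity comes from $\id_\Tc\in\Gamma$, symmetry from $h\in\Gamma\Rightarrow h^{-1}\in\Gamma$, and transitivity from the stability of $\Gamma$ under composition of (restrictions of) its elements. Since $\Tc/\Gamma$ is by definition the quotient set, $\pi_\Tc$ is surjective. To check the homogeneity property in Remark~\ref{H_def}, take $x,y\in\Tc$ with $\pi_\Tc(x)=\pi_\Tc(y)$, that is $x\simeq y$; by definition of $\simeq$ there is some $h\in\Gamma$, $h\colon U\to U'$, with $x\in U$ and $h(x)=y$. As every element of $\Gamma$ satisfies $z\simeq g(z)$ for each $z$ in its domain, we have $\pi_\Tc(h(z))=\pi_\Tc(z)$ for all $z\in U$, so this $h$ is a transverse diffeomorphism carrying $x$ to $y$. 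Thus the homogeneity property holds and $(\Tc,\pi_\Tc,\Tc/\Gamma)$ is a pre-\H-atlas. In particular $\Gamma\subseteq\Gamma(\pi_\Tc)$ and the mapping \eqref{pre_H_def_eq} is surjective.

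For the second assertion I would verify conditions \eqref{characterization_item1}--\eqref{characterization_item2} of Proposition~\ref{characterization} for the graph $\Rc_{\pi_\Tc}=\bigcup_{h\in\Gamma}\graf_h$. This is where freeness is used: it lets me invoke Lemma~\ref{graphIntersection}, which gives that for $h_1,h_2\in\Gamma$ with domains $U_1,U_2$ the set $U=\{z\in U_1\cap U_2\mid h_1(z)=h_2(z)\}$ is open and $\graf_{h_1}\cap\graf_{h_2}=\graf_{h}$ for the element $h:=h_1\vert_U\in\Gamma$. Consequently the bijections $p_1\vert_{\graf_h}\colon\graf_h\to U_h$ (where $U_h$ is the domain of $h$) serve as the charts of a $\Ci$-atlas on $\Rc_{\pi_\Tc}$ whose transition maps are restrictions of $\id_\Tc$ to open subsets, hence smooth; this is precisely the manifold structure built in the first half of the proof of Proposition~\ref{characterization}. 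In the chart $\graf_h$ the inclusion $\Rc_{\pi_\Tc}\hookrightarrow\Tc\times\Tc$ is the embedding $z\mapsto(z,h(z))$, so the tangent space at $(z,h(z))$ is the graph $\{(u,(T_zh)u)\mid u\in T_z\Tc\}$, a closed subspace complemented by $\{0\}\times T_{h(z)}\Tc$; thus $\Rc_{\pi_\Tc}$ is a split Banach submanifold, which is \eqref{characterization_item1}. Finally $p_1\vert_{\graf_h}$ is a diffeomorphism onto $U_h$ and $p_2\vert_{\graf_h}=h\circ p_1\vert_{\graf_h}$, so both projections are \'etale, which is \eqref{characterization_item2}. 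Proposition~\ref{characterization} then shows that $(\Tc,\pi_\Tc,\Tc/\Gamma)$ is an \H-atlas.

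The step I expect to demand the most care is the assembly of the charts $\{\graf_h\}_{h\in\Gamma}$ into a genuine split-submanifold structure on $\Rc_{\pi_\Tc}$, since it is exactly the openness of the diagonal overlaps $\{h_1=h_2\}$ that can fail without freeness; Lemma~\ref{graphIntersection} is what rescues this, and it is the sole place where the freeness hypothesis is essential. It is worth noting that this route sidesteps a direct proof of germ-uniqueness among \emph{all} transverse diffeomorphisms (a priori $\Gamma(\pi_\Tc)$ could be strictly larger than $\Gamma$): once Proposition~\ref{characterization} applies, the bijectivity of \eqref{pre_H_def_eq} delivers that uniqueness automatically, reconciling the homogeneity viewpoint of the first part with the graph viewpoint of the second.
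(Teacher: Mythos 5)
Your proof is correct and follows essentially the same route as the paper: the paper likewise checks the homogeneity property directly for the pre-\H{} claim, and for the \H{} claim endows $\Rc_{\pi_\Tc}$ with the manifold structure whose charts are the graphs $\graf_h$, $h\in\Gamma$, using Lemma~\ref{graphIntersection} (the sole place freeness enters) for the compatibility of overlapping charts, and then verifying that the inclusion $\Rc_{\pi_\Tc}\hookrightarrow\Tc\times\Tc$ is a split immersion with both restricted projections \'etale. The only difference is presentational: the paper leaves the concluding appeal to Proposition~\ref{characterization} implicit, whereas you invoke it explicitly, which also makes transparent your correct observation that germ-uniqueness for the full pseudogroup $\Gamma(\pi_\Tc)\supseteq\Gamma$ is what that proposition delivers rather than something following directly from freeness of $\Gamma$ alone.
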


\begin{proof}
It is straightforward to check that $(\Tc,\pi_\Tc,\Gamma,\Tc/\Gamma)$ 
 is a pre-\H-atlas. 

We now prove that if the action of $\Gamma$ on $\Tc$ is free, then 
$(\Tc,\pi_\Tc,\Gamma,\Tc/\Gamma)$   is actually an \H-atlas. 
We denote by  $\Rc$ the graph   of the equivalence relation $\simeq$. 
For any $(x,y)\in \Rc$, there exists   $h\in \Gamma$ such that $h(x)=y$ and the germ of $h$ at $x\in\Tc$ is unique. 
The domain of $h$ is an open neighbourhood of $x$, and 
the graph $\graf_h$ of $h$ 
defines a neighbourhood of $(x,y)$ in $\Rc$. 
The intersection of two such graphs is again the graph of such a diffeomorphism by Lemma~\ref{graphIntersection}. 
We can provide $\mathcal{R}$ by the topology generated by such graphs. 
In this way, it is clear that the canonical projection $q_1$ of $ \Tc\times  \Tc$ on the first factor   in restriction to $\mathcal{R}$ is a local homeomorphism.  

Moreover, since $ \Tc$ is a Banach manifold, we can then provide $\mathcal{R}$ with a Banach manifold structure. 
More precisely consider the graph $\graf_h$ 
over $U$ 
as previously, such that $q_1(U)$ is a chart domain for $ \Tc$.  
If $\phi: U\to \mathbb{T}$ is the associated parametrization into the Banach model $\mathbb{T}$ of $ \Tc$, then $(\graf_h,\Phi)$ is a chart on $\mathcal{R}$ where $\Phi=\phi\circ q_1$. 
The smoothness of the transition functions is due to the fact  that $\Tc$ is a Banach manifold and that the intersection of such graphs is again a graph of a local diffeomorphism by Lemma~\ref{graphIntersection} again.

It remains to show the inclusion map  $J\colon\Rc\hookrightarrow\Tc\times\Tc$  is an immersion whose range is complemented. 
Choose some chart domain  $\graf_h=\{(x,h(x))\mid x\in U\}$ in $\mathcal{R}$ as previously built.    
Since $(g(U), \phi\circ g^{-1})$ is also a chart in $ \Tc$ so $U\times g(U)$ is a chart for $ \Tc\times  \Tc$ and $G_U=R\cap U\times g(U)$.
Then the inclusion of $\mathcal{R}$  in restriction to such a chart domain   in $ \Tc\times  \Tc$ is the map $(x,h(x))\mapsto (x,h(x))$ from $\graf_h$ to $U\times h(U)$  and so is smooth and injective.  
So the differential of $J$ is injective  and its range at $(x,g(x))$  is the closed space $T_x  \Tc\times T_{g(x)}g(T_x \Tc)\subset T_x \Tc\times T_{g(x)}  \Tc $ which is complemented.
\end{proof}

\begin{example}
\label{Ba73_page242}
\normalfont 
We now show that the counterexample of a \Q-manifold given in \cite[Ch. I, \S 2, no. 3, page 242]{Ba73}
can be explained in the framework of \H-manifolds. 
Let $M=\RR$ and for every $k\in\RR$ consider the translation map $\lambda_k\colon M\to M$, $\lambda_k(x):=x+k$. 
Then define $\Gamma$ as diffeomorphism pseudogroup on $M$ generated by the family of diffeomorphisms $\{\lambda_k\mid k\in\RR\}$ in the sense of \cite[Def. 1.1.3]{Wal}. 
It is clear that the action of $\Gamma$ is free and transitive, that is, the set $M/\Gamma=\{\ast\}$ consists of a single point and, for the quotient map $\pi\colon\RR= M\to M/\Gamma=\{\ast\}$, the quadruple $(\RR,\pi,\Gamma,\{\ast\})$ is an \H-atlas by Proposition~\ref{actionG}. 
Moreover, the graph of the equivalence relation associated quotient map $\pi\colon M\to M/\Gamma$ satisfies 
$$\Rc_\pi=M\times M=\RR^2.$$
By Proposition~\ref{characterization}, one has a unique structure of manifold on $\Rc_\pi$ 
for which the Cartesian projections $p_i\colon\Rc_\pi\to M$ are \'etale maps and for every $h\in\Gamma$ its graph~$\graf_h$ is an open submanifold of $\Rc_\pi$. 
With the above notation, the graphs of the translation diffeomorphisms $\lambda_k$ are  
$$\graf_{\lambda_k}=\{(x,x+k)\mid x\in\RR\}\subseteq\RR^2\text{ for every }k\in\RR.$$
The manifold structure on $\Rc_\pi$ corresponding to the \H-atlas  $(\RR,\pi,\Gamma,\{\ast\})$ 
is then the structure of 1-dimensional manifold of $\RR^2$ which coincides with the foliation by lines which are parallel to the main diagonal 
$$\RR^2=\bigsqcup_{k\in\RR}\{(x,x+k)\mid x\in\RR\}.$$
For this manifold structure on $\Rc_\pi=\RR^2$, the identity map $\Rc_\pi\to M\times M$ is an immersion which is \emph{not} an initial map, 
and this reflects the fact pointed out in \cite[page 242]{Ba73}, 
namely that 
the \H-atlas  $(\RR,\pi,\Gamma,\{\ast\})$ is \emph{not} a \Q-atlas. 
In particular, if we denote by $\Gamma(\pi)$ the diffeomorphism pseudogroup on $M$ 
consisting of all transverse diffeomorphisms of $\pi$, 
then in this case $\Gamma(\pi)$ consists of \emph{all} local diffeomprphisms of $\RR$, 
but the quadruple $(\RR,\pi,\Gamma(\pi),\{\ast\})$ is \emph{not} an \H-atlas. 
(Compare Example~\ref{Q_ex}.)
\end{example}

\subsection{Tangent space of a pre-\H-manifold}
\label{connexetangent} 
We begin by a lemma that  will be repeatedly used.

\begin{lemma}
\label{ParticularOpen}
If $(M,\pi,\Gamma,S)$ 
is a pre-\H-atlas, then for every $s\in S$ and every finite subset $F\subseteq \pi^{-1}(s)$ there exist a family $\{U^F_x\mid x\in F\}$ of open subsets of $M$ and a family of diffeomorphisms $\{h^F_{xy}\colon U^F_y\to U^F_x\mid x,y\in F\}$ satisfying $x\in U^F_x$, $h^F_{xy}\in\Gamma$, 
$h^F_{xx}=\id_{U^F_x}$, $h^F_{xy}=(h^F_{yx})^{-1}$, $h^F_{xy}\circ h^F_{yz}=h^F_{xz}$, and  $h^F_{xy}(y)=x$ for all $x,y,z\in F$. 
\end{lemma}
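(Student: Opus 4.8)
The plan is to build every requested diffeomorphism as a composition out of a single \emph{star} of transverse diffeomorphisms based at one fixed point of $F$; this forces the groupoid-type relations to hold automatically. First I would fix a base point $x_0\in F$ and, for each $x\in F$, apply the homogeneity property of a pre-\H-atlas (Remark~\ref{H_def}) to the pair $(x_0,x)$---which is legitimate because $\pi(x_0)=s=\pi(x)$---to obtain a transverse diffeomorphism $g_x\in\Gamma(\pi)$ defined on an open neighbourhood $W_x$ of $x_0$ with $g_x(x_0)=x$; for $x=x_0$ I take $g_{x_0}=\id$.

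The key move is then to exploit the finiteness of $F$: I would set $\Omega:=\bigcap_{x\in F}W_x$, an open neighbourhood of $x_0$ on which all the $g_x$ are simultaneously defined. Putting $U^F_x:=g_x(\Omega)$ yields open sets with $x=g_x(x_0)\in U^F_x$, and I would define
$$h^F_{xy}:=g_x\circ g_y^{-1}\colon U^F_y\to U^F_x.$$
Since $g_y^{-1}$ carries $U^F_y=g_y(\Omega)$ bijectively onto $\Omega$ and $g_x$ carries $\Omega$ onto $U^F_x$, this is a genuine diffeomorphism whose source and target match the statement.

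The remaining verifications become routine because every map factors through the common set $\Omega$. Membership $h^F_{xy}\in\Gamma(\pi)$ follows from $\Gamma(\pi)$ being a pseudogroup, together with the one-line check $\pi(h^F_{xy}(g_y(w)))=\pi(g_x(w))=\pi(w)=\pi(g_y(w))$ for $w\in\Omega$. The identities $h^F_{xx}=\id_{U^F_x}$ and $h^F_{xy}=(h^F_{yx})^{-1}$ are immediate, and $h^F_{xy}\circ h^F_{yz}=g_x\circ(g_y^{-1}\circ g_y)\circ g_z^{-1}=g_x\circ g_z^{-1}=h^F_{xz}$ holds as an honest equality of maps, not merely of germs, precisely because $g_y^{-1}\circ g_y$ restricts to the identity on $\Omega$. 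Finally $h^F_{xy}(y)=g_x(g_y^{-1}(y))=g_x(x_0)=x$.

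The step I would watch most carefully is exactly this bookkeeping of domains: composing partially defined pseudogroup elements usually shrinks domains and would give the cocycle relation only up to germs. Choosing all the $U^F_x$ as images of one and the same $\Omega$ is what removes this shrinkage and makes each relation an equality on the nose, and this is the one place where the finiteness of $F$ is essential.
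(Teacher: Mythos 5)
Your proof is correct and follows essentially the same route as the paper: fix a base point $x_0\in F$, use the pre-\H\ homogeneity to get a star of transverse diffeomorphisms $g_x$ with $g_x(x_0)=x$, intersect the finitely many domains to a common neighbourhood $\Omega$ of $x_0$, set $U^F_x:=g_x(\Omega)$, and define $h^F_{xy}:=g_x\circ g_y^{-1}$ (the paper writes this as $h^F_{y_1y_2}:=h^F_{y_1x_0}\circ h^F_{x_0y_2}$, which is the same map). Your explicit verification of the cocycle identity as an equality of maps on the nose, and of membership in $\Gamma(\pi)$, fills in details the paper dismisses as straightforward.
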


\begin{proof}
The assertion is clear if $F=\emptyset$. 
Let us assume that $F\ne\emptyset$ and select $x_0\in F$. 
Since $(M,\pi,\Gamma,S)$ 
is a pre-\H-atlas, for every $y\in F\setminus\{x_0\}$ we may select open subsets $V_{x_0y},V_{yx_0}\subseteq M$ and 
a 
diffeomorphism 
$h_{yx_0}\colon V_{x_0y}\to V_{yx_0}$ with 
$h_{yx_0}\in\Gamma$ and 
$h_{yx_0}(x_0)=y$. 

We define $U^F_{x_0}:=\bigcap\limits_{y\in F\setminus\{x_0\}}V_{x_0y}$, which is the intersection of finitely many open neighbourhoods of $x_0\in M$. 
Then, for every $y\in F\setminus\{x_0\}$ we define $U^F_y:=h_{x_0y}(U^F_{x_0})$, 
$h^F_{yx_0}:=h_{yx_0}\vert_{U^F_{x_0}}\colon U^F_{x_0}\to U^F_y$, 
and $h^F_{x_0y}:=(h^F_{yx_0})^{-1}$. 
Moreover, if $y_1,y_2\in F\setminus\{x_0\}$ then we define 
$h^F_{y_1y_2}:=h^F_{y_1x_0}\circ h^F_{x_0y_2}$. 
Finally, defining $h^F_{xx}:=\id_{U^F_x}$ for every $x\in F$, it is straightforward to check that the families of open sets $\{U^F_x\mid x\in F\}$ and of diffeomorphisms $\{h^F_{xy}\mid x,y\in F\}$ satisfy all the required conditions. 
\end{proof}

\begin{remark}
\label{ParticularOpen_rem}
	\normalfont 
	In the framework of  Lemma \ref{ParticularOpen}, 
	if $s\in S$ and $F=\{x,y\}\subseteq\pi^{-1}(s)$, then we will use the notation $h_{xy}:=h^F_{xy}$, 
	$O^x_y:=U^F_y$, and $O^y_x:=U^F_x$. 
	Thus  $h_{xy}\colon O^x_y\to O^y_x$. 
	
	For any fixed  $s\in S$ and $x\in \pi^{-1}(s)$, 
	we then denote 
	$$O_s(x):=\bigcup_{y\in \pi^{-1}(s)} O_y^x\subseteq M.$$ 
	The open subsets $O_y^x$ for $y\in\pi^{-1}(x)$ may \emph{never} be selected to be mutually disjoint. 
	See also \cite[Prop. 2.6]{BPZ19}. 
	In particular, the sets $O_y^x$ are \emph{not} the connected components of $O_s(x)$. 
	
	For instance, let $\pi\colon \RR\to\RR/\QQ$ be the natural quotient map, 
	which is a \Q-atlas, and in particular an \H-atlas. 
	For any $x\in\RR$, if we denote $s:=\pi(x)=x+\QQ\in\RR/\QQ$, 
	then one has $\pi^{-1}(s)=x+\QQ$, which is a dense subset of $\RR$. 
	Now for every $y\in\pi^{-1}(s)$ we select any open set  $O_y^x\subseteq \RR$ with $y\in O_y^x$. 
	Then for every $y_1\in \pi^{-1}(s)$ the set $\pi^{-1}(s)\setminus\{y_1\}$ is still dense in $\RR$, hence its intersection with the open set $O_{y_1}^x$ is nonempty. 
	Then there exists $y_2\in \pi^{-1}(s)\setminus\{y_1\}$ with $y_2\in O_{y_1}^x$, which implies 
	$y_2\in O_{y_1}^x\cap O_{y_2}^x$, and thus $O_{y_1}^x\cap O_{y_2}^x\ne\emptyset$. 
\end{remark}

In the following definition we adapt the definition of the tangent space of a \Q-manifold given in \cite[Ch. 1, \S 1, no. 4]{Ba73}. 
It will be convenient to use the following simplified notation: 

\begin{notation}
	\normalfont
	If $M$ is a n.n.H. Banach manifold and $\pi\colon M\to S$ is a surjective map then
	for any $x,y\in M$ 
	and any 
	diffeomorphism $h\in\Gamma(\pi)$ 
	(cf. Definition~\ref{preHtg_def})
	the notation $h(x)=y$ implicitly means that $\pi(x)=\pi(y)$ and the point~$x$ belongs to the open set where~$h$ is defined. 
\end{notation}

\begin{definition}
\label{tgH_def}
\normalfont 
Let $S$ be a pre-\H-manifold and $s\in S$. 
We consider all the quadruples $(M,\pi,x,v)$ 
where $(M,\pi,\Gamma,S)$ 
is a pre-\H-chart of $S$ with $x\in M$, $\pi(x)=s$, and $v\in T_xM$. 
If $(M_j,\pi_j,x_j,v_j)$ for $j=1,2$ are such quadruples, 
then the pre-\H-charts $(M_j,\pi_j,\Gamma_j,S)$ 
are compatible, 
hence 
there exists an equivalence of pseudogroups $\Phi$ from $\Gamma_1$ to $\Gamma_2$ 
as in Definition~\ref{compatib}. 
With this notation, one says that the quadruples $(M_j,\pi_j,x_j,v_j)$ for $j=1,2$ are \emph{pre-\H-equivalent} if 
there exists a 
diffeomorphism $h_{21}\in\Phi$ with 
$h_{21}(x_1)=x_2$, 
whose  tangent map $T_{x_1}(h_{21})\colon T_{x_1}M_1\to T_{x_2}M_2$ satisfies the condition $(T_{x_1}(h_{21}))(v_1)=v_2$. 

If $(M_3,\pi_3,x_3,v_3)$ is another quadruple as above 
with respect to a pre-\H-chart $(M_3,\pi_3,x_3,v_3)$ 
with $\pi_3(x_3)=s$, which is equivalent to the quadruple $(M_2,\pi_2,x_2,v_2)$, 
then there exist 
an equivalence of pseudogroups $\Phi'$ from $\Gamma_2$ to $\Gamma_3$ 
	as in Definition~\ref{compatib} and 
a 
diffeomorphism  $h_{32}\in\Phi'$ 
with $h_{32}(x_2)=x_3$ and $(T_{x_2}(h_{32}))(v_2)=v_3$. 
Then $\Phi'\circ\Phi:=\{h\circ h'\mid h\in \Phi,h'\in\Phi'\}$ 
is an equivalence of pseudogroups from $\Gamma_1$ to $\Gamma_3$ 
as in Definition~\ref{compatib} and 
we obtain the 
diffeomorphism $h_{32}\circ h_{21}\in\Phi'\circ\Phi$ 
with $(h_{32}\circ h_{21})(x_1)=x_3$ 
and $(T_{x_1}(h_{32}\circ h_{21}))(v_1)=v_3$. 
This shows that the quadruple $(M_3,\pi_3,x_3,v_3)$ is equivalent to the quadruple $(M_1,\pi_1,x_1,v_1)$. 

Thus the equivalence of quadruples is transitive, and then it makes sense to consider the equivalence classes of quadruples $(M,\pi,x,v)$ as above 
with $\pi(x)=s$. 
Such an equivalence class is called a \emph{tangent vector} at the point $s$ of the pre-\H-manifold~$S$. 
For any fixed pre-\H-chart 
$(M,\pi,\Gamma,S)$ 
if $s\in S$, $x_0\in \pi^{-1}(s)$ and $v\in T_{x_0}M$, 
then  $(M,\pi,x_0,v)$  is a quadruple as above and its equivalence class is denoted by $(T_{x_0}\pi)(v)\in T_sS$. 
It is easily seen that for any fixed $x_0\in \pi^{-1}(s)$, every tangent vector at $s\in S$ is equal to $(T_{x_0}\pi)(v)$ for a suitable vector $v\in T_{x_0}M$. 
This shows that all the tangent vectors at $s\in S$ constitute a set (unlike any of the above equivalence classes of quadruples), to be denoted $T_sS$ and called the \emph{tangent space of the pre-\H-manifold at $s\in S$}, 
and it is easily checked that the mapping 
\begin{equation}
\label{tgH_def_eq1}
T_{x_0}\pi\colon T_{x_0}M\to T_sS,\quad v\mapsto (T_{x_0}\pi)(v).
\end{equation}
is surjective for any pre-\H-chart 
$(M,\pi,\Gamma,S)$  
with $s\in\pi(M)$ and $x_0\in\pi^{-1}(s)$. 
The disjoint union 
$$TS:=\bigsqcup\limits_{s\in S}T_sS$$ 
is called the \emph{tangent space of the pre-\H-manifold~$S$}. 
\end{definition}

We now prove that the tangent space of every pre-\H-manifold has the natural structure of a pre-\H-manifold. 

\begin{proposition}
\label{preHtg}
If $(M,\pi,\Gamma,S)$ 
is a pre-\H-chart, then the following assertions hold. 
\begin{enumerate}[{\rm 1.}]
	\item\label{preHtg_item1} 
	For every $x\in M$, the mapping $T_x\pi\colon T_xM\to T_{\pi(x)}S$ is surjective. 
	\item\label{preHtg_item2}  If 
	$(M,\pi,\Gamma,S)$
	is an \H-chart then the mapping $T_x\pi\colon T_xM\to T_{\pi(x)}S$ is bijective 
	for arbitrary $x\in M$. 
	For every $s\in \pi(M)$, the space $T_sS$ has the natural structure of a Banach space for which the mapping $T_x\pi\colon T_xM\to T_sS$ 
	is a Banach space isomorphism for any $x\in\pi^{-1}(s)$. 
	\item\label{preHtg_item3}  
	Let us denote by $T\Gamma$ the diffeomorphism pseudogroup on the n.n.H. manifold $TM$ 
	generated by the set of diffeomorphisms $\{Th\mid h\in\Gamma\}$ in the sense of \cite[Def. 1.1.3]{Wal}.
	Also define  the mapping $T\pi\colon TM\to TS$ given by $T\pi\vert_{T_xM}:=T_x\pi$ for every $x\in M$. 
   Then the quadruple $(TM,T\pi,T\Gamma,TS)$ is again a pre-\H-chart, which is a pre-\H-atlas if and only if 
   $(M,\pi,\Gamma,S)$ 
   is a pre-\H-atlas. 
	If moreover 
	$(M,\pi,\Gamma,S)$  is an \H-atlas, 
	then $(TM,T\pi,T\Gamma,TS)$ 
	is in turn an \H-atlas. 
	\item
	\label{preHtg_item4} 
	If $S$ is a pre-\H-manifold then 
	the canonical projection $p_S\colon TS\to S$ is smooth map in the sense of pre-\H-manifolds  and we have the commutative diagram 
	\begin{equation}
	\label{Tpi}
	\xymatrix{
		TM 
		\ar[r]^{T\pi} 
		\ar[d]_{p_M}  
		&TS \ar[d]^{p_S} 
		\\
		M  \ar[r]^{\pi} 
		&S
	}
	\end{equation}
	Moreover, $\pi$ is both a submersion and an immersion of pre-\H-manifolds.
\end{enumerate}
\end{proposition}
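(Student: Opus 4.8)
The plan is to treat the four assertions in order, relying throughout on the bijection \eqref{tgH_def_eq1} and its naturality, and to reduce the one delicate point---that $T\pi$ is an \H-atlas---to Proposition~\ref{characterization} through a canonical identification of $\Rc_{T\pi}$ with the tangent bundle $T\Rc_\pi$.

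\emph{Items \ref{preHtg_item1} and \ref{preHtg_item2}.} Surjectivity of $T_x\pi$ was already observed after \eqref{tgH_def_eq1}: given $\sigma\in T_{\pi(x)}S$ represented by a $4$-tuple $(M',\pi',x',v')$, the disjoint union $\pi'\sqcup\pi$ is again a pre-\H-chart, so by the homogeneity property there is a transverse diffeomorphism $h$ with $h(x)=x'$, and then $v:=(T_{x'}h^{-1})(v')\in T_xM$ satisfies $(T_x\pi)(v)=\sigma$. For Item~\ref{preHtg_item2}, assume $\pi$ is an \H-chart. If $(T_x\pi)(v_1)=(T_x\pi)(v_2)$, there is $h\in\Gamma(\pi)$ with $h(x)=x$ and $(T_xh)(v_1)=v_2$; since the action of $\Gamma(\pi)$ is free (Lemma~\ref{pseudogroup}\eqref{pseudogroup_item2}), the germ of $h$ at $x$ is that of $\id_M$, whence $T_xh=\id$ and $v_1=v_2$. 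Thus $T_x\pi$ is bijective, and I transport the Banach space structure of $T_xM$ to $T_sS$ through it. This is independent of the choice of $x\in\pi^{-1}(s)$: for $x,y\in\pi^{-1}(s)$ and the germ-unique transverse diffeomorphism $h$ with $h(x)=y$, the defining equivalence of $4$-tuples gives $T_y\pi\circ T_xh=T_x\pi$, and $T_xh$ is a Banach space isomorphism, so the two structures coincide.

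\emph{Item \ref{preHtg_item3}, pre-\H-chart.} I verify the homogeneity property of Remark~\ref{H_def} for $T\pi$. Let $\xi\in T_xM$ and $\eta\in T_yM$ with $T\pi(\xi)=T\pi(\eta)$; by the definition of $T_sS$ this means exactly that there is $h\in\Gamma(\pi)$ with $h(x)=y$ and $(T_xh)(\xi)=\eta$. Differentiating $\pi\circ h=\pi\vert_U$ yields $T\pi\circ Th=T\pi$ on $TU$, so $Th\in\Gamma(T\pi)$, and $Th(\xi)=(T_xh)(\xi)=\eta$. Hence $(TM,T\pi,TS)$ is a pre-\H-chart. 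Since every $T_x\pi$ is surjective by Item~\ref{preHtg_item1}, the image of $T\pi$ is $\bigsqcup_{s\in\pi(M)}T_sS$, so $T\pi$ is surjective if and only if $\pi$ is, which proves the pre-\H-atlas equivalence.

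\emph{Item \ref{preHtg_item3}, \H-atlas (the main obstacle).} Now suppose $\pi$ is an \H-atlas. A general element of $\Gamma(T\pi)$ need not be of the form $Th$, so I avoid arguing uniqueness of germs directly and instead apply Proposition~\ref{characterization}. Under the canonical identification $TM\times TM=T(M\times M)$, I claim $\Rc_{T\pi}=T\Rc_\pi$. Fix $(x,y)\in\Rc_\pi$ and a transverse diffeomorphism $h$ with $h(x)=y$; by the computation in the proof of Proposition~\ref{characterization} one has $T_{(x,y)}\Rc_\pi=\{(u,(T_xh)u)\mid u\in T_xM\}$. On the other hand, for $(\xi,\eta)\in T_xM\times T_yM$ the relation $T\pi(\xi)=T\pi(\eta)$ reads $(T_y\pi)(\eta)=(T_y\pi)((T_xh)\xi)$ after using $T_x\pi=T_{h(x)}\pi\circ T_xh$; since $T_y\pi$ is injective by Item~\ref{preHtg_item2}, this is equivalent to $\eta=(T_xh)\xi$, proving the claim. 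Because $\Rc_\pi$ is a split Banach submanifold of $M\times M$ with étale Cartesian projections, applying the tangent functor shows that $T\Rc_\pi$ is a split Banach submanifold of $T(M\times M)$ whose projections $Tp_i$ are étale; under the above identification these are precisely $p_i\vert_{\Rc_{T\pi}}$. Proposition~\ref{characterization} then gives that $(TM,T\pi,TS)$ is an \H-atlas.

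\emph{Item \ref{preHtg_item4}.} The diagram \eqref{Tpi} commutes by naturality: for $\xi\in T_xM$ one has $p_S(T\pi(\xi))=\pi(x)=\pi(p_M(\xi))$. To see that $p_S$ is smooth, I use the charts $T\pi$ and $\pi$; given $\sigma=T\pi(\xi)\in TS$ and any $x_2\in\pi^{-1}(p_S(\sigma))$, I choose $h\in\Gamma(\pi)$ with $h(p_M(\xi))=x_2$ and take the lift $h\circ p_M$, which is smooth and satisfies $\pi\circ(h\circ p_M)=\pi\circ p_M=p_S\circ T\pi$, as required by Definition~\ref{H-smooth}. Finally, viewing $M$ as a pre-\H-manifold via $(M,\id_M,M)$ and using the charts $\id_M$ and $\pi$, any lift $\hat f$ of $\pi$ must satisfy $\pi\circ\hat f=\pi$, i.e.\ $\hat f\in\Gamma(\pi)$; such lifts exist by the homogeneity property and are local diffeomorphisms, so $\pi$ is étale as a pre-\H-morphism and in particular both a submersion and an immersion.
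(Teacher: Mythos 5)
Your proof is correct, and Items \ref{preHtg_item1}, \ref{preHtg_item2} and \ref{preHtg_item4} follow essentially the paper's own arguments (you are in fact more explicit than the paper on two points: the independence of the Banach structure on $T_sS$ of the choice of $x\in\pi^{-1}(s)$, and the composition with a transverse diffeomorphism $h$ needed so that the lift of $p_S$ hits a prescribed point $x_2$ over $p_S(\sigma)$, as Definition~\ref{H-smooth} requires). The genuine divergence is in the \H-atlas half of Item~\ref{preHtg_item3}. The paper introduces the pseudogroup $\widehat{\Gamma}(\pi):=\{Th\mid h\in\Gamma(\pi)\}$ acting on $TM$, observes that this action is free (if $Th$ fixes $(x,u)$ then $h$ fixes $x$, so the germ of $h$, hence of $Th$, is the identity), and invokes Proposition~\ref{actionG}; you instead identify $\Rc_{T\pi}$ with $T\Rc_\pi$ inside $T(M\times M)\cong TM\times TM$ and invoke the characterization Proposition~\ref{characterization}, pushing conditions \eqref{characterization_item1}--\eqref{characterization_item2} through the tangent functor. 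Your route is legitimate: the fiberwise identification uses exactly the injectivity of $T_y\pi$ from Item~\ref{preHtg_item2}, and the two functorial facts you assert without proof---that the tangent map of a split immersion is again a split immersion (via the local normal form available for immersions with split closed range), and that the tangent map of an \'etale map is \'etale---are standard. What the paper's route buys is economy: freeness of $\widehat{\Gamma}(\pi)$ is a one-line consequence of freeness of $\Gamma(\pi)$, and no differential-topological lemmas about the tangent functor are needed. What your route buys is extra structure: it exhibits the graph $\Rc_{T\pi}$ concretely as the tangent bundle $T\Rc_\pi$ with its manifold structure, something the paper's argument leaves implicit. Two cosmetic remarks: ``differentiating $\pi\circ h=\pi\vert_U$'' should be read as an instance of Definition~\ref{tgH_def} (the 4-tuples $(M,\pi,z,u)$ and $(M,\pi,h(z),(T_zh)u)$ are equivalent via $h$ itself), since $\pi$ is not a smooth map into a manifold; and in Item~\ref{preHtg_item4} the chart $\pi$ need not be surjective, so for a point of $TS$ outside $T\pi(TM)$ you should run your lifting argument with a compatible pre-\H-chart $\pi'\colon M'\to S$ whose image contains the relevant point, exactly as the paper does with its atlas $\pi'$.
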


\begin{proof}
\ref{preHtg_item1}. 
Straightforward.  

\ref{preHtg_item2}. 
Taking into account Assertion~\ref{preHtg_item1}, it suffices to show that 
$T_x\pi\colon T_xM\to T_{\pi(x)}S$ si injective. 
To this end let $v_1,v_2\in T_xM$ with $(T_x\pi)(v_1)=(T_x\pi)(v_2)$. 
Then, by Definition~\ref{tgH_def}, there exists 
a 
diffeomorphism $h\in\Gamma$ 
with $h(x)=x$ and $(T_xh)(v_1)=v_2$. 
Since $(M,\pi,\Gamma,S)$ 
is an \H-chart, the condition $h(x)=x$ implies $h=\id_M$ on a suitable open neighbourhood of $x\in M$, hence $T_xh=\id_{T_xM}$, and then $v_1=v_2$. 

Since $T_xM$ is a Banach space, we then obtain a Banach space structure on $T_{\pi(x)}S$ by transport of structure via the bijective map $T_x\pi\colon T_xM\to T_{\pi(x)}S$. 

\ref{preHtg_item3}. 
It is clear that the mapping $T\pi$ is surjective if and only if $\pi$ is surjective. 
We now prove that if $(M,\pi,\Gamma,S)$ 
is a pre-\H-chart then 
$(TM,T\pi,T\Gamma,TS)$ 
is also a pre-\H-chart. 
To this end we use the notation of Lemma~\ref{ParticularOpen} 
and in addition, for any $x\in M$ it is convenient to denote any vector in $u\in T_xM$ as a pair $(x,u)$.

One can now define an equivalence relation $\widehat{R}$  on $TM$ as follows: 
$$(x,u )\widehat{R} (y,v)\iff  
\text{there exists } h\in\Gamma 
\text{ with }h(x)=y\text{ and }(T_xh)u=v.$$
Let us denote by $\widehat{\pi}\colon TM\to TM/\widehat{R}$ the corresponding quotient map. 
It is clear that we have $(x,u )\widehat{R} (y,v)$ if and only if the quadruples $(M,\pi,x,u)$ and $(M,\pi,y,v)$ are equivalent in the sense of Definition~\ref{tgH_def}, hence 
one has a natural injective map 
$\iota \colon TM/\widehat{R}\to TS$ (which is surjective if $\pi$ is surjective) for which the diagram 
$$\xymatrix{& TM \ar[dl]_{\widehat{\pi}}\ar[dr]^{T\pi}& \\
TM/\widehat{R} \ar[rr]^{\iota}& & TS}$$
is commutative. 
Therefore it suffices to show that 
$(TM,\widehat{\pi},T\Gamma,TS)$ 
is a pre-\H-chart. 

To this end we note that for arbitrary $(x,u),(y,v)\in TM$ with 
$(x,u )\widehat{R} (y,v)$ we have $\pi(x)=\pi(y)$ 
and then the 
diffeomorphism $h\colon W\to W'$ with 
$h\in\Gamma$ and 
$h(x)=y$ gives rise to the diffeomorphism 
$Th\colon TW\to TW'$ 
between the open neighbourhoods $TW$ and $TW'$ of the points $(x,u)\in TM$ and $(y,v)\in TM$, respectively. 
We have $Th\in T\Gamma$.
Moreover, since $\pi\circ h=\pi\vert_W$, 
we obtain $\widehat{\pi}\circ Th=\widehat{\pi}\vert_{TW}$. 
This shows that 
$(TM,\widehat{\pi},T\Gamma,TS)$ 
is a pre-\H-chart.

It remains to prove that 
if $(M,\pi,\Gamma,S)$  is an \H-atlas, 
then $(TM,T\pi,T\Gamma,TS)$
is an \H-atlas. 
To this end we 
note that 
the equivalence relation defined by the action of 
the diffeomorphism pseudogroup $T\Gamma$ 
on $TM$ (cf. Lemma~\ref{pseudogroup}) coincides with the above equivalence relation $\widehat{R}$. 
Moreover, the action of  
$T\Gamma$ 
on $TM$ is free. 
In fact,  
if we have $(x,u)\in TM$  and $h\in \Gamma(\pi)$ with $Th(x,u)=(x,u)$, 
then $h(x)=x$, hence the germ of $h$ at $x\in M$ is the identity (since 
$(M,\pi,\Gamma,S)$ 
is an \H-atlas hence the action of $\Gamma$ on $M$ is free). 
Therefore the germ of $Th$ at $(x,u)\in TM$ coincides with the germ of the identity map of~$TM$. 
It now follows by Proposition~\ref{actionG} that the quotient map 
$\widehat{\pi}\colon TM\to TM/\widehat{R}$ 
defines 
an \H-atlas 
$(TM,\widehat{\pi},T\Gamma,TS)$.  
Consequently, by the above commutative diagram, 
$(TM,T\pi,T\Gamma,TS)$ 
is an \H-atlas as well. 

\ref{preHtg_item4}. 
To check that the diagram \eqref{Tpi} is commutative, let $x_0\in M$ and $v\in T_{x_0}M$. 
We have $(T\pi)(v)=(T_{x_0}\pi)(v)$ ($\in T_{\pi(x_0)}S$).  
Therefore $p_S((T\pi)(v))=\pi(x_0)=\pi(p_M(v))$, 
and thus $p_S\circ T\pi=\pi\circ p_M$. 

To check that the projection $p_S\colon TS\to S$ is surjective, 
we need the fact that $S$ is a pre-\H-manifold, hence it has a pre-\H-atlas 
$(M',\pi',\Gamma',S)$. 
Using the commutative diagram \eqref{Tpi} for this pre-\H-atlas, 
we see that the mapping $p_{M'}\colon TM'\to M'$ is a lift of $p_S\colon TS\to S$ 
with respect to the pre-\H-charts 
$(M',\pi',\Gamma',S)$ and $(TM',T\pi',T\Gamma',S)$
(cf. \ref{preHtg_item3}.\ above). 
Since $p_{M'}\colon TM'\to M'$ is a submersion of Banach manifolds, it then follows that $p_S\colon TS\to S$ is a submersion of pre-\H-manifolds. 

Finally, using the trivial commutative diagram 
\begin{equation*}
\xymatrix{
	M 
	\ar[r]^{\id_M} 
	\ar[d]_{\id_M}  
	&M \ar[d]^{\pi} 
	\\
	M  \ar[r]^{\pi} 
	&S
}
\end{equation*}
where the vertical arrow $\id_M\colon M\to M$ is regarded as a pre-\H-chart of the Banach manifold~$M$, we see that the horizontal arrow $\id_M$ is a lift of $\pi$, hence $\pi$ is both a submersion and an immersion of pre-\H-manifolds. 
\end{proof}

\begin{definition}
\label{fields}
\normalfont 
Let $S$ be an \H-manifold, hence $TS$ is again an \H-manifold by Proposition~\ref{preHtg}. 
A (smooth) \emph{tangent vector field} on $S$ is a smooth mapping $V\colon S\to TS$ satisfying $V(s)\in T_s S$ for every $s\in S$. 
We denote by $\Xc(S)$ the set of all tangent vector fields on $S$, 
and it is clear that $\Xc(S)$ is a real vector space with pointwise addition and scalar multiplication. 
\end{definition}

\begin{definition}
	\label{fields_invar}
\normalfont
If $M$ is a Banach manifold, and $\Gamma$ is a 
diffeomorphism pseudogroup on~$M$, 
then we denote by $\Xc^\Gamma(M)$ the set of all 
smooth vector fields  $X\colon M\to TM$ 
which are \emph{$\Gamma$-invariant}, that is, 
satisfy the condition $(T_xh)(X(x))=X(h(x))$ for every diffeomorphism $h\in\Gamma$ and every point $x\in U$, where $h\colon U\to U'$. 

Since the diffeomorphisms preserve the Lie bracket of vector fields, 
it follows that $\Xc^\Gamma(M)$ is a subalgebra of the Lie algebra $\Xc(M)$ of all (smooth, global) vector fields on~$M$.  
\end{definition}

\begin{remark}
\label{fields_bracket}
\normalfont 
If $(M,\pi,\Gamma,S)$ is an \H-atlas 
(cf. Definition~\ref{pre_H_def})
then it is easily checked, using Proposition~\ref{preHtg}, that the mapping 
$$\pi_*\colon \Xc^\Gamma(M)\to \Xc(S),\quad (\pi_*(X))(\pi(x)):=(T_x\pi)(X(x))\in T_{\pi(x)}S,$$
is a well-defined isomorphism of real vector spaces. 
We just note that, for any $X\in \Xc^\Gamma(M)$, the mapping $\pi_*(X)\colon S\to TS$ is smooth since it has the smooth lift $X\colon M\to TM$ with respect to the \H-atlases $\pi\colon M\to S$ and $T\pi\colon TM\to TS$. 

Since $\Xc^\Gamma(M)$ is a Lie algebra of vector fields on~$M$, 
we endow $\Xc(S)$ with the unique Lie bracket for which the mapping $\pi_*$ is a Lie algebra isomorphism. 
\end{remark}

If $f\colon S_1\to S_2$ is a pre-\H-smooth map between two pre-\H-manifolds $S_1$ and $S_2$,  
the map $Tf\colon TS_1\to TS_2$ is well defined. 
Note that if $(M_i,\pi_i, \Gamma_i,S_i)$ is a pre-\H-atlas for $S_i$, for $i=1,2$, 
then for any $x_i\in M_i$ we have a neighbourhood $U_i$ of $x_i$ and smooth  map $\hat{f}\colon U_1\to U_2$ which is a local lift of $f$, that is, $\pi_2\circ \hat{f}=f\circ \pi_1\vert_{U_1}$. 
Then moreover $T\hat{f}\colon TU_1\to TU_2$ is a local lift of $Tf$. 

 We end this section by the following result  
 on pre-\H-diffeomorphisms.

\begin{proposition}
\label{EtalMapB-Manifold}  
If $S_1$ and $S_2$ are pre-\H-manifolds 
and $f\colon S_1\to S_2$  is a pre-\H-smooth bijective map for which $T_sf\colon T_sS_1\to T_{f(s)}S_2$ is bijective for every $s\in S_1$, 
then the inverse mapping  $f^{-1}\colon S_2\to S_1$ is pre-\H-smooth. 
Equivalently, the mapping  $f\colon S_1\to S_2$ is a pre-\H-diffeomorphism. 
\end{proposition}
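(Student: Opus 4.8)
The plan is to establish an inverse-function theorem for pre-\H-manifolds by reducing, locally, to the classical inverse function theorem for smooth maps between Banach manifolds. Since the assertion is local, I would fix $s_2\in S_2$, set $s_1:=f^{-1}(s_2)$, and aim to produce a local lift of $f^{-1}$ around $s_2$ in the sense of Definition~\ref{H-smooth}. Using that $f$ is pre-\H-smooth, I first choose pre-\H-charts $\pi_i\colon M_i\to S_i$ with $s_i\in\pi_i(M_i)$, points $x_i\in\pi_i^{-1}(s_i)$, and a local lift $\hat f\colon U_1\to U_2$ of $f$ with $\hat f(x_1)=x_2$ and $\pi_2\circ\hat f=f\circ\pi_1\vert_{U_1}$. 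The goal is to show that, after shrinking $U_1$ and $U_2$, the map $\hat f$ is a diffeomorphism of Banach manifolds; once that is achieved, the identity $f^{-1}\circ\pi_2=\pi_1\circ\hat f^{-1}$ on $\hat f(U_1)$ exhibits $\hat f^{-1}$ as a local lift of $f^{-1}$ with $\hat f^{-1}(x_2)=x_1$, which is exactly what is required to conclude that $f^{-1}$ is pre-\H-smooth at $s_2$.

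By the inverse function theorem for Banach manifolds it suffices to prove that the bounded linear operator $T_{x_1}\hat f\colon T_{x_1}M_1\to T_{x_2}M_2$ is a toplinear isomorphism. The main device is the commuting square $T_{x_2}\pi_2\circ T_{x_1}\hat f=T_{s_1}f\circ T_{x_1}\pi_1$, obtained by applying the tangent functor to $\pi_2\circ\hat f=f\circ\pi_1\vert_{U_1}$; here the chart tangent maps $T_{x_i}\pi_i$ are surjective by Proposition~\ref{preHtg}\eqref{preHtg_item1}, while $T_{s_1}f$ is bijective by hypothesis.

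Injectivity of $T_{x_1}\hat f$ is the easy half, and I would dispatch it first. If $T_{x_1}\hat f(v)=0$, then chasing the square gives $T_{s_1}f(T_{x_1}\pi_1(v))=0$, and injectivity of $T_{s_1}f$ forces $T_{x_1}\pi_1(v)$ to be the class of $0$ in $T_{s_1}S_1$. By the description of the tangent-vector equivalence in Definition~\ref{tgH_def}, this means there is a transverse diffeomorphism $h\in\Gamma(\pi_1)$ fixing $x_1$ with $T_{x_1}h(v)=0$; since $h$ is a local diffeomorphism, its tangent map $T_{x_1}h$ is invertible, whence $v=0$. Thus the set-theoretic fibre of $T_{x_1}\pi_1$ over the zero class is already $\{0\}$, even though $T_{x_1}\pi_1$ itself need not be injective on nonzero vectors.

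The heart of the matter, and the step I expect to be the main obstacle, is the surjectivity of $T_{x_1}\hat f$. Given $w\in T_{x_2}M_2$, pulling $T_{x_2}\pi_2(w)$ back through the bijection $T_{s_1}f$ and then through the surjection $T_{x_1}\pi_1$ yields $v$ with $T_{x_2}\pi_2(T_{x_1}\hat f(v))=T_{x_2}\pi_2(w)$; the difficulty is that this only forces $T_{x_1}\hat f(v)$ and $w$ to lie in the same tangent class, that is, $w=T_{x_2}k(T_{x_1}\hat f(v))$ for some $k\in\Gamma(\pi_2)$ fixing $x_2$, rather than $w=T_{x_1}\hat f(v)$. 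Closing this gap is precisely where the structure must be exploited: one has to upgrade the fibrewise bijectivity of $T_{s_1}f$, which is a statement about the quotient tangent \emph{sets}, to toplinear surjectivity of a genuine lift, for instance by passing to the competing lift $k\circ\hat f$ and arguing that $\Ran T_{x_1}\hat f$ does not depend on the admissible lift (as happens when the relevant isotropy acts trivially on tangent vectors, e.g.\ in the \H-atlas case, where the action is free). Alternatively, once this circle of ideas shows that $f$ is a surjective submersion of pre-\H-manifolds, the conclusion follows at once from Proposition~\ref{H-submersion} applied to $\phi:=f^{-1}$, since $\phi\circ f=\id_{S_1}$ is smooth.
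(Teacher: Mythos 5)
Your overall strategy coincides with the paper's: fix the pair $(s_1,s_2)$, take a local lift $\hat f\colon U_1\to U_2$ with $\hat f(x_1)=x_2$ and $\pi_2\circ\hat f=f\circ\pi_1\vert_{U_1}$, show that $T_{x_1}\hat f$ is a toplinear isomorphism, apply the inverse function theorem for Banach manifolds, and read off $(\hat f\vert_{U_1})^{-1}$ as a lift of $f^{-1}$ from the identity $\pi_1\circ(\hat f\vert_{U_1})^{-1}=f^{-1}\circ\pi_2\vert_{\hat f(U_1)}$. Your injectivity argument for $T_{x_1}\hat f$ is correct, and indeed more detailed than the paper, whose proof compresses the entire linear step into the single assertion that invertibility of $T_{s_1}f$ implies invertibility of $T_{x_1}\hat f$.

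However, as submitted your argument is incomplete at exactly the point you yourself flag: the surjectivity of $T_{x_1}\hat f$ is never established. The diagram chase only yields that every $w\in T_{x_2}M_2$ lies in $\bigl(T_{x_2}k\bigr)\bigl(\Ran T_{x_1}\hat f\bigr)$ for some germ $k\in\Gamma(\pi_2)$ fixing $x_2$, with $k$ possibly depending on $w$, and neither of your proposed repairs is carried out. The first (independence of $\Ran T_{x_1}\hat f$ from the admissible lift) is a statement rather than an argument, and it is only evident when the isotropy acts trivially on tangent vectors, i.e.\ in the \H-case --- where one does not need it anyway, since there $T_{x_i}\pi_i$ is bijective by Proposition~\ref{preHtg}\eqref{preHtg_item2}, so $T_{x_1}\hat f=(T_{x_2}\pi_2)^{-1}\circ T_{s_1}f\circ T_{x_1}\pi_1$ is a bounded linear bijection, hence a toplinear isomorphism by the open mapping theorem. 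The second repair, invoking Proposition~\ref{H-submersion} with $\phi:=f^{-1}$, is circular: to know that the bijection $f$ is a (surjective) submersion of pre-\H-manifolds one needs a lift with surjective differential, which is precisely the missing step. Your skepticism is legitimate in the strictly pre-\H-setting: pointwise, bijectivity of the set map $T_{s_1}f$ does not force surjectivity of $T_{x_1}\hat f$ when the isotropy is nontrivial (take $M_2=\RR^2$ with the rotation pseudogroup and the lift $t\mapsto(t,0)$ of a map into the quotient: at the origin the induced map on tangent classes is a bijection of half-lines while $T_0\hat f$ has one-dimensional range --- though there the hypothesis of the proposition fails at nearby points). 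It is fair to note that the paper's own proof passes over this same subtlety in one line, so you have isolated a genuine weak spot of the argument in the pre-\H generality; but your proposal proves only the injective half and therefore does not constitute a proof of the proposition.
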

 
\begin{proof}  
Let $s_1\in S_1$ and $s_2\in S_2$ with $f(s_1)=s_2$ and 
$(M_j,\pi_j, \Gamma_j,S_j)$ 
be a pre-\H-atlas for $j=1,2$. 
Select any $x_j\in \pi_j^{-1}(s_j)$. 
We must prove that there exists an open neighborhood $U_j$ of $x_j$ and a smooth map $\beta\colon U_2\to U_1$ such that $\pi_1\circ \beta=f^{-1}\circ {\pi_2}\vert_{U_2}$.  

Since $f$ is smooth, there exists a smooth map $\hat{f}\colon V_1\to V_2$ with $x_j\in V_j\subseteq M_j$ for $j=1,2$, $\hat{f}(x_1)=x_2$, and $f\circ {\pi_1}\vert _{V_1}=\pi_2\circ \hat{f}$. 
The differential $T_{s_1}f\colon T_{s_1}S_1\to T_{s_2}S_2$ is invertible, therefore the differential $T_{x_1}\hat{f}\colon T_{x_1} M_1\to T_{x_2}M_2$ is invertible. 
Then, since both $M_1$ and $M_2$ are Banach manifolds, it follows by the local inversion theorem that there exists open neighborhoods $V'_1$ of $x_1$  and $V'_2=\hat{f}(V'_1)$  of $x_2=\hat{f}(x_1)$ such that  that $\hat{f}\vert_{V'_1}\colon V'_1\to V'_2$ is a diffeomorphism.   
If we define $U_1:=V_1\cap V'_1$ and $U_2:=\hat{f}(W_1)$ it follows that $U_1\subseteq M_1$ and $U_2\subseteq M_2$ are open subsets with $x_j\in U_j$ for $j=1,2$ and $\hat{f}\vert_{U_1}\colon U_1\to W_2$ is a diffeomorphism. 
On the other hand, since $f\circ {\pi_1}\vert _{V_1}=\pi_2\circ \hat{f}$ 
and $U_j\subseteq V_j$ for $j=1,2$, we obtain $f\circ {\pi_1}\vert _{U_1}=\pi_2\circ \hat{f}\vert_{U_1}$, 
which further implies $ {\pi_1}\circ(\hat{f}\vert_{U_1})^{-1}=f^{-1}\circ \pi_2\vert _{W_2}$. 
Since the mapping $\beta:=(\hat{f}\vert_{W_1})^{-1}\colon U_2\to U_1$ is smooth 
and $U_j$ is an open neighbourhoods of the point $x_j\in \pi_j^{-1}(s_j)$ for $j=1,2$, 
and we have $ {\pi_1}\circ\beta=f^{-1}\circ \pi_2\vert _{W_2}$
it follows that the mapping $f^{-1}\colon S_2\to S_1$ is pre-\H-smooth.  
\end{proof}

\section{\H-manifold structures on leaf spaces of  regular foliations
}\label{Sect3}
This section contains a study of the holonomy for regular foliations in the general framework of Banach manifolds, 
culminating with our main construction of the natural structure of an \H-manifold on the leaf space of a foliation without holonomy  (Theorem~\ref{QuotientBanachManifold}). 
As an illustration of that general result, we prove that the homogeneous space defined as the quotient of  Banach-Lie subgroup by a split immersed subgroup has the natural structure of a \H-manifold (Proposition~\ref{G/K}). 
These results play a key role in the Lie theory of \H-groups developed in Section~\ref{Sect4}. 

\subsection{Holonomy transport of a regular foliation}\label{holonomy}
Consider a regular foliation~$\Fc$ in a Banach manifold $M$ (modeled on the Banach space $\mathbb{M}$)  and let  $E=T\Fc\subseteq TM$ be the corresponding subbundle tangent to $\Fc$. 
(See \cite{Lan}, and also \cite{Pel} for a more general framework.)  
We denote by $\EE$   the typical fiber of $E$ and fix a closed linear space   $\FF$  such that $\mathbb{M}=\EE\oplus \FF$. 
For every open subset $V\subseteq M$ we denote by  $\Fc_V$ the induced foliation  on $V$.  
According to the proof of Frobenius Theorem, for each $x\in M$, there exist an open subset $V\subseteq M$ with $x\in V$ and a chart $\varphi\colon V\to \EE\times \FF$ such that
 if $p_2\colon \EE\times \FF\to\FF$ is the Cartesian projection onto the second factor, we have
\begin{enumerate}[{\rm(1)}]
	\item\label{fol_item1} $\varphi(V)=\bar{U}\times\bar{T}$ and $\varphi\colon V\to \bar{U}\times\bar{T}$ is a diffeomorphism, where  $\bar{U}\subseteq \EE$ and $\bar{T}\subseteq\FF$ are simply connected open subsets; 
	\item\label{fol_item2} $p_2\circ \varphi:V\to \bar{T}$ is a submersion;
	\item\label{fol_item3} $\Fc_V$ is the foliation on $V$ defined by the fibers of the submersion $p_2 \circ\varphi$.
\end{enumerate}
We call $(V,\varphi) $ a \emph{foliated chart} and each fiber $P_x=(p_2\circ\varphi)^{-1}(x)$ for $x\in\bar{T}$  is called a \emph{plaque} of $\Fc$ in $V$.
Moreover if $(V_i,\varphi_i)$ for $i=1,2$, are two foliated charts with $V_1\cap V_2\not=\emptyset$, then 
\begin{enumerate}[{\rm(1)}]
	\setcounter{enumi}{3}
	\item\label{fol_item4} every $x\in V_1\cap V_2$ has a neighbourhood of type
	$\varphi_1^{-1}(\bar{U}_1'\times \bar{T}'_1)$ with $\bar{U}_1'\times \bar{T}_1'\subseteq \bar{U}_1\times \bar{T}_1$ such that 
	$$(\varphi_2\circ\varphi_1^{-1})\vert_{\bar{U}_1'\times \bar{T}_1'} :  \bar{U}_1'\times \bar{T}_1'\to \bar{U}_2\times \bar{T}_2$$
	is a local diffeomorphism of the form
	\begin{equation}\label{changechart}
	(\bar{x},\bar{t})\mapsto (f(\bar{x},\bar{t}),g(\bar{t}))\in \bar{U}_2\times \bar{T}_2
	\end{equation}
	In particular, $g\colon \bar{T}_1\to\bar{T}_2$ is a local diffeomorphism around $(p_2\circ\varphi)(x)\in\bar{T}_1$.
\end{enumerate}

\begin{definition} \label{transversal}
\normalfont
A Banach submanifold $\Tc$  of $M$ is a called a (local) \emph{transversal} of the foliation $\Fc$, if there exists   a foliated chart $(V,\varphi)$ such that $\Tc=\varphi^{-1}(\bar{x})\times \bar{T}$  for some $\bar{x}\in \bar{U}$ (according to the previous notations).
\end{definition}

As in finite dimensions, a regular foliation $\Fc$ in a Banach manifold $M$ can be alternatively defined by a family of local charts $\{(V_\alpha,\varphi_\alpha)\}_{\alpha\in A}$ with the above properties \eqref{fol_item1}--\eqref{fol_item3}
for each $\alpha\in A$, such that  $\{V_\alpha\}_{\alpha\in A}$ is a covering of $M$ and, for each $V_\alpha\cap V_\beta\not=\emptyset$, the map  $\varphi_\beta\circ\varphi_\alpha^{-1}$ satisfies the condition~\eqref{fol_item4}. 

As above, let $\Fc$ be a regular foliation $\Fc$ associated to a subbundle $T\Fc=E\subseteq TM$ whose typical fiber is $\EE$   and fix a direct sum decomposition $\mathbb{M}=\EE\oplus\FF$ as above. 
Consider the open unit balls $B_1\subseteq \EE$ and $B_2\subseteq\FF$ 
with respect to some norms that define the topology of on $\EE$ and $\FF$, respectively.
For every $x\in M$ there exists a foliated chart $(V,\varphi)$ with 
$x\in V$,  $\varphi(x)=(0,0)\in \EE\times \FF$, and $\varphi(V)=B_1\times B_2$. 

If the Banach manifold $M$ is $C^0$-paracompact, as in finite dimensions 
(cf. \cite[Lemma 1.3.3]{Wal})  we we can construct an atlas $\Vc=\{(V_\alpha,\varphi_\alpha)\}_{\alpha\in A}$  with the following properties: 
\begin{enumerate}[{\rm(i)}]
	\item\label{nice_item1} the covering $\{V_\alpha\}_{\alpha\in A}$ is locally finite;
	\item\label{nice_item2} we have $\varphi_\alpha(V_\alpha)=B_1\times B_2$;
	\item\label{nice_item3}  if $V_\alpha\cap V_\beta\not=\emptyset$ then the closure of $V_\alpha\cup V_\eta$ is contained in a foliated chart.
\end{enumerate}
In general, any atlas $ \mathcal{V}$ of a Banach manifold with the above properties will be  called a \emph{nice atlas}. 
A foliation of a (not necessarily $C^0$-paracompact) Banach manifold for which a nice atlas exists is called a \emph{nice foliation}.

\begin{remark}\label{IntersectionPlaques} 
\normalfont 
If $V_\alpha$ and $V_\beta$ are chart domains of a nice atlas, and if $V_\alpha\cap V_\beta\not=\emptyset$, 
then from property~\eqref{nice_item3} 
each plaque of $V_\alpha$ meets at most one 
plaque of $V_\beta$.
\end{remark}

\begin{convention}\label{Nicefoliation}
	From now on we consider only nice foliations and so every regular foliation will be assumed to be a nice foliation.
\end{convention}

\begin{definition}\label{M/F}
\normalfont 
A  (nice)  atlas $\mathcal{V}$ of a Banach manifold $M$ 
defines a structure of \emph{foliated manifold} on $M$  
and a topology called the \emph{foliated topology}.\footnote{This terminology makes sense  for  any atlas of foliated charts even if it is not a nice atlas.}

Each connected component of the foliated topology is a called a \emph{leaf}.  
This partition into leaves of $M$  gives rise to an equivalence relation  on $M$ 
and we denote by $M/\Fc$ the quotient space provided with the  quotient topology of the original topology of the Banach manifold~$M$.
\end{definition}

\begin{remark}\label{LeafTopology} 
\normalfont 
Let $L$ be a leaf of a regular foliation $\Fc$ and consider 
any atlas $\Vc=\{(V_\alpha,\varphi_\alpha)\}_{\alpha\in A}$ 
consisting of foliated charts.   
We denote $\Pc(\alpha)_L$  
the set of plaques of  $V_\alpha$ which meet $L$ 
(i.e., the plaques which are contained in $L$)
and if $P\in\Pc(\alpha)_L$  
we define $\varphi_\alpha^P:=\varphi_\alpha\vert_P$. 
Then the family  $\{(P,\varphi_\alpha^P)\}_{\Pc(\alpha)_L, \alpha\in A}$ 
 is an atlas for the structure of Banach manifold of~$L$.
\end{remark}

 Consider a nice atlas $\Vc=\{(V_\alpha,\varphi_\alpha)\}_{\alpha\in A}$ of the Banach manifold $M$ and for every $\alpha\in A$ let $x_\alpha\in V_\alpha$ with  $\varphi_\alpha(x_\alpha)=(0,0)\in\EE\times \FF$.  
 We also define $$\Tc_\alpha:=\varphi_\alpha^{-1}(\{0\}\times B_2)\subseteq V_\alpha 
 \text{ and  }\bar{T}_\alpha:= \{0\}\times B_2,$$ 
 and $\pi_\alpha:=p_2\circ \varphi_\alpha\colon V_\alpha\to\FF$. 
If $V_\alpha\cap V_\beta\not=\emptyset$, let $g_{\alpha\beta}$ be the local diffeomorphism of $\FF$ associated to transition map $\varphi_\beta\circ\varphi_\alpha^{-1}$ as in \eqref{changechart}.  
Then there exist open subsets $\Sc_\alpha\subseteq\Tc_\alpha$ and $\Sc_\beta\subseteq\Tc_\beta$ with $x_\alpha\in\Sc_\alpha$ and $x_\beta\in\Sc_\beta$ for which the restricted mapping 
$$h_{\alpha\beta}:=g_{\alpha\beta}\vert_{\Sc_\alpha}
\colon\Sc_\alpha\to\Sc_\beta$$ is a diffeomorphism with $h_{\alpha\beta}(x_\alpha)=x_\beta$.

 \begin{remark}\label{plaqueqq}
\normalfont  
In the above notation, we note the following facts. 
\begin{enumerate}[1.]
\item
For every $x\in M$, there exists a  foliated  chart $(V,\varphi )$ with 
$\varphi(V)=B_1\times B_2\subseteq\EE\times \FF$, $x\in V$ and $\varphi(x)=(0,0)$. 
Then $\mathcal{P}_x:=\varphi^{-1}(B_1\times\{0\})$ is a plaque through $x$ 
and $\Tc_x:=\varphi^{-1}(\{0\}\times B_2)$  a local transversal  through $x$.
\item
If $\mathcal{P}_\alpha$ is the plaque through $x_\alpha$ and $z\in \mathcal{P}_\alpha$  
then $\Tc'_z:=\varphi^{-1}(\{\varphi(z)\}\times \bar{T}_\alpha)$ is a transversal through $z$. 
Therefore, for any $x\in V_\alpha$ we can find a transversal  $\Tc_x$ 
with $x\in\Tc_x$ and 
 $\varphi_\alpha(\Tc_x)=\bar{T}_\alpha$. 
\end{enumerate}
 \end{remark}

The disjoint union 
$$\Tc:=\bigsqcup_{\alpha\in A}\Tc_\alpha$$ 
will be called a \emph{global transversal} to $\Fc$. 
Now let 
$$\bar{T}:=\bigsqcup_{\alpha\in A}\bar{T}_\alpha.$$  
Since $\bar{T}_\alpha\subseteq\FF$ is an open subset for every $\alpha\in A$, we can provide $\bar{T}$ with a (Hausdorff) Banach manifold structure.  
Note that the mapping 
$$\varphi_\Tc:=\bigsqcup\limits_{\alpha\in A}(\varphi_\alpha\vert_{\Tc_\alpha})\colon\Tc\to\bar{T}$$ 
is bijective hence we can provide $\Tc$ with a (Hausdorff) Banach manifold structure for which the above bijective map $\varphi_\Tc$ is a diffeomorphism.

\emph{We will make no difference if there is no ambiguity in the context considered  
in which a global transversal is used then we shall identify $\Tc$ with $\bar{T}$}.

Now let $L$ be a leaf of the foliation $\Fc$ and  $c\colon [0,1]\to L$ be a piecewise smooth curve. 
We denote $y_0:=c(0) $ and $y_1:=c(1)$.  
According to Remark \ref{plaqueqq}, 
let $\Tc_{x_0}$ and $\Tc_{x_1}$ be 
two local transversals to $\Fc$ in $x_0$ and $x_1$ respectively.

 We can cover the compact subset $c([0,1])\subseteq M$ by a  minimal finite  subfamily 
 of foliated charts $(V_i,\varphi_i)$, $i=0,\dots,n$  of the nice atlas $\{(V_\alpha,\varphi_\alpha)\}_{\alpha \in A}$ such that $V_{i-1}\cap V_{i}\not=\emptyset$ for $i=1,\dots, n$.  
  Such a covering of $c([0,1]$ will be called a \emph{nice covering of $c$}. 
We can choose  a sequence $t_0=0\leq t_1\leq\cdots\leq t_i\leq\cdots\leq t_n\leq t_{n+1}=1$ 
with $x_i:=c(t_i)\in V_{i-1}\cap V_{i}$ for $i=1,\dots, n$, 
where $x_0=y_0=c(t_0)$ and $x_{n+1}=y_1=c(t_{n+1})$. 
For each $i=0,\dots,n+1$, according to Remark \ref{plaqueqq} 
we have  a local transversal $\Tc_i$ through $x_i$ such that $\Tc_0=\Tc_{x_0}$ and $\Tc_{n+1}=\Tc_{x_1}$ are 
the initial fixed local transversals.
As above, to the transition map $\varphi_{i+1}\circ \varphi_{i}^{-1}$  
there corresponds a local diffeomorphism $g_{i, i+1}$ of $\FF$ 
which induces a diffeomorphism $h_i\colon \Sc_{i}\to\Sc_{i+1}$ 
for suitable open subsets $\Sc_i\subseteq\Tc_i$ with $x_i\in\Sc_i$ and $h_i(x_i)=x_{i+1}$ for $i=0,\dots,n$. 
Then the composition 
$$h_c:=h_n\circ\cdots\circ h_0\colon\Sc_0\to\Sc_{n+1}$$ 
is a local diffeomorphism from $\Tc_0$  
into $\Tc_{n+1}$  with 
$h(y_0)=y_1$.

Just as in the case of finite-dimensional manifolds (see e.g., \cite{Wal})
the above map~$h_c$ is called the \emph{\it holonomy transport along the curve~$c$}.
Note that according to Remark \ref{plaqueqq} and the definition of~$\Tc$, $h_c$ 
belongs to the pseudogroup $\Gamma(\Tc)$ of all local diffeomorphisms of~$\Tc$. 
In order to emphasize the role of the nice atlas $\Vc$ in the above construction, we denote by 
$\Gamma(\Vc,\Tc)$ 
the pseudogroup of $\Vc$ generated in the sense of \cite[Def. 1.1.3]{Wal} by the holonomy transport  along curves in the leaves of $\Fc$. 

\begin{remark}\label{chgtparameter}
	\normalfont 
With the above notation we record the following facts: 
	\begin{enumerate}
\item[1.] Let $c\colon [0,1]\to L$ be a piecewise smooth piecewise and we define the curve  $c^-\colon  [0,1]\to L$,  
$c^-(s)=c(1-s)$. 
Then the holonomy transport with respect to  these curves satisfies 
$h_{c^-}=(h_{c})^{-1}$.
 \item[2.] Since the holonomy transport depends only of the family of plaques which cover $c([0,1])$, 
the corresponding local diffeomorphism does not depends of the parametrization of~$c$. 
\end{enumerate}
\end{remark}
 
 More generally, we have the following result in the above setting of regular foliations on Banach manifolds, which is similar to the finite-dimensional case, cf. e.g., \cite[page 11]{Wal}.
 
 \begin{proposition}\label{dhcDependHomotopy} 
The holonomy transport $h_c$ along a curve $c\colon [0,1]\to L$ in a leaf $L$ of the regular foliation $\Fc$ 
depends only of the class of $C^0$-homotopy of $c$ in $L$ with fixed endpoints. 
 \end{proposition}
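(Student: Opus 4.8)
The plan is to reduce the statement to the standard finite-dimensional mechanism of holonomy invariance, which rests on two independent observations: first, that the holonomy transport is unchanged under a refinement or different choice of the nice covering of a fixed curve, and second, that it is unchanged under a \emph{small} homotopy, from which the general case follows by a compactness subdivision of the homotopy square. I would begin by establishing a \emph{well-definedness lemma}: if $c$ is covered by two minimal nice coverings $(V_i,\varphi_i)$ and $(V'_j,\varphi'_j)$ with associated subdivisions of $[0,1]$, then the two resulting local diffeomorphisms $h_c$ and $h'_c$ have the same germ at $x_0=c(0)$. The key point here is Remark~\ref{IntersectionPlaques}: when two foliated charts of a nice atlas overlap, each plaque of one meets \emph{at most one} plaque of the other, so the transition diffeomorphisms $h_i$ of $\FF$ are forced, and any two admissible subdivisions have a common refinement along which the composite local diffeomorphisms agree as germs. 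This is exactly the Banach analogue of the cocycle compatibility used in \cite[Def.~1.1.3]{Wal}.

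Next I would treat the case of a \emph{$C^0$-small} homotopy. Let $H\colon[0,1]\times[0,1]\to L$ be a continuous homotopy with fixed endpoints, $H(\cdot,0)=c_0$ and $H(\cdot,1)=c_1$, and suppose first that the entire image $H([0,1]^2)$ is contained in a single foliated chart $(V,\varphi)$. Then, writing $\pi=p_2\circ\varphi\colon V\to\FF$ as in \eqref{fol_item2}, every plaque of $V$ is a connected component of a fiber of $\pi$; since $H$ takes values in a \emph{single} leaf $L$ and $L\cap V$ is a union of plaques, the composite $\pi\circ H$ is constant in the second (homotopy) variable on each plaque, and the holonomy transport of both $c_0$ and $c_1$ is simply the identification of transversal coordinates induced by $\varphi$. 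Hence $h_{c_0}$ and $h_{c_1}$ coincide as germs. The only subtlety is that $L\cap V$ may be a countable union of plaques; but since the endpoints are fixed and $H$ is continuous, the values $H(s,\cdot)$ stay in a single plaque for each $s$, so the transverse coordinate is locally constant in the homotopy parameter and the two transports agree.

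I would then pass from the local case to the general one by a standard subdivision argument. By compactness of $[0,1]^2$ and the Lebesgue number lemma applied to the open cover $\{H^{-1}(V_\alpha)\}_{\alpha\in A}$ pulled back along $H$, choose partitions $0=s_0<\dots<s_m=1$ and $0=u_0<\dots<u_k=1$ so fine that $H$ maps each small rectangle $[s_{p-1},s_p]\times[u_{q-1},u_q]$ into a single foliated chart. Crossing one vertical edge of the grid at a time, each elementary modification of the curve stays within one chart, so by the small-homotopy case above the holonomy germ is unchanged at every step; composing along the $m\times k$ grid and using the well-definedness lemma to reconcile the intermediate coverings, one concludes $h_{c_0}=h_{c_1}$ as germs, and hence as the local diffeomorphisms between the fixed transversals $\Tc_0=\Tc_{x_0}$ and $\Tc_{n+1}=\Tc_{x_1}$.

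The main obstacle I expect is the well-definedness lemma, i.e.\ checking carefully that the holonomy transport does not depend on the chosen nice covering and subdivision of $c$; once this invariance is in place, the homotopy invariance is a routine compactness-and-subdivision argument identical to the finite-dimensional proof in \cite{Wal}. Remark~\ref{IntersectionPlaques}, which is the Banach-space incarnation of the ``one plaque meets at most one plaque'' property, is precisely what makes the transition diffeomorphisms canonical and hence makes this lemma go through without any change from the finite-dimensional setting.
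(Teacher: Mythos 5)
Your plan is correct, but it takes a genuinely more detailed route than the paper, whose proof is a one-shot compactness argument on the image of the homotopy rather than a subdivision of the homotopy square. The paper covers the compact set $K=\bar{c}([0,1]\times[0,1])\subseteq L$ by finitely many chart domains, extracts from these a finite family of plaques $P_1,\dots,P_m$ covering $K$, and concludes directly that $h_c=h_{c'}$ because both transports can be constructed ``using the same family of plaques''; the combinatorics of comparing the two chains of plaques is left implicit, and the independence of $h_c$ from the chosen covering is delegated to the pseudogroup-generation framework of \cite[Def.~1.1.3]{Wal}. Your proposal makes exactly these suppressed steps explicit: the well-definedness lemma (common refinement together with Remark~\ref{IntersectionPlaques}) justifies comparing transports built from different coverings; the single-chart case shows an elementary homotopy does not move the germ (the image of the connected square inside one chart is a connected subset of $L\cap V$ in the leaf topology, hence lies in a single plaque, so the transverse coordinate is constant); and the Lebesgue-number grid induction propagates this across the square. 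What the paper's version buys is brevity---compactness of $K$ in the leaf topology yields the finite plaque family at once, and Remark~\ref{chgtparameter} (the transport depends only on the covering family of plaques) is invoked to finish. What your version buys is a genuine justification of the step the paper merely asserts: two curves covered by the same finite family of plaques do not \emph{a priori} define the same germ unless their plaque chains are related step by step, and that is precisely what your grid argument establishes. One point worth stating explicitly when you write this up: the intermediate curves $c_u=H(\cdot,u)$ are merely continuous, not piecewise smooth, so you should record (as the paper does in Remark~\ref{chgtparameter}) that the holonomy transport is determined by the chain of plaques alone and therefore makes sense for continuous curves; your argument already operates at that level, so no change is needed beyond the remark.
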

 
 \begin{proof} 
Denote $x_0:=c(0),x_1:=c(1)\in L$ and consider another piecewise smooth curve $c'\colon[0,1]\to L$ 
such that $x_0=c'(0)$ and $x_1=c'(1)$ which is $C^0$-homotopic to $c$ with fixed endpoints. 
There exists a continuous map $\bar{c}\colon[0,1]\times [0,1]\to L$ such that 
\begin{align*}
\bar{c}(s,0) & =x_1\text{ and }\bar{c}(s,1)=x_1\text{ for all }s\in [0,1], \\
\bar{c}(0,t) &=c(t)\text{ for all }t\in [0,1],\\
\bar{c}(1,t) &=c'(t)\text{ for all }t\in [0,1]. 
\end{align*}
Since $K:=\bar{c}([0,1]\times [0,1])$ is a compact subset of $L$, 
we can cover $K$ by a finite number of chart domains $V_1,\dots, V_n$.   
From this covering  we have also a finite number of plaques $P_1,\dots, P_m$ which cover $K$.  
Since $c(\cdot)=c(0,\cdot)$ and $c'(\cdot)=c(1,\cdot)$, we obtain 
$$c([0,1])\subseteq\bar{c}([0,1]\times[0,1])=K\subseteq P_1\cup\cdots\cup P_m$$
and it then follows  by the construction of $h_c$ and $h_{c'}$, using the same family of plaques $P_1,\dots, P_m$, that $h_c=h_{c'}$. 
  \end{proof}

\begin{definition}\label{withoutholonomy_def} 
\normalfont 
A regular foliation $\Fc$ of a Banach manifold $M$ is called \emph{without holonomy} 
if for every leaf $L$, every point $x\in L$, and every smooth piecewise  closed curve  $c\colon[0,1]\to L(x)$    
with $c(0)=c(1)=x$, the germ of the holonomy transport $h_c$ at $x$ is the identity.
 \end{definition}

\begin{corollary}\label{simplyconnected}  
Let  $\Fc$ be a regular foliation $\Fc$ of a Banach manifold $M$ such that 
each leaf is simply connected. 
Then $\Fc$ is without holonomy.
\end{corollary}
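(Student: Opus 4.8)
The plan is to reduce the statement directly to the homotopy invariance of the holonomy transport established in Proposition~\ref{dhcDependHomotopy}. First I would fix a leaf $L$ of $\Fc$, a point $x\in L$, and an arbitrary piecewise smooth closed curve $c\colon[0,1]\to L$ with $c(0)=c(1)=x$, exactly as in Definition~\ref{withoutholonomy_def}. Since $L$ is connected it is in particular path-connected, and since it is simply connected the loop $c$ is $C^0$-homotopic, with fixed endpoints $x$, to the constant curve $c_x\colon[0,1]\to L$, $c_x(t):=x$.

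Next I would identify the holonomy transport of the constant curve $c_x$. Choosing a single foliated chart $(V,\varphi)$ around $x$ together with a local transversal $\Tc_x$ through $x$ (cf. Remark~\ref{plaqueqq}), one sees that a nice covering of $c_x([0,1])=\{x\}$ may be taken to consist of this single chart, so that in the construction of the holonomy transport no nontrivial plaque-to-plaque transition occurs. Consequently the associated local diffeomorphism of $\Tc_x$ is the restriction of $\id$, and its germ at $x$ is the germ of the identity.

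Then, by Proposition~\ref{dhcDependHomotopy}, the holonomy transport along a curve in $L$ depends only on the $C^0$-homotopy class of that curve with fixed endpoints; hence $h_c$ and $h_{c_x}$ determine the same germ at $x$. Combining the two previous steps, the germ of $h_c$ at $x$ equals the germ of $\id$, which is precisely the condition required in Definition~\ref{withoutholonomy_def} for $\Fc$ to be without holonomy. As $L$, $x$, and $c$ were arbitrary, this proves the corollary.

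I expect the only delicate point to be verifying that the holonomy transport of the constant curve is indeed the identity germ, that is, confirming that the nice covering and the transversals entering its construction can be arranged so that no genuine plaque transition intervenes; but this is immediate from the construction of $h_c$ recalled above, and once it is in place the homotopy invariance of Proposition~\ref{dhcDependHomotopy} together with the simple connectedness of the leaf does all the remaining work.
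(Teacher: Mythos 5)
Your proposal is correct and follows essentially the same route as the paper, whose proof is simply the invocation of Proposition~\ref{dhcDependHomotopy}; your argument just makes explicit the two steps the paper leaves implicit (homotoping a loop to the constant loop via simple connectedness, and checking that the constant loop's holonomy transport has identity germ). The careful verification of the latter point is a welcome addition but does not change the underlying argument.
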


\begin{proof}
Use Proposition \ref{dhcDependHomotopy}. 
\end{proof}

 Given a nice atlas  $\Vc=\{(V_\alpha,\varphi_\alpha)\}_{\alpha\in A}$ and  global transversal $\Tc=\bigsqcup\limits_{\alpha\in A}\Tc_\alpha$ as above, we have the natural mapping 
 $$\tau\colon\Tc\to M, \quad \tau(x)=x.$$ 
 Since the tangent space of $\Tc_\alpha$ is split in $TM$ by construction, this implies that $\tau$ is a split immersion. 
 Let  $c\colon[0,1]\to L$ be a piecewise curve in a leaf $L$ which joins $x_{\bar{t}_0}\in \Tc$ to $x_{\bar{t}_1}\in \Tc$  with its corresponding  holonomy transport $h_c$, which is 
 a local diffeomorphism of $\Tc$.   
 
 Recalling the holonomy pseudogroup $\Gamma(\Vc,\Tc)$ on $\Tc$ introduced before Remark~\ref{chgtparameter}, we now consider equivalence relation ${R}_\Tc$ defined by $\bar{t} \backsim \bar{t}'$ if and only if there exists $h\in \Gamma(\Vc,\Tc)$ such that $h(\bar{t})=\bar{t}'$. 
 
 \begin{proposition}\label{PFequivTG}  
The equivalence relation $R_\Tc$ is the equivalence relation on $\Tc$ induced by  $\Fc$ and the above mapping $\tau\colon \Tc\to M$ is a complemented immersion which gives rise to a  
bijective mapping $\bar{\tau}\colon \Tc/\Gamma(\mathcal{V},\Tc)\to M/\Fc$.
\end{proposition}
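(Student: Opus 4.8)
The plan is to prove the three assertions of the proposition in turn: that $R_\Tc$ coincides with the equivalence relation induced by $\Fc$, that $\tau$ is a complemented immersion, and that $\bar\tau$ is a well-defined bijection. The second assertion is essentially the observation already made just before the statement: in a foliated chart $T_x\tau$ carries $T_x\Tc_\alpha$ onto $\{0\}\times\FF\subseteq\EE\times\FF$, a closed subspace complemented by the tangent space to the leaf through $x$; since $\tau\vert_{\Tc_\alpha}$ is the chart inclusion, $T_x\tau$ is injective with split closed range, so $\tau$ is a split (equivalently, complemented) immersion in the sense of Section~\ref{Sect2}, and I would simply reaffirm this.

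The heart of the matter is the first assertion. To show that $R_\Tc$ is contained in the relation induced by $\Fc$, suppose $\bar t'=h(\bar t)$ with $h\in\Gamma(\Vc,\Tc)$; since this pseudogroup is generated by the holonomy transports, it suffices to treat $h=h_c=h_n\circ\cdots\circ h_0$. By construction each $h_i\colon\Sc_i\to\Sc_{i+1}$ sends a transversal point to the transversal point on the matching plaque across the overlap $V_i\cap V_{i+1}$, and a plaque is contained in a single leaf; concatenating these plaque segments yields a path in the leaf through $\tau(\bar t)$ joining $\tau(\bar t)$ to $\tau(\bar t')$, so the two images lie on the same leaf. Conversely, suppose $\tau(\bar t)$ and $\tau(\bar t')$ lie on a common leaf $L$, with $\bar t\in\Tc_\alpha$ and $\bar t'\in\Tc_\beta$. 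Being a connected Banach manifold, $L$ is smoothly path-connected, so there is a piecewise smooth curve $c\colon[0,1]\to L$ with $c(0)=\tau(\bar t)$ and $c(1)=\tau(\bar t')$. Using Remark~\ref{plaqueqq} I would take the fixed initial and final transversals in the construction of $h_c$ to be $\Tc_\alpha$ and $\Tc_\beta$; then $h_c\in\Gamma(\Vc,\Tc)$ is defined on a neighbourhood of $\tau(\bar t)$ in $\Tc_\alpha$ and satisfies $h_c(\bar t)=\bar t'$, whence $\bar t\backsim\bar t'$.

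Granting the first assertion, the map $\bar\tau\colon\Tc/\Gamma(\Vc,\Tc)\to M/\Fc$ sending the class of $\bar t$ to the leaf of $\tau(\bar t)$ is well defined and injective, since $\bar t\backsim\bar t'$ holds if and only if $\tau(\bar t)$ and $\tau(\bar t')$ lie on the same leaf, which is exactly the condition for the two leaf classes to coincide. For surjectivity, given a leaf $L$ I would pick $x\in L$ and a foliated chart $(V_\alpha,\varphi_\alpha)$ with $x\in V_\alpha$; the point $\bar t:=\varphi_\alpha^{-1}(0,p_2(\varphi_\alpha(x)))$ lies in $\Tc_\alpha\subseteq\Tc$ and on the same plaque of $V_\alpha$ as $x$, hence on the leaf $L$, so $\bar\tau([\bar t])=L$.

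The step I expect to be the main obstacle is the forward inclusion of the first assertion, where one must verify that holonomy transport moves transversal points \emph{along a single leaf}: this amounts to checking that the local diffeomorphisms $h_i$ are precisely the plaque-to-plaque identifications coming from the transition maps of the form~\eqref{changechart}, so that the broken path they trace never leaves the leaf. The reverse inclusion and surjectivity are comparatively routine once one exploits, via Remark~\ref{plaqueqq}, that the initial and final transversals in the holonomy construction can be chosen freely among the transversals $\Tc_\alpha$ through the relevant points.
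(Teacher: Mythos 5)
Your proposal is correct, and it uses the same two basic ingredients as the paper --- that holonomy transports move transversal points within a single leaf, and that a path in a leaf can be covered by a finite chain of plaques --- but it organizes them differently. You prove the equality of the two equivalence relations first, by double inclusion, with the converse inclusion obtained by running the holonomy-transport construction $h_c$ along a leaf path and using the freedom of Remark~\ref{plaqueqq} to prescribe the end transversals to be $\Tc_\alpha$ and $\Tc_\beta$; bijectivity of $\bar{\tau}$ then drops out, with surjectivity handled by the plaque-to-transversal point $\varphi_\alpha^{-1}(0,p_2(\varphi_\alpha(x)))$. The paper instead proves only the forward inclusion (orbits land in leaves, so $\bar{\tau}$ exists), and then constructs an explicit inverse: a map $\sigma\colon M\to \Tc/\Gamma(\Vc,\Tc)$ sending $x$ to the class of the unique intersection point of its plaque with the local transversal, shown to be chart-independent and constant on leaves via finite plaque covers of a connecting path; the identity $\bar{\tau}\circ\bar{\sigma}=\id$ yields bijectivity, and the statement about $R_\Tc$ is deduced at the end as a corollary. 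The content of your converse inclusion and of the paper's constancy of $\sigma$ on leaves is the same chain of plaque-to-plaque identifications, so neither route is deeper; yours isolates the set-theoretic statement cleanly and makes injectivity of $\bar{\tau}$ immediate, while the paper's $\sigma$, being defined on all of $M$ rather than only on $\Tc$, packages surjectivity automatically and foreshadows the local submersions $\pi_\alpha$ used later (e.g., in the proof of Theorem~\ref{QuotientBanachManifold}\eqref{QuotientBanachManifold_item_d}). Two small points you should make explicit: an arbitrary $h\in\Gamma(\Vc,\Tc)$ is only locally a composition $h_{c_1}\circ\cdots\circ h_{c_k}$ of holonomy transports and their inverses (the inverses being again holonomy transports by Remark~\ref{chgtparameter}), and since each factor preserves the leaf through a point, so does $h$; and in your converse inclusion the intermediate transversals $\Tc_i$ need not lie in the global transversal $\Tc$ --- this is harmless because they are composed away, so that $h_c$ maps an open subset of $\Tc_\alpha\subseteq\Tc$ into $\Tc_\beta\subseteq\Tc$, exactly as the paper notes when asserting $h_c\in\Gamma(\Tc)$.
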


\begin{proof}
 Denote by $[\bar{t}]\in\Tc/\Gamma(\Vc,\Tc)$ the equivalence class 
 (i.e., the orbit) of $\bar{t}\in \Tc$ under the action of the pseudogroup $\Gamma(\mathcal{V},\Tc)$. 
 This means that  
 $\bar{t}'\in [\bar{t}]$ if and only if there exists $h\in \Gamma(\mathcal{V},\Tc)$ with $h(\bar{t})=\bar{t}'$. 
 But by the definition of $\Gamma(\mathcal{V},\Tc)$ and
  Remark~\ref{chgtparameter}, there exists piecewise smooth curves $c_1,\dots,c_k$ into a leaf $L$ such that $h=h_{c_1}\circ\cdots\circ h_{c_k}$. 
 It follows  that $\tau([\bar{t}])\subset L(\bar{t})$, 
 where $L(\bar{t})$ denotes the leaf through $\bar{t}\in\Tc\subseteq M$. 
 This shows that there exists a mapping $\bar{\tau}$ that makes the following diagram commutative, 
 \begin{equation}
 \label{PFequivTG_proof_eq1} 
 \xymatrix{ \Tc \ar[r]^{\tau} \ar[d]_{\pi_\Tc} & M \ar[d]^{\pi_\Fc}\\
 \Tc/\Gamma(\Vc,\Tc) \ar[r]^{\bar{\tau}} & M/\Fc}
 \end{equation}
 where the vertical arrows are the quotient mappings. 
 
We will prove that the mapping $\bar{\tau}$ is bijective, by constructing its inverse mapping. 
To this end, we first define the mapping 
$\sigma\colon M\to \Tc/\Gamma(\Vc,\Tc)$ 
in the following way: 
For arbitrary $x\in M$  there exists $\alpha\in A$ with $x\in V_\alpha$. 
Then 
there exists a unique plaque $\Pc_x$  in $V_\alpha$ with $x\in\Pc_x$  and $\Pc_x$ meets the local transversal~$\Tc_\alpha$ in an unique point $\bar{x}_\alpha$.
If now $x\in V_\beta$ for some other index $\beta\in A$, then, 
in the same way, we obtain a unique point $\bar{x}_\beta\in \Tc_\beta\cap\Pc_x$. 
But by the construction of $\Gamma(\Vc,\Tc)$,  the points $\bar{x}_\alpha$ and $\bar{x}_\beta$ belong to the same orbit of $\Gamma(\mathcal{V},\Tc)$. 
That is, using the notation introduced above, we have $[\bar{x}_\alpha]=[\bar{x}_\beta]\in\Tc/\Gamma(\Vc,\Tc)$. 
This discussion shows that we obtain a well-defined mapping 
$\sigma\colon M\to \Tc/\Gamma(\Vc,\Tc)$ given by 
$$\sigma\colon M\to \Tc/\Gamma(\Vc,\Tc),\quad 
\sigma(x):=[\bar{x}_\alpha]\in\Tc/\Gamma(\Vc,\Tc)
\text{ if } x\in V_\alpha\text{ and }\Tc_\alpha\cap \Pc_x=\{\bar{x}_\alpha\}.$$
We now claim that for every leaf $L\in M/\Fc$, if $x, y\in L$, then
$\sigma(x)=\sigma(y)\in\Tc/\Gamma(\Vc,\Tc)$. 
In fact, there exists a piecewise smooth curve $\gamma \colon [0,1]\to L$ with $\gamma(0)=x$ and $\gamma(1)=y$.
As we have seen above, $\gamma([0,1])$ can be covered by a finite number of plaques $\{\Pc_i\}_{i=0,\dots,n}$ such that $\Pc_{i-1}\cap \Pc_{i}\not=\emptyset$ 
so if $x\in V_\alpha$ and $y\in V_\beta$ then $\sigma(x)=[\bar{x}_\alpha]$ 
and $\sigma(y)=[\bar{x}'_\beta]$, where  $\{\bar{x}_\alpha\}= \mathcal{P}_0\cap \Tc_\alpha$ and $\{\bar{x}'_\beta\}=\mathcal{P}_n\cap \Tc_\beta$. 
By the construction of $\Gamma(\mathcal{V},\Tc)$, the points  $\bar{x}_\alpha$ and $\bar{x}'_\beta$ belong to the same orbit of $\Gamma(\mathcal{V},\Tc)$, 
that is, $[\bar{x}_\alpha]=[\bar{x}'_\beta]\in\Tc/\Gamma(\Vc,\Tc)$. 
Equivalently, $\sigma(x)=\sigma(y)\in\Tc/\Gamma(\Vc,\Tc)$, as claimed above. 

This implies that there exists the well-defined  mapping 
$$\bar{\sigma}\colon M/\Fc\to\Tc/\Gamma(\mathcal{V},\Tc),\quad 
\bar{\sigma}(L):=\sigma(x)\text{ if }L\in M/\Fc\text{ and }x\in L,$$
 which is surjective by its construction.  
We also have $\bar{\tau}\circ \bar{\sigma}(L)=L$  for every $L\in M/\Fc$, which implies that both mappings $\bar{\sigma}$ and $\bar{\tau}$ are bijective and inverse to each other. 
Moreover, this implies  that  $R_{\Tc}$ is the equivalence relation which the pull-back by $\tau$ of the equivalence relation defined by $\Fc$. 
This completes the proof. 
\end{proof}

\begin{corollary}
	\label{PFequivTG_cor}
	If $\Fc$ be a nice foliation~of the Banach manifold~$M$ with a global transversal~$\Tc$ associated to a nice atlas~$\Vc$, 
	and consider 
	the quotient maps 
	$$\pi_\Tc\colon\Tc\to\Tc/\Gamma(\Vc.\Tc)\text{ and }\pi_\Fc\colon M\to M/\Fc.$$ 
	Then the following assertions hold: 
	\begin{enumerate}[{\rm(i)}]
		\item\label{PFequivTG_cor_item1} 
		The quadruple  $(\Tc,\pi_\Tc,\Gamma(\Vc.\Tc),\Tc/\Gamma(\Vc.\Tc))$ is a    pre-\H-atlas.
		\item\label{PFequivTG_cor_item2}  
		The quadruple  $(\Tc,\bar{\tau}\circ \pi_\Tc,\Gamma(\Vc.\Tc),M/\Fc)$ is a    pre-\H-atlas.
		\item\label{PFequivTG_cor_item3}  The quotient map  
		$\pi_\Fc\colon M\to M/\Fc$ 
		is a submersion of pre-\H-manifolds,  
		 one has the pre-\H-diffeomorphism $\bar{\tau}\colon \Tc/\Gamma(\mathcal{V},\Tc)\to M/\Fc$, and the diagram~\eqref{PFequivTG_proof_eq1} is commutative. 
		\item\label{PFequivTG_cor_item4}  
		If moreover the foliation $\Fc$ is without holonomy, then the above pre-\H-atlases are actually \H-atlases. 
	\end{enumerate}
	The above assertions hold for every regular foliation on a $C^0$-paracompact Banach manifold. 
\end{corollary}

\begin{proof}
\eqref{PFequivTG_cor_item1} This follows by Proposition~\ref{actionG}.

\eqref{PFequivTG_cor_item2}
	The mapping $\bar{\tau}\colon \Tc/\Gamma(\mathcal{V},\Tc)\to M/\Fc$ 
	is bijective by Proposition~\ref{PFequivTG}, 
	hence the assertion follows by Assertion~\eqref{PFequivTG_cor_item1}.

\eqref{PFequivTG_cor_item3}
Let $x_0\in M$ arbitrary and a foliated chart $\varphi\colon V\to B_1\times B_2$ with $x_0\in V$. 
Here $B_2\subseteq \Tc$, hence the Cartesian projection $p_2\colon V\to B_1$ 
fits in the commutative diagram 
$$\xymatrix{
V \ar[r]^{p_2} \ar@{^{(}->}[d]& \Tc \ar[d]^{\bar{\tau}\circ \pi_\Tc}\\
M \ar[r]^{\pi_\Fc} & M/\Fc
}
$$
which shows that the submersion $p_2$ is a local lift of the quotient map $\pi_\Fc$. 
Therefore $\pi_\Fc\colon M\to M/\Fc$ 
is a submersion of pre-\H-manifolds.

To prove that the mapping $\bar{\tau}$ is a pre-\H-diffeomorphism, 
we first recall that it is bijective. 
Moreover, we have the commutative diagram 
$$\xymatrix{ \Tc \ar[r]^{\id_\Tc} \ar[d]_{\pi_\Tc} & \Tc \ar[d]^{\bar{\tau}\circ\pi_\Tc}\\
	\Tc/\Gamma(\Vc,\Tc) \ar[r]^{\bar{\tau}} & M/\Fc}
$$
which shows that the diffeomorphism $\id_\Tc\colon\Tc\to\Tc$ is a lift of $\bar{\tau}$, 
hence $\bar{\tau}$ is an \'etale map of pre-\H-manifolds. 
Since we have seen that $\bar{\tau}$  is bijective, it follows that it is a pre-\H-diffeomorphism. 

\eqref{PFequivTG_cor_item4}  We have only to prove that for any $x\in \Tc$, 
		every element of the holonomy pseudogroup $\Gamma(\Vc,\Tc)$ which fixes the point~$x$ has its germ at~$x$ equal to the identity. 
		But this is precisely the definition of a regular foliation without holonomy. 

The final assertion follows from the existence of nice atlases for regular foliations on on $C^0$-paracompact Banach manifolds, noted after Definition~\ref{transversal}. 
\end{proof}

Let  $\pi \colon E\to M$ be a locally trivial bundle with typical fiber  $\EE$. 
A \emph{fibering atlas} is a collection $\Uc=\{(U_\alpha,\phi_\alpha)\}_{\alpha\in A}$ such  that 
$\{U_\alpha\}_{\alpha\in A}$ is a covering of $M$ and such that $\Phi_\alpha\colon \pi^{-1}(U_\alpha)\to U_\alpha\times \EE$ is a diffeomorphism  with the following commutative diagram:
$$
\xymatrix{
 \pi^{-1}(U_\alpha)  
\ar[r]^{\Phi_\alpha} 
\ar[d]_{\pi} 
& U_\alpha\times \mathbb{E} 
\ar[d] 
\\ 
U \ar[r]^{\id} 
& U_\alpha
}
$$
and the transition map  $\Phi_\beta\circ \Phi_\alpha^{-1}(x,u)=(x, g_{\alpha\beta}(x)(u))$ for $x\in U_\alpha\cap U_\beta$ and $g_{\alpha\beta}\colon U_\alpha\cap U_\beta\to\rm{Diff}(\mathbb{E})$ is a smooth field of diffeomorphisms of $\EE$. 
If we have such an atlas with constant fields  $g_{\alpha\beta}$,  for all $\alpha,\beta\in A$ with $U_\alpha\cap U_\beta\ne\emptyset$, then the fiber bundle $\pi \colon E\to M$ is called \emph{flat}. 
This condition is equivalent to the existence of a regular foliation $\Fc$ such that  for each $e\in E$ the tangent space $T_eL$ of the leaf through $e$ is the complemented space to the vertical space $T_eE_x$ of the fiber $E_x:=\pi^{-1}(x)$. 
In this case, each leaf $L$ is a (split) immersed submanifold of $E$ and the restricted map of $\pi\vert_L\colon L\to M$ is a local diffeomorphism.  
Moreover each fiber $E_x$ is a transversal of $\Fc$ in the previous sense and so the holonomy pseudogroup $\Gamma(\Uc,\Fc)$ of $\Fc$  is generated by $\{g_{\alpha\beta}\mid \alpha,\beta\in A\}$ and is a sub-pseudogroup of the pseudogroup of all local diffeomorphisms of $E$. 
In these conditions the quadruple $(E,\pi, M, \Fc)$ is called a \emph{foliated fiber bundle}.

\subsection{\H-manifold  structure on leaf spaces}
 
 We begin this section by an example of regular foliation whose leaf space has a structure of \H-manifold:
 
 \begin{proposition}\label{withoutholonomy_prop} 
Let $\Fc$ be a nice foliation of a Banach manifold $M$. 
If $\Fc$ is without holonomy, 
then the quotient space  $M/\Fc$  is an \H-manifold.
\end{proposition}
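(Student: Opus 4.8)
The plan is to reduce the statement, via the transversal picture already set up, to the single fact that the \emph{holonomy pseudogroup acts freely on the transversal}. Fix a nice atlas $\Vc$ for $\Fc$ together with a global transversal $\Tc=\bigsqcup_\alpha\Tc_\alpha$ and its holonomy pseudogroup $\Gamma(\Vc,\Tc)$. By Corollary~\ref{PFequivTG_cor} we already know that $\pi_\Fc\colon M\to M/\Fc$ is a pre-\H-atlas and that $\bar{\tau}\colon\Tc/\Gamma(\Vc,\Tc)\to M/\Fc$ is a pre-\H-diffeomorphism. Note that $\pi_\Fc$ by itself is only a pre-\H-atlas and \emph{not} an \H-atlas, since a transverse diffeomorphism for $\pi_\Fc$ may slide points along the plaques; this is precisely why one passes to the transversal, where the leaf directions have been removed. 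According to Proposition~\ref{actionG}, the map $\pi_\Tc\colon\Tc\to\Tc/\Gamma(\Vc,\Tc)$ is an \H-atlas as soon as the action of $\Gamma(\Vc,\Tc)$ on $\Tc$ is free, and then $\bar{\tau}\circ\pi_\Tc\colon\Tc\to M/\Fc$ is again an \H-atlas, because post-composing $\pi_\Tc$ with the bijection $\bar{\tau}$ changes neither the associated equivalence relation, nor the pseudogroup of transverse diffeomorphisms, nor the bijectivity of the map~\eqref{pre_H_def_eq}. Thus the whole proof reduces to establishing freeness.

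To prove freeness, I would fix $\bar{t}\in\Tc$ and $h\in\Gamma(\Vc,\Tc)$ with $h(\bar{t})=\bar{t}$, and show that the germ of $h$ at $\bar{t}$ is that of $\id_\Tc$. Since $\Gamma(\Vc,\Tc)$ is generated by the holonomy transports along curves in the leaves, and since the inverse of a holonomy transport is again a holonomy transport along the reversed curve (Remark~\ref{chgtparameter}), one may write, as germs at $\bar{t}$,
\begin{equation*}
h=h_{c_m}\circ\cdots\circ h_{c_1},
\end{equation*}
where each $c_j$ is a piecewise smooth curve in a leaf. Working at the level of germs, the very existence of this composition forces $c_1(0)=\bar{t}$, $c_{j+1}(0)=c_j(1)$ for $1\le j<m$, and $c_m(1)=h(\bar{t})=\bar{t}$; in particular all the $c_j$ lie in the single leaf $L$ through $\bar{t}$ and concatenate into a piecewise smooth loop $c:=c_1*c_2*\cdots*c_m$ based at $\bar{t}$. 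The key structural fact, namely that holonomy transport turns concatenation of paths into composition of the associated transverse diffeomorphisms, then yields $h_c=h_{c_m}\circ\cdots\circ h_{c_1}=h$ as germs at $\bar{t}$.

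Finally, because $\Fc$ is without holonomy, Definition~\ref{withoutholonomy_def} asserts precisely that the germ of $h_c$ at $\bar{t}$ equals the germ of the identity for the closed curve $c$. Hence the germ of $h$ at $\bar{t}$ is that of $\id_\Tc$, so the action of $\Gamma(\Vc,\Tc)$ on $\Tc$ is free; by Proposition~\ref{actionG} the map $\pi_\Tc$ is an \H-atlas, and the reduction of the first paragraph then exhibits $\bar{\tau}\circ\pi_\Tc$ as an \H-atlas on $M/\Fc$, so $M/\Fc$ is an \H-manifold. I expect the delicate point to be the bookkeeping in the second paragraph: one must argue carefully, at the germ level, that an arbitrary element of the \emph{generated} pseudogroup which fixes $\bar{t}$ is realized by a genuine loop rather than by a chain of curves with mismatched endpoints, so that the without-holonomy hypothesis becomes directly applicable.
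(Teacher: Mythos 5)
Your proof is correct and follows essentially the same route as the paper's: both reduce the statement, via Proposition~\ref{PFequivTG} (resp.\ Corollary~\ref{PFequivTG_cor}) and Proposition~\ref{actionG}, to freeness of the action of $\Gamma(\Vc,\Tc)$ on the transversal, and then invoke the without-holonomy hypothesis. In fact your second paragraph supplies the chain-to-loop bookkeeping that the paper's one-line appeal to Definition~\ref{withoutholonomy_def} leaves implicit; the only inaccuracy is the claim that composability of the germs \emph{forces} $c_1(0)=\bar{t}$ and $c_{j+1}(0)=c_j(1)$ --- it only forces each intermediate point to lie in the domain of the next transport --- which is repaired by the standard observation that the germ of $h_{c_j}$ at a nearby point of its domain is again a holonomy transport, along a leafwise parallel copy of $c_j$ through the same chain of plaques.
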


\begin{proof}
This follows by Corollary~\ref{PFequivTG_cor}\eqref{PFequivTG_cor_item4}. 
  \end{proof}

\begin{corollary}\label{simplyconnectedleaf} 
	Let $\Fc$ be a nice foliation of a Banach manifold $M$ whose all leafs are simply connected.  
	Then the quotient space  $M/\Fc$  is an \H-manifold.
\end{corollary}

\begin{proof}
	Since the leaves are simply connected, it follows by Corollary~\ref{simplyconnected} that the foliation~$\Fc$ is without holonomy, 
	hence Proposition~\ref{withoutholonomy_prop} applies. 
\end{proof}

The following lemma is needed in the proof of Theorem~\ref{QuotientBanachManifold} below.
 
 \begin{lemma}
 \label{double}
 Let $\Xc$ and $\Yc$ be real Banach spaces and  $P_j\colon\Xc\to\Yc$ be surjective continuous linear operators with split kernels for $j=1,2$, for which the linear operator 
 $P\colon \Xc\to\Yc\times\Yc$, $x\mapsto (P_1x,P_2x)$, 
 is injective and its range is a split closed subspace of $\Yc\times\Yc$. 
 Then $P_1(\Ker P_2)$  is a split closed linear subspace of $\Yc$. 
 \end{lemma}
 
 \begin{proof}
 	First note that 
 \begin{align*}
 P(\Ker P_2)
 &=\{(P_1x,P_2x)\mid x\in\Ker P_2\}=\{(P_1x,0)\mid x\in\Ker P_2\} \\
 &=P_1(\Ker P_2)\times\{0\}.
\end{align*}
 By hypothesis, $\Ker P_2$ is a split closed linear subspace of $\Xc$ while 
  $P\colon \Xc\to \Ran P$ is an isomorphism of Banach spaces 
  (where $\Ran P$ is a closed subspace of $\Yc\times\Yc$), 
  hence $P(\Ker P_2)$ is a split closed linear subspace of $\Ran P$. 
  Thus, by the above equalities, $P_1(\Ker P_2)\times\{0\}$ is a split closed linear subspace of $\Ran P$. 
  On the other hand,  $\Ran P$ is a closed subspace of $\Yc\times\Yc$, 
  hence it easily follows that $P_1(\Ker P_2)\times\{0\}$ is a split closed linear subspace of $\Yc\times\Yc$. 
  Therefore, there exists a closed linear subspace $\Vc\subseteq\Yc\times\Yc$ 
  for which we have the direct sum decomposition 
  $$(P_1(\Ker P_2)\times\{0\})\dotplus\Vc=\Yc\times\Yc.$$
  Since $P_1(\Ker P_2)\times\{0\}\subseteq\Yc\times\{0\}$, we then obtain 
  $$(P_1(\Ker P_2)\times\{0\})\dotplus(\Vc\cap(\Yc\times\{0\}))=\Yc\times\{0\}.$$
  (Compare the proof of \cite[Lemma 2.6]{BGJP}.)
  Thus $(P_1(\Ker P_2)\times\{0\}$ is a split closed linear subspace of $\Yc\times\{0\}$. 
  This directly implies that $P_1(\Ker P_2)$  is a split closed linear subspace of $\Yc$. 
 \end{proof}
 
  Now we give a general situation of a regular foliation whose leaf space has an \H-structure. 
This example is 
in some sense a variant 
of Proposition~\ref{characterization}. 
  (Compare \cite[Th. I.2.1]{Pla}.)

\begin{theorem}\label{QuotientBanachManifold} 
Let $\Rc\subseteq M\times M$ be an equivalence relation 
on a $C^0$-paracompact Banach manifold $M$ 
with its corresponding quotient map $\pi\colon M\to M/\Rc$, 
the Cartesian projections $p_1,p_2\colon M\times M\to M$ 
and the restricted Cartesian projections $p_j^\Rc:=p_j\vert_\Rc\colon\Rc\to M$ for $j=1,2$. 
We assume that 
there exists on 
$\Rc$ 
a Banach manifold structure
 with the following properties:
\begin{enumerate}[{\rm(i)}]
\item 
The 
restricted Cartesian projections $p_1^\Rc,p_2^\Rc\colon \Rc\to M$ are surjective submersions with connected fibers.
\item
The inclusion map $\iota:\Rc\hookrightarrow M\times M$ is a split immersion. 
\end{enumerate}
Then the partition of $M$ into $\Rc$-equivalence classes is a nice foliation~$\Fc$ on $M$. 

Moreover, fixing any nice atlas  $\Vc=\{(V_\alpha,\varphi_\alpha)\}_{\alpha\in A}$ on $M$ with respect to the foliation $\Fc$, 
we select a family of local transversals $\{\Tc_\alpha\}_{\alpha\in A}$
and define the disjoint union $\Tc:=\bigsqcup\limits_{\alpha\in A}\Tc_\alpha$  
with its correponding split immersion $\tau\colon \Tc\to M$, 
where $\tau\vert_{\Tc_\alpha}\colon\Tc_\alpha\hookrightarrow M$ is the inclusion map for every $\alpha\in A$.    
We also define  
$\Rc_\Tc:=(\tau\times\tau)^{-1}(\Rc)\subseteq\Tc\times\Tc$  
and the Cartesian projection $q_1,q_2\colon\Tc\times\Tc\to \Tc$ 
with their restrictions $q_j^{\Rc_\Tc}:=q_j\vert_{\Rc_\Tc}\colon\Rc_\Tc\to\Tc$ for $j=1,2$.
Then we have the following properties:
\begin{enumerate}[{\rm(a)}]
\item
\label{QuotientBanachManifold_item_a}
The mapping $\pi\circ \tau\colon \Tc\to M/\Rc$ is surjective. 
\item
\label{QuotientBanachManifold_item_b}
 The set $\Rc_\Tc$ has the structure of a Banach manifold for which  the inclusion map $\Rc_\Tc\hookrightarrow\Tc\times\Tc$ is a split immersion,  
 the restricted Cartesian projections $q_1^{\Rc_\Tc},q_2^{\Rc_\Tc}\colon \Rc_\Tc\to\Tc$ are \'etale maps, 
 and the quotient map $\pi_\Tc\colon\Tc\to \Tc/ R_\Tc$ is an \H-atlas. 
\item
\label{QuotientBanachManifold_item_c}
The map $\tau$ 
gives rise to a pre-\H-diffeomorphism $\bar{\tau}\colon \Tc/\Rc_\Tc\to M/\Rc$  
and the triple $(\Tc,\pi\circ \tau, M/\Rc)$ is an \H-atlas on $M/\Rc$.
\item
\label{QuotientBanachManifold_item_d}
The quotient map $\pi\colon M\to M/\Rc$ is a submersion of \H-manifolds 
and for every $x\in M$ we have 
\begin{equation}
\label{QuotientBanachManifold_eq1}
\Ker(T_x\pi)=T_x(L(x))=T_x(\pi(x))
\end{equation} 
where $L(x)\hookrightarrow M$ is the leaf of the foliation $\Fc$ with $x\in L(x)$, 
while $\pi(x)\hookrightarrow M$ is the $\Rc$-equivalence class of~$x$.
\end{enumerate}
\end{theorem}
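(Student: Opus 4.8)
The plan is to first establish the foliation $\Fc$ --- the geometric core of the statement --- and then to read off (a)--(d) from it, combining the structural results of Section~\ref{Sect2} (notably Propositions~\ref{characterization} and~\ref{actionG}) and the holonomy picture of Proposition~\ref{PFequivTG} with a couple of transversality computations. Throughout I write $[x]:=\pi^{-1}(\pi(x))$ for the $\Rc$-class of $x$, and I use reflexivity, symmetry and transitivity of $\Rc$ in the form $(x,x)\in\Rc$, $(x,y)\in\Rc\Rightarrow(y,x)\in\Rc$, and $(x,y),(y,z)\in\Rc\Rightarrow(x,z)\in\Rc$.

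First I would produce the tangent distribution. At each $x$ set $E_x:=\{w\in T_xM\mid(0,w)\in T_{(x,x)}\Rc\}$. Since $p_1^\Rc$ is a submersion, $\Ker T_{(x,x)}p_1^\Rc=\{0\}\times E_x$ is complemented in $T_{(x,x)}\Rc$, and condition (ii) makes $T_{(x,x)}\Rc$ split in $T_xM\times T_xM$; a short diagram chase then shows $E_x$ is a split subspace of $T_xM$, and flip-invariance of $\Rc$ together with the diagonal gives $T_{(x,x)}\Rc=\{(v,w)\mid w-v\in E_x\}$, so $E_x$ has a fixed model $\EE$ and we fix $\mathbb{M}=\EE\oplus\FF$; a routine check using the submersion $p_1^\Rc$ shows $x\mapsto E_x$ is a smooth split subbundle. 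The key step is that each class $[x_0]$ is an immersed integral manifold of $E$: it equals $p_2^\Rc\bigl((p_1^\Rc)^{-1}(x_0)\bigr)$, the image of the injective immersion $p_2^\Rc$ on a fibre of $p_1^\Rc$, so $T_y[x_0]=\{w\mid(0,w)\in T_{(x_0,y)}\Rc\}$ for $y\in[x_0]$. I would then prove the crucial consistency $T_y[x_0]=E_y$ by a curve argument on the relation axioms: if $c$ is a curve in $[x_0]$ through $y$, then $(y,x_0)\in\Rc$ by symmetry and $(y,c(t))\in\Rc$ by transitivity, so the initial velocity $(0,\dot c(0))$ lies in $T_{(y,y)}\Rc$; the reverse inclusion is symmetric. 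This shows $E$ is involutive (it admits an integral manifold through every point), so by the Frobenius theorem for Banach manifolds (\cite{Lan}) it is integrable, the leaves being exactly the connected components of the $\Rc$-classes. Finally $C^0$-paracompactness of $M$ and \cite[Lemma 1.3.3]{Wal} upgrade $\Fc$ to a \emph{nice} foliation, as in Convention~\ref{Nicefoliation}.

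For (a)--(c) I fix a nice atlas $\Vc$, transversals $\Tc_\alpha$, the global transversal $\Tc$ and the split immersion $\tau$. \emph{(a)} Every $x$ lies in some $V_\alpha$ and its plaque meets $\Tc_\alpha$ in a point $\Rc$-equivalent to $x$, so $\pi\circ\tau$ is onto $M/\Rc$. \emph{(b)} Using the consistency $E_{x'}=\{w\mid(0,w)\in T_{(x,x')}\Rc\}$ one checks that $\tau\times\tau$ is transverse in the split sense to $\Rc$: every $(u,u')$ over a point of $\Rc_\Tc$ splits as an element of $T_{(x,x')}\Rc$ plus one of $T_x\Tc\times T_{x'}\Tc$, so $\Rc_\Tc=(\tau\times\tau)^{-1}(\Rc)$ is a split Banach submanifold of $\Tc\times\Tc$ with $T\Rc_\Tc=T\Rc\cap(T\Tc\times T\Tc)$. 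The same decomposition shows $\Ker Tq_1^{\Rc_\Tc}=0$ (a vector killed by $q_1$ lies in $E_{x'}\cap T_{x'}\Tc=\{0\}$, by transversality of the transversal to the leaves) and that $q_1^{\Rc_\Tc}$ is onto $T\Tc$; hence $q_1^{\Rc_\Tc}$, and likewise $q_2^{\Rc_\Tc}$, is \'etale, and Proposition~\ref{characterization} yields that $\pi_\Tc\colon\Tc\to\Tc/\Rc_\Tc$ is an \H-atlas. \emph{(c)} By definition of $\Rc_\Tc$ the assignment $[\bar t]\mapsto\pi(\tau(\bar t))$ is a well-defined injection $\Tc/\Rc_\Tc\to M/\Rc$, surjective by (a); mimicking the proof of Proposition~\ref{PFequivTG} with $\Rc$ in place of the holonomy relation shows it is a pre-\H-diffeomorphism $\bar\tau$, and transporting the \H-structure of (b) along $\bar\tau$ makes $(\Tc,\pi\circ\tau,M/\Rc)$ an \H-atlas.

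For (d): in a foliated chart $\varphi_\alpha\colon V_\alpha\to\EE\times\FF$ the map $p_2\circ\varphi_\alpha\colon V_\alpha\to\Tc_\alpha$ is a submersion whose fibres are the plaques, and under the identifications it is a lift of $\pi$ with respect to the \H-charts $\id_M$ and $\pi\circ\tau$; hence $\pi$ is a submersion of \H-manifolds and $\Ker T_x\pi$ equals the plaque direction $E_x$, which by the consistency of the second paragraph is exactly $T_xL(x)=T_x(\pi(x))$, giving \eqref{QuotientBanachManifold_eq1}. The main obstacle is the foliation itself --- above all the splitness and smoothness of $E$ and the consistency $T_y[x_0]=E_y$ along a class, which is precisely where transitivity and symmetry of $\Rc$ are indispensable; once the distribution is under control, integrability, niceness, and the remaining assertions follow from Frobenius' theorem and the structural results of Section~\ref{Sect2}.
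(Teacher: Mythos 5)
Your parts (a)--(d) track the paper's own proof almost step for step: the transversality of $\tau\times\tau$ to $\Rc$ in the sense of \cite[5.11.6--5.11.7]{Bou3}, the diagonal section to get surjectivity of $Tq_j^{\Rc_\Tc}$, the kernel computation using $E_{x'}\cap T_{x'}\Tc=\{0\}$, the appeal to Proposition~\ref{characterization} for the \H-atlas, Proposition~\ref{PFequivTG} for $\bar\tau$, and the foliated-chart submersion as a lift of $\pi$ for (d). The divergence is in the construction of the foliation, where you replace the paper's groupoid-theoretic route by a direct distribution argument, and there your central ``consistency'' step $T_y[x_0]=E_y$ has a genuine gap. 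From the set-theoretic fact $(y,c(t))\in\Rc$ you infer $(0,\dot c(0))\in T_{(y,y)}\Rc$; but the hypotheses only make $\Rc$ a \emph{split immersed} submanifold of $M\times M$, and a smooth curve of the ambient manifold contained set-theoretically in an immersed submanifold need not be smooth into it, nor have velocity tangent to it (figure-eight immersions already defeat this; it is exactly the \emph{initial}-submanifold property that would license the inference, and by Proposition~\ref{characterization} initiality is precisely what separates \Q-atlases from \H-atlases, so it cannot be assumed here). The same illegitimate transport from set level to tangent level occurs when you claim that flip-invariance of the \emph{set} $\Rc$ gives $T_{(x,x)}\Rc=\{(v,w)\mid w-v\in E_x\}$, and implicitly when you work at diagonal points: smoothness of the unit section $x\mapsto(x,x)$ into the \emph{given} manifold structure on $\Rc$ is likewise not automatic.

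What repairs all three instances is the smoothness of the groupoid operations on $\Rc\tto M$: a smooth unit section, smooth inversion, and smooth partial multiplication, so that left translation by the fixed arrow $(y,x_0)$ carries your smooth curve $t\mapsto(x_0,c(t))$ in $\Rc$ to a curve $t\mapsto(y,c(t))$ that is smooth \emph{in} $\Rc$, whence the desired tangency. This is exactly how the paper proceeds: it observes that conditions (i)--(ii) endow $\Rc\tto M$ with the structure of a Banach-Lie groupoid in the sense of \cite[Def. 3.2]{BGJP}, takes $E=(\Ker(Tp_2^\Rc))\vert_{\Delta M}$ as the associated Lie algebroid with anchor $\rho=Tp_1^\Rc\vert_E$, obtains involutivity of $\rho(E)$ from the algebroid bracket (your alternative criterion, involutivity from an integral manifold through every point, is fine in itself but rests on the gapped consistency step), and then invokes \cite[Th. 4.24]{BGJP} for the identification of the leaves of the resulting foliation with the connected components of the $\Rc$-classes. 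So to keep your elementary presentation you would have to prove smoothness of the translations directly from (i)--(ii) --- which is the nontrivial content the paper outsources to \cite{BGJP} --- or strengthen the hypotheses (e.g.\ to $\Rc$ initial), which would change the theorem.
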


\begin{proof} 
Recall the (pair) Banach-Lie groupoid $M\times M\tto M$:
\begin{itemize}
 \item the source map is the Cartesian projection $p_1$ and the target is $p_2$;
\item the product of $(x,x')$ and $(y,y')$ is defined if $x'=y$ and its value is $(x,y')$; 
\item the inverse of $(x,y)$ is $(y,x)$;
\item the identity $\mathbf{1}_x$ is $(x,x)$.
\end{itemize}
According to the properties of $\Rc$, this groupoid structure induces  the structure of a Banach-Lie groupoid $\Rc\tto M$ in the sense of \cite[Def. 3.2]{BGJP}. 
In this way, we also identify $M$ with the diagonal  $\Delta M\subseteq\Rc\subseteq M\times M$,  
and the Lie algebroid associated to $\Rc\tto M$ is the bundle 
$$E:=(\Ker (Tp_2)\cap T\Rc)\vert_{\Delta M}=(\Ker (Tp_2^\Rc))\vert_{\Delta M}\to\Delta M\simeq M$$  
whose anchor  $\rho \colon E\to TM$  is the restriction of 
$Tp_1^\Rc\colon T\Rc\to TM$ to $E$. 
By the direct sum decomposition of vector bundles 
$$T(M\times M)=\Ker (Tp_1)\oplus \Ker (Tp_2)\to M\times M$$ 
we  obtain
$\Ker (Tp_1^\Rc)\cap \Ker (Tp_2^\Rc)=\{0\}$. 
As  $p_1^\Rc\colon\Rc\to M$  is a submersion,  
its tangent map $Tp_1^\Rc$ gives an isomorphism of vector bundles 
$T\Rc/\Ker (Tp_1^\Rc)\mathop{\longrightarrow}\limits^\sim TM$. 

On the other hand $p_2^\Rc\colon\Rc\to M$  is a submersion, 
hence the subbundle $\Ker (Tp_2^\Rc)$ is complemented in $T\Rc$. 
By restricting these bundles from $\Rc$ to  the diagonal $\Delta M\subseteq\Rc$, 
we obtain that  
the subbundle~$E$ is complemented in $T\Rc\vert_{\Delta M}$. 
Then, for arbitrary $x_0\in M$, 
	an application of Lemma~\ref{double} for $\Xc:=T_{(x_0,x_0)}\Rc$, $\Yc:=T_{x_0}M$, and  $P_j:=T_{(x_0,x_0)}p_j^\Rc$ for $j=1,2$, shows that 
$\rho(E_{(x_0,x_0)})=P_1(\Ker P_2)\subseteq T_{x_0}M$ is a split closed linear subspace. 
Thus $\rho(E)$ is a complemented subbundle of $TM$. 
which is also involutive, 
hence, by the Frobenius Theorem, we obtain a regular foliation $\Fc$ on~$M$.

It now follows by \cite[Th. 4.24]{BGJP}
that the leaves of $\Fc$ are the connected components of the equivalence classes for $\Rc$. 
Moreover, the hypothesis ensures that the fibers of 
	the restricted Cartesian projections $p_1^\Rc,p_2^\Rc\colon \Rc\to M$ are connected, 
hence the leaves of the foliation $\Fc$ are exactly the equivalence classes for $\Rc$. 

\eqref{QuotientBanachManifold_item_a}
It suffices to recall from Proposition \ref{PFequivTG} 
that the mapping $\tau\colon\Tc\to M$ induces a bijection 
$$\bar{\tau}\colon \Tc/R_\Tc\to M/\Fc=M/\Rc $$ 
where $M/\Fc=M/\Rc$ since the leaves of the foliation $\Fc$ are the equivalence classes for $\Rc$, as noted above. 

\eqref{QuotientBanachManifold_item_b} 
We first endow the set $\Rc_\Tc$ with the structure of a Banach manifold for which  the inclusion map $\Rc_\Tc\hookrightarrow\Tc\times\Tc$ is a split immersion. 
To this end we note that for any point   $(x,y)\in \Rc_\Tc$ 
we have
\begin{equation}
\label{QuotientBanachManifold_proof_eq0}
T_{\tau(x)}M
= T_{\tau(x)}(L(\tau(x)))\oplus (T_x\tau)(T_x\Tc)
\end{equation}
hence
\begin{align}
T_{(\tau(x),\tau(y))}(M\times M)
=&T_{\tau(x)}M\oplus T_{\tau(y)} M \nonumber \\
=& T_{\tau(x)}(L(\tau(x)))\oplus (T_x\tau)(T_x\Tc) \nonumber \\
& \oplus T_{\tau(y)}(L(\tau(y)))\oplus (T_y\tau)(T_y\Tc) \nonumber\\
=&T_{\tau(x)}(L(\tau(x)))\oplus T_{\tau(y)}(L(\tau(y))) \nonumber \\
& \oplus (T_{(x,y)}(\tau\times\tau))(T_{(x,y)}(\Tc\times\Tc)) \nonumber\\
\label{QuotientBanachManifold_proof_eq1}
=&T_{(\tau(x),\tau(y))}\Rc+ (T_{(x,y)}(\tau\times\tau))(T_{(x,y)}(\Tc\times\Tc)).  
\end{align}
The last of the above equalities follows by the fact that $p_1^\Rc\colon\Rc\to M$ is a submersion by hypothesis, hence the leaves $L(\tau(x))$ and $L(y)$ 
can be regarded as embedded submanifolds of $\Rc$ via 
$$L(\tau(x))\simeq \{\tau(x)\}\times L(\tau(x))=(p_1^\Rc)^{-1}(\tau(x))\subseteq \Rc$$
and similarly $L(\tau(x))\simeq(p_1^\Rc)^{-1}(y)\subseteq \Rc$, 
and thus 
$$T_{\tau(x)}(L(\tau(x)))\simeq T_{(\tau(x),\tau(y))}((p_1^\Rc)^{-1}(\tau(x)))\subseteq T_{(\tau(x),\tau(y))}\Rc$$
and similarly  $ T_{\tau(y)}(L(\tau(y)))\simeq T_{(\tau(x),\tau(y))}((p_2^\Rc)^{-1}(\tau(y)))\subseteq T_{(\tau(x),\tau(y))}\Rc$. 
It follows by \eqref{QuotientBanachManifold_proof_eq1} that the mapping $\tau\times\tau\colon\Tc\times\Tc\to M\times M$ is transversal to the split immersed submanifold $\iota \colon \Rc\hookrightarrow M\times M$ 
in the sense of \cite[5.11.6]{Bou3}. 
Then, by \cite[5.11.7]{Bou3}, we obtain that $\Rc_\Tc\subseteq \Tc\times\Tc$
is a split immersed submanifold. 
It remains to prove that the restricted Cartesian projections $q_1^{\Rc_\Tc},q_2^{\Rc_\Tc}\colon \Rc_\Tc\to\Tc$ are \'etale maps, 
since it will then directly follow by Proposition~\ref{characterization} that
the quotient map $\pi_\Tc\colon\Tc\to \Tc/ \Rc_\Tc$ is an \H-atlas.

In order to prove that $q_1^{\Rc_\Tc},q_2^{\Rc_\Tc}\colon \Rc_\Tc\to\Tc$ are \'etale maps, we first note that 
the diagonal embedding $\delta\colon \Tc\to\Rc_\Tc$, $t\mapsto (t,t)$, 
satisfies $q_j^{\Rc_\Tc}\circ\delta=\id_\Tc$ for $j=1,2$. 
Considering the tangent map of both sides of these equalities 
we directly obtain that the tangent maps $T_{(x,y)}(q_1^{\Rc_\Tc})\colon T_{(x,y)}\Rc_\Tc\to T_x\Tc$ 
and 
$T_{(x,y)}(q_2^{\Rc_\Tc})\colon T_{(x,y)}\Rc_\Tc\to T_y\Tc$
are surjective. 
It remains to check that these tangent maps are injective as well. 
To this end we use the commutative diagrams
\begin{equation*}
\xymatrix{\Rc_\Tc \ar[r]^{\qquad (\tau\times\tau)\vert_{\Rc_\Tc}\qquad } \ar[d]_{q_j^{\Rc_\Tc}} & \Rc \ar[d]^{p_j^\Rc}\\
\Tc \ar[r]^{\tau} & M }
\end{equation*}
for $j=1,2$. 
In order to show that $T_{(x,y)}(q_2^{\Rc_\Tc})\colon T_{(x,y)}\Rc_\Tc\to T_y\Tc$ is injective, we prove that if $v\in T_{(x,y)}\Rc_\Tc$ and $T_{(x,y)}(q_2^{\Rc_\Tc})v=0$, 
then $v=0$. 
In fact, we obtain by the above commutative diagram for $j=2$, 
\begin{align*}
(T_{(\tau(x),\tau(y))}(p_2^\Rc))(T_{(x,y)}(\tau\times\tau)v)
&=T_{(x,y)}(p_2^\Rc\circ (\tau\times\tau))v \\
&=T_{(x,y)}(\tau\circ q_2^{\Rc_\Tc})v \\
& =(T_y\tau)(T_{(x,y)}(q_2^{\Rc_\Tc})v) \\
&=0
\end{align*}
hence $T_{(x,y)}(\tau\times\tau)v\in\Ker T_{(\tau(x),\tau(y))}(p_2^\Rc)$. 
Since $p_2^\Rc\colon\Rc\to M$ is a submersion by hypothesis, 
we have 
\begin{align*}\Ker T_{(\tau(x),\tau(y))}(p_2^\Rc)
&=
T_{(\tau(x),\tau(y))}(p_2^\Rc)^{-1}(\tau(y)) \\
&=T_{(\tau(x),\tau(y))}(L(\tau(y))\times\{\tau(y)\}) \\
&= T_{\tau(x)} (L(\tau(y)))\times\{0\}
\subseteq T_{(\tau(x),\tau(y))}(M\times M)
\end{align*}
where we recall that $L(\tau(y))$ is the leaf through the point $\tau(y)\in M$, 
and we have $\tau(x)\in L(\tau(y))$ by the assumption $(x,y)\in\Rc_\Tc$. 
Since $\Tc$ is transversal to the leaves by construction, 
we have $(\Ran T_{\tau(x)}\tau)\cap T_{\tau(x)}L(\tau(x))=\{0\}$ 
and $(\Ran T_{\tau(y)}\tau)\cap T_{\tau(y)}L(\tau(y))=\{0\}$, 
where of course $L(\tau(x)=L(\tau(y))$ since $\tau(x)\in L(\tau(y))$. 
Consequently 
$$\Ran (T_{(x,y)}(\tau\times\tau))\cap 
(T_{\tau(x)}L(\tau(x))\times T_{\tau(y)}L(\tau(x)))=\{0\}.$$
Recalling from above that 
$$T_{(x,y)}(\tau\times\tau)v\in \Ker T_{(\tau(x),\tau(y))}(p_2^\Rc)
=T_{\tau(x)} (L(\tau(y)))\times\{0\}$$
we obtain $T_{(x,y)}(\tau\times\tau)v=0$. 
On the other hand $\tau\colon\Tc\to M$ is an immersion, 
hence $\tau\times\tau\colon\Tc\times\Tc\to M\times M$ is an immersion as well, 
and it then follows that $v=0$. 
This completes the proof of the fact that $T_{(x,y)}(q_2^{\Rc_\Tc})\colon T_{(x,y)}\Rc_\Tc\to T_y\Tc$ is injective. 
One can similarly prove that $T_{(x,y)}(q_1^{\Rc_\Tc})\colon T_{(x,y)}\Rc_\Tc\to T_x\Tc$ is injective, 
and this concludes the proof of Assertion~\eqref{QuotientBanachManifold_item_b}. 

\eqref{QuotientBanachManifold_item_c}
This is a consequence of Proposition~\ref{PFequivTG} 
and particularly Corollary~\ref{PFequivTG_cor}.

\eqref{QuotientBanachManifold_item_d} 
See Corollary~\ref{PFequivTG_cor}\eqref{PFequivTG_cor_item3}. 
Alternatively, for  arbitrary  $x\in M$,  if $x\in V_\alpha$, 
then we have a submersion $\pi_\alpha: U_\alpha\to \Tc_\alpha$\footnote{See the notation before Remark \ref{plaqueqq}.} 
so that $\pi_\Tc\circ \pi_\alpha=\pi $ on $U_\alpha$ which implies by definition that $\pi$ is a \H-submersion. 

Moreover, if $\Pc_x$ is a plaque through $x$, then $\Pc_x$ is an open subset of the leaf $L(x)$, while $L(x)$ is an open subset of the $\Rc$-equivalence class $\pi(x)$, hence $T_x(\pi(x))=T_x(L(x))=T_x(\Pc_x)=\Ker(T_x\pi)$, 
and this completes the proof. 
\end{proof}

\begin{remark}
\label{isomorphism}
\normalfont 
Let us resume the notation of the proof of Theorem~\ref{QuotientBanachManifold}
By \cite[5.11.7]{Bou3}, we obtain not only that $\Rc_\Tc\subseteq \Tc\times\Tc$
is a split immersed submanifold 
but also that its tangent space at $(x,y)\in\Rc_\Tc$ is 
\begin{equation}
\label{QuotientBanachManifold_proof_eq2}
T_{(x,y)}\mathcal{R}_\Tc
=(T_{(x,y)}(\tau\times\tau))^{-1}(T_{(\tau(x),\tau(y))}\Rc)
\end{equation}
and 
there exists a closed linear subspace 
$K_{(x,y)}\subseteq T_{(x,y)}(\Tc\times\Tc)$ 
satisfying 
\begin{equation}
\label{QuotientBanachManifold_proof_eq3}
T_{(x,y)}(\Tc\times\Tc)=T_{(x,y)}\Rc_\Tc\oplus K_{(x,y)}. 
\end{equation}
We claim that, 
if 
$H_{(\tau(x),\tau(y))}\subseteq T_{(\tau(x),\tau(y))}(M\times M)$  is a 
closed linear subspace satisfying 
\begin{equation}
\label{QuotientBanachManifold_proof_eq4}
T_{(\tau(x),\tau(y))}(M\times M)
=T_{(\tau(x),\tau(y))}\Rc\oplus H_{(\tau(x),\tau(y))}. 
\end{equation} 
then the Banach spaces $H_{(\tau(x),\tau(y))}$ and  $K_{(x,y)}$ are isomorphic. 

Indeed  we have as in  the proof of \cite[Th. I.2.1]{Pla}, 
\allowdisplaybreaks
\begin{align*}
T_{(\tau(x),\tau(y))}\Rc\oplus H_{(\tau(x),\tau(y))} 
\mathop{=}\limits^{\eqref{QuotientBanachManifold_proof_eq4}}
&T_{(\tau(x),\tau(y))}(M\times M)  \\
\mathop{=}\limits^{\eqref{QuotientBanachManifold_proof_eq1}}
&T_{(\tau(x),\tau(y))}\Rc+ (T_{(x,y)}(\tau\times\tau))(T_{(x,y)}(\Tc\times\Tc))\\
\mathop{=}\limits^{\eqref{QuotientBanachManifold_proof_eq3}}
&T_{(\tau(x),\tau(y))}\Rc \\
&+(T_{(x,y)}(\tau\times\tau))(T_{(\tau(x),\tau(y))}\Rc_\Tc\oplus K_{(x,y)}) \\
\mathop{=}\limits^{\eqref{QuotientBanachManifold_proof_eq2}}
&
T_{(\tau(x),\tau(y))}\Rc\oplus 
(T_{(x,y)}(\tau\times\tau))(K_{(x,y)})
\end{align*}
which leads to Banach space isomorphisms 
$$H_{(\tau(x),\tau(y))}\simeq T_{(x,y)}(M\times M)/T_{(\tau(x),\tau(y))}\Rc
\simeq (T_{(x,y)}(\tau\times\tau))(K_{(x,y)})
\simeq K_{(x,y)}.$$ 
Thus, $H_{(x,y)}$ and $K_{(x,y)}$ are isomorphic Banach spaces. 
\end{remark}

 Theorem \ref{QuotientBanachManifold} has an essential application in the Banach-Lie group framework 
as explained in the following.
 
\begin{definition}[{cf. \cite[Ch. III, \S 6, Def. 1]{Bou2}}]
	\label{weaksubgroup} 
	\normalfont 
	Let $G$ be a Banach-Lie group.  
A \emph{split immersed subgroup} of $G$ is a subgroup $K\subseteq G$ endowed with the structure of a Banach-Lie group for which the inclusion map $\iota\colon K\hookrightarrow G$ is a split immersion. 

There are two important classes of split immersed subgroups: 
\begin{enumerate}
	\item If  the Banach-Lie group $K$ is connected, then it is called an \emph{integral subgroup} of $G$. 
	\item If  the Banach-Lie group $K$ is an initial submanifold, then it is called an \emph{initial subgroup} of $G$. 
\end{enumerate}
\end{definition}

In the setting of Definition~\ref{weaksubgroup}, we denote by 
$\Lie(G)$ the Lie algebra of $G$, regarded as the tangent space $T_\1 G$ 
at the unit element $\1\in G$, endowed with the Lie bracket. 
We also denote by $\Lie(\iota):=T_\1\iota\colon \Lie(K)\to \Lie(G)$ the tangent mapping of $\iota$ at the unit element $\1\in H$, 
which is a topological isomorphism of the Lie algebra $\Lie(K)$ of $K$ onto a a split closed subalgebra of the Lie algebra $\Lie(G)$ of $G$.

We will need the following parameterization of the integral subgroups of a Banach-Lie group. 
 
 \begin{lemma}
 \label{Lie2}
 If $G$ is a Banach-Lie group, then the following assertions hold.  
 	\begin{enumerate}[{\rm(i)}]
 		\item\label{Lie2_item1}
 		The mapping  $H\mapsto \Lie(H)$ is a   bijection from the set of 
 		integral subgroups of $G$
 		onto the set of  split closed subalgebras  of the Banach-Lie algebra $\Lie(G)$. 
 		\item\label{Lie2_item2} 
 		If moreover $G$ is connected, then an integral subgroup $H\subseteq G$ is a normal subgroup if and only if $\Lie(K)$ is an ideal of $\Lie(G)$. 
 		\item\label{Lie2_item3} 
 		An integral subgroup $K$ is separable if and only if its Lie algebra $\Lie(K)$ is a separable Banach-Lie algebra and, if this is the case, then $K$ is an initial subgroup of $G$. 
 	\end{enumerate}
 \end{lemma}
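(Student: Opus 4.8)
The plan is to treat the three assertions separately, grounding \eqref{Lie2_item1} and \eqref{Lie2_item2} in the classical Lie correspondence carried out by means of the Frobenius theorem and the adjoint action, and to isolate in \eqref{Lie2_item3} the single point where separability is genuinely needed.

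For assertion~\eqref{Lie2_item1}, the surjectivity of $H\mapsto\Lie(H)$ onto the split closed subalgebras of $\gg:=\Lie(G)$ is immediate from the remark following Definition~\ref{weaksubgroup}. For existence, given a split closed subalgebra $\hg\subseteq\gg$, I would left-translate $\hg$ to a left-invariant subbundle $E\subseteq TG$ with $E_\1=\hg$; since $\hg$ is a subalgebra the distribution $E$ is involutive, and since $\hg$ is split closed $E$ is a complemented subbundle, so the Frobenius theorem yields a regular foliation $\Fc$ with $T\Fc=E$. Left-invariance of $E$ forces $\Fc$ to be invariant under left translations, and the standard argument (for $h$ in the leaf $H$ through $\1$, the map $L_{h^{-1}}$ sends the leaf through $h$, namely $H$, to the leaf through $\1$, so $h^{-1}H=H$) shows that $H$, endowed with the leaf manifold structure of Remark~\ref{LeafTopology}, is a connected split immersed subgroup with $\Lie(H)=\hg$. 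For injectivity, two integral subgroups with the same Lie algebra $\hg$ are both connected integral manifolds of the foliation attached to $\hg$ through $\1$; each is generated by the exponential image of $\hg$, hence both coincide with the leaf through $\1$, and therefore with each other.

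For assertion~\eqref{Lie2_item2}, write $\hg=\Lie(H)$ and assume $G$ connected. If $H$ is normal then every conjugation fixes $H$, so $\Ad(g)\hg=\hg$ for all $g\in G$; differentiating $t\mapsto\Ad(\exp tX)$ yields $\ad(X)\hg\subseteq\hg$ for every $X\in\gg$, i.e. $\hg$ is an ideal. Conversely, if $\hg$ is an ideal then $\Ad(\exp X)\hg=\hg$ for all $X$, and since a connected $G$ is generated by $\exp(\gg)$ we get $\Ad(g)\hg=\hg$ for all $g$; hence $gHg^{-1}$ is an integral subgroup with Lie algebra $\Ad(g)\hg=\hg$, and the uniqueness in~\eqref{Lie2_item1} gives $gHg^{-1}=H$.

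For assertion~\eqref{Lie2_item3}, the equivalence is a generation argument: a chart identifies a neighbourhood of $\1$ in $K$ with an open ball of $\Lie(K)=T_\1K$, and such a ball is separable exactly when $\Lie(K)$ is, which gives one implication; conversely, if $\Lie(K)$ is separable, choose a separable open $U\ni\1$ and note that $K=\bigcup_n U^n$ is a countable union of continuous images of the separable spaces $U^{\times n}$, hence separable. The final claim---that a separable integral subgroup is initial---is where I expect the real work. Realizing $K$ as the leaf through $\1$ of the left-invariant foliation $\Fc$ from~\eqref{Lie2_item1}, and given a smooth $f\colon P\to G$ from a Banach manifold with $f(P)\subseteq K$, I would produce the lift $f_0\colon P\to K$ by working in a nice atlas (Convention~\ref{Nicefoliation}) of foliated charts and checking that $f$ factors continuously, hence smoothly, through the plaque structure of $K$. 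The main obstacle is precisely here: for a general integral subgroup a leaf may meet a foliated chart in uncountably many plaques, which can destroy the continuity of $f_0$; separability of $K$, hence of the leaf, forces the set of plaques met by $K$ in each chart to be countable, and this countability is what lets a connectedness and Baire-category argument establish that $f_0$ is continuous and therefore smooth into the initial manifold $K$.
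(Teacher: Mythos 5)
Your proposal reconstructs direct proofs of facts that the paper disposes of purely by citation: for \eqref{Lie2_item1} the paper invokes \cite[Ch. III, \S 6, Th. 2(i)]{Bou2}, for \eqref{Lie2_item2} it invokes \cite[Satz 12.6]{Mai62}, and for \eqref{Lie2_item3} it combines \cite[Lemma II.2.2]{Pla} with \cite[9.2.8]{Bou3} or \cite[Prop. 2.2]{Pen70}. Your arguments for \eqref{Lie2_item1} and \eqref{Lie2_item3} are in substance the standard proofs behind those citations, and they are sound in the Banach setting: in \eqref{Lie2_item1}, injectivity works without any separability because an integral subgroup is by \emph{definition} everywhere tangent to the left-invariant distribution (via $\iota\circ L^K_k=L^G_{\iota(k)}\circ\iota$), so the transverse component of the inclusion has vanishing derivative in every foliated chart and the inclusion factors as an \'etale map into the leaf through $\1$; in \eqref{Lie2_item3}, no Baire category is needed at the end --- once the trace of the separable leaf on each transversal is countable, the transverse projection of a connected neighbourhood in $P$ is a connected countable subset of a metric space, hence a singleton, so $f$ maps locally into a single plaque and $f_0$ is smooth.

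The genuine gap is in the forward implication of \eqref{Lie2_item2}. From $gHg^{-1}=H$ \emph{as sets} you conclude ``so $\Ad(g)\hg=\hg$,'' but set-wise normality only tells you that for each $Y\in\hg$ the smooth curve $t\mapsto g\exp(tY)g^{-1}=\exp\bigl(t\,\Ad(g)Y\bigr)$ has image contained in $H$. To differentiate and deduce $\Ad(g)Y\in\hg$ you need to know that a smooth curve of $G$ lying in the leaf $H$ is tangent to the foliation, equivalently that $\hg=\{X\in\gg\mid \exp(tX)\in H\ \text{for all}\ t\in\RR\}$. Unlike the situation in \eqref{Lie2_item1}, here the curve is \emph{not} given as tangent to the distribution; its transverse component takes values in the trace of the leaf on a transversal, and only the countability of that trace would force it to be constant. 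When $H$ is separable this is exactly your plaque-counting argument from \eqref{Lie2_item3}; but \eqref{Lie2_item2} is asserted for arbitrary integral subgroups, and for non-separable $H$ the trace can be uncountable and initiality --- which in finite dimensions (via second countability, or Yamabe's theorem) licenses precisely this differentiation step --- is exactly what fails in general; indeed the failure of initiality is why \eqref{Lie2_item3} carries a separability hypothesis, and Section~\ref{Sect5} of the paper (connected pseudo-discrete and Bernstein subgroups) shows how pathological connected subgroups of Banach-Lie groups can be. So your ``every conjugation fixes $H$, so $\Ad(g)\hg=\hg$'' is a finite-dimensional reflex; in the Banach setting it is a real theorem, and it is precisely what the paper imports from Maissen. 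Your converse direction is fine as written: $\Ad(\exp X)=\ee^{\ad X}$ preserves the closed ideal $\hg$, connectedness of $G$ propagates this to $\Ad(g)\hg=\hg$ for all $g$, the pushforward of the Lie group structure by $c_g$ makes $gHg^{-1}$ an integral subgroup with Lie algebra $\hg$, and the injectivity in \eqref{Lie2_item1} gives $gHg^{-1}=H$.
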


\begin{proof}
\eqref{Lie2_item1} This  follows by \cite[Ch. III, \S 6, Th. 2(i)]{Bou2}. 

\eqref{Lie2_item2} 
See \cite[Satz 12.6]{Mai62}. 

\eqref{Lie2_item3} 
It is well known that a connected Banach-Lie group is second countable if and only if its Lie algebra is separable. 
(See for instance \cite[Lemma II.2.2]{Pla}.) 
Furthermore, if any integral subgroup $K$ is a leaf of the regular foliation of $G$ whose leaves are the left cosets $gK$ for arbitrary $g\in G$. 
Therefore, if $K$ is second countable, then it is an initial submanifold of $G$, cf. for instance \cite[9.2.8]{Bou3} or \cite[Prop. 2.2]{Pen70}.
\end{proof}

 With these definitions and results we have:
  
 \begin{proposition}
 	\label{G/K} 
 	Let $G$ be a   Banach-Lie group with a split immersed subgroup $K\subseteq G$. 
Then $G/K$ has the structure of a \H-manifold for which the quotient map 
$q\colon G\to G/K$ is a submersion of \H-manifolds 
with $\Ker(T_\1 q)=T_\1 K$. 

If $K$ is moreover an initial subgroup of $G$, then  
$G/K$ has the structure of a \Q-manifold for which the quotient map 
$q\colon G\to G/K$ is a submersion of \Q-manifolds. 
 \end{proposition}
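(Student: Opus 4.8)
The plan is to deduce the statement from Theorem~\ref{QuotientBanachManifold}, applied to the equivalence relation whose classes are the left cosets of $K$, and then to read off the supplementary information from that theorem's conclusions. First I would note that $G$, being a Hausdorff topological group modeled on a Banach space, is metrizable, hence paracompact, so it is a $C^0$-paracompact Banach manifold and the theorem is applicable with $M:=G$. I would then set
\[
\Rc:=\{(x,y)\in G\times G\mid x^{-1}y\in K\}\subseteq G\times G,
\]
which is the graph of the equivalence relation with classes $xK$, so that $G/\Rc=G/K$. To equip $\Rc$ with a suitable Banach manifold structure I would use the diffeomorphism $\Psi\colon G\times G\to G\times G$, $\Psi(x,y):=(x,x^{-1}y)$, whose inverse is $(x,z)\mapsto(x,xz)$. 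Since $x^{-1}y\in K$ precisely when $(x,y)\in\Rc$, the map $\Psi$ carries $\Rc$ bijectively onto $G\times\iota(K)$, and I would endow $\Rc$ with the unique Banach manifold structure for which $\Psi\vert_\Rc\colon\Rc\to G\times K$ is a diffeomorphism (here $G\times K$ is the product of Banach manifolds, using the Banach-Lie group structure of $K$).

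With this structure the two hypotheses of Theorem~\ref{QuotientBanachManifold} hold. The inclusion $\iota_\Rc\colon\Rc\hookrightarrow G\times G$ factors as $\Psi^{-1}\circ(\id_G\times\iota)\circ(\Psi\vert_\Rc)$, and since $\id_G\times\iota\colon G\times K\to G\times G$ is a split immersion (a product of a diffeomorphism and the split immersion $\iota$), so is $\iota_\Rc$; this gives hypothesis~(ii). For hypothesis~(i), under $\Psi\vert_\Rc$ the projection $p_1^\Rc$ becomes the first Cartesian projection $G\times K\to G$, a surjective submersion, while $p_2^\Rc$ becomes $\mu\colon G\times K\to G$, $(x,k)\mapsto x\iota(k)$. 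The map $\mu$ is clearly surjective, and a direct computation with left and right translations shows it is a submersion: for fixed $k$ the partial map $x\mapsto x\iota(k)$ is a diffeomorphism of $G$, so $T\mu$ is already surjective, and its kernel is the graph of a bounded operator on $T_kK$, hence complemented by $T_xG\times\{0\}$.

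Theorem~\ref{QuotientBanachManifold} then yields that the connected components of the cosets form a nice foliation $\Fc$, that $G/K=G/\Rc$ carries the structure of an \H-manifold, that $q=\pi\colon G\to G/K$ is a submersion of \H-manifolds, and that $\Ker(T_xq)=T_x(L(x))$ for the leaf $L(x)$ through $x$. At $x=\1$ the $\Rc$-class is $K$, so $L(\1)$ is the connected component $K^0$ of $\1$ in $K$; as $K^0$ is open in $K$ one gets $\Ker(T_\1 q)=T_\1(K^0)=T_\1 K=\Lie(\iota)(\Lie(K))$, which is the first assertion.

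Finally, suppose $K$ is an initial subgroup. The \H-atlas produced above is $(\Tc,\pi\circ\tau,G/K)$, whose associated graph is exactly $\Rc_\Tc=(\tau\times\tau)^{-1}(\Rc)$, so by Proposition~\ref{characterization} it remains only to prove that $\Rc_\Tc$ is an initial submanifold of $\Tc\times\Tc$. First, $\Rc$ itself is initial in $G\times G$: if $K$ is initial then $G\times K$ is initial in $G\times G$, and $\Rc=\Psi^{-1}(G\times K)$ is its image under a diffeomorphism. To pass from $\Rc$ to $\Rc_\Tc$ I would use the convenient-calculus technique from the proof of Proposition~\ref{characterization}: for a smooth $\varphi\colon P\to\Tc\times\Tc$ with $\varphi(P)\subseteq\Rc_\Tc$ it suffices to show that $\varphi$ corestricts smoothly to $\Rc_\Tc$, and this may be tested on smooth curves $\gamma=(\gamma_1,\gamma_2)\colon J\to\Tc\times\Tc$ with image in $\Rc_\Tc$. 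For such a curve the $G$-valued curve $t\mapsto(\tau\gamma_1(t))^{-1}\tau\gamma_2(t)$ is smooth with image in $K$, hence smooth into $K$ since $K$ is initial; combining this with the fact that $q_1^{\Rc_\Tc}\colon\Rc_\Tc\to\Tc$ is \'etale (Theorem~\ref{QuotientBanachManifold}\eqref{QuotientBanachManifold_item_b}) lets one lift $\gamma$ smoothly to $\Rc_\Tc$ near each parameter, the correct sheet being singled out by the local injectivity of the transverse projection of $\Fc$ onto a plaque. Then convenient analysis gives that $\varphi$ is smooth into $\Rc_\Tc$, so $\Rc_\Tc$ is initial and $(\Tc,\pi\circ\tau,G/K)$ is a \Q-atlas; hence $G/K$ is a \Q-manifold and $q$ is a submersion of \Q-manifolds. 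I expect this last point---the transfer of initiality from $\Rc$ to the transversal relation $\Rc_\Tc$, that is, controlling the immersed topology of $\Rc_\Tc$ along curves---to be the main obstacle.
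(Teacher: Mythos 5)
Your treatment of the \H-part is correct and is essentially the paper's own argument: the paper likewise conjugates $\Rc$ onto $G\times K$ by the diffeomorphism $(g,h)\mapsto(g,gh)$ (the inverse of your $\Psi$, which is immaterial) and then invokes Theorem~\ref{QuotientBanachManifold}. The differences there are minor: where you verify that the second restricted projection corresponds to $\mu(x,k)=x\iota(k)$ and check directly that $T\mu$ is surjective with kernel a complemented graph over $T_kK$, the paper instead transports the submersion property of the first projection through the flip $(g,g')\mapsto(g',g)$, after checking that the flip restricts smoothly to $\Rc$; both computations are valid. Your explicit remark that a Banach-Lie group is metrizable, hence $C^0$-paracompact, supplies a hypothesis of Theorem~\ref{QuotientBanachManifold} that the paper's proof leaves tacit, and your identification $\Ker(T_\1 q)=T_\1 K$ via the leaf $L(\1)=K^0$ open in $K$ matches the paper's appeal to \eqref{QuotientBanachManifold_eq1}.

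For the \Q-part the routes genuinely diverge: once the paper knows that $\Rc=\Psi(G\times K)$ is an initial submanifold, it simply cites \cite[Th.]{Pl80b} or \cite[Th. I.2.1]{Pla}, whereas you attempt a self-contained proof that $\Rc_\Tc$ is initial in $\Tc\times\Tc$ and then invoke the last clause of Proposition~\ref{characterization}. Your reduction (initiality of $G\times K$, hence of $\Rc$; then pass to $\Rc_\Tc$) is sound up to exactly the step you flag: lifting a curve to the ``correct sheet'' of $\Rc_\Tc$ via local injectivity of the transverse projection does not close as written, because a leaf may re-enter a foliated chart in many plaques, and nearness of $k(t)$ to $k_0$ in the manifold topology of $K$ still has to be converted into membership in a single plaque; your sketch does not perform this conversion. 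The gap is repairable, and in fact more cleanly than by curves and holonomy bookkeeping: given a smooth $\varphi\colon P\to\Tc\times\Tc$ with $\varphi(P)\subseteq\Rc_\Tc$, the map $(\tau\times\tau)\circ\varphi$ is smooth into $\Rc$ by the initiality of $\Rc$, which you established; locally its image then lies in a single chart piece of $\Rc$ which (shrinking, by the local form of a split immersion) is an \emph{embedded} split submanifold of an open subset of $G\times G$, and the transversality of $\tau\times\tau$ to $\Rc$ established in the proof of Theorem~\ref{QuotientBanachManifold} (via \cite[5.11.6--5.11.7]{Bou3}) makes the preimage of that piece an embedded split submanifold of an open subset of $\Tc\times\Tc$, i.e.\ an open piece of $\Rc_\Tc$; corestricting a smooth map to an embedded split submanifold containing its image is automatically smooth, so $\varphi$ is smooth into $\Rc_\Tc$ and no convenient-calculus or sheet-tracking argument is needed. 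With that substitution your proof is complete and, unlike the paper's, does not outsource the \Q-statement to Plaisant; as submitted, however, the sheet-selection step is a genuine gap.
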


 \begin{proof} (Compare the proof of \cite[Th. II.4.2]{Pla}.) 
We consider the diffeomorphism 
$$\Psi\colon G\times G\to G\times G, \quad (g,h)\mapsto (g,gk)$$ 
and 
the set 
 $$\Rc:=\{(g,g')\in G\times G\mid \pi(g)=\pi(g')\}.$$  
Since $K$ is a split immersed submanifold of $G$, 
it follows that $G\times K$ is a split immersed submanifold of
 $G\times G$, 
 and the first Cartesian projection $G\times K\to G$ is a submersion 
 with connected fibers, since $K$ is connected. 
 Since $\Psi\colon G\times G\to G\times G$ is a diffeomorphism that intertwines the Cartesian projections on the first factor, 
 it then follows that $\Psi(G\times K)$ is a split immersed submanifold of
 $G\times G$ and the first Cartesian projection $\Psi(G\times K)\to G$ is a submersion. 
 It is easily seen that 
 $\Rc=\Psi(G\times K)$, 
 hence $\Rc$ has the structure of a split immersed submanifold of
 $G\times G$ for which the first Cartesian projection $\Rc\to G$ is a submersion 
 with connected fibers.  
 
 We now note that $\Rc$ is invariant to the flip diffeomorphism 
 $$\Phi\colon G\times G\to G\times G, \colon (g,g')\mapsto (g',g)$$ 
 and $(\Psi^{-1}\circ \Phi \circ\Psi)(g,k)=\Psi^{-1}(gk,g)=(gk,k^{-1})$, 
 hence $\Psi^{-1}\circ \Phi \circ\Psi\colon G\times K\to G\times K$ 
 is smooth. 
 Since the mapping $\Phi\vert_{G\times K}\colon G\times K\to\Rc$ is a diffeomorphism by the definition of the Banach manifold structure of $\Rc$, it then follows that $\Phi\vert_\Rc\colon\Rc\to\Rc$ is smooth. 
 This smooth mapping $\Phi\vert_\Rc\colon\Rc\to\Rc$ intertwines the Cartesian projections from $\Rc$ onto $G$ and, since we already know that the first Cartesian projection is a submersion, it follows that 
  it then follows that the second Cartesian projection $\Rc\to G$ is a submersion  as well, with connected fibers. 

Consequently, it follows by Theorem \ref{QuotientBanachManifold}, 
that $G/K$ has the structure of an \H-manifold and   $q\colon G\to G/K$ is a submersion of \H-manifolds. 
The equality $\Ker(T_\1 q)=T_\1 K$ follows by \eqref{QuotientBanachManifold_eq1}.

Finally, if $K$ is an initial subgroup of $G$, then $\Rc=\Psi(G\times K)$ is an initial submanifold of $G\times G$, 
hence the assertion follows by \cite[Th.]{Pl80b} or \cite[Th. I.2.1]{Pla}. 
   \end{proof}

\section{Group \H-manifolds}
\label{Sect4}

This section contains our main results on the Lie theory for the \H-groups, 
corresponding to Sophus Lie's fundamental theorems I and III. 
Specifically, we construct the Lie functor from the category of \H-groups to the category of real Banach-Lie algebras (Theorem~\ref{Lie_I}) 
and we prove that every real Banach-Lie algebra arises from an \H-group (Theorem~\ref{Lie_III}). 

\subsection{Group \H-manifolds and their Lie algebras}
By way of motivation for the notion we are about to introduce, 
recall that a (Banach-)Lie group $G$ is a  (Banach) manifold 
which has  also a group  structure whose structural mapping 
$G\times G\to G$, $(x,y)\mapsto xy^{-1}$, is smooth. 
(See for instance \cite[Ch. III, \S 1, Def. 1]{Bou2}.)  
In the same way, we make the following definition. 

\begin{definition}
\normalfont 
A \emph{group \H-manifold} (for short an \emph{\H-group})
is  a  \H-manifold $G$  with a group  structure whose structural mapping 
$G\times G\to G$, $(x,y)\mapsto xy^{-1}$, is smooth in the sense of \H-manifolds, where the direct product $G\times G$ is a \H-manifold by Corollary~\ref{characterization_cor1} and Example~\ref{pre-H_prod}. 
\end{definition} 

Special case of this notion are the \Q-groups introduced in  \cite{Ba73} in finite dimensions and \cite{Pla} in the Banach context. 
(See also \cite{BPZ19}.)
Every Banach-Lie group is of course an \H-group.  
More generally we have:

\begin{proposition}\label{TG} 
Let $G$ be an \H-group with its group multiplication, inversion, and unit element denoted by $\bm\colon G\times G\to G$, $\iota\colon  G\to G$, 
and $\1\in G$, respectively. 
Then the tangent \H-manifold $TG$ is an \H-group 
whose group multiplication, inversion, and unit element are $T\bm\colon TG\times TG\to TG$,  
$T\iota\colon TG\to TG$, and $0\in T_\1 G$, respectively. 
\end{proposition}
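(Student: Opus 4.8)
The plan is to deduce everything from the fact that the tangent operation is a product-preserving functor on the category of \H-manifolds, and then to obtain each group axiom for $TG$ by applying this functor to the corresponding axiom for $G$.

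First I would record the ingredients. By Proposition~\ref{preHtg} (the tangent atlas $T\pi$ of an \H-atlas $\pi\colon M\to G$ is again an \H-atlas), $TG$ is an \H-manifold, and by Corollary~\ref{characterization_cor1} the product $TG\times TG$ is an \H-manifold as well. Moreover, for the product \H-atlas $\pi\times\pi\colon M\times M\to G\times G$ the tangent \H-atlas $T(\pi\times\pi)$ is, under the canonical Banach-manifold identification $T(M\times M)\cong TM\times TM$, exactly the product \H-atlas $T\pi\times T\pi$; hence the canonical map $T(G\times G)\to TG\times TG$ is an \H-diffeomorphism, which I would use throughout without further comment. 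Finally, recall from the discussion preceding Proposition~\ref{EtalMapB-Manifold} that the tangent operation sends \H-smooth maps to \H-smooth maps: a local lift $\hat f$ of $f$ produces the local lift $T\hat f$ of $Tf$. In particular $T\bm$ and $T\iota$ are \H-smooth, and $0\in T_\1 G$ is a well-defined point of $TG$.

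Next I would identify the structural map. The structural map $\Theta\colon G\times G\to G$, $(x,y)\mapsto xy^{-1}=\bm(x,\iota(y))$, is \H-smooth because $G$ is an \H-group. Applying the tangent operation and using the identification above gives the \H-smooth map $T\Theta=T\bm\circ(\id_{TG}\times T\iota)\colon TG\times TG\to TG$, $(v,w)\mapsto T\bm(v,T\iota(w))$, which is precisely the structural map attached to the proposed multiplication $T\bm$ and inversion $T\iota$ on $TG$. Thus, once the group axioms are verified, $TG$ is an \H-group. The axioms then follow purely formally, since each is an identity between \H-smooth maps that the tangent functor preserves (it fixes identities, respects composition, and respects finite products by the identification above). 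Associativity of $T\bm$ is obtained by applying $T$ to $\bm\circ(\bm\times\id_G)=\bm\circ(\id_G\times\bm)$. For the unit, writing $j_1=(c_\1,\id_G)\colon G\to G\times G$ with $c_\1$ the constant map at $\1$, one has $\bm\circ j_1=\id_G$; since $Tc_\1$ is constant equal to $0\in T_\1 G$, applying $T$ yields $T\bm(0,v)=v$ for all $v\in TG$, and symmetrically $T\bm(v,0)=v$. For the inverse, applying $T$ to $\bm\circ(\iota\times\id_G)\circ\Delta=c_\1$ (with $\Delta$ the diagonal, so that $T\Delta(v)=(v,v)$) gives $T\bm(T\iota(v),v)=0$, and symmetrically $T\bm(v,T\iota(v))=0$. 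Hence $(TG,T\bm,T\iota,0)$ is a group.

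The only point requiring genuine care---and the step I would treat as the main obstacle---is the product-preservation of the tangent operation at the level of \H-manifolds, namely that $T(G\times G)\to TG\times TG$ is an \H-diffeomorphism intertwining $T\bm$ with the multiplication built from the product \H-structure on $TG\times TG$. This is bookkeeping with the tangent \H-atlases of Proposition~\ref{preHtg} rather than a substantive difficulty, but it is exactly what makes ``apply the tangent functor'' legitimate; everything else is a formal consequence of functoriality.
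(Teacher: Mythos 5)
Your proposal is correct and follows essentially the same route as the paper: the paper's proof likewise establishes that $M\mapsto TM$ is a functor on \H-manifolds (via Proposition~\ref{preHtg}) that preserves finite Cartesian products (via $T_{(s,s')}(S\times S)=T_sS\times T_{s'}S$), and then transfers the classical Banach--Lie group argument, citing \cite[Prop.~2.4]{Be06} for the formal verifications that you write out explicitly. Your identification of product-preservation as the one step requiring genuine care matches exactly the point the paper singles out before invoking the classical proof.
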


 \begin{proof} 
 We first recall from Proposition~\ref{preHtg} that the tangent space of any \H-manifold is again an \H-manifold, 
 hence the correspondence $M\mapsto TM$ is a functor on the category of \H-manifolds. 
 At first note that by construction of the tangent bundle of a \H-manifold $S$, 
for each $(s,s')\in S\times S$ we have $T_{(s,s')}(S\times S)=T_sS\times T_{s'}S$. 
(See Definition~\ref{tgH_def}.)
This implies that the above functor commutes with the Cartesian products of finitely many objects. 
Now the assertion follows along the lines of the proof in the classical case of Banach-Lie groups. 
See for instance the proof of \cite[Prop. 2.4]{Be06}.
\end{proof}

Just as in the classical case of the Banach-Lie groups, we consider the left translation 
 ${L}_g\colon G\to G$, ${L}_g(h)=gh$, for all $g,h\in G$. 
Note that $L_g$ is a diffeomorphism whose inverse is $L_{g^{-1}}$. 
For any $u\in \gg:=T_\1 G$ we define the vector field $\Lc_u\colon G\to TG$,  $\mathcal{L}_u(g)=(T_\1({L}_g))(u)\in T_g G$. 
Conversely, a  (global) vector field $X\colon G\to TG$ is called 
\emph{left invariant} if  $T_h(L_g)(X(h))=X(gh)$ for all $g,h\in G$. 
We denote by $\Xc^\Lie(G)$ the set of all left-invariant vector fields
 on~$G$. 

In order to define the Lie algebra of $G$ we need to prove that the Lie bracket of left invariant vector fields is again a left invariant vector field, which is essentially the content of the following proposition.

\begin{proposition}
\label{left_bracket}
If $G$ is an \H-group, 
then the set of all left-invariant vector fields~$\Xc^\Lie(G)$ is a subalgebra of the Lie algebra $\Xc(G)$. 
Moreover, the mapping 
$\Lc\colon T_\1 G\to \Xc^\Lie(G)$, $u\mapsto\Lc_u$, is a linear isomorphism. 
\end{proposition}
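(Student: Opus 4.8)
The plan is to treat the linear-isomorphism claim first, as it is largely formal once each $\Lc_u$ is known to be a smooth vector field, and then to deduce the subalgebra claim from naturality of the Lie bracket under the left translations. First I would check that $\Lc_u\in\Xc^\Lie(G)$ for every $u\in T_\1 G$. Left invariance is purely formal: from $L_g\circ L_h=L_{gh}$ and $L_\1=\id_G$ one gets $T_h(L_g)(\Lc_u(h))=T_\1(L_g\circ L_h)(u)=\Lc_u(gh)$. For smoothness I would realize $\Lc_u$ through the multiplication map $\bm\colon G\times G\to G$: using the identification $T(G\times G)=TG\times TG$ recorded in the proof of Proposition~\ref{TG}, form $\zeta_u\colon G\to T(G\times G)$, $\zeta_u(g)=(0_g,u)$ with $0_g\in T_gG$ the zero vector. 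This $\zeta_u$ is the pairing of the zero section $G\to TG$ and the constant map at $u$, both smooth in the sense of \H-manifolds, hence $\zeta_u$ is smooth; and $T\bm$ is smooth by functoriality of the tangent construction. Writing $j_g\colon G\to G\times G$, $h\mapsto(g,h)$, we have $\bm\circ j_g=L_g$ and $T_\1 j_g(u)=(0_g,u)$, so $T\bm(\zeta_u(g))=T_\1(L_g)(u)=\Lc_u(g)$; thus $\Lc_u=T\bm\circ\zeta_u$ is smooth.

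The isomorphism claim is then immediate. Linearity of $u\mapsto\Lc_u$ follows from linearity of each $T_\1(L_g)$. Evaluation at $\1$ gives $\Lc_u(\1)=T_\1(L_\1)(u)=u$, so $\Lc$ is injective; and for any $X\in\Xc^\Lie(G)$, left invariance with $h=\1$ yields $\Lc_{X(\1)}(g)=T_\1(L_g)(X(\1))=X(g)$, so $\Lc$ is surjective with inverse $X\mapsto X(\1)$.

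For the subalgebra claim I would establish that the Lie bracket on $\Xc(G)$, defined in Remark~\ref{fields_bracket} by transport along an \H-atlas $\pi\colon M\to G$, is natural under the \H-diffeomorphisms $L_g$. For an \H-diffeomorphism $f\colon G\to G$ set $f_*X:=Tf\circ X\circ f^{-1}\in\Xc(G)$; the claim is $f_*[X,Y]=[f_*X,f_*Y]$. The verification is local over the atlas: $f$ lifts locally to diffeomorphisms $\hat f\colon U\to U'$ of open subsets of $M$ with $\pi\circ\hat f=f\circ\pi\vert_U$, and using that each $T_x\pi$ is bijective for an \H-atlas (Proposition~\ref{preHtg}) one checks that the $\Gamma(\pi)$-invariant lift of $f_*X$ coincides with $\hat f_*\widetilde X$ on $U'$, where $\widetilde X:=(\pi_*)^{-1}(X)$. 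Since the ordinary bracket on the Banach manifold $M$ is natural under $\hat f$, the lifts of $f_*[X,Y]$ and of $[f_*X,f_*Y]$ agree on each such $U'$, and these cover $M$; applying $\pi_*$ gives $f_*[X,Y]=[f_*X,f_*Y]$. Finally, left invariance of $X$ means exactly $(L_g)_*X=X$ for all $g\in G$, so naturality yields $(L_g)_*[X,Y]=[X,Y]$, proving $[X,Y]\in\Xc^\Lie(G)$ and hence that $\Xc^\Lie(G)$ is a subalgebra.

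The main obstacle is the naturality step: the bracket on $\Xc(G)$ is available only through the atlas transfer of Remark~\ref{fields_bracket}, so I must show that the locally defined pushforwards $\hat f_*\widetilde X$ glue to a well-defined $\Gamma(\pi)$-invariant vector field on $M$, independent of the chosen local lift $\hat f$. This independence rests on the fiberwise bijectivity of $T_x\pi$ for an \H-atlas together with the $\Gamma(\pi)$-invariance of $\widetilde X$, which forces two local lifts to agree after composition with the relevant transverse diffeomorphisms; once this is secured, classical naturality of the bracket on $M$ completes the argument.
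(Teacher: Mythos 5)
Your proposal is correct, and its first two parts coincide with the paper's proof: the paper also derives smoothness of $\Lc_u$ from the identity $(\Lc_u)(g)=(T_{(g,\1)}\bm)(0,u)$ together with smoothness of $T\bm$ (Proposition~\ref{TG}), and settles the linear-isomorphism claim by the classical evaluation-at-$\1$ argument you give. Where you genuinely diverge is the subalgebra step. The paper never defines a pushforward on $G$: it introduces the pseudogroup $\Gamma_\Lie$ of local diffeomorphisms of $M$ generated by the local lifts of all the left translations $L_g$, observes that under the isomorphism $\pi_*\colon\Xc^{\Gamma(\pi)}(M)\to\Xc(G)$ of Remark~\ref{fields_bracket} a field $\pi_*(X)$ is left-invariant exactly when $X\in\Xc^{\Gamma_\Lie}(M)$, and then invokes Definition~\ref{fields_invar} twice: both $\Xc^{\Gamma(\pi)}(M)$ and $\Xc^{\Gamma_\Lie}(M)$ are Lie subalgebras of $\Xc(M)$ because diffeomorphisms preserve brackets, so their intersection is one, and $\pi_*$ transports it onto $\Xc^\Lie(G)$. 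This sidesteps precisely the gluing problem you single out as your main obstacle: working only with the invariance condition upstairs, the paper never needs the locally defined pushforwards $\hat f_*\widetilde X$ to assemble into a global, well-defined field. Your route does go through — two local lifts $\hat f_1,\hat f_2$ of $L_g$ differ by $\hat f_2^{-1}\circ\hat f_1\in\Gamma(\pi)$, so $\Gamma(\pi)$-invariance of $\widetilde X$ forces the pushforwards to agree — but it costs extra bookkeeping, e.g.\ you must also arrange the lifts to be local diffeomorphisms, which is available here because $L_g$ has the smooth inverse $L_{g^{-1}}$ (so that, via the fiberwise bijectivity of $T\pi$ and the local inversion theorem, a lift is invertible near any point). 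In exchange, your naturality statement $f_*[X,Y]=[f_*X,f_*Y]$ for arbitrary \H-diffeomorphisms is a more general, reusable lemma, while the paper's two-pseudogroup formulation is the leaner argument for this particular proposition.
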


\begin{proof}
Let $\pi\colon M\to G$ be an \H-atlas 
and denote by $\Gamma_\Lie$ the pseudogroup of local diffeomorphisms of $M$ generated by the smooth (local) lifts of the left-translation diffeomorphisms $L_g\colon G\to G$ for all $g\in G$. 
Recalling the Lie algebra isomorphism 
$$\pi_*\colon \Xc^{\Gamma(\pi)}(M)\to\Xc(G)$$ 
from Remark~\ref{fields_bracket}, it is easily seen that 
for any $X\in \Xc^{\Gamma(\pi)}(M)$, its corresponding 
vector field $\pi_*(X)\in \Xc(G)$ is left-invariant if and only if $X$ is invariant with respect to the pseudogroup $\Gamma_\Lie$. 
That is, we have the linear isomorphism 
$$\pi_*\vert_{\Xc^{\Gamma(\pi)}(M)\cap \Xc^{\Gamma_\Lie}(M)}\colon \Xc^{\Gamma(\pi)}(M)\cap \Xc^{\Gamma_\Lie}(M)\to\Xc^\Lie(G).$$
Here $\Xc^{\Gamma(\pi)}(M)\cap \Xc^{\Gamma_\Lie}(M)$ is a subalgebra of the Lie algebra $\Xc(M)$ (see the end of Definition~\ref{fields_invar}) 
hence, using the above Lie algebra isomorphism $\pi_*$, 
it follows that $\Xc^\Lie(G)$ is a subalgebra of the Lie algebra $\Xc(G)$. 

The proof of the fact that $\Lc\colon T_\1 G\to \Xc^\Lie(G)$ is a linear isomorphism follows by the same lines as in the case of Banach-Lie groups. 
(See for instance \cite[Lemma 2.17]{Be06}.) 
The smoothness of $\Lc_u\colon G\to G$ follows by the formula 
$(\Lc_u)(g)=(T_\1(L_g))(u)=(T_{(g,\1)}\bm)(0,u)$ for all $g\in G$ and $u\in T_\1 G$, where $0\in T_g G$, 
using the fact that the multiplication mapping $\bm\colon G\times G\to G$ is smooth, hence its tangent mapping $T\bm\colon TG\times TG\to TG$ is again smooth. 
(See also Proposition~\ref{TG}.)
\end{proof}

\begin{definition}
\label{Lie_alg_def}
\normalfont
If $G$ is an \H-group, then its \emph{Lie algebra} is the Banach space $\Lie(G):=T_\1 G$ endowed with the unique Lie bracket for which the linear isomorphism $\Lc\colon T_\1 G\to \Xc^\Lie(G)$ from Proposition~\ref{left_bracket} is a Lie algebra isomorphism. 

If $K$ is another \H-group and $\varphi\colon K\to G$ is a smooth group homomorphism, then we denote $\Lie(\varphi):=T_\1\varphi\colon \Lie(K)\to\Lie(G)$. 

We denote by $\BHLG$ th category whose objects are the \H-groups and whose morphisms are the smooth group homomorphisms, and by $\BLA$ the category whose objects are the real Banach-Lie algebras, and whose morphisms are continuous Lie algebra homomorphisms. 
\end{definition}

\begin{theorem}
\label{Lie_I}
The correspondence $\Lie\colon\BHLG\to\BLA$ is a well-defined functor. 
\end{theorem}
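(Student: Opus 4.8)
The plan is to check, in turn, that $\Lie$ sends objects of $\BHLG$ to objects of $\BLA$, sends morphisms to morphisms, and respects identities and composition.

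\emph{Objects.} Let $G$ be an \H-group. By Proposition~\ref{preHtg}(\ref{preHtg_item2}) the tangent space $\Lie(G)=T_\1 G$ is a Banach space, and by Proposition~\ref{left_bracket} together with Definition~\ref{Lie_alg_def} it carries a Lie bracket for which $\Lc\colon\Lie(G)\to\Xc^\Lie(G)$ is a Lie algebra isomorphism. The only point left to verify for $\Lie(G)\in\BLA$ is that this bracket is continuous. I would check this by lifting to a chart: fix an \H-atlas $\pi\colon M\to G$ and a point $x_0\in\pi^{-1}(\1)$, so that $T_{x_0}\pi\colon T_{x_0}M\to\Lie(G)$ is a Banach space isomorphism. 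By Remark~\ref{fields_bracket} and Definition~\ref{Lie_alg_def}, for $u,v\in\Lie(G)$ the bracket $[u,v]$ is obtained by lifting $\Lc_u,\Lc_v$ to $\Gamma(\pi)$-invariant vector fields $X_u,X_v$ on $M$ and reading off $(T_{x_0}\pi)\bigl([X_u,X_v](x_0)\bigr)$. In a Banach-manifold chart of $M$ around $x_0$ the $1$-jets of $X_u,X_v$ at $x_0$ depend boundedly and linearly on $u,v$ ---exactly as left-invariant fields do on a Banach-Lie group--- so the coordinate formula $[X,Y]=DY\cdot X-DX\cdot Y$ exhibits $(u,v)\mapsto[u,v]$ as a continuous bilinear map; this is the same computation as in the Banach-Lie group case (cf.\ \cite[Lemma 2.17]{Be06}).

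\emph{Morphisms.} Let $\varphi\colon K\to G$ be a smooth group homomorphism; then $\varphi(\1)=\1$ and $\Lie(\varphi)=T_\1\varphi\colon\Lie(K)\to\Lie(G)$. Lifting $\varphi$ locally to a smooth map of Banach manifolds (Definition~\ref{H-smooth}) and using the fibrewise Banach space isomorphisms of Proposition~\ref{preHtg}(\ref{preHtg_item2}), one identifies $T_\1\varphi$ with the tangent map at a point of a smooth map between Banach manifolds, hence $T_\1\varphi$ is bounded linear. For the bracket, the homomorphism identity $\varphi\circ L_k=L_{\varphi(k)}\circ\varphi$ (where $L$ denotes left translation in $K$, resp.\ in $G$), differentiated at $\1$ and evaluated at $u\in\Lie(K)$, yields
\begin{equation*}
(T_k\varphi)\bigl(\Lc_u(k)\bigr)=\Lc_{T_\1\varphi(u)}(\varphi(k))\qquad(k\in K),
\end{equation*}
so the left-invariant field $\Lc_u$ on $K$ is $\varphi$-related to the left-invariant field $\Lc_{T_\1\varphi(u)}$ on $G$. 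To compute the brackets I pass to $\Gamma$-invariant lifts on \H-atlases $\pi_K\colon M_K\to K$, $\pi_G\colon M_G\to G$ and to a local lift $\hat\varphi$ of $\varphi$: if $X_u$ and $Y_{T_\1\varphi(u)}$ are the lifts of $\Lc_u$ and $\Lc_{T_\1\varphi(u)}$, then $(T\hat\varphi)\circ X_u$ and $Y_{T_\1\varphi(u)}\circ\hat\varphi$ have the same image under $T\pi_G$, which is fibrewise injective for an \H-atlas (Proposition~\ref{preHtg}(\ref{preHtg_item2})); hence $X_u$ and $Y_{T_\1\varphi(u)}$ are $\hat\varphi$-related. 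The classical naturality of the Lie bracket then shows that $[X_u,X_v]$ is $\hat\varphi$-related to $[Y_{T_\1\varphi(u)},Y_{T_\1\varphi(v)}]$; pushing down to $K$ and $G$, evaluating at $\1$, and using that $\Lc$ is a Lie algebra isomorphism gives $T_\1\varphi([u,v])=[T_\1\varphi(u),T_\1\varphi(v)]$. Thus $\Lie(\varphi)$ is a continuous Lie algebra homomorphism.

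\emph{Functoriality and main obstacle.} One has $\Lie(\id_G)=T_\1\id_G=\id_{\Lie(G)}$, and for composable smooth homomorphisms $K\xrightarrow{\varphi}G\xrightarrow{\psi}H$ the chain rule for tangent maps of \H-smooth maps together with $\varphi(\1)=\1$ gives $\Lie(\psi\circ\varphi)=T_\1(\psi\circ\varphi)=(T_\1\psi)\circ(T_\1\varphi)=\Lie(\psi)\circ\Lie(\varphi)$. I expect the main obstacle to be the continuity of the bracket in the object step: unlike the bracket-preservation for morphisms, which reduces to the fibrewise injectivity of $T\pi$ plus the classical naturality of the bracket, the continuity requires controlling the first-order dependence of the lifted left-invariant fields on the Lie-algebra variable in a Banach chart, that is, reproducing in the \H-setting the quantitative part of the Banach-Lie group computation and not merely its pointwise algebraic content.
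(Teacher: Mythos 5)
Your proposal is correct and follows essentially the same route as the paper: the heart of the paper's proof of Theorem~\ref{Lie_I} is precisely your morphism step --- lifting $\varphi$ to a local lift $\hat{\varphi}$ between \H-atlases, using the fibrewise injectivity of $T\pi$ from Proposition~\ref{preHtg} to conclude that the $\Gamma(\pi)$-invariant lifts of the left-invariant fields are $\hat{\varphi}$-related, and then invoking the classical compatibility of relatedness with brackets together with the isomorphisms $\pi_*$, $\tau_*$, $\Lc$ to get $T_\1\varphi[u_1,u_2]=[T_\1\varphi u_1,T_\1\varphi u_2]$. Your additional checks (continuity of the bracket on $\Lie(G)$ via the identity $\Lc_u(g)=(T_{(g,\1)}\bm)(0,u)$ read in a chart, and functoriality on identities and compositions) are points the paper leaves implicit in Proposition~\ref{left_bracket} and Definition~\ref{Lie_alg_def}, and your sketch of them is completable as stated.
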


\begin{proof}
We must prove that 
if $\varphi\colon K\to G$ is a smooth homomorphism of \H-groups, then $\Lie(\varphi)\colon \Lie(K)\to\Lie(G)$ is a Lie algebra morphism. 
To this end we consider the linear isomorphisms 
$\Lc^G\colon T_\1 G\to \Xc^\Lie(G)$, $u\mapsto\Lc^G_u$, 
and 
$\Lc^K\colon T_\1 K\to \Xc^\Lie(K)$, $u\mapsto\Lc^K_u$, 
given by Proposition~\ref{left_bracket} for the \H-groups $G$ and $K$, respectively. 
For all $g,h\in G$ and $k,r\in K$ we also define $L^G_g(h)=gh$ and $L^K_k(r)=kr$, hence $\varphi\circ L^K_k=L^G_{\varphi(k)}\circ \varphi$ 
since $\varphi\colon K\to G$ is a group homomorphism. 

For every $u\in T_\1K$ we then obtain
\begin{align*}
(T_k\varphi)((\Lc^K_u)(k))
&=(T_k\varphi)((T_\1(L^K_k))(u)) \\
&=T_\1(\varphi\circ L^K_k)(u) \\
&=T_\1(L^G_{\varphi(k)}\circ \varphi)(u) \\
&=T_\1(L^G_{\varphi(k)})((T_\1\varphi)u) \\
&=\Lc^G_{(T_\1\varphi)u}(\varphi(k))
\end{align*}
for arbitrary $k\in K$. 
If  $\pi\colon M\to G$ and $\tau\colon N\to K$ are \H-atlases 
and $\widehat{\varphi}\colon U\to V$ is a local lift of $\varphi$, 
then the diagram 
$$\xymatrix{
	TW \ar@{^{(}->}[d] \ar[rrr]^{T\widehat{\varphi}} & & & TV \ar@{^{(}->}[d] \\
	TN \ar[r]^{T\tau} & TK \ar[r]^{T\varphi}& TG & TM \ar[l]_{T\pi} \\
N \ar[u]^{\tau_*^{-1}(\Lc^K_u)} \ar[r]^{\tau} & K \ar[u]^{\Lc^K_u}\ar[r]^{\varphi} & G  \ar[u]_{\Lc^G_{(T_\1\varphi)u}} & M \ar[u]_{\pi_*^{-1}(\Lc^G_{(T_\1\varphi)u})}\ar[l]_{\pi} \\
W \ar@{^{(}->}[u] \ar[rrr]^{\widehat{\varphi}}  & & & V \ar@{^{(}->}[u]
}$$
is commutative, where $\tau_*^{-1}(\Lc^K_u)\in\Xc^{\Gamma(\tau)}(N)$ and 
$\pi_*^{-1}(\Lc^G_{(T_\1\varphi)u})\in\Xc^{\Gamma(\pi)}(M)$, 
cf. Remark~\ref{fields_bracket}.

We have $T\pi\circ T\widehat{\varphi}=T\varphi\circ T\tau\vert_{TW}$ 
hence 
\allowdisplaybreaks
\begin{align*}
T\pi\circ T\widehat{\varphi}\circ \tau_*^{-1}(\Lc^K_u)
& =T\varphi\circ T\tau\circ \tau_*^{-1}(\Lc^K_u)\vert_{W} \\
& =T\varphi\circ\Lc^K_u\circ\tau\vert_{W} \\
&=\Lc^G_{(T_\1\varphi)u} \circ\varphi \circ\tau\vert_{W} \\
&=\Lc^G_{(T_\1\varphi)u} \circ\pi \circ\widehat{\varphi} \\
&=T\pi\circ \pi_*^{-1}(\Lc^G_{(T_\1\varphi)u})\circ\widehat{\varphi}.
\end{align*}
Since the tangent map of the \H-atlas $\pi\colon M\to G$ is  fiberwise injective on $TM$  by Proposition~\ref{preHtg}, 
we then obtain for all $u\in T_\1 K$, 
\begin{equation}
\label{related}
T\widehat{\varphi}\circ \tau_*^{-1}(\Lc^K_u)
=\pi_*^{-1}(\Lc^G_{(T_\1\varphi)u})\circ\widehat{\varphi} 
\end{equation}
that is, the vector fields $\tau_*^{-1}(\Lc^K_u)\vert_W\colon W\to TW$ 
and $\pi_*^{-1}(\Lc^G_{(T_\1\varphi)u})\vert_V\colon V\to TV$ are $\widehat{\varphi}$-related. 
It is well-known that the property of being $\widehat{\varphi}$-related is compatible with the Lie bracket of vector fields, 
hence we obtain for all $u_1,u_2\in T_\1 K$, 
\begin{equation*}
T\widehat{\varphi}\circ [\tau_*^{-1}(\Lc^K_{u_1}),\tau_*^{-1}(\Lc^K_{u_2})]
=[\pi_*^{-1}(\Lc^G_{(T_\1\varphi)u_1}),\pi_*^{-1}(\Lc^G_{(T_\1\varphi)u_2})]\circ\widehat{\varphi}
\end{equation*}
Using the fact that the mappings 
$\pi_*\colon\Xc^{\Gamma(\pi)}(M)\to \Xc(G)$, 
and 
$\Lc^G\colon \Lie(G)\to \Xc^\Lie(G)$  
are Lie algebra isomorphisms 
(by Remark~\ref{fields_bracket} and Definition~\ref{Lie_alg_def}), 
we further obtain 
\begin{equation}
\label{related1}
T\widehat{\varphi}\circ [\tau_*^{-1}(\Lc^K_{u_1}),\tau_*^{-1}(\Lc^K_{u_2})]
=\pi_*^{-1}(\Lc^G_{[(T_\1\varphi)u_1,(T_\1\varphi)u_2]})\circ\widehat{\varphi}.
\end{equation}
On the other hand, using \eqref{related} for $u:=[u_1,u_2]\in\Lie(K)$, 
we have 
\begin{equation}
\label{related2}
T\widehat{\varphi}\circ [\tau_*^{-1}(\Lc^K_{u_1}),\tau_*^{-1}(\Lc^K_{u_2})]
=
T\widehat{\varphi}\circ \tau_*^{-1}(\Lc^K_{[u_1,u_2]})
=\pi_*^{-1}(\Lc^G_{(T_\1\varphi)[u_1,u_2]})\circ\widehat{\varphi} 
\end{equation}
since 
$\tau_*\colon\Xc^{\Gamma(\tau)}(N)\to \Xc(K)$
and
$\Lc^K\colon \Lie(K)\to \Xc^\Lie(K)$ 
are Lie algebra isomorphisms 
(by Remark~\ref{fields_bracket} and Definition~\ref{Lie_alg_def} again), 
while the second of the above equalities 
is an application of \eqref{related} for $u:=[u_1,u_2]\in\Lie(K)$. 

It now follows by \eqref{related1}--\eqref{related2} that 
\begin{equation*}
\pi_*^{-1}(\Lc^G_{[(T_\1\varphi)u_1,(T_\1\varphi)u_2]})\circ\widehat{\varphi}
=
\pi_*^{-1}(\Lc^G_{(T_\1\varphi)[u_1,u_2]})\circ\widehat{\varphi}. 
\end{equation*}
Since $\widehat{\varphi}$ is an arbitrary lift of $\varphi$ we then obtain, using again that 
$\pi_*\colon\Xc^{\Gamma(\pi)}(M)\to \Xc(G)$, 
and 
$\Lc^G\colon \Lie(G)\to \Xc^\Lie(G)$  
are Lie algebra isomorphisms, 
$$[(T_\1\varphi)u_1,(T_\1\varphi)u_2]=(T_\1\varphi)[u_1,u_2]$$
for arbitrary $u_1,u_2\in\Lie(K)$. 
Consequently $T_\1\varphi\colon\Lie(K)\to\Lie(G)$ is a Lie algebra homomorphism, and this completes the proof. 
\end{proof}

\begin{corollary}\label{quotient_group} 
	If $\iota\colon K\hookrightarrow G$ is the inclusion map of a  normal split  immersed   Lie subgroup of a  Banach-Lie group $G$, then $G/K$ has the structure of a \H-group for which the 
	quotient homomorphism $q\colon G\to G/K$ is a submersion and 
	we have the exact sequence of Banach-Lie algebras 
	\begin{equation}
	\label{quotient_group_eq1} 
	\xymatrix{0 \ar[r] & \Lie(K) \ar[r]^{\Lie(\iota)} & \Lie(G)\ar[r]^{\Lie(q)} & \Lie(G/K) \ar[r] & 0.
	}\end{equation}
If moreover $K$ is an initial subgroup, then $G/K$ is a \Q-group. 
\end{corollary}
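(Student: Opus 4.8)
The plan is to build on Proposition~\ref{G/K}, which already equips $G/K$ with the structure of an \H-manifold (indeed a \Q-manifold when $K$ is initial) for which the quotient map $q\colon G\to G/K$ is a submersion of \H-manifolds with $\Ker(T_\1 q)=T_\1 K$. Since $K$ is normal, the quotient $G/K$ is a group and $q$ is a group homomorphism, so it remains to verify that the group operation is \H-smooth and then to identify the Lie algebra.

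First I would prove that $G/K$ is an \H-group, that is, that the structural map $m\colon (G/K)\times(G/K)\to G/K$, $(\bar x,\bar y)\mapsto \bar x\bar y^{-1}$, is smooth. Denote by $\mu\colon G\times G\to G$, $(x,y)\mapsto xy^{-1}$, the (smooth) structural map of the Banach-Lie group $G$. Because $K$ is normal, one has the factorization $m\circ(q\times q)=q\circ\mu$, and the right-hand side is smooth as a composition of the smooth maps $\mu$ and $q$. The product $q\times q\colon G\times G\to (G/K)\times(G/K)$ is a surjective submersion of \H-manifolds: it is surjective since $q$ is, and it is a submersion because the product of two submersion lifts of $q$ is a submersion lift of $q\times q$, the target carrying the product \H-atlas by Corollary~\ref{characterization_cor1} and Example~\ref{pre-H_prod}. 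Proposition~\ref{H-submersion} then yields that $m$ is smooth, so $G/K$ is an \H-group and $q\colon G\to G/K$ is simultaneously a smooth homomorphism and a submersion.

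Next I would establish the exact sequence~\eqref{quotient_group_eq1}. By Theorem~\ref{Lie_I} the morphisms $\iota$ and $q$ induce Banach-Lie algebra homomorphisms $\Lie(\iota)$ and $\Lie(q)$. Here $\Lie(\iota)=T_\1\iota$ is injective with split closed range $T_\1 K$ (cf.\ the discussion following Definition~\ref{weaksubgroup}), while $\Lie(q)=T_\1 q$ is surjective because $q$ is a submersion of \H-manifolds: a submersion lift of $q$ has surjective tangent map, and the \H-chart tangent maps are surjective by Proposition~\ref{preHtg}, so $T_\1 q$ is surjective. Exactness in the middle is precisely the equality $\Ker(T_\1 q)=T_\1 K=\Ran(\Lie(\iota))$ recorded in Proposition~\ref{G/K}. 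Since $T_\1 K$ is complemented and $\Lie(q)$ is a continuous surjection with this kernel, the open mapping theorem identifies $\Lie(G/K)$ topologically with $\Lie(G)/\Lie(K)$, so~\eqref{quotient_group_eq1} is exact as a sequence of Banach-Lie algebras.

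Finally, if $K$ is an initial subgroup then $G/K$ is a \Q-manifold by Proposition~\ref{G/K}. Since the \Q-manifolds form a subcategory of the \H-manifolds with the same notion of smooth map (Example~\ref{Q_ex}), the smoothness of $m$ established above shows that $G/K$ is a \Q-group. The main obstacle is the smoothness of the group operation; everything hinges on recognizing the factorization $m\circ(q\times q)=q\circ\mu$ and invoking the universal property of surjective submersions of \H-manifolds (Proposition~\ref{H-submersion}), together with the routine verification that $q\times q$ is such a submersion.
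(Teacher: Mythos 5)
Your proposal is correct and takes essentially the same route as the paper's own proof: both rely on Proposition~\ref{G/K} for the \H-manifold (resp.\ \Q-manifold) structure and the submersion property of $q$, deduce smoothness of the structural map of $G/K$ from the factorization $q\circ\mu=\widetilde{\mu}\circ(q\times q)$ together with Proposition~\ref{H-submersion} applied to the surjective submersion $q\times q$, and obtain exactness of \eqref{quotient_group_eq1} from Theorem~\ref{Lie_I}, injectivity of $\Lie(\iota)$ (immersion), surjectivity of $\Lie(q)$ (submersion), and the equality $\Ker(T_\1 q)=T_\1 K$ from Proposition~\ref{G/K}. Your closing remark invoking the open mapping theorem to identify $\Lie(G/K)$ with $\Lie(G)/\Lie(K)$ is a harmless addition that the paper's proof does not need.
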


\begin{proof} 
	Since $K$ is a normal subgroup, $G/K$ has a group structure and 
	$q\colon G\to G/K$ is a surjective group morphism.  
	From Proposition~\ref{G/K} it follows that $G/K$ is a \H-manifold and the mapping~$q$ is a submersion, and $G/K$ is a \Q-group if moreover $K$ is an initial subgroup of $G$.

	The structural map $\mu\colon G\times G\to G$, $(g, h)\mapsto gh^{-1}$ is smooth by hypothesis, 
	while the structural map $\widetilde{\mu}\colon (G/H)\times (G/H)\to G/H$, $(s,t)\mapsto st^{-1}$,   satisfies 
	$$q\circ\mu=\widetilde{\mu}\circ(q\times q).$$
	Here $q\times q\colon G\times G\to (G/K)\times(G/K)$ is a submersion, 
	hence it follows by Proposition~\ref{H-submersion} that 
	$\widetilde{\mu}\colon (G/K)\times (G/K)\to G/K$ is a smooth mapping, 
	that is, $G/K$ is an \H-group.
	
It remains to prove~\eqref{quotient_group_eq1}. 
The mappings $\Lie(\iota)$ and $\Lie(q)$ are Banach-Lie algebra homomorphisms by Theorem~\ref{Lie_I}. 
Moreover, $\Lie(\iota)=T_\1 \iota\colon T_\1K\to T_\1G$ is injective since $\iota\colon K\to G$ is an immersion, 
while $\Lie(q)=T_\1 q\colon T_\1G\to T_\1(G/K)$ is surjective since $q\colon G\to G/K$ is an submersion.  
Finally, we have $\Ker\Lie(q)=\Ran\Lie(\iota)$ by the equality $\Ker(T_\1 q)=T_\1 K$ in Proposition~\ref{G/K}. 
\end{proof}

We are now in a position to prove the following infinite-dimensional version of Sophus Lie's Third Fundamental Theorem.

\begin{theorem}
\label{Lie_III}
Every real Banach-Lie algebra is isomorphic to the Lie algebra of some  \H-group.  
\end{theorem}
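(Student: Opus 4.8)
The plan is to realize $\gg$ as the Lie algebra of a quotient of a genuine Banach--Lie group, using the machinery of Corollary~\ref{quotient_group}. First I would reduce the theorem to the following purely algebraic statement: \emph{every real Banach--Lie algebra $\gg$ fits into a short exact sequence of Banach--Lie algebras}
$$0\longrightarrow \kg \longrightarrow \hg \stackrel{p}{\longrightarrow}\gg\longrightarrow 0$$
\emph{in which $\hg$ is enlargible and $\kg$ is a closed ideal that is complemented in $\hg$.} Granting this, let $H$ be a simply connected Banach--Lie group with $\Lie(H)\cong\hg$. Since $\kg$ is a split closed subalgebra of $\hg$, the first assertion of Lemma~\ref{Lie2} produces an integral subgroup $K\subseteq H$ with $\Lie(K)=\kg$; since $H$ is connected and $\kg$ is an ideal, the second assertion of Lemma~\ref{Lie2} shows that $K$ is normal, and being an integral subgroup it is split immersed. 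Corollary~\ref{quotient_group} then gives that $H/K$ is an \H-group whose Lie algebra sits in the exact sequence \eqref{quotient_group_eq1}, so that $\Lie(H/K)\cong\hg/\kg\cong\gg$. This completes the proof modulo the displayed sequence, and I emphasize that the novelty here is entirely in the endgame: the topology of $H/K$ is irrelevant, which is exactly why an \H-group (rather than a Banach--Lie group) is the correct target.

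The construction of the sequence is where the real work lies, and I would build $\hg$ as a central extension. The quotient $\gg/\zg(\gg)$ is always enlargible, being realized as the Lie algebra of the group of inner automorphisms inside the Banach--Lie group $\Aut(\gg)$ (compare the discussion of enlargibility in \cite{Ne02} and \cite{Pes95}). Thus, after arranging the center to be complemented, $\gg$ is presented as a central extension
$$0\longrightarrow \zg(\gg) \longrightarrow \gg \longrightarrow \gg/\zg(\gg)\longrightarrow 0$$
of the enlargible algebra $\gg/\zg(\gg)$ by the abelian Banach--Lie algebra $\zg(\gg)$, classified by a continuous Lie-algebra $2$-cocycle $\omega$. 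By the integrability criterion for central extensions (\cite{Ne02}), the obstruction to $\gg$ itself being enlargible --- and hence the very source of the classical counterexamples recalled in the introduction (\cite{DL66}) --- is precisely that the associated period homomorphism, defined on $\pi_2$ of the simply connected group integrating $\gg/\zg(\gg)$, may have \emph{non-discrete} image in $\zg(\gg)$.

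The key step, which I expect to be the main obstacle, is to \emph{enlarge the center so as to render the period group discrete while preserving a surjection onto $\gg$}. Concretely I would set $\hat\zg:=\zg(\gg)\oplus\EE$ for a suitable abelian Banach--Lie algebra $\EE$ and replace $\omega$ by a cocycle $\hat\omega=\omega\oplus\eta$ with $\eta$ valued in $\EE$, chosen so that the combined period homomorphism $c\mapsto(\mathrm{per}_\omega(c),\mathrm{per}_\eta(c))$ has discrete image in $\zg(\gg)\oplus\EE$. The resulting central extension $\hg:=(\gg/\zg(\gg))\oplus_{\hat\omega}\hat\zg$ is then enlargible by the criterion of \cite{Ne02}, the canonical projection $\hat\zg\to\zg(\gg)$ induces a surjective homomorphism $p\colon\hg\to\gg$, and its kernel is the central --- hence ideal --- and complemented subalgebra $\kg=\EE$. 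Producing $\EE$ and $\eta$ with the required discreteness is the crux: I would obtain them by embedding the period group injectively and discretely into a free Banach space on its generators, so that the graph of $\mathrm{per}_\omega$ becomes discrete. The remaining points (complementability of $\zg(\gg)$, which can be sidestepped by enlarging it to a complemented subspace before forming the cocycle, and the verification that $\hat\omega$ is again a continuous cocycle) are routine. With the sequence in hand, the endgame of the first paragraph --- via Lemma~\ref{Lie2}, Proposition~\ref{G/K}, and Corollary~\ref{quotient_group} --- yields the desired \H-group.
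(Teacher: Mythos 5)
Your first paragraph reproduces the paper's endgame exactly: the paper's proof takes $\hg:=\Lambda\gg=\{\gamma\in\Cc([0,1],\gg)\mid\gamma(0)=0\}$ with $p:=\ev_1$ and $\kg:=\Omega\gg=\Ker\ev_1$, the splitting being the explicit bounded section $v\mapsto(t\mapsto tv)$, and enlargibility of $\Lambda\gg$ coming for free from \'Swierczkowski's path functor \cite{Swi}; then, as you say, Lemma~\ref{Lie2}, Corollary~\ref{quotient_group} (and, behind it, Theorem~\ref{Lie_I}, which is what identifies the brackets and not merely the Banach spaces) finish the argument. So your reduction to the displayed exact sequence is sound, and everything hinges on your construction of that sequence --- which is where there are genuine gaps. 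First, the presentation $\gg\cong\gg_{\ad}\oplus_\omega\zg(\gg)$ by a continuous cocycle requires a continuous linear section of $\gg\to\gg/\zg(\gg)$, i.e.\ that $\zg(\gg)$ be complemented as a closed subspace of $\gg$, which fails in general; your sidestep (``enlarging it to a complemented subspace'') is not a construction, since a strictly larger subspace is no longer the center, need not be an ideal, and modifying $\gg$ changes the algebra you must integrate. (Your one-line justification that $\gg/\zg(\gg)$ is enlargible via inner automorphisms inside $\Aut(\gg)$ is also too quick, since $\ad\,\gg$ need not be closed in $\Der\gg$ nor carry the quotient topology, though that statement itself is classical.)

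The decisive gap is the step you yourself flag as the crux, and it fails with the tools you invoke. A homomorphism $\pi_2(\widetilde{G}_{\ad})\to\EE$ is the period homomorphism of a continuous Lie-algebra cocycle only if it lies in the image of the period map $H^2_c(\gg_{\ad},\EE)\to\Hom(\pi_2(\widetilde{G}_{\ad}),\EE)$, and this map is not surjective in general; so ``embedding the period group discretely into a free Banach space on its generators'' does not by itself produce any cocycle $\eta$. Worse, the one family of auxiliary cocycles that always exists, $\eta=\phi\circ\omega$ for bounded linear $\phi\colon\zg(\gg)\to\EE$, has $\mathrm{per}_\eta=\phi\circ\mathrm{per}_\omega$, so the combined period image is $\{(z,\phi(z))\mid z\in\Pi_\omega\}$, the graph of a \emph{continuous} map over the non-discrete set $\Pi_\omega$ --- and such a graph is never discrete. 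Thus your scheme requires cocycles whose periods are genuinely independent of those of $\omega$, and no general supply of these is known; whether every real Banach-Lie algebra is a quotient of an enlargible one by a split \emph{central} ideal is left entirely open by your sketch. The paper avoids precisely this obstruction by dropping centrality of the kernel: $\Omega\gg$ is a large non-central ideal, the splitting is elementary, and enlargibility of the total algebra is \'Swierczkowski's theorem rather than a period-group computation.
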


\begin{proof}
We denote by $I=[0,1]$ the (compact) unit interval in the real line. 
For an arbitrary  real Banach-Lie algebra $\gg$ we define the path algebra 
$$\Lambda\gg:=\{\gamma\in\Cc(I,\gg)\mid \gamma(0)=0\}$$
and we endow $\Lambda\gg$ with its natural structure of a Banach-Lie algebra with pointwise defined operations and topology of uniform conergence on~$I$. 
We also define 
the evaluation mapping 
$$\ev_1\colon\Lambda\gg\to\gg,\quad \ev_1(\gamma):=\gamma(1)$$
and the loop algebra
$$\Omega\gg:=\Ker(\ev_1)=\{\gamma\in\Lambda\gg\mid \gamma(1)=0\}.$$
It is clear that $\ev_1$ is a Banach-Lie algebra homomorphism and $\Omega\gg$ is a closed ideal of $\Lambda\gg$. 
Moreover, if we define $\eta(v)\in\Lambda\gg$ by $(\eta(v))(t):=tv$ for all $t\in I$ and $v\in\gg$, then $\eta\colon\gg\to\Lambda\gg$ is a bounded linear map 
satisfying $\ev_1\circ\eta=\id_\gg$.  
Therefore we have the exact sequence of Banach-Lie algebras
\begin{equation}
\label{H-integr_proof_eq1} 
\xymatrix{
	0 \ar[r] & \Omega\gg \ar@{^{(}->}[r] & \Lambda\gg \ar[r]^{\ev_1} & \gg \ar[r] & 0
}
\end{equation}
for which $\eta\colon\gg\to\Lambda\gg$ is a linear cross-section. 
In particular, we obtain the decomposition into a direct sum of closed linear subspaces $\Lambda\gg=\Omega\gg\dotplus\Ran\eta$, 
and thus $\Omega\gg$ is a split closed ideal of~$\Lambda\gg$. 

On the other hand, there exists a connected and simply connected Banach-Lie group $\widetilde{G}$ whose Lie algebra is isomorphic to the path algebra~$\Lambda\gg$, by the main result of \cite{Swi}. 
We may assume $\Lie(\widetilde{G})=\Lambda\gg$ without loss of generality. 
Let $K\subseteq \widetilde{G}$ be the integral subgroup with $\Lie(K)=\Omega\gg$. 
Since $\Omega\gg$ is an ideal of $\widetilde{G}$, it follows that $K$ is a normal subgroup of $\widetilde{G}$ by Lemma~\ref{Lie2}\eqref{Lie2_item2}. 
Now Corollary~\ref{quotient_group} implies that $\widetilde{G}/K$  has the structure of an \H-group for which the 
quotient homomorphism $q\colon \widetilde{G}\to \widetilde{G}/K$ is a submersion and 
we have the exact sequence of Banach-Lie algebras 
\begin{equation*}
\xymatrix{0 \ar[r] & \Lie(K) \ar[r]^{\Lie(\iota)} & \Lie(\widetilde{G})\ar[r]^{\Lie(q)} & \Lie(\widetilde{G}/K) \ar[r] & 0.
}\end{equation*} 
Here $\Lie(\widetilde{G})=\Lambda\gg$ and $\Lie(K)=\Omega\gg$ hence, 
taking into account the exact sequence \eqref{H-integr_proof_eq1}, 
we obtain an isomorphism of Banach-Lie algebras $\Lie(\widetilde{G}/K)\simeq\gg$, and this completes the proof. 
\end{proof}

\begin{remark}
\label{critique}
\normalfont 
In connection with 
Theorem~\ref{Lie_III}, we note the following facts. 
\begin{enumerate}
	\item The construction used in the proof of Theorem~\ref{Lie_III} goes back to \cite{Pla} and \cite[Th. II.4.2]{Pl80b} in the special case of the separable Banach-Lie algebras.
	Specifically, it was proved that if  the real Banach-Lie algebra $\gg$ is separable, then the quotient group $\widetilde{G}/K$ is a \Q-group and a Banach space isomorphism $T_\1(\widetilde{G}/K)\simeq \gg$ was also constructed at the end of the proof of \cite[Th. II.4.2]{Pl80b}, 
	without to check that the Lie brackets of $\Lie(\widetilde{G}/K)$ and $\gg$ are compatible with that isomorphism. 
	In the proof of our Theorem~\ref{Lie_III}, it is exactly this last aspect which needs Theorem~\ref{Lie_I}. 
	\item Let us assume that a Banach-Lie algebra $\gg$ is \emph{enlargible}, 
	that is, we have $\gg=\Lie(G)$, where $G$ is a connected and simply connected Banach-Lie group. 
	(See for instance \cite[Ch. 3]{Be06}.)
	Then, we claim that, with the notation from the proof of Theorem~\ref{Lie_III}, 
	the \H-group $\widetilde{G}/K$ is actually a Banach-Lie group and 
	there exists an isomorphism of Banach-Lie groups $\widetilde{G}/K\simeq G$. 
	
	In fact, let us define 
	$\Lambda G:=\{\gamma\in\Cc(I,G)\mid \gamma(0)=\1\}$
	and we endow $\Lambda G$ with its natural structure of a topological group with pointwise defined operations and topology of uniform conergence on~$I$. 
	Then $\Lambda G$ is actually a contractible (hence connected and simply connected) Banach-Lie group and the evaluation map 
	$\ev_1\colon\Lambda G\to G$, $\gamma\mapsto\gamma(1)$ 
	is a surjective, submersive homomorphism of Banach-Lie groups 
	whose kernel $\Omega G:=\{\gamma\in\Lambda G\mid\gamma(1)=\1\}$ is a (split, topologically embedded) normal Banach-Lie subgroup of $\Lambda G$, which gives an isomorphism of Banach-Lie groups $\Lambda G/\Omega G\simeq G$, cf. \cite[Def. III.1]{GN03}. 
	Since $G$ is simply connected, it then follows that the group $\Omega G$ is connected, hence $\Omega G=K$, using the notation from the proof of Theorem~\ref{Lie_III}. 
	Consequently, we have the isomorphism of Banach-Lie groups $\widetilde{G}/K\simeq G$, as claimed. 
	\item There exists still another functorial construction that presents a given Banach-Lie algebra as the quotient of an enlargible Banach-Lie algebra, namely the construction of free Banach-Lie algebras. 
	Specifically, see \cite[Th. 3.3]{Pes93a}, \cite[Th. 6]{Pes93b}, and \cite[Th. B]{Pes95}. 
	However, in that construction, it is less clear that the kernel of the corresponding quotient map has a direct complement in the free Banach-Lie algebra under consideration. 
	Such a direct complement is essential for the techniques used in the present paper, related to the transverse structure of certain regular foliations. 
	It is for this reason that, in the above proof of Theorem~\ref{Lie_III}, we used instead the construction from \cite{Swi}, in which one can easily construct the linear cross-section $\eta$ of the exact sequence~\eqref{H-integr_proof_eq1}. 
\end{enumerate}
\end{remark} 

We conclude this section by briefly recalling a classical example of non-enlargible Banach-Lie algebra which is not separable. 
Hence the integrability results of \cite{Pla} and \cite{Pl80b} are not applicable to this example, but Theorem~\ref{Lie_III} is.

\begin{example}
\normalfont
Let $\Hc$ be a separable infinite-dimensional complex Hilbert space, 
and denote by $\Bc(\Hc)$ the associative Banach$*$- algebra of all bounded linear operators on~$\Hc$. 
It is well known that $\Bc(\Hc)$, endowed with its operator norm topology, 
is not separable. 
(See for instance \cite[Solution 99]{Hal82}.) 
Then the set of all skew-adjoint operators $\ug(\Hc):=\{a\in\Bc(\Hc)\mid a^*=-a\}$ is a real Banach-Lie algebra with the Lie bracket given by the operator commutator $[a,b]=ab-ba$ for all $a,b\in\ug(\Hc)$, 
and we have the direct sum decomposition $\Bc(\Hc)=\ug(\Hc)+\ie\ug(\Hc)$, 
which directly implies that $\ug(\Hc)$  is not separable. 

For an arbitrary real number $\theta\in\RR$ 
and we now define 
$$\ag_\theta:=\{(\ie t\cdot \id_\Hc,\ie t\theta\cdot \id_\Hc)\mid t\in\RR\}\subseteq \ug(\Hc)\times \ug(\Hc).$$
It is clear that $\ag_\theta$ is a (central) closed ideal of 
the Banach-Lie algebra $\ug(\Hc)\times \ug(\Hc)$, hence the quotient 
$$\gg_\theta:=(\ug(\Hc)\times \ug(\Hc))/\ag_\theta$$
has the natural structure of a real Banach-Lie algebra. 
Since $\ug(\Hc)\times \ug(\Hc)$ is not separable and $\dim\ag_\theta=1$, 
it is not difficult to check that $\gg_\theta$ is not separable. 
Moreover, $\gg_\theta$ is not enlargible if $\theta\in\RR\setminus\QQ$, 
as shown in \cite[\S 4]{DL66}. 
See also \cite[Ex. 3.35]{Be06} for additional details. 

It follows by Theorem~\ref{Lie_III} that for every $\theta\in\RR\setminus\QQ$ there exists an \H-group $G_\theta$ whose Lie algebra is isomorphic 
to~$\gg_\theta$, but we do not know if $\gg_\theta$ can be realized as the Lie algebra of a \Q-group. 
\end{example}

\section{Examples of \Q-groups which are not Banach-Lie groups}
\label{Sect5}

In order to show the wide variety of examples of \H-groups that exist beyond the realm of Banach-Lie groups, we develop in this section some methods of constructing quotients of Banach-Lie groups which are \Q-groups, hence in particular \H-groups. 
Our main result in this connection says in particular that, assuming the continuum hypothesis, if the center of the Lie algebra of a connected separable Banach-Lie group has the dimension $>1$, then that group has a quotient which is a \Q-group but not a Banach-Lie group (Theorem~\ref{pseudo_discr_cor2}). 
 
A central role is held by the following notion of pseudo-discrete subgroup of a Lie group, which is slightly more general than the one defined in \cite[page 254]{Ba73}. 
In the cas of normal subgroups, the following definition is equivalent to \cite[Def. 3.4]{BPZ19}.

\begin{definition}\label{pseudo-discr_def}
	\normalfont
	A \emph{pseudo-discrete subgroup} of a Banach-Lie group $G$ is any subgroup $H\subseteq G$ with the property that if $\gamma\in\Ci(D_\gamma,G)$ and $\gamma(D_\gamma)\subseteq H$, 
	where $D_\gamma\subseteq \RR$ is an open interval, then $\gamma$ is constant. 
\end{definition}

The importance of pseudo-discrete subgroups comes from the following fact, 
which shows in particular examples of the quotients Banach-Lie groups which are \Q-groups but not Banach-Lie groups are obtained as soon as we have 
examples of normal subgroups which are simultaneously connected and pseudo-discrete. 

\begin{proposition}
If $G$ is a Banach-Lie group with a normal subgroup $N\subseteq G$, 
then the quotient map $\pi\colon G\to S:=G/N$ is a \Q-atlas if and only if the subgroup $N$ is pseudo-discrete. 
If this is the case, then $G/N$ is a Banach-Lie group if and only if $N$ is discrete subgroup of $G$. 
\end{proposition}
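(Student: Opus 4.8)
The plan is to prove the two equivalences separately, reducing everything to the behaviour of smooth curves into $N$. For the first one, observe that the fibres of $\pi$ are the cosets $gN$, so $\pi(x)=\pi(y)$ means $y=xn$ for some $n\in N$; since the right translation $R_n\colon G\to G$, $g\mapsto gn$, is a diffeomorphism with $R_n(x)=y$ and $\pi\circ R_n=\pi$ (because $N$ is a subgroup), the homogeneity condition~\eqref{Q_ex_a} holds automatically, and the entire content of being a \Q-atlas is condition~\eqref{Q_ex_b}. I would then reformulate \eqref{Q_ex_b}: given smooth $f,h\colon T\to G$ with $\pi\circ f=\pi\circ h$, the map $\gamma:=f^{-1}h\colon T\to G$ is smooth with $\gamma(T)\subseteq N$ and $\{t\mid f(t)=h(t)\}=\gamma^{-1}(\1)$, so \eqref{Q_ex_b} is equivalent to requiring $\gamma^{-1}(\1)$ to be open for every Banach manifold $T$ and every smooth $\gamma\colon T\to G$ with $\gamma(T)\subseteq N$.

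For the forward implication, assuming $\pi$ is a \Q-atlas, let $\gamma\in\Ci(D_\gamma,G)$ with $\gamma(D_\gamma)\subseteq N$ and $D_\gamma$ an open interval. Fixing $t_0\in D_\gamma$ and applying the reformulated \eqref{Q_ex_b} to the constant map $\gamma(t_0)$ and to $\gamma$, the set $\{t\mid\gamma(t)=\gamma(t_0)\}$ is open; it is also closed (as $G$ is Hausdorff and $\gamma$ continuous) and nonempty, so by connectedness of $D_\gamma$ it is all of $D_\gamma$, giving that $\gamma$ is constant and $N$ is pseudo-discrete (Definition~\ref{pseudo-discr_def}). Conversely, assuming $N$ pseudo-discrete, take smooth $\gamma\colon T\to G$ with $\gamma(T)\subseteq N$ and $\gamma(t_0)=\1$: in a chart around $t_0$ modelled on a ball $B$ with $t_0\mapsto 0$, any nearby point $w\mapsto v$ is joined to $t_0$ by the segment $s\mapsto sv$ defined on an open interval containing $[0,1]$ inside $B$, and composing with $\gamma$ yields a smooth curve into $N$, hence constant, so $\gamma(w)=\1$. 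Thus $\gamma^{-1}(\1)$ is open, \eqref{Q_ex_b} holds, and $\pi$ is a \Q-atlas.

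For the second equivalence (now with $N$ pseudo-discrete throughout), the easy direction is that if $N$ is discrete then it is closed (a discrete subgroup of a Hausdorff group is closed), and the classical transfer of the chart structure along the local sections of $\pi$ makes $G/N$ a Banach-Lie group for which $\pi$ is a local diffeomorphism, cf.~\cite{Bou2}. For the converse, suppose $G/N$ is a Banach-Lie group. Then $G/N$ is Hausdorff, so $N=\pi^{-1}(\1)$ is closed, and $\pi\colon G\to G/N$ is a continuous homomorphism between Banach-Lie groups, hence smooth by the automatic smoothness recalled in the Introduction (cf.~\cite{Ne06}). The plan is then to show that $T_\1\pi\colon\Lie(G)\to\Lie(G/N)$ is a Banach space isomorphism. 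Injectivity follows from pseudo-discreteness: any $0\ne X\in\Ker(T_\1\pi)$ would give, via $\pi\circ\exp=\exp\circ T_\1\pi$, a nonconstant smooth curve $t\mapsto\exp(tX)$ contained in $N$, a contradiction. Surjectivity follows from openness of the quotient map: $\pi(\exp V)=\exp(T_\1\pi\,V)$ is a neighbourhood of the identity, so $\Ran(T_\1\pi)$ contains a neighbourhood of $0$. By the inverse function theorem $\pi$ is then a local diffeomorphism at $\1$, so some open $W\ni\1$ has $\pi\vert_W$ injective, whence $N\cap W=\{\1\}$ and $N$ is discrete.

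The routine parts are the reformulation of \eqref{Q_ex_b} and the connectedness and straight-segment arguments in the first equivalence. The main obstacle is the second equivalence, where one must pass between the quotient-topology data and the Banach-Lie group structure on $G/N$: the decisive inputs are the automatic smoothness of $\pi$ and the identification of $\Ker(T_\1\pi)$, with injectivity extracted from pseudo-discreteness through the one-parameter subgroups $\exp(tX)$ and surjectivity from openness of $\pi$. Once $T_\1\pi$ is recognized as an isomorphism, the inverse function theorem closes the argument.
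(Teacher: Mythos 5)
Your proof is correct, and the comparison with the paper is somewhat one-sided: the paper gives no in-text argument for this proposition at all, its proof being the single citation \cite[Prop. 3.5]{BPZ19}, so your attempt is a self-contained reconstruction of that outsourced proof. Your reduction is the natural one and all the key mechanisms are sound: homogeneity \eqref{Q_ex_a} holds for free via the right translations $R_n$, so the whole content of the \Q-condition is \eqref{Q_ex_b}, which you correctly rephrase as openness of $\gamma^{-1}(\1)$ for smooth $\gamma\colon T\to G$ with $\gamma(T)\subseteq N$; the open-and-closed argument on an interval then yields pseudo-discreteness, and the chart-ball segment trick legitimately reduces arbitrary Banach manifolds $T$ to curves for the converse. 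For the second equivalence, invoking automatic smoothness of continuous homomorphisms (which the paper itself recalls in the Introduction, citing \cite{Ne06}), identifying $\Ker(T_\1\pi)$ through the one-parameter subgroups $t\mapsto\exp_G(tX)$, and closing with the inverse function theorem is exactly the expected mechanism, and your easy direction (discrete $\Rightarrow$ closed $\Rightarrow$ classical quotient Lie group) is standard. Two glosses deserve one explicit line each in a final write-up. First, in the surjectivity step you cannot take logarithms immediately: an element of the open set $\pi(\exp_G V)=\exp_{G/N}((T_\1\pi)(V))$ near $\1$ could a priori be the exponential of a large vector of $\Ran(T_\1\pi)$. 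The fix is routine: shrink $V$ so that $(T_\1\pi)(V)$ lies in a $0$-neighbourhood $U'$ on which $\exp_{G/N}$ is injective; then the logarithms of the open set $\pi(\exp_G V)$ lie in $(T_\1\pi)(V)$, so $\Ran(T_\1\pi)$ contains a $0$-neighbourhood and, being a linear subspace, equals $\Lie(G/N)$, after which the open mapping theorem gives boundedness of $(T_\1\pi)^{-1}$, as you implicitly use before the inverse function theorem. Second, your converse tacitly reads ``$G/N$ is a Banach-Lie group'' as meaning that the Lie group topology is the quotient topology---this is what licenses ``Hausdorff $\Rightarrow N$ closed'' and the continuity and openness of $\pi$. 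That is indeed the intended reading in the paper's framework (the underlying topology of a \Q-manifold is the quotient topology), and the statement would be false under a purely abstract-group reading, since for instance $\RR/\QQ$ is abstractly isomorphic to $(\RR,+)$; it is worth saying so explicitly.
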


\begin{proof}
See \cite[Prop. 3.5]{BPZ19}.
\end{proof}

\begin{proposition}\label{pseudo-discr_prop}
	Let $G$ be a connected Banach-Lie group with its center $Z\subseteq G$. 
	Then for every pseudo-discrete normal subgroup $H\subseteq G$ one has $H\subseteq Z$.  
\end{proposition}

\begin{proof}
	Since the Banach-Lie group $G$ is connected, it follows that it is generated by the image of its exponential map $\exp_G\colon \gg\to G$. 
	Therefore it suffices to prove that for every $h\in H$ and $X\in\gg$ one has $(\exp X)h(\exp X)^{-1}=h$. 
	To this end we define 
	$$\gamma_{X,h}\colon \RR\to G, \quad \gamma_{X,h}(t):=(\exp tX)h(\exp t X)^{-1}h^{-1}.$$ 
	Then $\gamma_{X,h}\in\Ci(\RR,G)$. 
	
	On the other hand, since $H$ is a normal subgroup of $G$ and $h\in H$, one has $\gamma_{X,h}(\RR)\subseteq H$. 
	Therefore, by the hypothesis that $H$ is pseudo-discrete, one obtains that $\gamma_{X,h}$ is constant. 
	In particular, $\gamma_{X,h}(1)=\gamma_{X,h}(0)$, that is, $(\exp X)h(\exp X)^{-1}=h$, which completes the proof. 
\end{proof}

\begin{remark}
	\normalfont
	Proposition~\ref{pseudo-discr_prop} reduces the study of pseudo-discrete normal subgroups of connected Banach-Lie groups to the study of 
	pseudo-discrete subgroups of \emph{abelian} connected Banach-Lie groups. 
	This idea is used in the proof of Theorem~\ref{pseudo_discr_cor2}. 
\end{remark}

\begin{remark}
	\normalfont
	If $G$ is a finite-dimensional Lie group, then a subgroup $H\subseteq G$ is pseudo-discrete if and only if the arc-connected component of $\1\in H$ is $\{\1\}$. 
	This follows by a theorem of Yamabe as discussed for instance \cite[Rem. II.6.5(b)]{Ne06}. 
\end{remark}

\begin{remark}
	\normalfont 
	Let $\Xc$ be a Banach space, regarded as an additive Banach-Lie group $(\Xc,+)$. 
	Here are some examples of pseudo-discrete subgroups of $\Xc$ under various special assumptions. 
	\begin{itemize}
		\item If $\Xc=L^p_{\RR}(I)$ with $1\le p<\infty$ for the interval $I=[0,1]$, then K.H.~Hofmann's example $H:=L^p_{\RR}(I)$ is a  closed and path connected subgroup of $\Xc$ which is also pseudo-discrete. See \cite[Ex. 3.6]{BPZ19}.
		\item Every 0-dimensional subgroup of $\Xc$ is clearly pseudo-discrete. 
		(A topological space is 0-dimensional if its topology has a basis consisting of subsets that are simultaneously closed and open.)
		There exist 0-dimensional, closed, nondiscrete subgroups of $\Xc$ if and only if $\Xc$ contains a subspace isomorphic to the Banach space $c_0$ of sequences that are convergent to zero, by \cite[Th. 4.1]{ADG94}. 
	\end{itemize}
\end{remark}

We now prepare to prove that there exist \emph{connected} pseudo-discrete normal nontrivial subgroups of any separable Banach-Lie group for which the dimension of the center of its Lie algebra is~$>1$. 
(See Theorem~\ref{pseudo_discr_cor2}.)

\begin{definition}
	\normalfont 
	A topological space $T$ is said to satisfy the \emph{countable separation condition} if for every subset $A\subseteq T$  with $\card\, A\le\aleph_0$, its complement $T\setminus A$ is connected. 
\end{definition}

\begin{lemma}\label{pseudo_discr_L4}
	If $G$ is a connected separable abelian Banach-Lie group with $\dim G>1$ then $G$ satisfies the countable separation condition. 
\end{lemma}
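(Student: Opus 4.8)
The plan is to use the classical structure of a connected abelian Banach-Lie group as a quotient of its Lie algebra by a discrete subgroup, and then to reduce the assertion to the elementary geometric fact that a real Banach space of dimension at least~$2$ stays connected after deletion of a countable set.

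First I would recall that, since $G$ is connected and abelian, its exponential map $\exp_G\colon\gg\to G$ (where $\gg:=\Lie(G)$) is a homomorphism of the additive group $(\gg,+)$ into $G$, because the Baker--Campbell--Hausdorff series reduces to $x+y$ when the bracket vanishes. As $\exp_G$ is a local diffeomorphism at $0$, its image is an open subgroup, hence closed, hence all of the connected group $G$; and its kernel $\Gamma:=\Ker(\exp_G)$ is discrete. Thus $p:=\exp_G\colon\gg\to G$ is a surjective covering homomorphism inducing an isomorphism $G\cong\gg/\Gamma$, and the hypothesis $\dim G>1$ means precisely that $\dim_\RR\gg\ge 2$.

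Next I would exploit separability. Since $p$ is a local homeomorphism, $G$ contains an open set homeomorphic to an open ball of $\gg$, and an open ball of a normed space is homeomorphic to the whole space; as $G$ is separable and metrizable, so is this open set, whence $\gg$ is a separable Banach space. A discrete subset of a separable metric space is countable, so $\Gamma$ is countable. Now fix a countable set $A\subseteq G$. Then $p^{-1}(A)=\bigcup_{a\in A}p^{-1}(a)$ is a countable union of $\Gamma$-cosets, each of which is countable, so $\widetilde A:=p^{-1}(A)$ is countable. Since $p$ is a continuous surjection with $G\setminus A=p(\gg\setminus\widetilde A)$, it suffices to prove that $\gg\setminus\widetilde A$ is connected.

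Finally I would show that $\gg\setminus\widetilde A$ is in fact path-connected. Given $x,y\in\gg\setminus\widetilde A$ with $x\ne y$, I would choose (using $\dim_\RR\gg\ge 2$) a vector $w$ linearly independent of $y-x$ and set $P:=x+\spa\{y-x,w\}$, a $2$-dimensional affine subspace containing $x$ and $y$ and homeomorphic to $\RR^2$. Then $P\cap\widetilde A$ is countable, and it is classical that $\RR^2$ minus a countable set is path-connected: for a broken line $x\to z\to y$ with $z\in P$, the set of $z$ for which $[x,z]$ or $[z,y]$ meets $\widetilde A$ is contained in a countable union of lines in $P$, which cannot cover $P\cong\RR^2$, so a suitable $z\notin\widetilde A$ exists. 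This yields a path from $x$ to $y$ inside $P\setminus\widetilde A\subseteq\gg\setminus\widetilde A$, proving connectedness. The step requiring the most care is the passage from separability of the quotient $G$ to countability of $\Gamma$ (equivalently, separability of $\gg$); once $\widetilde A$ is known to be countable, the geometric conclusion is routine, and I expect no further obstacle.
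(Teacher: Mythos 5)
Your proof is correct and takes essentially the same route as the paper's: both reduce the statement, via a covering of $G$ by a separable Banach space with countable discrete kernel (the paper invokes the universal covering $\widetilde{G}\cong(\Xc,+)$, you use $\exp_G\colon\gg\to G$, which here is the same map), to showing that a separable Banach space of dimension $\ge 2$ minus a countable set is path-connected by a two-dimensional broken-line argument. Your selection of the middle vertex $z$ (outside a countable union of lines in a plane through $x$ and $y$) is a cosmetic variant of the paper's uncountable family of pairwise disjoint broken paths $\gamma_s$, and your use of the affine plane $x+\spa\{y-x,w\}$ even avoids the paper's case distinction between linearly independent and linearly dependent endpoints.
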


\begin{proof}
	Let $q\colon\widetilde{G}\to G$ be the universal covering of $G$. 
	Then $\widetilde{G}$ is a simply connected, separable, abelian Banach-Lie group, hence there exists a separable Banach space~$\Xc$ with $\widetilde{G}=(\Xc,+)$. 
	On the other hand, $\Gamma:=\Ker q$ is a discrete subgroup of  $\widetilde{G}$ and 
	one has the isomorphism of Banach-Lie groups $\widetilde{G}/\Gamma\to G$, $x+\Gamma\mapsto q(x)$. 
	Since $\Gamma$ is a discrete subgroup of $(\Xc,+)$ and $\Xc$ is separable, it is straightforward to show that $\Gamma$ is at most countable. 
	
	Thus, we may assume 
	$G=(\Xc/\Gamma,+)$, where $\Xc$ is a separable real Banach space and $\Gamma\subseteq\Xc$ is a discrete subgroup with 
	$\card\,\Gamma\le\aleph_0$,    
	while $q\colon\Xc\to\Xc/\Gamma$ is the corresponding quotient map. 
	
	We must prove that for any subset $A\subseteq G$  with $\card\, A\le\aleph_0$, its complement $G\setminus A$ is connected. 
	The set $q^{-1}(A)=A+\Gamma$ satisfies $\card\,(q^{-1}(A))\le\aleph_0$ and 
	$q(\Xc\setminus q^{-1}(A))=(\Xc/\Gamma)\setminus A$, 
	hence it suffices to prove that $\Xc\setminus q^{-1}(A)$ is connected. 
	
	Therefore we may assume $\Gamma=\{0\}$ and $A\subseteq\Xc$ with $\card\, A\le\aleph_0$. 
	In this case we can actually prove that the complement $\Xc\setminus A$ is pathwise connected. 
	In fact, for any linearly independent vectors $x_0,x_1\in\Xc\setminus A$ with $x_0\ne x_1$ let $\Vc:=\RR x_0\dotplus\RR x_1$. 
	For every $s\in[0,1]$ let $\gamma_s\colon [0,1]\to \Vc$ with $$\gamma_s(0)=x_0,\ \gamma_s(1)=x_1,\ \gamma_s(1/2)=s(x_0+x_1),$$
	and 
	$$\gamma_s(t)=
	\begin{cases}
	(1-2t) x_0+2t\gamma_s(1/2)&\text{ if }0\le t<1/2,\\
	2(1-t)\gamma_s(1/2)+(2t-1)x_1&\text{ if }1/2<t\le 1.
	\end{cases}$$
	Then one has $\gamma_{s_1}([0,1])\cap\gamma_[s_2]([0,1])=\emptyset$ if $s_1,s_2\in[0,1]$ with $s_1\ne s_2$. 
	Since $\card\,A\le\aleph_0$, it then follows that there exists $s\in[0,1]$ with $\gamma_s([0,1])\cap A=\emptyset$, that is, $\gamma_s$ is a continuous path that connects $x_0$ to $x_1$ \emph{inside} $\Xc\setminus A$. 
	
	If however  $x_0,x_1\in\Xc\setminus A$ are linearly dependent then, using the hypothesis $\dim\Xc>1$, we may select $x_2\in\Xc\setminus A$ 
	which is linearly independent on $x_0$ and $x_1$. 
	Then, as above, we construct continuous paths in $\Xc\setminus A$ connecting $x_0$ to $x_2$ and $x_1$ to $x_2$, respectively. 
	The concatenation of these two paths is a continuous path in $\Xc\setminus A$ connecting $x_0$ to $x_1$, and we are done. 
\end{proof}

\begin{definition}
	\normalfont 
	Let $T$ be a topological space. A subset $P\subseteq T$ is called \emph{perfect} if it is closed and has no isolated points. 
	A \emph{Bernstein subset} is any subset $B\subseteq T$ satisfying $P\cap B\ne\emptyset$ and $P\cap(T\setminus B)\ne\emptyset$ for every perfect subset $P\subseteq T$ with $P\ne\emptyset$. 
	A \emph{strongly Bernstein subset} is any subset $B\subseteq T$ satisfying $F\cap B\ne\emptyset$ and $F\cap(T\setminus B)\ne\emptyset$ for every closed subset $F\subseteq T$ with $\card\,F>\aleph_0$. 
	
	If moreover $T$ is a topological group, then a \emph{Bernstein subgroup} of $T$ is any subgroup $B\subseteq T$ which is in addition a Bernstein subset 
	of~$T$. 
	One similarly defines the strongly Bernstein subgroups. 
\end{definition}

\begin{remark}\label{B-vs-sB}
	\normalfont
	It easily follows by Baire's category theorem that every complete metric space without isolated points is uncountable. 
	In particular, if $T$ is a complete metric space, then every perfect subset $P\subseteq T$ satisfies $\card\, P>\aleph_0$. 
	(See also \cite[Cor. 6.1.4]{En89} for connected metric spaces.)
	Hence every Bernstein subset of a complete metric space is a strongly Bernstein subset. 
	
	In every Polish space (i.e., separable complete metric space) every closed subset is the union of a perfect set and a countable set by the Cantor-Bendixson theorem \cite[Ch. I, Th. 6.4]{Ke95}, 
	and it then easily follows that in this framework the notions of Bernstein subset and strongly Bernstein subset coincide. 
	
	We also mention that the Bernstein subsets of any uncountable Polish space (i.e., uncountable separable complete metric space) are exactly the absolutely nonmeasurable subsets by \cite[Th. 3, page 206]{Kh09}, that is, the subsets that do not belong to the domain of any nonzero $\sigma$-finite continuous  Borel measure on that space. 
\end{remark}

\begin{lemma}\label{dense-connect}
	Assume that $T$ is a topological space satisfying the condition that every point of $T$ has a neighbourhood basis consisting of open neighbourhoods homeomorphic to open subsets of suitable topological vector spaces different from~$\{0\}$. 
	Then every Bernstein subset of $T$ is dense in $T$. 
	
	If moreover $T$ is a complete metric space that satisfies the countable separation condition
	and $H$ is a strongly Bernstein subset of $T$, 
	then $H$ is connected. 
\end{lemma}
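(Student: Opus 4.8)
The plan is to treat the two assertions separately, the first being a routine consequence of the local structure and the second the substantive part.

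For the density statement, I would show that every nonempty open subset $U\subseteq T$ contains a nonempty perfect subset of $T$; this immediately yields $B\cap U\neq\emptyset$ for any Bernstein subset $B$, hence density. To produce such a perfect set, pick $x\in U$ together with a neighbourhood $W\subseteq U$ of $x$ and a homeomorphism $\phi\colon W\to O$ onto an open subset $O$ of a topological vector space $V\neq\{0\}$. Choosing $0\neq v\in V$ and a sufficiently small $\varepsilon>0$, the segment $\{\phi(x)+tv\mid t\in[0,\varepsilon]\}$ lies in $O$ and is homeomorphic to $[0,\varepsilon]$; its preimage under $\phi$ is then a nonempty perfect subset of $T$ contained in $U$, where compactness provides the closedness in $T$.

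For the connectedness statement I would first note that, since $T$ is a complete metric space carrying the local structure above, every strongly Bernstein subset is a Bernstein subset by Remark~\ref{B-vs-sB}; hence the first part applies and $H$ is dense, that is $\overline H=T$. Arguing by contradiction, suppose $H=A\sqcup B$ is a separation of $H$ into two nonempty relatively clopen pieces. The key observation is that $S:=\overline A\cap\overline B$ (closures in $T$) is then disjoint from $H$: a point of $A$ cannot lie in $\overline B$ and a point of $B$ cannot lie in $\overline A$, because $A$ and $B$ are separated. Since $S$ is closed in $T$ and avoids $H$, the strongly Bernstein hypothesis forces $\card\,S\le\aleph_0$.

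Now the countable separation condition gives that $T\setminus S$ is connected. On the other hand, writing $A':=\overline A\setminus\overline B$ and $B':=\overline B\setminus\overline A$, one has $A'=\overline A\cap(T\setminus S)$ and $B'=\overline B\cap(T\setminus S)$, so both are closed in $T\setminus S$ and are clearly disjoint; moreover $A'\cup B'=\overline H\setminus S=T\setminus S$ precisely because $\overline H=T$. As $A\subseteq A'$ and $B\subseteq B'$, both $A'$ and $B'$ are nonempty, so $\{A',B'\}$ is a separation of $T\setminus S$, contradicting its connectedness. Therefore $H$ is connected. The main obstacle I anticipate is the bookkeeping in this second part, namely verifying that the separation of $H$ really pushes the interface $S=\overline A\cap\overline B$ entirely into $T\setminus H$ and that $S$ is closed, so that the strongly Bernstein property makes it countable; once $\overline H=T$ is secured, the identity $A'\cup B'=T\setminus S$ and the clash with the countable separation condition are immediate.
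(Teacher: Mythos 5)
Your proof is correct and follows essentially the same route as the paper: density is obtained from a perfect arc (a segment through a chart) inside every basic open neighbourhood, and connectedness by exhibiting a closed ``interface'' of a putative separation that avoids $H$, is therefore at most countable by the strongly Bernstein property, and then contradicts the countable separation condition. The only cosmetic difference is in the bookkeeping: the paper encodes the separation by open sets $V,W$ with $H\subseteq V\cup W$ and takes $F=T\setminus(V\cup W)$ as the interface (using density to force $V\cap W=\emptyset$), whereas you take $S=\overline{A}\cap\overline{B}$ and use density to get $T\setminus S=(\overline{A}\setminus\overline{B})\sqcup(\overline{B}\setminus\overline{A})$ --- both arguments rest on exactly the same two ingredients.
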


\begin{proof}
	Let $B\subseteq T$ be a Bernstein subset. 
	We first prove that $H$ is dense in $T$. 
	In fact, arbitrary $x\in T$ has by hypothesis a neighbourhood basis consisting of open subsets $U_x\subseteq T$  for which there exist a
	topological vector space $\Xc$ and an open subset $U_0\subseteq \Xc$ with $x\in U_x$, $0\in U_0$, as well as a homeomorphism $\chi\colon U_0\to U_x$ with $\chi(0)=x$. 
	Since $U_0$ is an open neighbourhood of $0\in\Xc$ and $\Xc\ne\{0\}$, we may select $v\in U_0\setminus\{0\}$. 
	Again, since $U_0$ is a neighbourhood of $0\in\Xc$, there exists $\varepsilon>0$ such that $sv\in U_0$ for all $s\in[-\varepsilon, \varepsilon]$. 
	Then $\{\chi(sv)\mid s\in[-\varepsilon, \varepsilon] \}$ is clearly a perfect subset of $T$, and moreover this perfect set is contained in $\chi(U_0)=U_x$. 
	On the other hand, since $H$ is a Bernstein subset, its intersection with every perfect subset is nonempty, hence we obtain $B\cap U_x\ne\emptyset$. 
	Since these open sets $U_x$ constitute a neighbourhood basis of the arbitrary point $x\in T$, it follows that $H$ is dense in $T$. 
	
	Now assume that $T$ is a complete metric space satisfying the countable separation condition
	and $H$ is a strongly Bernstein subset of $T$. 
	In particular, $H$ is a Bernstein subset by Remark~\ref{B-vs-sB},
	hence is dense in $T$ by what we already proved. 
	
	To show that $H$ is connected, assume that one has open subsets $V,W\subseteq T$ with $B\subseteq V\cup W$ and $(B\cap V)\cap(B\cap W)=\emptyset$. 
	Then $B\cap(V\cap W)=\emptyset$.
	Since $H$ is dense and $V\cap W$ is open, we then obtain $V\cap W=\emptyset$. 
	Denoting $F:=T\setminus(V\cup W)$, it follows that $F\subseteq T$ is a closed subset satisfying $T\setminus F=V\cup W$ with  $V\cap W=\emptyset$. 
	Since $T$ satisfies the countable separation condition, we then obtain either $V=\emptyset$, or $W=\emptyset$, or $\card\, F>\aleph_0$. 
	If $\card\, F>\aleph_0$ then, using the hypothesis that $H$ is a strongly Bernstein subset, we obtain $B\cap F\ne\emptyset$, which is a contradiction with $B\subseteq V\cup W=T\setminus F$.
	Consequently either $V=\emptyset$, or $W=\emptyset$, and this completes the proof of the fact that $H$ is connected. 
\end{proof}

\begin{remark}
	\normalfont 
	The proof of Lemma~\ref{dense-connect} actually shows a stronger  connectedness property of strongly Bernstein subsets $H$ of any complete metric space $T$ satisfying the countable separation condition: 
	If $V,W\subseteq T$ are open subsets with $B\subseteq V\cup W$ 
	and $V\ne\emptyset\ne W$, then 
	$B\cap(V\cap W)\ne\emptyset$. 
\end{remark}

\begin{lemma}\label{push}
	Let $q\colon T\to S$ be a continuous surjective map between topological spaces. 
	If $B\subseteq T$ is a strongly Bernstein subset then $q(B)\subseteq S$ is a strongly Bernstein subset. 
	
	If moreover $T$ and $S$ are Hausdorff topological spaces and $q\colon T\to S$ is a continuous open surjective map, then for every Bernstein subset $B\subseteq T$ its image $q(B)\subseteq S$ is Bernstein subset.  
\end{lemma}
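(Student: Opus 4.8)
The plan is to reduce both assertions to the defining property of $B$, applied to suitable preimages under $q$, while treating the ``meeting'' condition and the ``co-meeting'' (complement) condition separately, since these two behave very differently under images.

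First I would set up the preimage mechanism. For the strongly Bernstein assertion the key observation is that whenever $F\subseteq S$ is closed with $\card\,F>\aleph_0$, the set $q^{-1}(F)$ is closed in $T$ by continuity and satisfies $\card\,q^{-1}(F)\ge\card\,F>\aleph_0$, because surjectivity of $q$ gives $q(q^{-1}(F))=F$, so $q$ restricts to a surjection $q^{-1}(F)\to F$. For the ``moreover'' part, where $q$ is in addition open, I would prove the sharper fact that the preimage of a nonempty perfect set is again nonempty and perfect. Given a nonempty perfect $P\subseteq S$, the set $q^{-1}(P)$ is closed and, by surjectivity, nonempty; and it has no isolated points, for if some $t$ were isolated in $q^{-1}(P)$, witnessed by an open $U\ni t$ with $U\cap q^{-1}(P)=\{t\}$, then $q(U)$ would be open (as $q$ is open) and one checks directly that $q(U)\cap P=\{q(t)\}$, making $q(t)$ isolated in $P$ and contradicting perfectness. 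The openness and the Hausdorff hypotheses enter exactly at this transfer of the ``no isolated point'' property.

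With this in place the meeting half of each statement is immediate. Since $B$ is strongly Bernstein and $q^{-1}(F)$ is a closed set of cardinality $>\aleph_0$, we have $B\cap q^{-1}(F)\ne\emptyset$, and applying $q$ yields a point of $q(B)\cap F$; thus $q(B)$ meets every such $F$. In the open case the same argument with the perfect set $q^{-1}(P)$ in place of $q^{-1}(F)$ shows that $q(B)$ meets every nonempty perfect subset $P\subseteq S$.

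The main obstacle is the co-meeting half, that is, showing $S\setminus q(B)$ meets every such $F$ (respectively $P$). The difficulty is intrinsic: a point $s\in S\setminus q(B)$ means that the \emph{entire} fiber $q^{-1}(s)$ avoids $B$, whereas the natural symmetric attempt—applying the strongly Bernstein property to $T\setminus B$, which is again strongly Bernstein since the definition is symmetric in $B$ and $T\setminus B$—only produces a single point of $q^{-1}(F)\setminus B$, whose image may still lie in $q(B)$ because some other point of the same fiber can belong to $B$. In other words this gives only that $q(T\setminus B)$ meets $F$, and in general $q(T\setminus B)\ne S\setminus q(B)$. Closing this gap is where I expect the real work to concentrate: I would try to exploit the openness of $q$ to transport the complementary closed (respectively perfect) structure fiberwise, aiming to produce a closed set of cardinality $>\aleph_0$ (respectively a nonempty perfect set) that is saturated by fibers, so that a point of $T\setminus B$ located inside it drags its whole fiber into $T\setminus B$. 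Controlling the cardinality and the topology of such a fiber-saturated complement, and thereby ruling out the projection phenomenon in which every fiber over $F$ meets $B$, is the crux on which the topological hypotheses on $q$ must be brought to bear.
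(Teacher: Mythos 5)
Your treatment of the ``meeting'' halves is correct and complete: the preimage mechanism $q^{-1}(F)$ closed with $\card\,q^{-1}(F)\ge\card\,F>\aleph_0$ is exactly the paper's argument, and for the perfect-set case your direct isolated-point transfer (if $U\cap q^{-1}(P)=\{t\}$ then $q(U)$ is open and $q(U)\cap P=\{q(t)\}$, since any $u\in U$ with $q(u)\in P$ lies in $U\cap q^{-1}(P)$) is cleaner than the paper's, which establishes that $q^{-1}(P)$ has no isolated points via generalized sequences and a ``limit covering property'' of continuous open surjections.

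The co-meeting half, which you explicitly left open, is however a genuine gap in your proposal --- and, remarkably, it is exactly the point where the paper's own proof is unsound. The paper disposes of it by writing $q^{-1}(q(B))=B$, justified by ``using again that $q$ is surjective''; but surjectivity only gives $q(q^{-1}(A))=A$, while $q^{-1}(q(B))=B$ is the assertion that $B$ is $q$-saturated, which is nowhere assumed. (The paper's second display moreover verifies $(S\setminus F)\cap q(B)\ne\emptyset$ rather than the required $F\cap(S\setminus q(B))\ne\emptyset$; with saturation the intended computation $q^{-1}(F\cap(S\setminus q(B)))=q^{-1}(F)\cap(T\setminus B)\ne\emptyset$ would finish the proof in one line via your own preimage mechanism.) Your suspicion that the projection phenomenon is a real obstruction is justified: take $T=\RR\times\{0,1\}$ with the disjoint-union topology, $S=\RR$, $q$ the projection (continuous, open, surjective, both spaces Polish), let $B_0\subseteq\RR$ be a Bernstein set and put $B=(B_0\times\{0\})\cup((\RR\setminus B_0)\times\{1\})$. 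Every closed uncountable (resp.\ nonempty perfect) subset of $T$ is uncountable closed (resp.\ perfect) in one of the two copies of $\RR$, hence contains a nonempty perfect set meeting both $B_0$ and $\RR\setminus B_0$, so $B$ is a strongly Bernstein (indeed Bernstein) subset of $T$; yet $q(B)=\RR=S$ has empty complement. So no amount of fiberwise cleverness can close the gap at the stated level of generality: the lemma is false as stated, and the correct fix is to add the saturation hypothesis $B=q^{-1}(q(B))$ (or an equivalent), after which your argument goes through completely.
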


\begin{proof}
	For any closed subset $F\subseteq S$ with $\card\,F>\aleph_0$, 
	the subset $q^{-1}(F)\subseteq T$ is also closed since $q$ is continuous, and moreover, since $q$ is surjective, $\card\, q^{-1}(F)\ge \card\, F>\aleph_0$. 
	Since $B\subseteq T$ is a strongly Bernstein subset, it then follows that $q^{-1}(F) \cap B \ne\emptyset\ne q^{-1}(F)\cap(S\setminus B)$. 
	Therefore, using again that $q$ is surjective, 
	\begin{align*}
	q^{-1}(F\cap q(B))
	&=q^{-1}(F) \cap q^{-1}(q(B)) \\
	&=q^{-1}(F)\cap B \\
	&\ne\emptyset,
	\end{align*} 
	which implies $F\cap q(B)\ne\emptyset$. 
	Similarly, 
	\begin{align*}
	q^{-1}((S\setminus F)\cap q(B))
	& =q^{-1}(S\setminus F)\cap q^{-1}(q(B)) \\
	&=(T\setminus q^{-1}(F))\cap B \\
	&\ne\emptyset,
	\end{align*} 
	which implies $(S\setminus F)\cap q(B)\ne\emptyset$. 
	Hence $q(B)\subseteq S$ is a strongly Bernstein subset. 
	
	Now let us assume that $q\colon T\to S$ is a continuous open surjective homomorphism of topological groups. 
	It suffices to prove that for every perfect subset $P\subseteq S$ its preimage $q^{-1}(P)\subseteq T$ is a perfect subset, and then the proof follows the same lines as above. 
	Since $q$ is continuous, the set $q^{-1}(P)$ is closed, so it suffices to show that every point $x\in q^{-1}(P)$ belongs to the closure of $q^{-1}(P)\setminus \{x\}$. 
	In fact, one has $q(x)\in q(q^{-1}(P))=P$ (since $q$ is surjective). 
	Since $P$ is a perfect set, there exists a generalized sequence $\{y_j\}_{j\in J}$ in $P\setminus \{q(x)\}$ with $\lim\limits_{j\in J}y_j=q(x)$. 
	The mapping $q$ is assumed to be continuous, open, and surjective, hence it has the limit covering property. 
	That is, there exist a generalized subsequence $\{z_i\}_{i\in I}$ of $\{y_j\}_{j\in J}$ and a generalized subsequence $\{x_i\}_{i\in I}$ in $T$ with $q(x_i)=z_i$ for every $i\in I$ and moreover $\lim\limits_{i\in I}x_i=x$. 
	We note that for every $i\in I$ one has 
	$$x_i\in q^{-1}(z_i)\subseteq q^{-1}(P\setminus\{q(x)\})=q^{-1}(P)\setminus q^{-1}(\{q(x)\})\subseteq q^{-1}(P)\setminus\{x\}$$
	hence $x$ indeed belongs to the closure of $q^{-1}(P)\setminus \{x\}$, 
	and we are done. 
\end{proof}

\begin{remark}\label{transfinite_R}
	\normalfont
	In the proof of Lemma~\ref{transfinite} below we use a few basic notions of cardinals and ordinals. 
	See for instance \cite[App. A]{Ke95}. 
	We denote by $\omega_1$ the first uncountable ordinal and 
	$\Omega$ be the set of all countable ordinals~$\le \omega_1$. 
	Hence $\Omega$ is a well-ordered set for which $\omega_1$ is the largest element, and for every $\alpha\in\Omega\setminus\{\omega_1\}$ the set $ \{\beta\in\Omega\mid \beta<\alpha\}\equiv\alpha$ is at most countable. 
	Here we identify every $\alpha \in\Omega$ with the set of its predecessors, and in this way we can speak about the cardinality of every ordinal. 
	
	We identify the finite ordinals with the natural numbers $\{1,2,\dots,n\}\equiv n$ for $n=1,2,\dots$, and thus $\{1,2,\dots\}=\NN$ is the first infinite ordinal, with its cardinality denoted by~$\aleph_0$. 
	The cardinality of $\Omega\equiv\omega_1$ is denoted by~$\aleph_1$, 
	hence this is the first ordinal $>\aleph_0$. 
	Since one can prove that $\aleph_0< 2^{\aleph_0}$, it follows that  
	$\aleph_1\le 2^{\aleph_0}$. 
	
	We recall that the ``continuum hypothesis'' is the assumption $\aleph_1=2^{\aleph_0}$, that is, the cardinality of $\Omega$ is equal to the cardinality of the set of all subsets of $\NN$. 
	We also recall that $\card\,\RR=2^{\aleph_0}$, which follows by writing the real numbers in base~2, and then one has $\card\,\RR=\aleph_1$ if the continuum hypothesis is assumed. 
\end{remark}

\begin{lemma}\label{count}
	If $X$ is a separable metric space with its set of all closed subsets denoted by $\Fc$, then $\card\,\Fc\le 2^{\aleph_0}$. 
	
	If moreover the metric of $X$ is complete and there exists a continuous injective map $\gamma\colon[0,1]\to X$, 
	then 
	$$\card\,\Pc= \card\,\Cc=\card\,\Fc=2^{\aleph_0},$$
	where  $\Pc$ is the set of all perfect subsets of $\Xc$, and $\Cc:=\{F\in\Fc\mid \card\, F>\aleph_0\}$. 
\end{lemma}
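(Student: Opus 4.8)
The plan is to establish the upper bound $\card\,\Fc\le 2^{\aleph_0}$ using only separability, and then, under the extra hypotheses, to produce $2^{\aleph_0}$ pairwise distinct members of $\Pc\cap\Cc$ out of the arc $\gamma([0,1])$.

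First I would note that a separable metric space is second countable: if $D\subseteq X$ is countable and dense, the balls centered at points of $D$ with rational radii form a countable base $\{B_n\}_{n\in\NN}$. Since every open subset of $X$ is a union of members of this base, the assignment $S\mapsto\bigcup_{n\in S}B_n$ is a surjection from the collection of all subsets of $\NN$ (which has cardinality $2^{\aleph_0}$) onto the family of open subsets of $X$; hence there are at most $2^{\aleph_0}$ open sets, and by passing to complements at most $2^{\aleph_0}$ closed sets. This gives $\card\,\Fc\le 2^{\aleph_0}$, and since $\Pc\subseteq\Fc$ and $\Cc\subseteq\Fc$ the same bound holds for $\card\,\Pc$ and $\card\,\Cc$.

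For the lower bound I would exploit the arc. As $\gamma\colon[0,1]\to X$ is a continuous injection of a compact space into a metric (hence Hausdorff) space, it is a homeomorphism onto its image; in particular, for each $t\in(0,1]$ the set $P_t:=\gamma([0,t])$ is compact, therefore closed in $X$, and homeomorphic to the nondegenerate interval $[0,t]$, so it has no isolated points. Thus each $P_t$ is a perfect subset of $X$ with $\card\,P_t=\card\,[0,t]=2^{\aleph_0}>\aleph_0$, whence $P_t\in\Pc\cap\Cc$. Injectivity of $\gamma$ forces $P_t\ne P_{t'}$ whenever $t\ne t'$ (if $t<t'$ then $\gamma(t')\in P_{t'}\setminus P_t$), so $\{P_t\}_{t\in(0,1]}$ is a family of $2^{\aleph_0}$ distinct elements of $\Pc\cap\Cc$. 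Combining with the upper bounds, and using $\card\,\Fc\ge\card\,\Pc$, all three cardinalities equal $2^{\aleph_0}$.

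The only genuinely nontrivial steps are the two standard topological facts invoked---that a continuous injection out of a compact space is an embedding, and that a compact subset of a metric space is closed---together with the bookkeeping that the $P_t$ are honestly perfect (closed in $X$ and without isolated points) and pairwise distinct. I expect the main point to watch is simply keeping $\Pc$ (perfect sets) and $\Cc$ (uncountable closed sets) in step; I note that completeness of the metric is not actually needed for these cardinality equalities, although it is what places $X$ among the Polish spaces used in the applications.
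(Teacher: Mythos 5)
Your proof is correct and follows essentially the same route as the paper's: the upper bound comes from a countable base (every open set is a union of basic sets, so there are at most $2^{\aleph_0}$ open, hence closed, sets), and the lower bound from the pairwise distinct arcs $\gamma([0,t])$, which are compact, closed, and without isolated points. The one small (valid) variation is that you place each $P_t$ in $\Cc$ directly from $\card\,P_t=2^{\aleph_0}$, whereas the paper invokes the inclusion $\Pc\subseteq\Cc$ of Remark~\ref{B-vs-sB} (which rests on completeness via Baire's theorem), so your remark that completeness of the metric is not actually needed for these cardinality equalities is accurate.
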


\begin{proof}
	Since $X$ is a separable metric space, its topology has a basis $\Bc$ with $\card\,\Bc=\aleph_0$. 
	Every open subset of $X$ is the equal to $\bigcup\limits_{D\in\Sc}D$ for some subset $\Sc\subseteq\Bc$, hence it follows that the cardinality of the set of all open subsets of $X$ is $\le$ the cardinality of all subsets of $\Bc$, that is $ 2^{\aleph_0}$. 
	Since the complement correspondence 
	$$D\mapsto X\setminus D$$ 
	is a bijection between the open subsets and the closed subsets of $X$, it follows  that  $\card\,\Fc\le  2^{\aleph_0}$. 
	
	Now assume the metric of $X$ is complete and there exists a continuous injective map $\gamma\colon[0,1]\to X$. 
	One has by Remark~\ref{B-vs-sB}
	$$\Pc\subseteq\Cc\subseteq\Fc.$$
	All the sets $\gamma([0,r])\subseteq X$ for arbitrary $r\in(0,1)$ are compact, connected, non-singletons, and distinct since $\gamma$ is injective. 
	In particular $\{\gamma([0,r])\}_{0<t<1}$ is a family of distinct elements of~$\Pc$. 
	Using $\card\,(0,1)=2^{\aleph_0}$, we then obtain $\card\,\Pc\ge 2^{\aleph_0}$, 
	and then the assertion follows. 
\end{proof}

\begin{lemma}\label{transfinite}
	Assume the continuum hypothesis. 
	If $\Xc$ is a separable real Banach space, 
	then there exists a $\QQ$-linear subspaces $\Zc,\Wc\subseteq\Xc$ whose underlying additive groups $(\Zc,+)$ and $(\Wc,+)$ are strongly Bernstein subgroups of $(\Xc,+)$ and moreover one has the direct sum decomposition $\Xc=\Zc\dotplus\Wc$.  
\end{lemma}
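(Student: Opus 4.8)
The plan is to build $\Zc$ and $\Wc$ by transfinite recursion, first constructing a $\QQ$-linearly independent ``Bernstein core'' that meets every large closed set, and then completing it to a Hamel basis so as to obtain the direct sum decomposition. We may assume $\Xc\ne\{0\}$, the case $\Xc=\{0\}$ being trivial (there is then no closed set of cardinality $>\aleph_0$, so the strongly Bernstein condition is vacuous). Since $\Xc$ is a separable Banach space it is a separable complete metric space, and the map $t\mapsto tv$ for a fixed $v\ne0$ is a continuous injection $[0,1]\to\Xc$; hence by Lemma~\ref{count} the family $\Cc$ of all closed subsets $F\subseteq\Xc$ with $\card\,F>\aleph_0$ has cardinality $2^{\aleph_0}$. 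Assuming the continuum hypothesis, $2^{\aleph_0}=\aleph_1$ (Remark~\ref{transfinite_R}), so I would enumerate $\Cc=\{F_\alpha\mid \alpha<\omega_1\}$ in order type $\omega_1$, where each $F_\alpha$ satisfies $\card\,F_\alpha>\aleph_0$.

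Next I would reduce the four required incidence conditions to two. Suppose $\Zc,\Wc$ are $\QQ$-subspaces with $\Zc\cap\Wc=\{0\}$. If $F\in\Cc$ contains a nonzero vector $z\in\Zc$ and a nonzero vector $w\in\Wc$, then $z\in F\cap\Zc$ and $w\in F\cap\Wc$, while $w\in\Wc\setminus\{0\}\subseteq\Xc\setminus\Zc$ and $z\in\Zc\setminus\{0\}\subseteq\Xc\setminus\Wc$; hence $F$ meets each of $\Zc$, $\Xc\setminus\Zc$, $\Wc$, $\Xc\setminus\Wc$. Thus it suffices to produce $\QQ$-subspaces $\Zc,\Wc$ with $\Zc\cap\Wc=\{0\}$, in direct sum equal to $\Xc$, such that every $F_\alpha$ contains a nonzero element of $\Zc$ and a nonzero element of $\Wc$.

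The core of the argument is the recursion. Proceeding by recursion on $\alpha<\omega_1$, I would choose vectors $z_\alpha,w_\alpha\in F_\alpha$ so that $\{z_\beta,w_\beta\mid\beta\le\alpha\}$ is $\QQ$-linearly independent. This is possible because, $\alpha$ being a countable ordinal, the already chosen set $B_{<\alpha}:=\{z_\beta,w_\beta\mid\beta<\alpha\}$ is at most countable, so its $\QQ$-linear span $V_{<\alpha}$ is countable; since $\card\,F_\alpha>\aleph_0$ I may pick $z_\alpha\in F_\alpha\setminus V_{<\alpha}$ and then $w_\alpha\in F_\alpha\setminus\mathrm{span}_{\QQ}(B_{<\alpha}\cup\{z_\alpha\})$, both necessarily nonzero. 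Setting $B_0:=\{z_\alpha\mid\alpha<\omega_1\}\cup\{w_\alpha\mid\alpha<\omega_1\}$, this yields a $\QQ$-linearly independent family with $z_\alpha,w_\alpha\in F_\alpha$ for every $\alpha$.

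Finally I would complete $B_0$ to a $\QQ$-Hamel basis $B\supseteq B_0$ of $\Xc$ (by Zorn's lemma) and define $\Wc:=\mathrm{span}_{\QQ}\{w_\alpha\mid\alpha<\omega_1\}$ and $\Zc:=\mathrm{span}_{\QQ}\bigl(\{z_\alpha\mid\alpha<\omega_1\}\cup(B\setminus B_0)\bigr)$. Since these two spanning sets partition the basis $B$, standard linear algebra gives $\Xc=\Zc\dotplus\Wc$ with $\Zc\cap\Wc=\{0\}$, and each $F_\alpha$ contains the nonzero vectors $z_\alpha\in\Zc$ and $w_\alpha\in\Wc$. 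By the reduction above, both $\Zc$ and $\Wc$ are strongly Bernstein subsets, hence strongly Bernstein subgroups of $(\Xc,+)$, as required. The only delicate point is the recursion step, and the reason the continuum hypothesis is used is precisely to enumerate $\Cc$ in order type $\omega_1$: this guarantees that at every stage the accumulated independent set $B_{<\alpha}$ is countable, so that the uncountable set $F_\alpha$ always admits two fresh $\QQ$-independent points.
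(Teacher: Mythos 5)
Your proof is correct and follows essentially the same route as the paper: under the continuum hypothesis you enumerate the uncountable closed sets in order type $\omega_1$ (via Lemma~\ref{count}) and, by transfinite recursion, extract from each $F_\alpha$ two fresh $\QQ$-linearly independent points, exactly as in the paper's argument. The only (harmless) difference is the final bookkeeping: the paper obtains $\Wc$ as a Zorn-maximal $\QQ$-subspace containing all the second chosen points and meeting $\Zc$ trivially, whereas you complete the independent family to a $\QQ$-Hamel basis and partition it --- two equivalent devices, both resting on Zorn's lemma.
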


\begin{proof}
	Let $\Fc$ be the set of all closed subsets of $\Xc$ and denote  $$\Cc:=\{F\in\Fc\mid \card\, F>\aleph_0\}.$$ 
	Since $\Xc$ is a separable complete metric space, it follows by Lemma~\ref{count} along with the the continuum hypothesis that  
	$ \card\,\Cc=\aleph_1$. 
	
	Now resume the notation of Remark~\ref{transfinite_R} and denote  $\Omega_0:=\Omega\setminus\{\omega_1\}$. 
	It is known that $\card\,\Omega_0=\aleph_1$, hence $\card\,\Omega_0=\card\,\Cc$,  
	and then the set $\Cc$ can be bijectively indexed as $\Cc=\{F_\alpha\}_{\alpha\in\Omega_0}$. 
	
	We now construct the required $\Zc$ by transfinite induction. 
	Since $\card\, F_1>\aleph_0=\card\,\QQ$, we may select 
	$x_1\in F_1\setminus\{0\}$ and then $y_1\in F_1\setminus \QQ x_1$, hence 
	$x_1,y_1\in F_1$ are linearly independent over~$\QQ$. 
	
	For the induction step,  
	let $\alpha\in\Omega_0$ arbitrary and assume that we have 
	$x_\beta,y_\beta\in F_\beta$ for every $\beta\in\Omega_0$ with $\beta<\alpha$, whose corresponding set 
	$$S_\alpha:=\{x_\beta\mid \beta<\alpha\}\cup\{y_\beta\mid \beta<\alpha\}$$
	is linearly independent over~$\QQ$. 
	Using the fact that $\{\beta\in\Omega_0\mid \beta<\alpha\}$ is countable (see  Remark~\ref{transfinite_R}), we see that the rational vector space $$\Vc_\alpha:=\spa_{\QQ} (S_\alpha)$$ 
	satisfies  
	$\card\,\Vc_\alpha\le\aleph_0<\card\,F_\alpha$, hence 
	we can select $x_\alpha\in F_\alpha\setminus \Vc_\alpha$. 
	Similarly, one can after that select $y_\alpha\in F_\alpha\setminus (\Vc_\alpha+\QQ x_\alpha)$. 
	Thus $x_\alpha,y_\alpha\in F_\alpha$ and the set $S_\alpha\cup\{x_\alpha,y_\alpha\}$ is linearly independent over~$\QQ$, 
	which completes the induction step. 
	
	We now define 
	$$\Zc:=\spa_{\QQ}(\{x_\alpha\mid\alpha\in\Omega_0\}).$$ 
	This is a $\QQ$-vector subspace of $\Xc$ and for every $F=F_\alpha\in\Cc$ one clearly has $x_\alpha\in F_\alpha\cap\Zc$ and $y_\alpha\in F_\alpha\setminus\Zc$, hence $F\cap \Zc\ne\emptyset$ and $F\cap (\Xc\setminus \Zc)\ne\emptyset$. 
	Therefore $\Zc$ is a strongly Bernstein subset of $\Xc$. 
	
	Moreover, let $\Ac$ denote the set of all $\QQ$-vector subspaces $\Yc\subseteq\Xc$ satisfying $\{y_\alpha\mid \alpha\in\Omega_0\}\subseteq\Yc$ and $\Zc\cap\Yc=\{0\}$. 
	It is easily checked that the set $\Ac$ is inductively ordered upwards with respect to the partial ordering defined by the inclusion, 
	hence there exists a maximal element $\Wc\in\Ac$ by Zorn's lemma. 
	Then one has $\Zc\cap\Wc=\{0\}$ since $\Wc\in\Ac$, and on the other hand $\Zc+\Wc=\Xc$ as a direct consequence of the maximality property of~$\Wc$. 
	Moreover, for every $F=F_\alpha\in\Cc$ one has $y_\alpha\in F_\alpha\cap\Wc$ and $x_\alpha\in F_\alpha\setminus\Wc$, hence $F\cap \Wc\ne\emptyset$ and $F\cap (\Xc\setminus \Wc)\ne\emptyset$. 
	Thus $\Wc$ is also a strongly Bernstein subset of $\Xc$,  
	and we are done. 
\end{proof}

\begin{remark}
	\normalfont
	The proof of Lemma~\ref{transfinite} follows the lines of \cite[Lemma 1, page 284; Ex. 4, page 294]{Kh09}.  
\end{remark}

\begin{lemma}\label{B_vs_pseudo_discr}
	Every (strongly) Bernstein subgroup of a Banach-Lie group is a pseudo-discrete subgroup. 
\end{lemma}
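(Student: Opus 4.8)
The plan is to argue by contradiction, producing from a non-constant smooth curve with values in the subgroup a nonempty perfect subset contained in the subgroup, which directly contradicts the defining property of a (strongly) Bernstein subset. So let $B\subseteq G$ be a (strongly) Bernstein subgroup of the Banach-Lie group $G$ and suppose that $B$ fails to be pseudo-discrete. By Definition~\ref{pseudo-discr_def} there then exists an open interval $D\subseteq\RR$ and a non-constant map $\gamma\in\Ci(D,G)$ with $\gamma(D)\subseteq B$.

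First I would locate a closed subinterval on which $\gamma$ is injective. Since $\gamma$ is smooth and non-constant on the connected set $D$, its derivative cannot vanish identically, so there is $t_0\in D$ with $\dot\gamma(t_0)\ne 0$. Choosing a chart $\varphi\colon U\to\mathbb{M}$ around $\gamma(t_0)$, the curve $c:=\varphi\circ\gamma$ (defined near $t_0$) satisfies $\dot c(t_0)\ne 0$ in the Banach space $\mathbb{M}$; by the Hahn--Banach theorem I may pick $\lambda\in\mathbb{M}^*$ with $\lambda(\dot c(t_0))\ne 0$. Then $(\lambda\circ c)'(t_0)=\lambda(\dot c(t_0))\ne 0$, so the real-valued function $\lambda\circ c$ is strictly monotone on some closed interval $[a,b]\subseteq D$ with $a<b$ and $t_0\in[a,b]$. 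Strict monotonicity of $\lambda\circ c$ forces $c$, and hence $\gamma$, to be injective on $[a,b]$.

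Next I would verify that $P:=\gamma([a,b])$ is a nonempty perfect subset of $G$ with $\card\,P>\aleph_0$. Indeed $P$ is the continuous image of a compact set, hence compact, and since $G$ is Hausdorff (being a Banach manifold) $P$ is closed in $G$. As $\gamma\vert_{[a,b]}$ is a continuous injection from a compact space into a Hausdorff space, it is a homeomorphism onto $P$; therefore $P$ is homeomorphic to $[a,b]$, so it has no isolated points and satisfies $\card\,P=2^{\aleph_0}>\aleph_0$. Thus $P$ is simultaneously a nonempty perfect subset of $G$ and a closed subset of $G$ of cardinality exceeding $\aleph_0$ (compare Remark~\ref{B-vs-sB}).

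Finally, since $\gamma(D)\subseteq B$ we have $P\subseteq B$, that is $P\cap(G\setminus B)=\emptyset$. This contradicts the Bernstein property $P\cap(G\setminus B)\ne\emptyset$ applied to the perfect set $P$, and equally the strongly Bernstein property applied to the closed set $P$ with $\card\,P>\aleph_0$; in either case we reach a contradiction, so $B$ must be pseudo-discrete. I expect the only genuinely technical point to be the extraction of the injective subinterval in the second step — everything else is a direct application of the definitions — and the Hahn--Banach reduction to the monotonicity of a scalar function is precisely what makes that step routine.
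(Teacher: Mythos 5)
Your proof is correct, but it takes a genuinely different route from the paper's. The paper never uses the smoothness of $\gamma$: for each compact interval $K\subseteq D_\gamma$ it observes that $\gamma(K)$ is a compact, connected, second countable (hence completely metrizable) subset of $G$, so by the Baire-category dichotomy of Remark~\ref{B-vs-sB} either $\card\,\gamma(K)=1$ or $\gamma(K)$ is a perfect set with $\card\,\gamma(K)>\aleph_0$; the (strongly) Bernstein property of the subgroup rules out the second alternative for every $K$, whence $\gamma$ is constant. You instead use smoothness in an essential way: a nonvanishing derivative at some $t_0$ plus Hahn--Banach produces a scalar function $\lambda\circ c$ with nonzero derivative, hence strict monotonicity on a small closed interval, hence injectivity of $\gamma$ there, so the arc $P=\gamma([a,b])\cong[a,b]$ is a closed perfect subset of $G$ of cardinality $2^{\aleph_0}$ contained in $B$ --- a contradiction. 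What your argument buys is self-containedness: you obtain $\card\,P=2^{\aleph_0}$ directly, with no appeal to Baire category or to the metrizability result \cite[Th. 4.2.8]{En89} that the paper invokes. What the paper's argument buys is generality: since it only uses continuity, it establishes the stronger statement recorded in the remark immediately following the lemma (any continuous path in a Hausdorff space taking values in a (strongly) Bernstein subset is constant), which your derivative-based extraction of an injective subinterval cannot reach, and it also avoids the (true, but chart-dependent) preliminary that $\dot\gamma\equiv 0$ forces $\gamma$ constant. One small point of care in your write-up: when selecting $[a,b]$ you should additionally require $\gamma([a,b])\subseteq U$ so that $c=\varphi\circ\gamma$ is defined on all of $[a,b]$; by continuity this holds for $[a,b]$ small enough, so it is a cosmetic rather than a substantive gap.
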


\begin{proof}
	Let $H\subseteq G$ be a (strongly) Bernstein subgroup of a Banach-Lie group. 
	To prove that $H$ is pseudo-discrete, let $\gamma\in\Ci(D_\gamma,G)$ arbitrary with $\gamma(D_\gamma)\subseteq H$, where $D_\gamma\subseteq\RR$ is an open interval. 
	For every compact interval $K\subseteq D_\gamma$, its image $\gamma(K)$ is a second countable, compact, connected subset of $G$ since $\gamma\colon D_\gamma\to G$ is smooth, hence continuous. 
	Since $\gamma(K)$ is compact and second countable, it is metrizable by \cite[Th. 4.2.8]{En89}, and in fact any metric defining its topology is complete since $\gamma(K)$ is compact. 
	
	Since $\gamma(K)$ is connected, it follows that it has no isolated points, and then either $\card\,\gamma(K)=1$ or $\card\,\gamma(K)>\aleph_0$ by Remark~\ref{B-vs-sB}. 
	Since $\gamma(K)\subseteq \gamma(D_\gamma)\subseteq H$ and $H$ is a (strongly) Bernstein subgroup, it then follows that $\card\,\gamma(K)=1$ 
	for every compact interval $K\subseteq D_\gamma$, hence the mapping $\gamma\colon D_\gamma\to G$ is constant. 
	This shows that $H$ is a pseudo-discrete subgroup of~$G$, and we are done. 
\end{proof}

\begin{remark}
	\normalfont
	The proof of Lemma~\ref{B_vs_pseudo_discr} actually shows that if $X$ is a Hausdorff topological space with a (strongly) Bernstein subset $B\subseteq X$ then the following assertion holds: 
	If $D\subseteq \RR$ is an interval and  $\gamma\colon D\to X$ is a continuous mapping with $\gamma(D)\subseteq B$, then $\gamma$ is constant. 
	
	In particular, if $X$ is a separable Banach space with $\dim X>1$, 
	then it follows by Lemmas \ref{pseudo_discr_L4}--\ref{dense-connect} 
	that \emph{every Bernstein subset $B\subseteq X$ is connected but is quite far from being pathwise connected}, since every continuous path contained in $H$ is actually constant. 
\end{remark}

\begin{proposition}\label{pseudo_discr_ex}
	Assume the continuum hypothesis. 
	If $G$ is a connected separable abelian Banach-Lie group and $\dim G>1$, 
	then $G$ contains dense connected pseudo-discrete subgroups $H_1$ and $H_2$ with $G=H_1+H_2$. 
\end{proposition}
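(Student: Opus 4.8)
The plan is to assemble the pieces developed in Lemmas~\ref{pseudo_discr_L4}--\ref{B_vs_pseudo_discr}, transporting the purely algebraic construction of strongly Bernstein subgroups from a Banach space down to $G$ via the universal covering.

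First I would reduce to the linear model exactly as in the proof of Lemma~\ref{pseudo_discr_L4}. Let $q\colon\widetilde{G}\to G$ be the universal covering; then $\widetilde{G}=(\Xc,+)$ for a separable real Banach space $\Xc$, and $\Gamma:=\Ker q\subseteq\Xc$ is a discrete, hence at most countable, subgroup, with $G\cong\Xc/\Gamma$. Since $\dim G>1$ we have $\dim\Xc>1$, so $\Xc\ne\{0\}$, and both standing hypotheses of Lemma~\ref{dense-connect} become available for $G$: indeed $G$ is a complete metric space (being a quotient of the complete metric space $\Xc$ by a discrete subgroup), it satisfies the countable separation condition by Lemma~\ref{pseudo_discr_L4}, and every point of $G$ has a neighbourhood basis of open sets homeomorphic to open subsets of $\Xc\ne\{0\}$.

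Next, invoking the continuum hypothesis, Lemma~\ref{transfinite} supplies $\QQ$-linear subspaces $\Zc,\Wc\subseteq\Xc$ whose additive groups are strongly Bernstein subgroups of $(\Xc,+)$ and satisfy $\Xc=\Zc\dotplus\Wc$. I would then set $H_1:=q(\Zc)$ and $H_2:=q(\Wc)$. Because $q$ is a continuous surjective homomorphism, each $H_i$ is a subgroup of $G$, and by Lemma~\ref{push} each $H_i$ is a strongly Bernstein subset of $G$. Lemma~\ref{B_vs_pseudo_discr} then shows that $H_1$ and $H_2$ are pseudo-discrete, while the two conclusions of Lemma~\ref{dense-connect} give that they are dense and connected. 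Finally $H_1+H_2=q(\Zc)+q(\Wc)=q(\Zc\dotplus\Wc)=q(\Xc)=G$ since $q$ is surjective, which completes the argument.

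The proof is essentially a routine concatenation of the preceding lemmas, so I do not expect a serious obstacle; the only points demanding care are the verifications that the three standing hypotheses of Lemma~\ref{dense-connect} genuinely transfer to $G$ (completeness of the quotient metric, the countable separation condition through Lemma~\ref{pseudo_discr_L4}, and the local vector-space chart condition coming from $\dim\Xc>1$), together with confirming that pushing forward along $q$ preserves the strongly Bernstein property in the precise form required by Lemma~\ref{push}.
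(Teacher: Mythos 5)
Your proposal is correct and follows essentially the same route as the paper: reduce to $G\cong\Xc/\Gamma$ as in Lemma~\ref{pseudo_discr_L4}, take the strongly Bernstein decomposition $\Xc=\Zc\dotplus\Wc$ from Lemma~\ref{transfinite}, push the two subgroups forward with Lemma~\ref{push}, and invoke Lemmas~\ref{B_vs_pseudo_discr} and~\ref{dense-connect}. The one (harmless) divergence is that you apply Lemma~\ref{dense-connect} downstairs in $G$, which obliges you to verify that the quotient metric on $\Xc/\Gamma$ is complete and that $G$ is locally modeled on $\Xc\ne\{0\}$, whereas the paper applies that lemma upstairs to $\Zc_j$ inside $\Xc$ and then transports density and connectedness through the continuous surjection $q$, so those extra checks never arise.
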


\begin{proof}
	As in the proof of Lemma~\eqref{pseudo_discr_L4} we may assume 
	$G=(\Xc/\Gamma,+)$, where $\Xc$ is a separable real Banach space and $\Gamma\subseteq\Xc$ is a discrete subgroup with 
	$\card\,\Gamma\le\aleph_0$. 
	Let $q\colon\Xc\to\Xc/\Gamma$ be the corresponding quotient map. 
	
	It follows by Lemma~\ref{transfinite} that the abelian group $(\Xc,+)$ contains two strongly Bernstein subgroups $\Zc_1$ and $\Zc_2$ for which one has the direct sum decomposition $\Xc=\Zc_1\dotplus\Zc_2$. 
	Since the quotient map $q\colon\Xc\to\Xc/\Gamma$ is continuous, open, and surjective, it follows by Lemma~\ref{push} that the set 
	$$H_j:=q(\Zc_j)$$ 
	is a strongly Bernstein subgroup of $G=\Xc/\Gamma$ for $j=1,2$. 
	Moreover, one has 
	$$H_1+H_2=q(\Zc_1+\Zc_2)=q(\Xc)=G.$$ 
	Also, $H_j$ is a pseudo-discrete subgroup of $G$ for $j=1,2$ by Lemma~\ref{B_vs_pseudo_discr}. 
	Finally, since $\Zc_j$ is connected and dense in $\Xc$ by Lemma~\ref{dense-connect}, and the quotient mapping $q\colon \Xc\to G$ is continuous and surjective, it follows that $H_j$ is connected and dense in $G$ for $j=1,2$, and we are done. 
\end{proof}

\begin{corollary}
	\label{pseudo_discr_cor1}
	Assume the continuum hypothesis. 
	If $G$ is a separable abelian Banach-Lie group and $\dim G>1$, 
	then $G$ contains a connected pseudo-discrete nontrivial subgroup. 
\end{corollary}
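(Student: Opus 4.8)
The plan is to reduce the general statement to the connected case, which is precisely the content of Proposition~\ref{pseudo_discr_ex}. To that end I would first pass to the identity component $G_0$ of $G$, that is, the connected component of the unit element $\1\in G$. Since a Banach-Lie group is locally connected (being locally homeomorphic to a Banach space), the component $G_0$ is an \emph{open} subgroup of~$G$.

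I would then check that $G_0$ satisfies all the hypotheses of Proposition~\ref{pseudo_discr_ex}. By its very definition $G_0$ is connected, and being a subgroup of the abelian group $G$ it is abelian. As an open subgroup of the Banach-Lie group~$G$, it is itself a Banach-Lie group modeled on the same Banach space as $G$; in particular $\dim G_0=\dim G>1$. Finally, since $G$ is separable and $G_0$ is an open subset of~$G$, the subgroup $G_0$ is separable as well. Thus Proposition~\ref{pseudo_discr_ex} applies to $G_0$ and provides a dense connected pseudo-discrete subgroup $H\subseteq G_0$. Because $G_0$ is nontrivial (as $\dim G_0>1$) and $H$ is dense in $G_0$, the subgroup $H$ is nontrivial.

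It remains to verify that $H$, which is pseudo-discrete as a subgroup of $G_0$, is also pseudo-discrete as a subgroup of the larger group~$G$. This is the only step that deserves attention, and it is where the openness of $G_0$ is used: if $\gamma\in\Ci(D_\gamma,G)$ is a smooth curve on an open interval $D_\gamma\subseteq\RR$ with $\gamma(D_\gamma)\subseteq H\subseteq G_0$, then, since $G_0$ is open in~$G$, the map $\gamma$ is in fact a smooth curve with values in the open Banach submanifold $G_0$. Pseudo-discreteness of $H$ in $G_0$ then forces $\gamma$ to be constant, so $H$ is pseudo-discrete in $G$ in the sense of Definition~\ref{pseudo-discr_def}. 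Consequently $H$ is a connected pseudo-discrete nontrivial subgroup of $G$, as required. The argument is an entirely routine reduction; the passage from $G$ to its identity component causes no real difficulty precisely because that component is open, which is what makes the transfer of pseudo-discreteness automatic.
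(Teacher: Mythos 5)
Your proof is correct and follows exactly the paper's argument, which likewise applies Proposition~\ref{pseudo_discr_ex} to the identity component of~$G$; you have merely spelled out the routine verifications (openness of the $\1$-component, separability, preservation of dimension, and the transfer of pseudo-discreteness from $G_0$ to $G$) that the paper leaves implicit. The transfer step you single out is handled correctly: since $G_0$ is open, any smooth curve in $G$ with values in $H\subseteq G_0$ is a smooth curve in $G_0$, so pseudo-discreteness in the sense of Definition~\ref{pseudo-discr_def} passes up automatically.
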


\begin{proof}
	This follows by a direct application of Proposition~\ref{pseudo_discr_ex} for the connected $\1$-component of~$G$. 
\end{proof}

\begin{theorem}
	\label{pseudo_discr_cor2}
	Assume the continuum hypothesis. 
	Let $G$ be a connected separable Banach-Lie group and denote by $\zg$ the center of the Lie algebra of~$G$. 
	Then~$G$ contains a connected pseudo-discrete normal nontrivial subgroup if and only if $\dim\zg>1$. 
\end{theorem}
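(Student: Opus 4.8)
The plan is to prove both implications, reducing normality to centrality at the outset. By Proposition~\ref{pseudo-discr_prop}, any pseudo-discrete normal subgroup $H$ of the connected group $G$ is contained in the group center $Z:=\Ker(\Ad)$, which is closed since $\Ad$ is continuous. The forward implication will \emph{produce} such an $H$ from the hypothesis $\dim\zg>1$, while the converse will analyze the structure of $Z$ near $\1$ to \emph{exclude} nontrivial connected pseudo-discrete subgroups when $\dim\zg\le 1$.

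For the implication $\dim\zg>1\Rightarrow$ existence, I would first pick two linearly independent vectors in $\zg$ and let $V\subseteq\zg$ be their $2$-dimensional span, a finite-dimensional (hence split closed) abelian subalgebra of $\gg$. By Lemma~\ref{Lie2}\eqref{Lie2_item1} there is an integral subgroup $K\subseteq G$ with $\Lie(K)=V$; it is connected, abelian and $2$-dimensional, hence separable, so by Lemma~\ref{Lie2}\eqref{Lie2_item3} it is an \emph{initial} subgroup. Since $V\subseteq\zg$ gives $\Ad(\exp X)=\exp(\ad X)=\id$ for all $X\in V$, we get $K=\exp(V)\subseteq Z$, so $K$ is central, in particular normal. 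Applying Corollary~\ref{pseudo_discr_cor1} to $K$ (this is where the continuum hypothesis enters) yields a connected, nontrivial, pseudo-discrete subgroup $H\subseteq K$. The one delicate point is that $H$ is pseudo-discrete \emph{relative to $K$}, and I must upgrade this to pseudo-discreteness relative to $G$: this is exactly where the initial-submanifold property of $K$ is used, since any $\gamma\in\Ci(D_\gamma,G)$ with $\gamma(D_\gamma)\subseteq H\subseteq K$ factors through a smooth curve into $K$, which is then constant. As $H$ is central it is normal in $G$, completing this direction.

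For the converse, assume $H$ is connected, nontrivial, pseudo-discrete and normal; then $H\subseteq Z$. The first step is to show $H\subseteq\exp(\zg)$: choosing a ball $B\subseteq\gg$ on which $\exp$ is a chart and setting $V_0:=\exp(B)$, for small $X$ one has $\exp(X)\in Z\iff\ad X=0\iff X\in\zg$, so $Z\cap V_0=\exp(\zg\cap B)$; since a connected topological group is generated by any neighbourhood of $\1$, one gets $H=\langle H\cap V_0\rangle\subseteq\langle\exp(\zg)\rangle=\exp(\zg)$, the last equality because $\zg$ is abelian. If $\dim\zg=0$ this already forces $H=\{\1\}$. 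If $\dim\zg=1$, write $\zg=\RR X_0$ and $c(t):=\exp(tX_0)$, so $H\subseteq c(\RR)$. The crucial step is to prove $c(\RR)$ is closed: its closure $T$ is a connected subgroup of the closed center $Z$, and the inclusion $T\cap V_0\subseteq Z\cap V_0=c((-\varepsilon,\varepsilon))$ shows that $c(\RR)$ contains a neighbourhood of $\1$ in $T$, hence is open and therefore closed in $T$; being also dense, $c(\RR)=T$ is closed. Thus $c(\RR)$ is an embedded $1$-dimensional subgroup isomorphic to $\RR$ or $\TT$, and $H$ is a connected subgroup of it in the subspace topology. The only connected subgroups of $\RR$ or $\TT$ are $\{\1\}$ and the whole group; the latter contains the nonconstant smooth curve $c$ and so is not pseudo-discrete, forcing $H=\{\1\}$, a contradiction. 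Hence $\dim\zg>1$.

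The main obstacle is the closedness of the central one-parameter subgroup $c(\RR)$ in the converse: without it, a dense non-embedded winding could a priori carry a strictly coarser subspace topology in which a \emph{proper} subgroup becomes connected, so that the dimension count would collapse. The observation that $T\cap V_0$ is already contained in $c(\RR)$---because every germ of a central element is $\exp$ of an element of $\zg$---is precisely what rules out this solenoid-type pathology and makes the argument go through.
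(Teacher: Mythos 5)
Your proof is correct, but it follows a genuinely different route from the paper's, in both directions. The paper's proof is short because it invokes the cited results of \cite{Ne06} to the effect that the center $Z$ of $G$ is itself a Banach-Lie group in the topology inherited from $G$, with Lie algebra $\zg$: for the existence direction it applies Corollary~\ref{pseudo_discr_cor1} directly to the (possibly infinite-dimensional) separable abelian Banach-Lie group $Z$, and for the converse it notes that if $\dim\zg\le 1$ then the identity component $Z_0$ is $\{\1\}$, $\RR$, or $\RR/\ZZ$, where connected pseudo-discrete subgroups are visibly trivial. You avoid this structure theorem for centers altogether. In the existence direction you descend instead to a two-dimensional central integral subgroup $K$ with $\Lie(K)=V\subseteq\zg$ (Lemma~\ref{Lie2}\eqref{Lie2_item1}), apply Corollary~\ref{pseudo_discr_cor1} to $K$, and---a point the paper's proof leaves implicit when it passes from pseudo-discreteness relative to $Z$ to pseudo-discreteness relative to $G$---you transfer pseudo-discreteness from $K$ to $G$ explicitly through the initial-subgroup property of Lemma~\ref{Lie2}\eqref{Lie2_item3}. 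In the converse direction you re-derive by hand the local description $Z\cap V_0=\exp(\zg\cap B)$ and then prove that the one-parameter group $c(\RR)$ is closed; this closedness step is precisely what legitimates quoting the classification of connected subgroups of $\RR$ and $\TT$, and it is the content that the paper outsources to the statement that $Z_0$ carries the inherited topology. What the paper's route buys is brevity; what yours buys is self-containedness within the paper's own toolkit (Proposition~\ref{pseudo-discr_prop}, Lemma~\ref{Lie2}, Corollary~\ref{pseudo_discr_cor1}) together with an explicit treatment of the relative-versus-ambient subtlety. The only assertion you leave unproved---that $\ee^{\ad X}=\id_{\gg}$ forces $\ad X=0$ for $X$ small---is standard but deserves a line: if $\|\ad X\|<2\pi$, the spectral mapping theorem gives $\sigma(\ad X)=\{0\}$, and then the factorization $\ee^{A}-\id=A\bigl(\id+A/2!+\cdots\bigr)$, whose second factor is invertible for quasinilpotent $A$, yields $A=0$.
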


\begin{proof} 
	Let us denote by $Z$ the center of $G$. 
	Then $Z$ is a Banach-Lie group with respect to its topology inherited from~$G$, and its Lie algebra is~$\zg$.  
	(See for instance \cite[Cor. IV.3.9, Th. IV.3.3, Prop. IV.3.4]{Ne06}.) 
	In particular, it follows that the connected $\1$-component $Z_0$ of $Z$ is a 
	Banach-Lie group with respect to its topology inherited from~$G$, and its Lie algebra is~$\zg$.
	
	If there exists a connected pseudo-discrete normal subgroup $H\subseteq G$, 
	then $H\subseteq Z$ by Proposition~\ref{pseudo-discr_prop}. 
	Moreover, since $H$ is connected and $\1\in H\cap Z_0$, one actually has $H\subseteq Z_0$. 
	This implies that $\dim\zg>1$ since otherwise the connected Banach-Lie group $Z_0$ would be either $\{\1\}$ (if $\dim\zg=0$) 
	or isomorphic to $\RR$ or to $\RR/\ZZ$ and in these cases it is easily seen that every connected pseudo-discrete subgroup of $Z_0$ is trivial. 
	
	Conversely, if $\dim\zg>1$, then the separable abelian Banach-Lie group~$Z$ satisfies $\dim Z>1$, hence there exists a connected pseudo-discrete nontrivial subgroup $H\subseteq Z$ by Corollary~\ref{pseudo_discr_cor1}.
\end{proof}

\begin{remark}
	\normalfont 
	Existence of Bernstein subgroups of arbitrary connected finite-dimensional Lie groups was stated in \cite{Th87} but the corresponding proof contains some gaps. 
	Constructions of Bernstein subgroups of separable Banach spaces can be found in \cite[Ex. 6, page 91; Lemma 1, page 284; Ex. 4, page 294]{Kh09}.

	The method of proof of Lemma~\ref{pseudo_discr_L4} goes back to \cite[page 202]{Ha62}. 
	See also \cite[Counterex. 124]{StSe78} and \cite[Th., page 110]{HoYo61} for existence of connected Bernstein subsets of 2-dimensional real vector spaces. 
\end{remark}

\newpage
\appendix

\section{Fr\"olicher spaces versus \H-manifolds}
\label{AppA}

In this section, we first recall (mostly without proofs) the definition and basic properties of Fr\"olicher  structure on sets  according to \cite{Mag} and \cite{Lau}.   
We then compare the notion of tangent space in the sense of Fr\"olicher spaces with the tangent spaces of \H-manifolds. 

\subsection{Definition and basic properties of Fr\"olicher spaces}

\begin{definition}\label{frolicher}  
	\normalfont
	A \emph{Fr\"olicher space} is a triple $(S,\Cg,\Fg)$
	satisfying the following conditions: 
	\begin{enumerate}[{\rm(i)}] 
		\item\label{frolicher_item1} $\Cg$ is a set of curves $c:\RR\to S$.
		\item\label{frolicher_item2} $\Fg$ is a set of functions $F:S\to \RR$.
		\item\label{frolicher_item3} For any map $c\colon \RR\to S$ one has $c\in\Cg$ if and only if $F\circ c\in\Ci(\RR,\RR)$ for all $F\in \Fg$.
		\item\label{frolicher_item4} For any map $F\colon S\to\RR$ one has $F\in\Fg$ if and only if $F\circ c\in\Ci(\RR,\RR)$ for all $c\in \Cg$. 
	\end{enumerate} 
	If  $(S, \Cg, \Fg)$  and $(S', \Cg', \Fg')$  are two  Fr\"olicher spaces,
	a map $f\colon S\to S'$ is called  \emph{smooth} if and only if $F'\circ f\circ c\in\Ci(\RR,\RR)$ for all $F'\in \Fg'$ and $c\in \Cg$. 
	If $f$ is bijective, then $f$ is called a \emph{diffeomorphism} if $f$ and $f^{-1}$ are smooth maps.
\end{definition}

\begin{example}\label{F_ex1}
	\normalfont 
	Every set $\Fg_0$ of maps from $S$ into $\RR$ naturally gives rise to the  Fr\"olicher  space structure $(S,\Cg(S,\Fg_0),\Fg(S,\Fg_0))$, where
	\begin{align*}
	\Cg(S,\Fg_0)
	&=\{ c:\RR\to S\mid F\circ c\in\Ci(\RR,\RR) \text{ for all } F\in \Fg \}, \\
	\Fg(S,\Fg_0)
	&=\{ F:S\to\RR\mid F\circ c\in\Ci(\RR,\RR) \text{ for all }
	c\in \Cg(S,\Fg_0)\}. 
	\end{align*}
	Clearly $\Fg_0\subseteq \Fg(S,\Fg_0)$. 
	
	Similarly, every set $\Cg_0$ of maps from $\RR$ into $S$ gives rise to the  Fr\"olicher  space structure $(S,\Cg(S,\Cg_0),\Fg(S,\Cg_0))$, where
	\begin{align*}
	\Fg(S,\Cg_0)
	&=\{ F:S\to\RR\mid F\circ c \text{ is smooth  for all } c\in \Cg \},\\ 
	\Cg(S,\Cg_0)
	&=\{ c:\RR\to S\mid F\circ c \text{ is smooth  for all } F\in \Fg(S,\Cg_0) \}, 
	\end{align*}
	and one has $\Cg_0\subseteq \Cg(S,\Cg_0)$. 
\end{example} 

\begin{example}\label{F_ex2}
	\normalfont 
	Every Banach manifold  $M$ has the canonical structure of a Fr\"olicher  space 
	$(M, \Cg_M, \Fg_M)$ as a special case of Example~\ref{F_ex1} for $\Cg_0:=\Ci(\RR,M)$. 
	Here 
	\begin{align*}
	\Fg_M 
	& :=
	\Fg(M,\Cg_0)=\{F\colon M\to\RR\mid F\circ c\in\Ci(\RR,\RR)\text{ for all }c\in\Ci(\RR,M)\} \\
	& =\Ci(M,\RR)
	\end{align*}
	where the last equality follows for instance by \cite[Th. 12.8]{KrMi} since $M$ is a Banach manifold. 
	Moreover, 
	$$\Cg_M:=\Cg(M,\Cg_0)=\{c\colon \RR\to M\mid F\circ c\in\Ci(\RR,\RR)\text{ for all }F\in\Ci(M,\RR)\}.$$
	Clearly, $\Ci(\RR,M)\subseteq \Cg_M$, and one has $\Ci(\RR,M)=\Cg$ 
	if there exist ``sufficiently many'' smooth functions on~$M$, 
	for instance the Banach manifold $M$ is smoothly paracompact, 
	cf. \cite[Th. 2]{Fr81}.
	
	We note that, by the construction of Example~\ref{F_ex1} for 
	$\Fg_0:=\Ci(M,\RR)$ one obtains the same Fr\"olicher structure 
	$(M, \Cg_M, \Fg_M)$ as above. 
\end{example}

\begin{example}\label{F_ex3}
	\normalfont 
	Let $\pi: S \to S/\mathcal{R}$ be the quotient space of $S$ by an equivalence relation $\mathcal{R}$. 
	If $(S,\Cg,\Fg)$ is a Fr\"olicher  space then one naturally obtains the Fr\"olicher  space structure $(S/\mathcal{R},\Cg/\mathcal{R},\Fg/\mathcal{R})$, 
	where 
	\begin{align*}
	\Fg/\mathcal{R}&:=\Fg(S/\mathcal{R},\Cg_\pi),\\ 
	\Cg/\mathcal{R}&:=\Cg(S/\mathcal{R},\Cg_\pi), 
	\end{align*}
	are defined as in Example~\ref{F_ex1} with respect to the set of curves 
	$\Cg_\pi:=\{\pi\circ c\mid c\in\Cg\}$ in $S/\mathcal{R}$. 
	We note that 
	\begin{align*}
	\Fg(S/\mathcal{R},\Cg_\pi)
	& =\{ F\colon S/\mathcal{R}\to\RR\mid F\circ \pi\circ c \text{ is smooth  for all } c\in \Cg \} \\
	& =\{ F\colon S/\mathcal{R}\to\RR\mid F\circ \pi\in \Fg \}
	\end{align*}
	which in particular shows that  $\pi: S \to S/\mathcal{R}$ is smooth.
\end{example}

\begin{example}\label{F_ex4}
	\normalfont 
	For $i=1,2$,  let  $(S_i,\Cg_i,\Fg_i)$ be two Fr\"olicher spaces. 
	Then $S_1\times S_2$ is naturally endowed with the Fr\"olicher structure   $(S_1\times S_2,\Cg_1\times \Cg_2,\Fg_1 \times \Fg_2)$ 
	where  
	\begin{align*}
	\Fg_1 \times \Fg_2&:=\Fg(S_1\times S_2,(\Cg_1\times \Cg_2)_0),\\ 
	\Cg_1\times \Cg_2&:=\Cg(S_1\times S_2,(\Cg_1\times \Cg_2)_0), 
	\end{align*}
	are defined as in Example~\ref{F_ex1} with respect to the set of curves 
	$
	(\Cg_1\times \Cg_2)_0 :=\{(c_1,c_2)\colon \RR\to S_1\times S_2\mid c_j\in \Cg_j \text{ for }j=1,2 \}$. 
\end{example}

\begin{definition} 
	\normalfont 
	A \emph{Fr\"olicher group} is a group $G$ provided with a Fr\"olicher structure $(G,\Cg,\Fg)$ for which the map $G\times G\to  G$, $ (g_1,g_2)\mapsto g_1g_2^{-1}$, is smooth. 
\end{definition}

\subsection{Tangent bundle of a Fr\"olicher  space}

\begin{definition}
\label{tgF}
\normalfont
Let $(S, \Cg, \Fg)$ be a Fr\"olicher   space and $s\in S$. 
We set $\Cg_s:=\{c \in \Cg \mid  c(0)=s\}$.  
On $\Cg_s$ we define the following equivalence relation:
$$c_1\simeq c_2\iff 
(\forall F\in \Fg)\quad \frac{\de}{\de t}\Bigl\vert_{t=0}(F\circ c_1)(t)
=\frac{\de}{\de t}\Bigl\vert_{t=0}(F\circ c_2)(t).$$
An equivalence class in $\Cg_s$ will be denoted by $[c]$. 
The \emph{Fr\"olicher-tangent space}  of $S$ at $s$ is the quotient 
$$T^\Fr_s S:=\Cg_s/ \simeq\quad =\{[c]\mid c\in \Cg_s\}$$ 
and we also denote by $q^\Fr_s\colon \Cg_s\to T^\Fr_s S $, $c\mapsto[c]$, the corresponding quotient map. 
\end{definition}

Unlike the usual tangent space of Banach manifolds, in general $T^\Fr_sS$ {\bf is not a vector space}. 
(See \cite[Ex. 2.2]{Lau}.)
However, we have the following result. 

\begin{proposition}
	\label{FrolicherGr}
	If $G$ is a  Fr\"olicher  group, then for any $g\in G$ the tangent space $T^\Fr_gG$ is a vector space. In particular, we have
	\begin{eqnarray}\label{suminverseTG}
	[c_1]+[c_2]=[s\mapsto c_1(s)g^{-1}c_2(s)]\nonumber\\
	-[c]=[s\mapsto gc(s)^{-1}g].\hfill{}
	\end{eqnarray}
\end{proposition}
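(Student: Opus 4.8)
The plan is to realize $T^\Fr_g G$ as a linear subspace of the vector space $\RR^{\Fg}$ of all real-valued functions on $\Fg$. To this end I would associate to every $c\in\Cg_g$ the functional $D_c\colon\Fg\to\RR$ defined by $D_c(f):=\frac{\de}{\de s}\big|_{s=0}(f\circ c)(s)$. By the very definition of the equivalence relation $\simeq$ in Definition~\ref{tgF}, one has $[c_1]=[c_2]$ if and only if $D_{c_1}=D_{c_2}$, so the assignment $[c]\mapsto D_c$ is a well-defined \emph{injective} map $T^\Fr_g G\to\RR^{\Fg}$. It then suffices to prove that its image is closed under the addition, scalar multiplication, and negation of $\RR^{\Fg}$, and that these operations are given by the asserted formulas; the vector space axioms are inherited from $\RR^{\Fg}$. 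First I would record that, writing $\beta(g_1,g_2):=g_1g_2^{-1}$ for the structural map, both the inversion $\iota\colon G\to G$ (since $\iota=\beta(\1,\cdot)$) and the multiplication $\mu\colon G\times G\to G$ (since $\mu(g_1,g_2)=\beta(g_1,\iota(g_2))$) are smooth, where I use Example~\ref{F_ex4} to regard $(c_1,c_2)$ as a smooth curve in $G\times G$.

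For additivity, given $c_1,c_2\in\Cg_g$ I would introduce $\Phi\colon\RR^2\to G$, $\Phi(s,t):=c_1(s)g^{-1}c_2(t)$, and $d(s):=\Phi(s,s)=c_1(s)g^{-1}c_2(s)$. A short check shows that $\Phi$ is smooth in the Fr\"olicher sense: for any $\gamma=(\gamma_1,\gamma_2)\in\Ci(\RR,\RR^2)$ the curve $u\mapsto\Phi(\gamma(u))$ is obtained by applying the smooth map $\mu$ twice to the smooth curves $c_1\circ\gamma_1$, the constant curve $g^{-1}$, and $c_2\circ\gamma_2$. In particular $d=\Phi\circ\Delta$ with $\Delta(s)=(s,s)$ is a smooth curve through $g$, so $[d]$ is defined. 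The key step is then that for every $f\in\Fg$ the function $h:=f\circ\Phi\colon\RR^2\to\RR$ is smooth along all smooth curves, whence $h\in\Ci(\RR^2,\RR)$ by the identification of the Fr\"olicher structure of $\RR^2$ with its manifold structure (Example~\ref{F_ex2}, cf.~\cite[Th.~12.8]{KrMi}). Applying the ordinary two-variable chain rule to $s\mapsto h(s,s)$ and using $c_1(0)=c_2(0)=g$ yields
\begin{equation*}
D_d(f)=\partial_1 h(0,0)+\partial_2 h(0,0)=(f\circ c_1)'(0)+(f\circ c_2)'(0)=D_{c_1}(f)+D_{c_2}(f),
\end{equation*}
so that $D_d=D_{c_1}+D_{c_2}$. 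This simultaneously proves that addition is well defined on the image and that $[c_1]+[c_2]=[s\mapsto c_1(s)g^{-1}c_2(s)]$.

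The remaining operations I would dispatch quickly. Scalar multiplication is given by reparametrization: for $\lambda\in\RR$ the curve $s\mapsto c(\lambda s)$ lies in $\Cg_g$ and the chain rule gives $D_{c(\lambda\,\cdot)}=\lambda D_c$, so the image is closed under scalars. The class of the constant curve $s\mapsto g$ satisfies $D=0$ and, by the additivity formula, is the additive identity. For the negative, set $e(s):=gc(s)^{-1}g$, which lies in $\Cg_g$ because $\iota,\mu$ are smooth and $e(0)=g$; from the identity $c(s)g^{-1}e(s)=c(s)c(s)^{-1}g=g$ together with the additivity formula one gets $[c]+[e]=[s\mapsto g]=0$, hence $-[c]=[e]=[s\mapsto gc(s)^{-1}g]$. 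Having shown the image of $[c]\mapsto D_c$ is a linear subspace of $\RR^{\Fg}$, I conclude that $T^\Fr_g G$ is a vector space with the stated formulas.

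The main obstacle is the additivity step, and within it the passage from Fr\"olicher-smoothness of $\Phi$ to ordinary smoothness of $h=f\circ\Phi$ on $\RR^2$: it is precisely here that Boman's theorem, available through Example~\ref{F_ex2}, is indispensable, since it legitimizes splitting $\frac{\de}{\de s}\big|_{0}h(s,s)$ into the two partial derivatives. Everything else reduces to chain-rule bookkeeping and the smoothness of $\mu$ and $\iota$.
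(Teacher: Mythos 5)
Your proof is correct. The paper states Proposition~\ref{FrolicherGr} without proof (it is taken from \cite{Lau}), and your argument --- embedding $T^\Fr_gG$ into $\RR^{\Fg}$ via $[c]\mapsto D_c$, upgrading Fr\"olicher-smoothness of $(s,t)\mapsto f\bigl(c_1(s)g^{-1}c_2(t)\bigr)$ to ordinary smoothness on $\RR^2$ by Boman's theorem, splitting the diagonal derivative by the two-variable chain rule, and deducing $-[c]=[s\mapsto gc(s)^{-1}g]$ from the additivity formula applied to the constant curve --- is essentially the standard argument found in that reference.
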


\begin{definition}\label{tg-F}
	\normalfont
The \emph{tangent bundle} $T^\Fr S$ of a  Fr\"olicher  space $S$ is 
$$T^\Fr S:=\bigsqcup_{x\in S}T^\Fr_xS.$$
If $\phi\colon S\to S'$ is a smooth map, its \emph{tangent map} is  $$T^\Fr\phi\colon T^\Fr S\to T^\Fr S',\quad 
T^\Fr\phi([c])=[\phi\circ c].$$ 
We set  $T^\Fr_x\phi=T^\Fr\phi\vert_{T^\Fr_xS}$.
If $T^\Fr_xS$ is a vector space so is $T^\Fr_x\phi(T^\Fr_xS)\subset T^\Fr_{\phi(x)}S'$.
\end{definition}

If $(S, \Cg, \Fg)$ is a  Fr\"olicher  space, then we can endow $T^\Fr S$ with a natural Fr\"olicher   structure 
$(T^\Fr S,\Cg_{T^\Fr S},\Fg_{T^\Fr S})$, 
where 
\begin{align*}
\Fg_{T^\Fr S}&:=\Fg(T^\Fr S,\Cg_{T^\Fr S}^0),\\ 
\Cg_{T^\Fr S}&:=\Cg(T^\Fr S,\Cg_{T^\Fr S}^0), 
\end{align*}
are defined as in Example~\ref{F_ex1} with respect to the set of curves 
defined by:
$$\Cg_{T^\Fr S}^0=\{c\colon\RR\to T^\Fr S\mid (\forall f\in \Fg)\quad 
Tf\circ c\in\Ci(\RR,\RR)\}.$$
A detailed discussion of the following properties can be found in \cite{Lau}.

\begin{theorem} \label{TS}
	The following assertions hold: 
	\begin{enumerate}[{\rm 1.}]
		\item The natural projection $p_S:T^\Fr S\to S$ is a smooth map.
		\item If $S$ and $S'$ are Fr\"olicher  spaces, then $T^\Fr (S\times S')$ is diffeomorphic to $T^\Fr S\times T^\Fr S'$.
		\item Let $\phi: S\to S'$ be a smooth map between  Fr\"olicher  spaces, then $T^\Fr \phi: T^\Fr S\to T^\Fr S'$ is a smooth map.
		\item Let $G$ be a Fr\"olicher  group. 
		Then $T^\Fr G$ has the natural structure of a Fr\"olicher  group. 
		The projection $p_G:T^\Fr G\to G$ is a group morphism. 
		If we set $\mathfrak{g}=T^\Fr _\1G$  (where $\1\in G$ is the unit element) then the map
		$\Phi:T^\Fr G\to G\ltimes \mathfrak{g}$ given by $\Phi([c]):=(c(0), c(0)^{-1}[c])$ is an isomorphism of Fr\"olicher groups\footnote{That is, $\Phi $ is a diffeomorphism of Fr\"olicher spaces and a group isomorphism.}
		when $G\ltimes \mathfrak{g}$ is provided with a structure of semidirect product of Fr\"olicher groups. 
	\end{enumerate}
\end{theorem}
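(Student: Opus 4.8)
The plan is to establish the four assertions in the stated order, using throughout the functoriality of the construction $T^\Fr$ together with the observation, visible from Definition~\ref{tg-F}, that the Fr\"olicher structure on $T^\Fr S$ is generated by the curves $\gamma$ for which $T^\Fr f\circ\gamma$ is smooth for every $f\in\Fg$.

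For Assertion~1, I would exploit the naturality identity $f\circ p_S=p_\RR\circ T^\Fr f$, valid for every $f\in\Fg$, where $p_\RR\colon T^\Fr\RR\to\RR$ is the canonical projection (under the standard identification $T^\Fr\RR\cong\RR\times\RR$). Indeed, evaluating both sides on $[c]$ gives $f(c(0))$. Since $p_\RR$ is smooth, being a continuous linear projection, and since for every $\gamma\in\Cg_{T^\Fr S}$ the composite $T^\Fr f\circ\gamma$ is smooth by the very definition of $\Cg^0_{T^\Fr S}$, it follows that $f\circ p_S\circ\gamma=p_\RR\circ T^\Fr f\circ\gamma$ is smooth; as $f\in\Fg$ and $\gamma\in\Cg_{T^\Fr S}$ are arbitrary, $p_S$ is smooth. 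For Assertion~3 the key is again functoriality: for every $f'\in\Fg'$ one has $T^\Fr f'\circ T^\Fr\phi=T^\Fr(f'\circ\phi)$, and $f'\circ\phi\in\Fg$ because $\phi$ is smooth. Hence for each $\gamma\in\Cg_{T^\Fr S}$ the composite $T^\Fr(f'\circ\phi)\circ\gamma$ is smooth, and since the functions $T^\Fr f'$ generate $\Fg_{T^\Fr S'}$ this shows $T^\Fr\phi$ is smooth.

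For Assertion~2, I would write the candidate diffeomorphism $\Theta\colon T^\Fr(S\times S')\to T^\Fr S\times T^\Fr S'$, $[c]\mapsto([\pr_S\circ c],[\pr_{S'}\circ c])$, with inverse $([c_1],[c_2])\mapsto[t\mapsto(c_1(t),c_2(t))]$. Well-definedness and bijectivity follow from the description of the product structure in Example~\ref{F_ex4} and of the equivalence relation in Definition~\ref{tgF}, since the derivatives of $f\circ c$ and $f'\circ c$ on the two factors jointly separate equivalence classes; smoothness of $\Theta$ and of $\Theta^{-1}$ reduces, via the argument of Assertion~3, to the smoothness of $T^\Fr\pr_S$, $T^\Fr\pr_{S'}$ and of the pairing map, all built from smooth maps on the factors.

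For Assertion~4, I would transport the group operations of $G$ through the functor $T^\Fr$. Using Assertion~2 to identify $T^\Fr(G\times G)$ with $T^\Fr G\times T^\Fr G$, the maps $T^\Fr\bm$ and $T^\Fr\iota$ furnish a multiplication and an inversion on $T^\Fr G$; the group axioms follow by applying $T^\Fr$ to the commutative diagrams encoding them on $G$, and smoothness of the structural map follows from Assertions~2--3. That $p_G$ is a homomorphism is the naturality of $p$ with respect to $\bm$ and $\iota$. For the trivialization $\Phi([c])=(c(0),c(0)^{-1}[c])$ I would invoke Proposition~\ref{FrolicherGr}, which guarantees that $\gg=T^\Fr_\1 G$ is a vector space: left translation by $c(0)^{-1}$ carries $[c]$ into $\gg$, so $\Phi$ is well defined with inverse $(g,X)\mapsto g\cdot X$, and both are smooth by Assertion~3. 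One then endows $G\ltimes\gg$ with the semidirect product attached to the adjoint action of $G$ on $\gg$ and verifies that $\Phi$ intertwines the multiplications. The main obstacle will be precisely this last verification: writing $g_j:=c_j(0)$ and $X_j:=g_j^{-1}[c_j]\in\gg$, one computes, using the formula $[c_1]+[c_2]=[s\mapsto c_1(s)g^{-1}c_2(s)]$ of Proposition~\ref{FrolicherGr}, that the base point $g_2^{-1}g_1^{-1}c_1(s)c_2(s)=(g_2^{-1}(g_1^{-1}c_1(s))g_2)(g_2^{-1}c_2(s))$ yields $\Phi([c_1][c_2])=(g_1g_2,\Ad(g_2^{-1})X_1+X_2)$, which forces the semidirect-product law and pins down the action. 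Keeping track of base points and of left versus right translations in this computation is where the delicate bookkeeping lies.
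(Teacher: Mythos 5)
The paper itself offers no proof of Theorem~\ref{TS}: it explicitly defers to \cite{Lau} (``A detailed discussion of the following properties can be found in \cite{Lau}''). So your proposal can only be measured against the cited source, and there it fares well: the functorial strategy (naturality $f\circ p_S=p_\RR\circ T^\Fr f$ for Assertion~1, $T^\Fr f'\circ T^\Fr\phi=T^\Fr(f'\circ\phi)$ for Assertion~3, transport of the group structure through $T^\Fr\bm$, $T^\Fr\iota$ and the product identification for Assertion~4) is exactly the standard route, and your computation $g_2^{-1}g_1^{-1}c_1(s)c_2(s)=\bigl(g_2^{-1}(g_1^{-1}c_1(s))g_2\bigr)\bigl(g_2^{-1}c_2(s)\bigr)$, combined with the addition formula of Proposition~\ref{FrolicherGr} at the base point $\1$, correctly yields $\Phi([c_1][c_2])=(g_1g_2,\Ad(g_2^{-1})X_1+X_2)$ and pins down the semidirect-product law; one checks directly that $\Ad(h)[a]:=[s\mapsto h\,a(s)\,h^{-1}]$ is additive on $\gg$ and multiplicative in $h$, so this law is associative.

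There is, however, one step where you assert precisely what must be proved. In Assertion~2 you claim that the derivatives against functions of the form $f\circ\pr_S$ and $f'\circ\pr_{S'}$ ``jointly separate equivalence classes''. This is not formal: $\Fg_{S\times S'}$ is defined (Example~\ref{F_ex4}) as \emph{all} functions smooth along product curves, and it contains mixed functions that are not generated by the two factors, so injectivity of $\Theta$ needs an argument. The standard fix: given $c=(c_1,c_2)$ and $\tilde c=(\tilde c_1,\tilde c_2)$ with the same base point, $c_j\simeq\tilde c_j$, and $F\in\Fg_{S\times S'}$, note that $(t,s)\mapsto(c_1(t),c_2(s))$ is smooth into $S\times S'$ (its composition with any smooth curve in $\RR^2$ is a product curve), so $g(t,s):=F(c_1(t),c_2(s))$ lies in $\Ci(\RR^2,\RR)$ and the chain rule gives $(F\circ c)'(0)=\partial_t\vert_0 F(c_1(t),c_2(0))+\partial_s\vert_0 F(c_1(0),c_2(s))$; each partial derivative involves a function of one factor with the other slot frozen, which belongs to $\Fg_S$ resp.\ $\Fg_{S'}$, so $c_j\simeq\tilde c_j$ forces $(F\circ c)'(0)=(F\circ\tilde c)'(0)$. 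The same frozen-variable device is what makes the pairing map $\Theta^{-1}$ smooth, so it cannot be dismissed as ``built from smooth maps on the factors''. A second, minor imprecision: in Assertions 1 and 3, smoothness of $T^\Fr f\circ\gamma$ for \emph{all} $\gamma\in\Cg_{T^\Fr S}$ is not ``by the very definition of $\Cg^0_{T^\Fr S}$'' (that covers only generating curves); it follows because the two real components of $T^\Fr f$, namely $[c]\mapsto f(c(0))$ and $[c]\mapsto(f\circ c)'(0)$, are smooth along every curve in $\Cg^0_{T^\Fr S}$ and hence belong to $\Fg_{T^\Fr S}$, after which saturation does the rest. With these two repairs your outline is a complete and correct proof.
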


In the case of Banach manifolds there exists a natural map from the usual tangent space into the tangent space in the sense of Fr\"olicher spaces, 
as explained in the following remark. 

\begin{remark}
	\label{classical-F}
	\normalfont 
	Let $M$ be a Banach manifold with its canonical structure of a Fr\"olicher space $(M,\Cg_M,\Fg_M)$ as in Example~\ref{F_ex2}, where $\Ci(M,\RR)\subseteq\Cg_M$. 
	For every $x\in M$ and $c_1,c_2\in \Ci(M,\RR)$ with $c_1(0)=c_2(0)=x$ and  $\dot{c}_1(0)=\dot{c}_2(0)\in T_xM$ we clearly have 
	$\frac{\de}{\de t}\Bigl\vert_{t=0}(f\circ c_1)(t)=\frac{\de}{\de t}\Bigl\vert_{t=0}(f\circ c_2)(t)$, 
	hence $c_1\simeq c_2$. 
	This shows that have a well-defined mapping 
	$$\Psi_x\colon T_xM\to T^\Fr_xM,\quad \dot{c}(0)\mapsto [c]$$
	where $c\in \Ci(M,\RR)\subseteq\Cg_M$ with $c(0)=x$. 
	We also define 
	$$\Psi\colon TM\to T^\Fr M,\quad \Psi\vert_{T_xM}:=\Psi_x.$$
	It is easily seen that the mapping $\Psi_x$ is surjective for every $x\in M$ if for instance $\Ci(M,\RR)=\Cg_M$. 
	On the other hand, the mapping $\Psi_x$ is injective if and only if the points of the Banach space $T_xM$ are separated by the continuous linear functionals $\{T_xf\colon T_xM\to\RR\mid f\in \Ci(M,\RR)\}$, that is, if for every $v\in T_xM$ we have $v=0$ if and only if $(T_xf)(v)=0$ for every $f\in\Ci(M,\RR)$. 
	If the manifold $M$ is finite-dimensional then both these conditions are satisfied and the above mapping $\Psi_x$ is bijective for every $x\in M$, 
	cf. \cite[Th. 2.12]{Lau}. 
	
	However, the above conditions are satisfied even for infinite-dimensional manifolds. 
	For instance, if $M$ is a Banach space, then its points are separated by the continuous linear functionals (by the Hahn-Banach theorem) hence the mapping $\Psi_x$ is injective for every $x\in M$. 
\end{remark}

\subsection{Relation between \H-structures and Fr\"olicher structures} 

The following result is a generalization of Example~\ref{F_ex2}. 

\begin{proposition}\label{HversusF}
	Every pre-\H-manifold $S$ has the canonical Fr\"olicher structure given by the triple $(S,\Fg_S,\Cg_S)$, 
	where $\Fg_S:=\Hom_{\H}(S,\RR)$ and 
	$$\Cg_S:=\{c\colon \RR\to S\mid F\circ c\in\Ci(\RR,\RR)\text{ for all }F\in\Hom_{\H}(S,\RR)\}$$
	and moreover $\Hom_{\H}(\RR,S)\subseteq \Cg_S$. 
\end{proposition}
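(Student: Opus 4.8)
The plan is to recognize that the triple $(S,\Fg_S,\Cg_S)$ is exactly the Fr\"olicher structure generated by the set of functions $\Fg_0:=\Hom_{\H}(S,\RR)$ in the sense of Example~\ref{F_ex1}, so that the whole proposition reduces to checking axioms \eqref{frolicher_item3} and \eqref{frolicher_item4} of Definition~\ref{frolicher} (axioms \eqref{frolicher_item1}--\eqref{frolicher_item2} being trivial). Axiom \eqref{frolicher_item3} holds verbatim by the definition of $\Cg_S$, and in axiom \eqref{frolicher_item4} the implication "$F\in\Fg_S\Rightarrow F\circ c\in\Ci(\RR,\RR)$ for all $c\in\Cg_S$" is again immediate from the definition of $\Cg_S$. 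Thus the entire content concentrates in the converse implication: if $F\colon S\to\RR$ satisfies $F\circ c\in\Ci(\RR,\RR)$ for every $c\in\Cg_S$, then $F\in\Hom_{\H}(S,\RR)$.

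First I would record a convenient reformulation of pre-\H-smoothness for real-valued maps. Fix a pre-\H-atlas $\pi\colon M\to S$ representing the pre-\H-structure of $S$. Since the target $\RR$ carries the trivial \H-chart $\id_\RR$, Definition~\ref{H-smooth} forces every local lift of a map $F\colon S\to\RR$ to coincide with $F\circ\pi$ on its domain; hence, using the chart-independence established in Remark~\ref{H-categ}, one has $F\in\Hom_{\H}(S,\RR)$ if and only if $F\circ\pi\in\Ci(M,\RR)$.

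Next I would prove the key inclusion $\pi\circ\gamma\in\Cg_S$ for every smooth curve $\gamma\in\Ci(\RR,M)$. Indeed, for any $G\in\Fg_S=\Hom_{\H}(S,\RR)$ the reformulation above gives $G\circ\pi\in\Ci(M,\RR)$, whence $G\circ(\pi\circ\gamma)=(G\circ\pi)\circ\gamma\in\Ci(\RR,\RR)$; by the definition of $\Cg_S$ this means $\pi\circ\gamma\in\Cg_S$. Now suppose $F\circ c\in\Ci(\RR,\RR)$ for all $c\in\Cg_S$. Applying this to the curves $c=\pi\circ\gamma$ yields $(F\circ\pi)\circ\gamma\in\Ci(\RR,\RR)$ for every $\gamma\in\Ci(\RR,M)$. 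Since $M$ is a Banach manifold, a function that is smooth along all smooth curves is smooth, by \cite[Th. 12.8]{KrMi} (compare Example~\ref{F_ex2}); hence $F\circ\pi\in\Ci(M,\RR)$, and by the reformulation $F\in\Hom_{\H}(S,\RR)=\Fg_S$. This establishes axiom \eqref{frolicher_item4} and thereby that $(S,\Fg_S,\Cg_S)$ is a Fr\"olicher space. The hard part is precisely this converse of \eqref{frolicher_item4}, and its resolution hinges on the Boman-type theorem for Banach manifolds together with the observation that $\pi$ carries smooth curves of $M$ into $\Cg_S$.

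Finally, for the remaining assertion $\Hom_{\H}(\RR,S)\subseteq\Cg_S$, I would argue as follows. Let $c\in\Hom_{\H}(\RR,S)$ and $F\in\Fg_S=\Hom_{\H}(S,\RR)$. Then $F\circ c\colon\RR\to\RR$ is a composition of pre-\H-morphisms, hence a pre-\H-morphism by Remark~\ref{H-categ}; and a pre-\H-smooth map $\RR\to\RR$ is an ordinary smooth function, so $F\circ c\in\Ci(\RR,\RR)$. As this holds for every $F\in\Fg_S$, the definition of $\Cg_S$ gives $c\in\Cg_S$, which completes the proof.
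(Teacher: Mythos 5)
Your proof is correct, and it reaches the same two load-bearing facts as the paper's proof, but it organizes the argument dually. The paper invokes the curve-generated construction of Example~\ref{F_ex1} with $\Cg_0:=\Hom_{\H}(\RR,S)$ and reduces everything to the single equality $\Fg(S,\Cg_0)=\Hom_{\H}(S,\RR)$; with that construction the inclusion $\Hom_{\H}(\RR,S)\subseteq\Cg_S$ comes for free from $\Cg_0\subseteq\Cg(S,\Cg_0)$, and the nontrivial direction is handled exactly as you do it: a function smooth along all curves $\pi\circ d$ with $d\in\Ci(\RR,M)$ has $F\circ\pi$ smooth along all smooth curves of $M$, hence $F\circ\pi\in\Ci(M,\RR)$ by \cite[Th. 12.8]{KrMi}, hence $F\in\Hom_{\H}(S,\RR)$. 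You instead use the function-generated construction with $\Fg_0:=\Hom_{\H}(S,\RR)$, which makes $\Cg_S$ literally the generated curve set, so the only issue is that the saturation $\Fg(S,\Fg_0)$ does not enlarge $\Fg_0$ --- the converse of axiom \eqref{frolicher_item4} --- and you must then prove $\Hom_{\H}(\RR,S)\subseteq\Cg_S$ separately by the composition argument from Remark~\ref{H-categ} (which is precisely the easy direction of the paper's equality). Your explicit reformulation $F\in\Hom_{\H}(S,\RR)\iff F\circ\pi\in\Ci(M,\RR)$ is sound (with the target chart $\id_\RR$ every lift of $F$ is forced to be $F\circ\pi$ locally, and Remark~\ref{H-categ} gives chart-independence); the paper states this same identification only afterwards, in Remark~\ref{FsmoothHsmooth}, whereas in the proof itself it routes through $\pi\circ d\in\Hom_{\H}(\RR,S)$ rather than your lemma $\pi\circ\gamma\in\Cg_S$. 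The trade-off is minor: the paper's route proves one equality from which both conclusions drop out simultaneously; yours isolates the Boman-type step as the unique nontrivial point and makes the trivial inclusions transparent, at the cost of one extra (easy) verification at the end.
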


\begin{proof}
	By Example~\ref{F_ex1} with $\Cg_0:=\Hom_{\H}(\RR,S)$, 
	we obtain the Fr\"olicher structure 
	$(S,\Cg(S,\Cg_0),\Fg(S,\Cg_0))$, 
	where 
	$$\Fg(S,\Cg_0)=\{F\colon S\to\RR\mid F\circ c\in \Ci(\RR,\RR)\text{ for all }c\in\Cg_0\}$$
	and 
	$$\Cg(S,\Cg_0)=\{c\colon \RR\to S\mid F\circ c\in \Ci(\RR,\RR)
	\text{ for all }F\in\Fg(S,\Cg_0)\}.$$ 
	Therefore 
	it suffices to prove the equality 
	\begin{equation}\label{HversusF_proof_eq1}
	\Fg(S,\Cg_0)=\Hom_{\H}(S,\RR).
	\end{equation}
	To this end let $F\colon S\to\RR$ arbitrary. 
	If $F\in\Hom_{\H}(S,\RR)$, then for every curve $c\in\Hom_{\H}(\RR,S)$ 
	one has $F\circ c\in \Ci(\RR,\RR)$, as noted in the last part of Remark~\ref{H-categ}. 
	
	Conversely, let us assume that for every $c\in\Hom_{\H}(\RR,S)$ one has 
	$F\circ c\in \Ci(\RR,\RR)$, and fix an arbitrary pre-\H-atlas $(M,\pi,S)$. 
	Since $\pi\in\Hom_{\H}(M,S)$, it follows that 
	for every smooth curve $d\in\Ci(\RR,M)$ one has $\pi\circ d\in \Hom_{\H}(\RR,S)$, hence our assumption implies 
	$F\circ \pi\circ d\in \Ci(\RR,\RR)$. 
	Here $F\circ \pi\colon M\to\RR$ and $M$ is a Banach manifold, 
	while  $d\in\Ci(\RR,M)$ is arbitrary, hence $F\circ\pi\in \Ci(M,\RR)$. 
	(See for instance \cite[Th. 12.8]{KrMi}.) 
	This further implies $F\in\Hom_{\H}(S,\RR)$, which completes the proof of~\eqref{HversusF_proof_eq1}, and we are done. 
\end{proof}

\begin{remark}\label{FsmoothHsmooth} 
	\normalfont
	If $(M,\pi,S)$ is a 
	pre-\H-atlas on $S$, then we obtain 
	a structure  of Fr\"olicher  space $(S,\Cg_M/\Rc_\pi,\Fg_M/\Rc_\pi)$ defined as in Example~\ref{F_ex3} with respect to the equivalence relation 
	$\Rc_\pi$ whose equivalence classes are the fibers of~$\pi$, 
	where $(M,\Cg_M,\Fg_M)$ is the canonical Fr\"olicher structure of the Banach manifold $M$ as in Example~\ref{F_ex2}. 
	In particular $\Fg_M=\Ci(M,\RR)$ and 
	$$\Fg_M/\Rc_\pi=\{F\colon S\to\RR\mid F\circ\pi\in\Ci(M,\RR)\}
	=\Hom_{\H}(S,\RR).$$	
	(See also Remark~\ref{H-categ} for the last equality.)
	Moreover 
	\begin{align*}
	\Cg_M/\Rc_\pi
	&=\{c\colon\RR\to S\mid F\circ c\in\Ci(\RR,\RR)\text{ for all }F\in \Fg_M/\Rc_\pi\} \\
	&=
	\{c\colon\RR\to S\mid F\circ c\in\Ci(\RR,\RR)\text{ for all }F\in \Hom_{\H}(S,\RR)\}.
	\end{align*}
	Thus the quotient Fr\"olicher structure $(S,\Cg_M/\Rc_\pi,\Fg_M/\Rc_\pi)$ is actually independent on the pre-\H-atlas $(M,\pi,S)$ and it coincides with the canonical Fr\"olicher structure $(S,\Cg_S,\Fg_S)$ given by Proposition~\ref{HversusF}. 
\end{remark}

\begin{remark}\label{FsmoothHsmooth1} 
	\normalfont
	If $S_1$ and $S_2$ are pre-\H-manifolds and 
	$f\in\Hom_{\H}(S_1,S_2)$, then $f$ is a smooth map with respect to the canonical Fr\"olicher structures of $S_1$ and $S_2$. 
	
	In fact, for arbitrary $c\in\Cg_{S_1}$ and $F\in\Fg_{S_2}$ 
	one has $F\in \Hom_{\H}(S_2,\RR)$ hence $F\circ f\in \Hom_{\H}(S_1,\RR)$ 
	since any composition of \H-smooth mapping is \H-smooth. 
	(See the end of Remark~\ref{H-categ}.) 
	Then, by the definition of $\Cg_{S_1}$ in Proposition~\ref{HversusF}, 
	we obtain $(F\circ f)\circ c\in\Ci(\RR,\RR)$, and this shows that 
	$f$ is a smooth map with respect to the canonical Fr\"olicher structures of $S_1$ and $S_2$. 
\end{remark}

\subsection{Tangent spaces of pre-\H-manifolds vs. tangent spaces of Fr\"olicher spaces}

\begin{definition}\label{preHtg_def}
	\normalfont
	Let $S$ be a pre-\H-manifold. 
	For any $s\in S$ we define 
	$$\Cg^\H_s(S):=\{\gamma\in\Hom_{\H}(J_\gamma,S)\mid 0\in J_\gamma\subseteq \RR\text{ open interval},\ \gamma(0)=s\}$$	
	Fix any pre-\H-atlas $(M,\pi,S)$.  
	For any $\gamma_1,\gamma_2\in\Cg^\H_s(S)$, we write $\gamma_1\mathop{\sim}\limits^\pi\gamma_2$ if there exist 
	an open interval $J\subseteq \RR$ with $0\in J\subseteq J_{\gamma_1}\cap J_{\gamma_2}$, and, for $j=1,2$ there exist $c_j\in\Ci(J,M)$ 
	with $\pi\circ c_j=\gamma_j\vert_J$ for $j=1,2$,
	and there exists a transverse diffeomorphism $h\colon U_1\to U_2$ with 
	$h(c_1(0))=c_2(0)$, 
	and $(T_{c_1(0)}h)(\dot{c}_1(0))=\dot{c}_2(0)$. 
	
	Then $\mathop{\sim}\limits^\pi$ is an equivalence relation on $\Cg^\H_s(S)$ 
	which does not depend on the choice of the pre-\H-atlas $(M,\pi,S)$ 
	by Proposition~\ref{preHtg_lemma}\eqref{preHtg_lemma_item2} below, so we denote it simply by $\sim$, and 
	we define the quotient space 
	$$T^\Q_sS:=\Cg^\H_s(S)/\sim$$
	with its corresponding quotient map $q_s\colon \Cg^\H_s(S)\to T^Q_sS$. 
\end{definition}

\begin{proposition}\label{preHtg_lemma}
	If  $S$ is a pre-\H-manifold, then the following assertions hold: 
	\begin{enumerate}[{\rm(i)}]
		\item\label{preHtg_lemma_item1} 
		Let  $(M,\pi,S)$ be a pre-\H-atlas. 
		For any open interval $J\subseteq \RR$ with $0\in J$,  
		if $c_j\in\Ci(J,M)$ for $j=1,2$ and $h\colon U_1\to U_2$  is a transverse diffeomorphism with 
		$c_j(0)\in U_j$ for $j=1,2$, $h(c_1(0))=c_2(0)$,   
		and $(T_{c_1(0)}h)(\dot{c}_1(0))=\dot{c}_2(0)$,  
		then $\pi\circ c_1 \mathop{\sim}\limits^\pi \pi\circ c_2$. 
		\item\label{preHtg_lemma_item2} 
		The binary relation $\mathop{\sim}\limits^\pi$ is an equivalence relation on $\Cg^\H_s(S)$ 
		that does not depend on the choice of the pre-\H-atlas $(M,\pi,S)$. 
		\item\label{preHtg_lemma_item3}  
		Select $s\in S$, fix any $x_0\in\pi^{-1}(s)$ and consider the quotient map
		$$q\colon \Cg^\H_s(S)\to T^\Q_sS=\Cg^\H_s(S)/\sim.$$ 
		For every $v\in T_{x_0}M$ fix any curve $c_v\in \Ci(I_v,M)$ with $0\in I_v$, $c_v(0)=x_0$, and $\dot{c}_v(0)=v$, 
		and denote by 
		$$(T^\Q_{x_0}\pi)(v):=q(\pi\circ c_v)\in T^\Q_sS$$ 
		the equivalence class of $\pi\circ c_v\in \Cg^\H_s(S)$. 
		Then the mapping 
		$$T^\Q_{x_0}\pi\colon T_{x_0}M\to T^\Q_sS,\quad v\mapsto (T^\Q_{x_0}\pi)(v)$$
		is surjective and is independent of the choice of the curves $c_v$ for $v\in T_{x_0}M$. 
		\item\label{preHtg_lemma_item4} 
		For every $s\in S$ we have the surjective well-defined mapping 
		$$\Theta_s\colon T^\Q_s S\to T_sS,\quad \Theta_s(q_s(\gamma)):=(T_{c(0)}\pi)(\dot{c}(0))\in T_sS$$ 
		for every $\gamma\in\Cg^\H_s(S)$, any pre-\H-atlas $(M,\pi,S)$, any open interval $J\subseteq J_\gamma$ with $0\in J$, and any $c\in\Ci(J,M)$ with $\pi\circ c=\gamma\vert_J$.  
		\item\label{preHtg_lemma_item5} 
		For every pre-\H-atlas $\pi\colon M\to S$ and every $s\in S$ there exists a unique 
		mapping $\Lambda_s\colon T^\Q_sS\to T^\Fr_sS$ for which the diagram
			$$\xymatrix{\Cg^\H_s(S) \ar[d]_{q_s} \ar@{^{(}->}[r]&  (\Cg_M/\Rc_\pi)_s\ar[d]^{q_s^\Fr}\\
			T^\Q_sS \ar@{.>}[r]^{\Lambda}& T^\Fr_sS}$$
		is commutative. 
		\footnote{Here $(\Cg_M/\Rc_\pi)_s:=\Cg_s$ as in Definition~\ref{tgF} for the Fr\"olicher space $(S,\Cg,\Fg):=(S,\Cg_M/\Rc_\pi,\Fg_M/\Rc_\pi)$ constructed in Remark~\ref{FsmoothHsmooth}.} 
		\item\label{preHtg_lemma_item6}  
		If $(M,\pi,S)$ is a \Q-atlas, then the above mappings $T^\Q_{x_0}\pi$ and $\Theta_s$ are bijective. 
	\end{enumerate}
\end{proposition}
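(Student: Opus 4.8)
The plan is to reduce all six assertions to one key fact, which I will call $(\star)$: for a fixed $\gamma\in\Cg^\H_s(S)$ and a fixed pre-\H-atlas $(M,\pi,S)$, any two local lifts $c,c'$ of $\gamma$ (smooth curves in $M$ with $\pi\circ c=\pi\circ c'=\gamma$ near $0$) induce the same tangent vector, $(T_{c(0)}\pi)(\dot c(0))=(T_{c'(0)}\pi)(\dot c'(0))$ in $T_sS$. To prove $(\star)$ I would first invoke the homogeneity of the atlas to choose a transverse diffeomorphism $h\in\Gamma(\pi)$ with $h(c(0))=c'(0)$; since the $4$-tuples $(M,\pi,c(0),\dot c(0))$ and $(M,\pi,c'(0),(T_{c(0)}h)\dot c(0))$ are equivalent in the sense of Definition~\ref{tgH_def}, this reduces $(\star)$ to the case of the two lifts $h\circ c$ and $c'$ through the common point $c'(0)$, where one must exhibit a transverse diffeomorphism fixing $c'(0)$ and carrying one velocity to the other. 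This last reconciliation is the heart of the matter and the step I expect to be the main obstacle; in the \Q-case it is supplied sharply by Lemma~\ref{Qdot}, which turns the equality $\pi\circ c=\pi\circ c'$ directly into a transverse diffeomorphism matching the two curves, and hence their velocities.

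With $(\star)$ in hand the remaining bookkeeping is routine. Assertion~\eqref{preHtg_lemma_item1} is a verbatim instance of the definition of $\mathop{\sim}\limits^\pi$ once the two lifts are restricted to a common interval. For \eqref{preHtg_lemma_item2}, reflexivity uses a lift together with $h=\id$ and symmetry uses $h^{-1}$; transitivity follows from $(\star)$, since in chaining $\gamma_1\mathop{\sim}\limits^\pi\gamma_2$ and $\gamma_2\mathop{\sim}\limits^\pi\gamma_3$ the only gap is to reconcile the two intermediate lifts of $\gamma_2$, which $(\star)$ provides, allowing the two witnessing transverse diffeomorphisms to be composed into one. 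Independence of the pre-\H-atlas is handled exactly as in Remark~\ref{H-categ}, by conjugating with the transverse diffeomorphisms between two compatible charts inside their disjoint union.

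For \eqref{preHtg_lemma_item3}, surjectivity of $T^\Q_{x_0}\pi$ holds because every class of $T^\Q_sS$ is $q(\gamma)$ and $\gamma$ admits, by \H-smoothness, a lift through the prescribed point $x_0$; independence of the auxiliary curve $c_v$ is immediate, as two curves sharing $c_v(0)=x_0$ and $\dot c_v(0)=v$ have $\mathop{\sim}\limits^\pi$-equivalent projections via $h=\id$. For \eqref{preHtg_lemma_item4}, that $\Theta_s$ descends to $\sim$ is immediate on comparing the definition of $\mathop{\sim}\limits^\pi$ with the equivalence of $4$-tuples in Definition~\ref{tgH_def}, while independence of the chosen lift is precisely $(\star)$ and independence of the atlas follows as in \eqref{preHtg_lemma_item2}; surjectivity is inherited from that of $T_{x_0}\pi\colon T_{x_0}M\to T_sS$ in Proposition~\ref{preHtg}, every $v$ being realised as $\dot c(0)$. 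I would also record here that $\Theta_s$ is automatically injective: $\Theta_s(q(\gamma_1))=\Theta_s(q(\gamma_2))$ unwinds, through Definition~\ref{tgH_def}, into a transverse diffeomorphism witnessing $\gamma_1\mathop{\sim}\limits^\pi\gamma_2$, and the relation $\Theta_s\circ T^\Q_{x_0}\pi=T_{x_0}\pi$ will be convenient in \eqref{preHtg_lemma_item6}.

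Finally, for \eqref{preHtg_lemma_item5} I would record the inclusion $\Cg^\H_s(S)\subseteq(\Cg_M/\Rc_\pi)_s$, valid because \H-smooth curves are smooth for the canonical Fr\"olicher structure (Remark~\ref{FsmoothHsmooth1}, Remark~\ref{FsmoothHsmooth}), and then check that $\gamma_1\sim\gamma_2$ forces $q^\Fr_s(\gamma_1)=q^\Fr_s(\gamma_2)$: for every $F\in\Fg_M/\Rc_\pi$ the function $F\circ\pi$ is smooth on $M$, so a pair of lifts with matching velocities yields $\frac{\de}{\de t}\big\vert_0(F\circ\gamma_1)=\frac{\de}{\de t}\big\vert_0(F\circ\gamma_2)$; uniqueness of $\Lambda_s$ is then forced by the surjectivity of $q_s$. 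For \eqref{preHtg_lemma_item6}, Lemma~\ref{Qdot} secures $(\star)$ in its strongest form, hence the well-definedness and surjectivity of both $T^\Q_{x_0}\pi$ and $\Theta_s$; combined with the automatic injectivity of $\Theta_s$, the injectivity of $T_{x_0}\pi$ for a \Q-atlas (Proposition~\ref{preHtg}), and the identity $\Theta_s\circ T^\Q_{x_0}\pi=T_{x_0}\pi$, this yields that both maps are bijective.
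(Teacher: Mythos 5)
Your proposal is architecturally different from the paper's proof in one crucial respect, and that is where it has a genuine gap: you funnel assertions (ii) and (iv) through the unproved claim $(\star)$ that any two local lifts of the same curve $\gamma$ determine the same class in $T_sS$. You rightly flag this as the main obstacle, but in the stated pre-\H\ generality it is not merely hard, it is false. Already for the constant map $\pi\colon\RR^2\to S=\{s\}$, which is a pre-\H-atlas whose transverse diffeomorphisms are \emph{all} local diffeomorphisms of $\RR^2$, every smooth curve is a lift of the constant curve, and a lift with zero velocity at $0$ can never be matched to one with nonzero velocity by any diffeomorphism, so the two lifts give inequivalent 4-tuples. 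Hence transitivity of $\mathop{\sim}\limits^{\pi}$ and lift-independence of $\Theta_s$ cannot be obtained along your route for general pre-\H-atlases. The paper never formulates $(\star)$: its proof of (ii) asserts transitivity by bare composition of transverse diffeomorphisms (that is, transitivity of the 4-tuple equivalence of Definition~\ref{tgH_def}, tacitly reusing the middle lift), and its well-definedness check in (iv) verifies only that two lifts related by a \emph{witnessing} transverse diffeomorphism yield the same class --- which is in fact tautological, since equality in $T_sS$ is by definition 4-tuple equivalence. The repair for your write-up is to retreat to exactly this witnessed descent in the pre-\H\ items, reserving full lift-independence --- your $(\star)$ --- for the \Q-case, where Lemma~\ref{Qdot} genuinely supplies it; note that your worry then applies verbatim to the quantifier ``any $c$'' in the statement of (iv), since the paper's proof likewise establishes no more than the witnessed version.

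Everywhere else your proposal matches the paper's proof: (i) is definitional; in (iii), surjectivity via a lift of $\gamma$ through the prescribed point $x_0$ and independence of $c_v$ via $h=\id$ are the paper's arguments; so are, in (v), the inclusion via Remarks~\ref{FsmoothHsmooth} and~\ref{FsmoothHsmooth1}, the computation that $F\circ\gamma_1$ and $F\circ\gamma_2$ have equal derivatives at $t=0$ because $F\circ\pi\in\Ci(M,\RR)$, and the uniqueness of $\Lambda_s$ from surjectivity of $q_s$; atlas-independence in (ii) via the disjoint-union chart is also the same. In (vi) you genuinely improve on the paper: there, injectivity of $T^\Q_{x_0}\pi$ is proved by hand --- Lemma~\ref{Qdot} is applied twice to produce $h_1,h_2$ matching $c_{v_1},c_{v_2}$ with the witnessing lifts, the composite $k=h_2^{-1}\circ h\circ h_1$ fixes $x_0$, and uniqueness of germs forces $k=\id$ near $x_0$ --- whereas your observation that $\Theta_s$ is automatically injective once well defined (equality of its values unwinds, through Definition~\ref{tgH_def}, into a transverse diffeomorphism witnessing $\gamma_1\mathop{\sim}\limits^{\pi}\gamma_2$), combined with the identity $\Theta_s\circ T^\Q_{x_0}\pi=T_{x_0}\pi$ and the injectivity of $T_{x_0}\pi$ for \H-charts from Proposition~\ref{preHtg}, delivers both bijections at once. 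That shortcut is correct and worth keeping.
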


\begin{proof}
	\eqref{preHtg_lemma_item1} 
	This follows directly by Definition~\ref{preHtg_def}. 
	
	\eqref{preHtg_lemma_item2} 
	It is clear that the binary relation $\mathop{\sim}\limits^\pi$ is symmetric and reflexive, and it is also transitive as a direct consequence of the fact that any composition of transversal diffeomorphisms is again a transversal diffeomorphism. 
	
	In order to show that $\mathop{\sim}\limits^\pi$ does not depend on 
	the choice of the pre-\H-atlas $(M,\pi,S)$, 
	we apply~\eqref{preHtg_lemma_item1} for the pre-\H-atlas $(M,\pi,S)$ obtained as the disjoint union of two pre-\H-atlases $(M_i,\pi_i,S)$ for $i=1,2$, 
	and we thus easily prove that the equivalence relations $\mathop{\sim}\limits^{\pi_1}$ and $\mathop{\sim}\limits^{\pi_2}$ on $\Cg^\H_s(S)$ coincide. 
	
	\eqref{preHtg_lemma_item3} 	
	To see that $T_{x_0}\pi$ does not depend on the choice of the curves $c_v$, 
	it suffices to note that if $I\subseteq \RR$ is an open interval with $0\in I$ and $c^1_v,c^2_v\in\Ci(I,M)$ with $c^1_v(0)=c^2_v(0)=x_0$ and $\dot{c}^1_v(0)=\dot{c}^2_v(0)=v$, then $\pi\circ c^1_v\sim\pi\circ c^2_v$ 
	since the condition of Definition~\ref{preHtg_def} is satisfied for $h=\id_U$ for any open subset $U\subseteq M$ with $x_0\in U$. 
	
	We next define 
	$\widetilde{T_{x_0}\pi}\colon T_{x_0} M\to \Cg^\H_s(S)$, 
	$v\mapsto \pi\circ c_v$, 
	so that the diagram 
	$$\xymatrix{
		& \Cg^\H_s(S) \ar[d]^{q} \\
		T_{x_0}M \ar[ur]^{\widetilde{T^\Q_{x_0}\pi}} \ar[r]_{T_{x_0}\pi} & T^\Q_sS 
	}$$
	is commutative, that is, $T^\Q_{x_0}\pi=q\circ \widetilde{T_{x_0}\pi}$. 
	
	Then, by Definition~\ref{preHtg_def}, in order to prove that the mapping $T^\Q_{x_0}\pi$ is surjective, 
	it suffices to show that for every $\gamma\in\Cg^\H_s(S)$ there exists $v\in T_{x_0}M$ with $\widetilde{T_{x_0}\pi}(v)\sim \gamma$, 
	that is, $\pi\circ c_v\sim \gamma$. 
	Since $\gamma\in\Cg^\H_s(S)\subseteq\Hom_{\H}(J_\gamma,S)$ and 
	$0\in J_\gamma$, 
	it follows that there exist an open interval $J_\gamma^0\subseteq \RR$ 
	and $c\in\Ci(J_\gamma^0,M)$ 
	with $0\in J_\gamma^0\subseteq J_\gamma$, $c(0)=x_0$, and $\pi\circ c=\gamma\vert_{J_\gamma^0}$. 
	We now show that for $v:=\dot{c}(0)\in T_{x_0}M$ we have $\pi\circ c_v\sim \gamma$. 
	In fact, since $c(0)=x_0=c_v(0)$ and $\dot{c}(0)=v=\dot{c}_v(0)$, 
	it directly follows that $\pi\circ c_v\sim \pi\circ c=\gamma\vert_{J_\gamma^0}$. 
	(This is a very special case of the general remark made in the proof of \eqref{preHtg_lemma_item1} above.) 
	
	\eqref{preHtg_lemma_item4} 
	In order to prove that the mapping $\Theta_s$ is well defined, 
	we must show that if $\gamma_1,\gamma_2\in\Cg^\H_s(S)$ and there exist 
	a pre-\H-atlas $(M,\pi,S)$, 
	an open interval $J\subseteq \RR$ with $0\in J\subseteq J_{\gamma_1}\cap J_{\gamma_2}$, and,  $c_j\in\Ci(J,M)$ 
	with $\pi\circ c_j=\gamma_j\vert_J$ for $j=1,2$,
	as well as a transverse diffeomorphism $h\colon U_1\to U_2$ satisfying 
	$h(c_1(0))=c_2(0)$, 
	and $(T_{c_1(0)}h)(\dot{c}_1(0))=\dot{c}_2(0)$, 
	then $(T_{c_1(0)}\pi)(\dot{c}_1(0))=(T_{c_2(0)}\pi)(\dot{c}_2(0))$ ($\in T_sS$). 
	In fact, since $h$ is a transverse diffeomorphism, we have $\pi\circ h=\pi\vert_{U_1}$ hence 
	\begin{align*}
	(T_{c_1(0)}\pi)(\dot{c}_1(0))
	&=(T_{c_1(0)}(\pi\circ h))(\dot{c}_1(0))  \\
	&=((T_{c_2(0)}\pi)\circ(T_{c_1(0)}h))(\dot{c}_1(0)) \\
	&=(T_{c_2(0)}\pi)((T_{c_1(0)}h)(\dot{c}_1(0))) \\
	&=(T_{c_2(0)}\pi)(\dot{c}_2(0))
	\end{align*}
	as required. 
	
	In order to show that mapping $\Theta_s\colon T^Q_s S\to T_sS$ is surjective we note that for arbitrary $x_0\in \pi^{-1}(s)$, $v\in T_{x_0}M$, and $c_v\in\Ci(I_v,M)$ with $0\in I_v$, $c_v(0)=x_0$, and  $\dot{c}_v(0)=v$ we have by the definition of $T^\Q_{x_0}\pi$ and $\Theta_s$
	$$\Theta_s((T^\Q_{x_0}\pi)(v))
	=\Theta_s(q_s(\pi\circ c_v))=(T_{x_0}\pi)(\dot{c}_v(0))
	=(T_{x_0}\pi)(v).$$
	This shows that the diagram 
	\begin{equation}
	\label{preHtg_lemma_proof_eq0.5}
	\xymatrix{
		& T^Q_s S\ar[d]^{\Theta_s} \\
		T_{x_0}M \ar[ur]^{T^\Q_{x_0}\pi} \ar[r]_{T_{x_0}\pi} & T_sS 
	}
    \end{equation}
	is commutative, that is, 
	$\Theta_s\circ T^\Q_{x_0}\pi=T_{x_0}\pi$.
	Since the mapping $T_{x_0}\pi\colon T_{x_0}M\to T_sS$ from \eqref{tgH_def_eq1} is surjective, it the follows that $\Theta_s \colon T^Q_s S\to T_sS$ is surjective, too.

	\eqref{preHtg_lemma_item5}
	In order to show that there exists a mapping $\Lambda_s$ as in the statement, 
	we must check that if $\gamma_1,\gamma_2\in\Cg^\H_s(S)$ with $\gamma_1\mathop{\sim}\limits^\pi\gamma_2$ in the sense of Definition~\ref{preHtg_def}, then $\gamma_1\simeq\gamma_2$ in the sense of Definition~\ref{tgF}.
	To this end we recall that if  $\gamma_1\mathop{\sim}\limits^\pi\gamma_2$ then  $\gamma_j\vert_J=\pi\circ c_j$ with $c_j\in\Ci(J,M)$ as in Definition~\ref{preHtg_def}. 
	If $F\in\Fg_M/\Rc_\pi$ (cf. Remark~\ref{FsmoothHsmooth}), then $F\circ\pi\in\Ci(M,\RR)$, 
	and $F\circ \gamma_j\vert_J=(F\circ\pi)\circ c_j\in\Ci(J,\RR)$. 
	Since $\pi\circ h=\pi$ on a neighbourhood of $c_1(0)\in M$, 
	we have  $(F\circ\pi)\circ c_1=(F\circ\pi)\circ (h\circ c_1)$ 
	hence 
	\begin{align*}
	\frac{\de}{\de t}\Big\vert_{t=0}(F\circ \gamma_1)
	&=\frac{\de}{\de t}\Big\vert_{t=0}((F\circ\pi)\circ h\circ c_1) \\
	&=(T_{c_2(0)}(F\circ\pi))((T_{c_1(0)}h)(\dot{c}_1(0))) \\
	&=(T_{c_2(0)}(F\circ\pi))(\dot{c}_2(0)) \\
	&=\frac{\de}{\de t}\Big\vert_{t=0}((F\circ\pi)\circ c_2) \\
	&=\frac{\de}{\de t}\Big\vert_{t=0}(F\circ \gamma_2).
	\end{align*}
	Since $F\in\Fg_M/\Rc_\pi$ is arbitrary, we thus obtain $\gamma_1\simeq\gamma_2$ in the sense of Definition~\ref{tgF}.
		
	\eqref{preHtg_lemma_item6}
	We know from \eqref{preHtg_lemma_item3}  that the mapping $T^\Q_{x_0}\pi$ is surjective and we now use the hypothesis that $(M,\pi,S)$ is a Q-atlas in order to show that $T^\Q_{x_0}\pi$ is injective as well. 
	To this end we claim that if $v_1,v_2\in T_{x_0}M$ satisfy  $(T^\Q_{x_0}\pi)(v_1)=(T^\Q_{x_0}\pi)(v_2)$, then $v_1=v_2$. 
	The assumption is equivalent to $\pi\circ c_{v_1}\sim \pi\circ c_{v_2}$, 
	and this implies (by Definition~\ref{preHtg_def} for $\gamma_j:=\pi\circ c_{v_j}\in\Hom_{\H}(I_{v_j},S)$) that there exist
	an open interval $J\subseteq \RR$ with $0\in J\subseteq I_{v_1}\cap I_{v_2}$, and, for $j=1,2$ there exist $c_j\in\Ci(J,M)$ 
	with $\pi\circ c_j=\pi\circ c_{v_j}\vert_J$ for $j=1,2$, 
	and there exists a transverse diffeomorphism $h\colon U_1\to U_2$ satisfying 
	\begin{equation}
	\label{preHtg_lemma_proof_eq1}
	h(c_1(0))=c_2(0) 
	\text{ and }
	(T_{x_1}h)(\dot{c}_1(0))=\dot{c}_2(0).
	\end{equation} 
	On the other hand, since $\pi\circ c_j=\pi\circ c_{v_j}\vert_J$ and $c_{v_j}(0)=x_0$, it follows by Lemma~\ref{Qdot} that there exists a transversal diffeomorphism $h_j\colon V_j\to U'_j$ with 
	\begin{equation}
	\label{preHtg_lemma_proof_eq2}
	h_j(x_0)=c_j(0)
	\text{ and }
	(T_{x_0}h_j)(v_j)=(T_{x_0}h_j)(\dot{c}_{v_j}(0))=\dot{c}_j(0).
	\end{equation} 
	Replacing $V_1$ by $h_1^{-1}(U'_1\cap U_1)$, then $U_1$ by $U'_1\cap U_1$, 
	then $U_2$ by $h(U'_1\cap U_1)$, and finally $U'_2$ by $h(U'_1\cap U_1)$ 
	and $V_2$ by $h_2^{-1}(h(U'_1\cap U_1))$, we may assume $U'_j=U_j$ for $j=1,2$. 
	Denoting $k:=h_2^{-1}\circ h\circ h_1$, we thus obtain the 
	diagram 
	$$\xymatrix{
		J \ar[r]^{c_1}& U_1 \ar[r]^{h} & U_2 & J \ar[l]_{c_2}\\
		J \ar[r]_{c_{v_1}} & V_1\ar[u]^{h_1} \ar@{.>}[r]^{k} 
		& V_2 \ar[u]_{h_2} & J \ar[l]^{c_{v_2}}
	}$$
	where $c_{v_j}(0)=x_0\in V_1\cap V_2$ and $c_j(0)=h_j(x_0)\in U_j$ for $j=1,2$. 
	Therefore, using \eqref{preHtg_lemma_proof_eq1}--\eqref{preHtg_lemma_proof_eq2}, we obtain 
	$v_2=(T_{x_0}k)v_1$. 
	Here $k\colon V_1\to V_2$ is a transversal diffeomorphism satisfying $k(x_0)=x_0$, hence $h=\id$ on some neighbourhood of $x_0\in V_1$, 
	and therefore $v_2=v_1$, as claimed. 
	This completes the proof of the fact that the mapping $T^\Q_{x_0}\pi$ is bijective. 
	
	Moreover, since the \Q-chart $\pi$ is in particular an \H-chart 
	(Example~\ref{Q_ex}), 
	the mapping $T_{x_0}\colon T_{x_0}M\to T_sS$ is bijective by Proposition~\ref{preHtg}\eqref{preHtg_item2}. 
	It then follows by the commutative diagram ~\eqref{preHtg_lemma_proof_eq0.5} that the mapping $\Theta_s$ is bijective as well, and this concludes the proof. 
\end{proof}

If  $(M,\pi, \Gamma, S)$ is an \H-atlas, then $S$ has also a canonical structure of Fr\"olicher space (see Remark~\ref{FsmoothHsmooth}), hence we can define the  tangent space $T^\Fr S$ in the sense of Fr\"olicher spaces. 
We have noted prior to Proposition~\ref{FrolicherGr} that \emph{the tangent space at a point of a Fr\"olicher space is not a vector space in general}. 
However, in the special case of the Fr\"olicher spaces underlying an \H-manifold $S$ we have the following result, 
in which we use the maps $\Psi\colon TM\to T^\Fr M$ and $T^\Fr\pi\colon T^\Fr M\to T^\Fr S$ from Definition~\ref{tg-F} and Remark~\ref{classical-F}, respectively.

\begin{proposition}\label{propTS} 
	Let $(M,\pi, \Gamma, S)$ be a pre-\H-atlas. 
	If we regard $M$ and $S$ as Fr\"olicher spaces,  
	then the  map $\pi\colon M\to S$ is smooth in the sense of Fr\"olicher spaces. 
\end{proposition}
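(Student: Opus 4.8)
The plan is to unwind the definition of Fr\"olicher smoothness and reduce the claim to two facts already established in the excerpt: the identification of $\Hom_\H(S,\RR)$ with those functions that pull back to smooth functions on $M$, and the explicit description of the canonical Fr\"olicher structure of a Banach manifold. First I would recall that, by Proposition~\ref{HversusF}, the canonical Fr\"olicher structure on $S$ has function set $\Fg_S=\Hom_\H(S,\RR)$, while by Example~\ref{F_ex2} the canonical structure on $M$ has $\Fg_M=\Ci(M,\RR)$ and a curve set $\Cg_M$ characterized by the property that $c\in\Cg_M$ if and only if $G\circ c\in\Ci(\RR,\RR)$ for every $G\in\Ci(M,\RR)$. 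By Definition~\ref{frolicher}, showing that $\pi$ is smooth amounts exactly to verifying that $F'\circ\pi\circ c\in\Ci(\RR,\RR)$ for every $F'\in\Fg_S=\Hom_\H(S,\RR)$ and every $c\in\Cg_M$.

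Next I would invoke the identity
$$\Hom_\H(S,\RR)=\{F\colon S\to\RR\mid F\circ\pi\in\Ci(M,\RR)\}$$
recorded in Remark~\ref{FsmoothHsmooth} (see also the end of Remark~\ref{H-categ}). For any $F'\in\Hom_\H(S,\RR)$ this yields $F'\circ\pi\in\Ci(M,\RR)=\Fg_M$. Applying the defining property of $\Cg_M$ to the smooth function $G:=F'\circ\pi$, I immediately obtain $(F'\circ\pi)\circ c=F'\circ\pi\circ c\in\Ci(\RR,\RR)$ for every $c\in\Cg_M$, which is precisely the condition required for smoothness of $\pi$.

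There is essentially no obstacle here: once the two identifications are in place, the statement is a one-line consequence of associativity of composition. The only point meriting care is to confirm that the canonical Fr\"olicher structure on $S$ supplied by Proposition~\ref{HversusF} coincides with the quotient structure $(S,\Cg_M/\Rc_\pi,\Fg_M/\Rc_\pi)$, so that $\Fg_M$ and $\Fg_S$ are related through $\pi$ as claimed; this is exactly the content of Remark~\ref{FsmoothHsmooth}. With that in hand the verification is complete.
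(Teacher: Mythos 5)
Your proof is correct and follows essentially the same route as the paper's: both arguments reduce Fr\"olicher smoothness of $\pi$ to showing $F\circ\pi\circ c\in\Ci(\RR,\RR)$ for $F\in\Fg_S=\Hom_{\H}(S,\RR)$ and $c\in\Cg_M$, use the identity $\Hom_{\H}(S,\RR)=\{F\mid F\circ\pi\in\Ci(M,\RR)\}$ from Remark~\ref{FsmoothHsmooth}, and conclude via the defining property of $\Cg_M$ from Example~\ref{F_ex2}. Your additional remark that the canonical structure on $S$ agrees with the quotient structure $(S,\Cg_M/\Rc_\pi,\Fg_M/\Rc_\pi)$ is likewise exactly what Remark~\ref{FsmoothHsmooth} records, so nothing is missing.
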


\begin{proof}
	The Fr\"olicher structure of the Banach manifold $M$ is $(M,\Fg_M,\Cg_M)$ as in Example~\ref{F_ex2}, where $\Fg_M=\Ci(M,\RR)$ and $\Ci(M,\RR)\subseteq\Cg_M$. 
	On the other hand, the Fr\"olicher structure of $S$ is $(S,\Fg_S,\Cg_S)$ as in Proposition~\ref{HversusF}, where $\Fg_S=\Hom_\H(S,\RR)$ and $\Hom_\H(\RR,S)\subseteq\Cg_S$. 
	In order to check that the map 
	$\pi\colon M\to S$ is smooth in the sense of Fr\"olicher spaces, 
	we must show that for every $c\in \Cg_M$ and $F\in\Fg_S$ we have $F\circ \pi\circ c\in\Ci(\RR,\RR)$, 
	cf. Definition~\ref{frolicher}. 
	To this end we first note that the hypothesis $F\in\Fg_S=\Hom_\H(S,\RR)$ implies $F\circ\pi\in \Ci(M,\RR)$. 
	Then, since $c\in \Cg_M$, we obtain $(F\circ \pi)\circ c\in\Ci(\RR,\RR)$ 
	by the definition of $\Cg_M$ in in Example~\ref{F_ex2}. 
	Thus the map 
	$\pi\colon M\to S$ is smooth in the sense of Fr\"olicher spaces.  
\end{proof}

\subsection*{Acknowledgment} 
We wish to thank Professor Raymond Barre for pointing out several important references 
and the Referee for several remarks that improved our manuscript. 
We acknowledge financial support from the Centre Francophone en Math\'ematiques de Bucarest and the GDRI ECO-Math.

\newpage

\end{document}